\documentclass[a4paper]{amsart}
\usepackage{smfthm,amssymb}
\usepackage{hyperref}
\usepackage[all]{xy}
\usepackage{stmaryrd}
\usepackage{longtable}
\usepackage{bbm}
\usepackage[baseline]{euflag}

\SwapTheoremNumbers

\theoremstyle{plain}
\newtheorem{theorem}[subsubsection]{Theorem}
\newtheorem*{theorem*}{Theorem}
\newtheorem{lemma}[subsubsection]{Lemma}
\newtheorem{proposition}[subsubsection]{Proposition}
\newtheorem{corollary}[subsubsection]{Corollary}
\newtheorem{definition-proposition}[subsubsection]{Definition-Proposition}
\theoremstyle{definition}
\newtheorem{definition}[subsubsection]{Definition}
\newtheorem{example}[subsubsection]{Example}
\newtheorem{remark}[subsubsection]{Remark}

\newcommand{\Sp}{\mathrm{Sp}}
\newcommand{\SL}{\mathrm{SL}}
\newcommand{\MSp}{\mathbf{MSp}}
\newcommand{\MSL}{\mathbf{MSL}}
\newcommand{\MU}{\mathbf{MU}}
\newcommand{\etainv}{\eta^{-1}}
\newcommand{\MSpetainv}{{\MSp[\etainv]}}
\newcommand{\MSLetainv}{{\MSL[\etainv]}}
\newcommand{\E}{\mathcal{E}}
\newcommand{\W}{\mathcal{W}^{\eta}}
\newcommand{\HP}{\mathbb{HP}}
\newcommand{\HGr}{\mathrm{HGr}}
\newcommand{\HPinf}{\HP^{\infty}}
\newcommand{\HW}{\mathbf{H}\mathrm{W}}
\newcommand{\HMW}{\mathbf{H}\mathrm{MW}}
\newcommand{\KW}{\mathrm{KW}}
\newcommand{\KO}{\mathrm{KO}}
\newcommand{\kw}{\mathrm{kw}}
\newcommand{\SH}{\mathcal{SH}}
\newcommand{\ZZ}{\mathbb{Z}}
\newcommand{\QQ}{\mathbb{Q}}
\newcommand{\FF}{\mathbb{F}}
\newcommand{\coker}{\mathrm{coker}}
\newcommand{\cont}{\mathrm{cont}}
\newcommand{\univ}{\mathrm{univ}}
\newcommand{\homgr}{{\mathrm{Hom}^{gr}}}
\newcommand{\Rfr}{\mathcal{R}_{\mathrm{fr}}}

\renewcommand{\le}{\leqslant}
\renewcommand{\ge}{\geqslant}

\title{On $\eta$-periodic Formal Ternary Laws}
\author{Tao Huang}
\address{Institut Fourier - UMR 5582, Universit\'e Grenoble-Alpes, CNRS, CS 40700, 38058 Grenoble Cedex 9, France}
\email{tao.huang@univ-grenoble-alpes.fr}
\thanks{
	The author is fully supported by the COGENT project which has received funding from the European Union’s Horizon Europe Programme under the Marie Sklodowska-Curie actions HORIZON-MSCA-2023-DN-01 call (Grant agreement ID: 101169527), and from UK Research and Innovation.
}
\date{\today}

\begin{document}

\begin{abstract}
	We study the algebraic structure underlying $\Sp$-orientations in the $\eta$-periodic motivic stable homotopy category $\SH(k)[\etainv]$. Borel classes determine a geometric formal ternary law, but the $\HW$-Hurewicz map shows that its universal coefficients generate a proper $W(k)$-subalgebra $\Lambda\subsetneq (\MSpetainv)_*$, although $\Lambda[1/2]=(\MSpetainv)_*[1/2]$. Thus the failure of generation is purely $2$-primary. To capture part of the missing information, we introduce framed involutions. The spectrum $\MSpetainv$ carries a canonical framed involution, yielding a Quillen-type idempotent with telescope $\MSLetainv$ and a canonical splitting $(\MSpetainv)_* \cong \Rfr \otimes_{\ZZ} (\MSLetainv)_*$, where $\Rfr$ is the universal ring of framed involutions. We then axiomatize formal ternary laws, construct the universal Walter ring $\W$, and prove that $\W$ is isomorphic to the Lazard ring $L$ after inverting $2$. If $W(k)\cong \ZZ$, the universal geometric formal ternary law together with the canonical framed involution induces a classifying map $\phi:\W\to (\MSpetainv)_*$ that is injective and becomes an isomorphism after inverting $2$. Integrally, however, additional secondary power series are needed to recover the full cobordism coefficient ring.
\end{abstract}

\maketitle

\section{Introduction}\label{sec:introduction}

Quillen's theorem identifies the complex cobordism ring with the Lazard ring, that is, with the universal ring classifying one-dimensional commutative formal group laws \cite{quillen1969on}. A natural problem in symplectic cobordism is to determine the corresponding algebraic structure attached to $\Sp$-orientations.

In motivic homotopy theory, Panin and Walter developed the basic geometric input for this question by introducing quaternionic Grassmannians and Borel classes for symplectic bundles \cite{panin2021quaternionic}. They also constructed the algebraic symplectic and special linear cobordism ring spectra $\MSp$ and $\MSL$, and proved that maps $\MSp \to \E$ and $\MSL \to \E$ classify $\Sp$- and $\SL$-orientations on homotopy commutative ring spectra $\E$, respectively \cite{panin2023on}. Since the canonical ring map $\MSp\to \MSL$ induces an $\Sp$-orientation on every $\SL$-oriented spectrum, $\MSp$, together with the Borel classes of symplectic bundles, is the natural place to look for a symplectic analogue of the Lazard--Quillen picture.

The first Borel class of a rank-$2$ symplectic bundle plays the role of the first Chern class of a line bundle. However, the tensor product of two symplectic bundles is naturally orthogonal rather than symplectic, whereas the tensor product of three symplectic bundles is symplectic again. For this reason, the basic operation attached to an $\Sp$-orientation is not binary but ternary. The corresponding geometric object is the Borel polynomial of the triple tensor product of the three tautological rank-$2$ symplectic bundles on $(\HP^\infty)^{\times 3}$. This yields the geometric formal ternary law appearing in $\Sp$-oriented cohomology theories such as hermitian $K$-theory \cite{deglise2023the,fasel2025the}; see also Walter's notes \cite{walter2012formal}. Building on this geometric picture, Coulette, Déglise, Fasel, and Hornbostel introduced an axiomatic notion of formal ternary law and related it to Buchstaber's $2$-groups \cite{coulette2024formal}.

This point of view has substantial roots in classical topology. Shimakawa \cite{shimakawa1976remarks} formulated quaternionic oriented cohomology theories and, for any such theory $h^{*}$, defined a coefficient subring $\widetilde{\Lambda}_h$ generated by certain characteristic power series. In the universal case of symplectic cobordism, he showed that $\widetilde{\Lambda}_{\MSp}[1/2]$ is polynomial. Buchstaber then developed the framework for the Buchstaber subring generated by universal Pontryagin-type power series \cite{buchstaber1978characteristic}. Toda and Kozima subsequently introduced a symplectic formal system and its coefficient ring, the symplectic Lazard ring, as a symplectic analogue of the Lazard ring \cite{toda1982the}.

The $\eta$-periodic motivic category, obtained by inverting the stable motivic Hopf map $\eta\in \pi_{1,1}(\mathbbm{1})$, is particularly well suited to this question. Here $\eta$ is the stabilization of the morphism
\[
	\mathbb{A}^{2}\setminus 0 \to \mathbb{P}^{1},\qquad (x,y)\mapsto [x:y],
\]
using the identification $\mathbb{A}^{2}\setminus 0 \simeq S^{3,2}$ and $\mathbb{P}^{1}\simeq S^{2,1}$ in unstable motivic homotopy theory \cite{morel1999a1}. This already marks a basic difference between classical and motivic homotopy theory: classically the first Hopf map is nilpotent by Nishida's theorem \cite{nishida1973nilpotency}, whereas the motivic Hopf map $\eta$ is already non-nilpotent over fields \cite{morel2004pi0}.

Throughout the paper, we work in the $\eta$-periodic stable motivic homotopy category $\SH(k)[\etainv]$ over a Dedekind domain $k$.

The structural results of Bachmann and Hopkins, extended by Bachmann to Dedekind domains where $2$ is invertible, give a fiber sequence relating the $\eta$-periodic sphere to connective Witt $K$-theory and identify the coefficient rings of $\MSpetainv$ and $\MSLetainv$ as polynomial algebras over the Witt ring \cite{bachmann2021eta,bachmann2022eta}. Recent constructions of the motivic Hermitian $K$-theory spectrum $\KO$ make Hermitian $K$-theory available over more general bases \cite{calmes2024a,kumar2024construction}. With such constructions, we are able to make the following standing Bachmann--Hopkins assumption: the same $\eta$-periodic fiber sequence, the homotopy-sheaf and coefficient-ring identifications for $\MSpetainv$ and $\MSLetainv$, and the base-change compatibilities used in the reductions to the universal case. See also Remark~\ref{rem:odd-witt-classes}.

A first question of the paper is whether the coefficients of the universal geometric formal ternary law generate the entire $\eta$-periodic symplectic cobordism ring. The results can be summarized as follows.

\begin{theorem*}
	Let $\Lambda \subset (\MSpetainv)_*$ be the $W(k)$-subalgebra generated by the coefficients of the universal geometric formal ternary law. Then
	\[
		\Lambda \subsetneq (\MSpetainv)_*,
		\qquad
		\Lambda[1/2]=(\MSpetainv)_*[1/2].
	\]
	Equivalently, the coefficients of the universal geometric formal ternary law do not generate the $\eta$-periodic symplectic cobordism ring integrally, but they do after inverting $2$. The failure of generation is therefore purely $2$-primary.
\end{theorem*}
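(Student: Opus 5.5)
The strategy is to detect everything after rational-type reduction through the $\HW$-Hurewicz map $h:(\MSpetainv)_*\to \HW_*(\MSpetainv)$. By the standing Bachmann--Hopkins assumption, $(\MSpetainv)_*\cong W(k)[y_1,y_2,\dots]$ with $|y_i|=4i$. The $\HW$-homology $\HW_*(\MSpetainv)\cong W(k)[b_1,b_2,\dots]$ is computed by the Thom isomorphism and $\HW_*(\HPinf)$, with $b_i$ dual to powers of the first Borel class. This is the $\eta$-periodic analogue of $H_*(MSp)$: $\HW$ behaves like rational/ordinary homology in which only the "Pontryagin" part survives. The plan is to compute $h$ on the generators and on the coefficients of the universal ternary law, and compare the two images. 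First I reduce to the universal base (say $k=\ZZ[1/2]$ or the base where $W(k)\cong\ZZ$ components are visible) by the base-change compatibilities assumed in the introduction. Then I may work with graded pieces modulo decomposables.

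Step one: compute the Hurewicz image of the indecomposables. As in the classical computation $h(\mathbb{CP}^n)$ and the Milnor hypersurfaces, the indecomposable quotient $Q_{4n}=(\MSpetainv)_{4n}/\text{decomp}$ maps to $\HW$ indecomposables $W(k)\cdot b_n$. The image is $c_n\, b_n$, where $c_n$ is a Hattori--Stong type integer. Expectation (matching Bachmann--Hopkins, where $\MSpetainv$ behaves like $MSO[1/2]$-Pontryagin data): $c_n=1$ unless $2n+1$ is a prime power $p^s$, in which case $c_n=p$. It could also be a power-of-$2$-modified version, and I take this from the known description of $\pi_*\MSpetainv$ via $\kw$ and the $\HW$-homology of $\HGr$.

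Step two: compute the Hurewicz images of the coefficients $a_{ijk}$ of the universal ternary law $F(x,y,z)=b_1(L_1\otimes L_2\otimes L_3)$. Express $b_1$ of the triple tensor product, after $h$, via the $\HW$-logarithm. In $\HW$ the law becomes additive in suitable coordinates, so the law is conjugate to the additive ternary law, essentially $(\sqrt x\pm\sqrt y\pm\sqrt z)^2$-type symmetric expressions. I then read off the leading coefficient in $b_n$ modulo decomposables. The indecomposable part of the coefficients in degree $4n$ is then a $W(k)$-multiple of $b_n$ by the gcd $d_n$ of explicit multinomial-type coefficients $\binom{2n+1}{\dots}$ with parity constraints.

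Step three: compare. Inverting $2$, I show $d_n$ and $c_n$ agree up to a power of $2$. This gives $Q(\Lambda)[1/2]=Q[1/2]$ in every degree, and a graded Nakayama argument then yields $\Lambda[1/2]=(\MSpetainv)_*[1/2]$. For strictness, I exhibit a degree (likely $n=1$, degree $4$, or the first degree where the extra factor $2$ appears) where $d_n=2c_n$, using $W(k)\to\ZZ$ (rank or signature, or the base $\mathbb{R}$) to see that $2$ is not a unit. Then $Q(\Lambda)_{4n}\subsetneq Q_{4n}$, so $y_n\notin\Lambda$. The main obstacle is step two: the gcd computation of the ternary-law coefficients modulo decomposables, in particular pinning down the exact power of $2$. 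I would first try degree $4$ directly, computing $a_{111}$ from the explicit Borel-class formula for $\HW$ (as in hermitian $K$-theory computations of Déglise--Fasel), and checking whether it is twice the generator.
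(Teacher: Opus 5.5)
Your architecture (compare the images of $\Lambda$ and of $(\MSpetainv)_*$ in the indecomposables of $\HW_*\MSp\cong W(k)[a_1,a_2,\ldots]$ prime by prime, then apply graded Nakayama) is the paper's. However, the model you plan to compute with is the classical complex-realization picture, which is the wrong one after inverting $\eta$, so Steps one and two would give wrong numbers. The relevant comparison is real realization. There, $b_1$ of a rank-$2$ symplectic bundle behaves like $c_1$ of a complex line bundle, and $\MSpetainv[1/2]$ corresponds to $\MU[1/2]$; it is $\MSLetainv$ that corresponds to $\mathbf{MSO}[1/2]$. Concretely, $F_t^{\HW}$ splits with \emph{linear} roots $\varepsilon_1x+\varepsilon_2y+\varepsilon_3z$, where $\varepsilon_1\varepsilon_2\varepsilon_3=-1$. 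Roots of the form $(\sqrt x\pm\sqrt y\pm\sqrt z)^2$ would give $F_1^{\HW}=4(x+y+z)$, whereas in fact $F_1^{\HW}=0$. The base-change formula $t_M=\exp(t_{\HW})$ then gives $h(F_t)=\exp_*F_t^{\HW}$, with roots $\exp(\varepsilon_1\log x+\varepsilon_2\log y+\varepsilon_3\log z)$. Consequently, in the degree of $a_n$, the odd-primary Hurewicz index of $(\MSpetainv)_*$ is $p$ exactly when $n+1=p^k$, not when $2n+1=p^s$. The $\Lambda$-side gcds are the contents of $S_{n+1}$ (coefficients $\pm4\binom{n+1}{a,b,c}$ over exponent triples of equal parity) and of the analogous polynomials $P_{n+2}$, $C_n$, $\Delta S_n-4L_n$ coming from $F_2,F_3,F_4$. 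They are not multinomials $\binom{2n+1}{\cdots}$. Your proposed test also misses: $a^3_{1,1,1}=-8$ is a degree-zero constant, and the $\HW$ formula alone sees nothing in positive degree.

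The more serious gap is that Step one is only an expectation, yet strictness rests on it. The odd-primary part needs a proof. The paper uses real realization over $k=\ZZ$, identifying $\MSpetainv[1/2]$ with $\MU[1/2]$ and $\HW[1/2]$ with $\mathbf{H}\ZZ[1/2]$, and then Milnor's computation; the $\kw$-description you appeal to is purely $2$-local. On the $2$-primary side you need the Bachmann--Hopkins sequence $0\to\pi_*\MSp_{(2)}[\etainv]\to\kw_*\MSp_{(2)}\xrightarrow{\varphi}\kw_{*-4}\MSp_{(2)}\to0$. Without it, nothing prevents $(\MSpetainv)_*$ from carrying the same power of $2$ as $\Lambda$, and in some degrees it does: in the degree of $a_2$ both images are $8a_2W(k)_{(2)}$. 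The paper computes the full $2$-primary image via $\varphi$, but strictness only needs the bottom degree, where the gap closes cheaply. Since $\kw_{-2}\MSp=0$, the sequence shows that $h(y_1)$ is a unit multiple of $a_1$ after localizing at $2$. Meanwhile $\Lambda$ in that degree is the $W(k)$-span of the degree-$2$ coefficients of $F_1,\ldots,F_4$, whose Hurewicz images are $a_1$ times the coefficients of $S_2$, $P_3$, $C_1$ and $\Delta S_1-4L_1$, all of content $4$. So the discrepancy is a factor $4$, not $2$. A map $W(k)\to\ZZ$ need not exist, but the rank map $W(k)\to\ZZ/2$ shows that $2$ is never a unit, so $y_1\notin\Lambda$.
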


To study the missing coefficient data, we show that an $\Sp$-orientation carries not only a ternary law but also an involution induced by dualizing the universal symplectic bundle over $\HPinf$, together with a framing that trivializes it. We package this datum as a \emph{framed involution}. For $\MSpetainv$, the framed involution gives rise to the following splitting result.

\begin{theorem*}
	The coefficient ring $(\MSpetainv)_*$ carries a canonical framed involution. It induces an idempotent endomorphism of the homotopy commutative ring spectrum
	\[
		\xi:\MSpetainv\to \MSpetainv
	\]
	whose telescope is homotopy equivalent to $\MSLetainv$. Moreover, this induces a canonical isomorphism
	\[
		(\MSpetainv)_* \cong \Rfr \otimes_{\ZZ} (\MSLetainv)_*,
	\]
	where $\Rfr$ is the universal ring of framed involutions.\footnote{The universal ring $\Rfr$, and likewise the Walter ring introduced below, is obtained by quotienting the freely generated coefficient ring by the $2$-saturation of the naive coefficient ideal, coming from the involution equation for $\Rfr$ and from the FTL axioms for the Walter ring. This removes spurious $2$-torsion.}
\end{theorem*}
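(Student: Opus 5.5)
This is a Quillen-idempotent argument, modelled on Quillen's splitting of $\MU_{(p)}$ onto $BP$, here using the Bachmann--Hopkins descriptions of $(\MSpetainv)_*$ and $(\MSLetainv)_*$ assumed in the standing hypotheses.

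\emph{Step 1: the framed involution.} Dualizing the tautological rank-$2$ symplectic bundle $U$ on $\HPinf$ gives a symplectic bundle $U^\vee$. Its Borel class expands as a power series
\[
b_1(U^\vee)=\iota(b_1(U))\in(\MSpetainv)^{*}(\HPinf)=(\MSpetainv)_*[[b]].
\]
The canonical isomorphism $U\cong U^\vee$ induced by the symplectic form is only an isomorphism of bundles, not of symplectic bundles. Combined with $\eta$-periodicity, it nevertheless supplies a framing trivializing the discrepancy. I would check that $\iota$ satisfies the involution equation $\iota\circ\iota=\mathrm{id}$, and that the framing identities hold, by comparing $U^{\vee\vee}\cong U$ canonically. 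This defines a classifying map $\Rfr\to(\MSpetainv)_*$.

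\emph{Step 2: the idempotent.} Following Quillen, I would use the universal property of $\MSp$ from \cite{panin2023on}. A ring map $\MSpetainv\to\MSpetainv$ is the same as an $\Sp$-orientation on $\MSpetainv$. I would twist the canonical Thom classes by the framing power series to obtain a new orientation with $b_1^{\mathrm{new}}=\theta(b_1)$, where $\theta$ is chosen so that the involution becomes strict: $\theta\circ\iota=\theta$ modulo the framing. This yields a multiplicative map $\xi$. Idempotence $\xi^2=\xi$ then reduces to the statement that $\theta$ for the new orientation is the identity, which is a coefficient computation. Since $\Rfr$ is defined via $2$-saturation, this check can be done in $(\MSpetainv)_*$, which is torsion-free over $W(k)$ in the universal case.

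\emph{Step 3: identify the telescope with $\MSLetainv$, and deduce the splitting.} Write $e=\mathrm{tel}(\xi)$. The composite $\MSpetainv\to\MSLetainv$ should factor through $e$. The reason is that on $\SL$-oriented theories every symplectic bundle of the form $E\oplus E^\vee$ has the orientation induced by the $\SL$-structure, so $\iota$ is already strict there and $\xi$ acts as the identity on $\SL$-oriented targets. This gives $e\to\MSLetainv$. Conversely, I would build an $\SL$-orientation on $e$, or equivalently show $e$ is $\SL$-oriented through the framing, which gives a map $\MSLetainv\to e$. To check that the two composites are equivalences, it suffices to look at homotopy rings: by the Bachmann--Hopkins description both sides are polynomial over $W(k)$, and one compares generators in the $\HW$-homology via the Hurewicz map. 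Finally, $(\MSpetainv)_*$ is free over $\xi_*(\MSpetainv)_*\cong(\MSLetainv)_*$. The map $\Rfr\otimes_{\ZZ}(\MSLetainv)_*\to(\MSpetainv)_*$ is a map of polynomial rings, and I would show it is an isomorphism by matching generators: the generators of $\Rfr$ should be the coefficients of $\iota$, which account precisely for the extra polynomial generators of $\MSp$ over $\MSL$ in each degree.

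\emph{Main obstacle.} I expect the hardest step to be the generator matching in Step 3. This means showing integrally, and not just after inverting $2$, that the coefficients of the involution hit indecomposables of $(\MSpetainv)_*$ not coming from $\MSLetainv$. My first attempt would be to compute $\iota$ in $\HW$-homology, where the Hurewicz image is explicit, and check the leading terms modulo decomposables. The $2$-saturation in the definition of $\Rfr$ is precisely what should make this comparison unimodular.
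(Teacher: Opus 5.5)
Your architecture matches the paper's: the new orientation $s=\Psi^M(t)$, idempotence from torsion-freeness, the telescope via homotopy sheaves, and the splitting checked on indecomposables through the $\HW$-Hurewicz map. But Step~1 has a genuine gap. The content of ``$(\MSpetainv)_*$ carries a framed involution'' is that $t-\iota^M(t)$ is divisible by $2$ in $(\MSpetainv)_*\llbracket t\rrbracket$, so that $\Psi^M:=(t-\iota^M(t))/2$ exists. Once it does (over $\ZZ$, then restricting to general $k$), $\iota^M\circ\iota^M=\mathrm{id}$ follows from $U^{\vee\vee}\cong U$, and torsion-freeness gives the $2$-saturated relations for free. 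The non-symplectic isomorphism $U\cong U^\vee$ ``combined with $\eta$-periodicity'' does not produce a halving of a power series, and you offer no other mechanism. The obvious computation does not suffice either. Indeed, $h(t-\iota^M(t))=\exp(\log t)-\exp(-\log t)=2\bigl(\log t+\sum_{m\ge1}a_{2m}\log^{2m+1}t\bigr)$ is twice an integral series in $\HW_*\MSp\llbracket t\rrbracket$. However, the image of $h$ is not $2$-saturated ($2$-locally its degree-$4$ part contains $8a_2$ but not $4a_2$), so this gives no divisibility in the source. The paper closes the gap by base change to a field $k'$ with $W(k')\cong\FF_2$ (e.g.\ $\mathbb{C}$). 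There $(\MSpetainv)_*(k')$ is the mod-$2$ reduction of the integral ring, it embeds via $h$ into $\kw_*\MSp(k')$ as $\ker\varphi$, and $h(\iota^M)(t)=\exp(-\log t)=-t=t$ because $\kw$ has involution $-t$. (Alternatively, $2$-locally, half of the $\kw$-Hurewicz image of $t-\iota^M(t)$ is killed by $\varphi$ since $\kw_*\MSp_{(2)}$ is torsion-free, hence lies in $\ker\varphi$.) This halving is also what your final generator matching needs. You must use the coefficients $c_n$ of $\Psi^M$, for which $h(c_{2m-1})\equiv -a_{2m-1}$ modulo decomposables, and this forces $Q_{4m-2}\kappa$ to be an isomorphism. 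The coefficients of $\iota^M$ have leading terms $2a_{2m-1}$ and are not unimodular. So it is the halving, not the $2$-saturation, that produces the unit in degree $4m-2$; the saturation is what makes $Q_{4m}\Rfr=0$.

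Several further steps are asserted rather than proved. Treating $\Rfr\otimes_{\ZZ}(\MSLetainv)_*\to(\MSpetainv)_*$ as a map of polynomial rings requires $\Rfr\cong\ZZ[b_1,b_3,\ldots]$, which is a separate result. Its surjectivity uses the saturation to solve $4b_{2k}=P_{2k}(2b_1,\ldots,2b_{2k-1})$ with $P_{2k}\in(c_1,\ldots,c_{2k-1})^2$. Its injectivity uses the explicit model $\Psi=f^{-1}$ with $f(t)=t+\sum_k e_{2k-1}t^{2k}$. The same fact gives $q\xi\simeq q$: $\Rfr$ is generated in degrees $4m-2$, where $(\MSLetainv)_*$ vanishes, so $q_*\Psi^M=t$. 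Your reason via bundles $E\oplus E^\vee$ does not bear on $\iota$. For the telescope you need not (and have no way to) build an $\SL$-orientation on $\operatorname{Tel}(\xi)$. What is needed is $\xi_*(y_{2m-1})=0$, so that $\underline{\pi}_*\operatorname{Tel}(\xi)=\operatorname{im}\xi_*$ maps isomorphically onto $(\MSLetainv)_*$ under $q_*$. The paper obtains this from the new exponential $\exp'(s)=s+\sum_n a_{2n}s^{2n+1}$, so that $\xi_*\otimes\QQ$ kills the odd $a_i$ and fixes the even ones, together with torsion-freeness in degrees $4m-2$. Finally, in Step~2 the strictness condition is $\theta\circ\iota=-\theta$, not $\theta\circ\iota=\theta$. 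It holds automatically for $\theta=\Psi$, since $\Psi(\iota(t))=(\iota(t)-t)/2$.
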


This theorem shows that framed involutions capture an essential part of the missing $2$-primary datum and therefore should be incorporated into the algebraic framework. Motivated by this, we introduce the notion of a \emph{formal ternary law} (FTL) in the $\eta$-periodic setting. We denote by $\W$ the universal $\Rfr$-algebra representing formal ternary laws, called the \emph{$\eta$-periodic Walter ring}. Passing to the trivial involution gives the normalized Walter ring $\widetilde{\W}:=\W\otimes_{\Rfr}\ZZ$. The datum of a formal ternary law splits into a framed involution and a normalized formal ternary law, and accordingly
\[
	\W\cong \Rfr\otimes_{\ZZ}\widetilde{\W}.
\]

The key structural point is that, after inverting $2$, formal ternary laws split and become equivalent to ordinary formal group laws. More precisely, over graded $\ZZ[1/2]$-algebras we construct natural transformations
\[
	\mathbb{T}:\mathcal{FGL}(-)\to\mathcal{FTL}(-),\qquad
	\mathbb{G}:\mathcal{FTL}(-)\to\mathcal{FGL}(-),
\]
and use them to compare the ternary and binary theories.
The construction of $\mathbb{G}$ also gives an explicit splitting of every FTL over a graded $\ZZ[1/2]$-algebra.
\begin{theorem*}[Lazard-type]
	There exists a universal $\eta$-periodic Walter ring $\W$ representing formal ternary laws. After inverting $2$, the natural transformations $\mathbb{T}$ and $\mathbb{G}$ identify formal ternary laws with ordinary formal group laws, and in particular $\mathbb{T}$ induces an isomorphism
	\[
		\W[1/2]\cong L[1/2].
	\]
\end{theorem*}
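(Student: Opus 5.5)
The plan has three parts: build $\W$ as a quotient of a free ring, construct $\mathbb{T}$ and $\mathbb{G}$ over $\ZZ[1/2]$-algebras, and then show the two transformations are mutually inverse.

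\emph{Construction of $\W$.} A formal ternary law over a graded $\Rfr$-algebra $R$ is a power series $F(x,y,z)\in R[[x,y,z]]$ with coefficients $a_{ijk}$ of prescribed degrees, subject to the FTL axioms: normalization, symmetry, the neutral-element and involution-compatibility identities, and the ternary associativity-type identity. Each axiom is a family of polynomial identities in the coefficients. So I take the free graded polynomial $\Rfr$-algebra on symbols $a_{ijk}$ and let $I$ be the ideal generated by these identities. Following the convention of the footnote, I set $\W:=P/I^{\mathrm{sat}}$, where $I^{\mathrm{sat}}=\{p : 2^n p\in I \text{ for some } n\}$. Representability then follows as for Lazard's ring, with one caveat: $\W$ represents FTLs over $2$-torsion-free targets, or FTLs in the saturated sense of the definition. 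Since $I^{\mathrm{sat}}[1/2]=I[1/2]$, this subtlety disappears after inverting $2$.

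\emph{The transformations.} For $\mathbb{T}$, take an FGL $G$ with logarithm-free data over a $\ZZ[1/2]$-algebra and mimic the geometric origin of the ternary law. The Borel class of a rank-$2$ symplectic bundle behaves like $x=c_1(L)c_1(L^\vee)$, that is, like $-u\,\bar u$ where $\bar u$ is the formal inverse $\chi_G(u)$. The Borel roots of $E_1\otimes E_2\otimes E_3$ are then the products $u_1^{\pm}\otimes u_2^{\pm}\otimes u_3^{\pm}$, grouped into four pairs. I define $F_{\mathbb{T}(G)}(x,y,z)$ as the elementary symmetric expressions in $\prod (-w\,\chi_G(w))$, with $w$ running over $G$-sums $u_1\pm_G u_2\pm_G u_3$. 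The result is symmetric under $u_i\mapsto\chi_G(u_i)$, so it is a series in $x_i=-u_i\chi_G(u_i)$. Inverting $2$ is needed to solve $x=-u\chi(u)$ for $u$ as a series in a square root. More precisely, one works in $R[[u]]^{\ZZ/2}=R[[x]]$, where the averaging idempotent $\tfrac12(1+\sigma)$ is available. The FTL axioms are then verified by transporting the FGL axioms.

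For $\mathbb{G}$, go the other way. Given an FTL $F$, use the $2$-invertibility to split it: extract from $F(x,y,0)$, or from the specialization of the involution at $z$, a binary law in $x$. Then define $u$ via $x=u^2$ up to a unit series $\theta(x)$ constructed recursively. At each degree the obstruction equation has leading coefficient $2$, so it can be solved. Setting $G(u,v)$ to be the unique odd-compatible series with $\mathbb{T}(G)=F$ gives the explicit splitting promised in the text.

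\emph{Mutual inverses and the isomorphism.} Both $\mathbb{T}$ and $\mathbb{G}$ are natural, and $\mathbb{G}\circ\mathbb{T}=\mathrm{id}$ holds by construction. The main obstacle is $\mathbb{T}\circ\mathbb{G}=\mathrm{id}$, that is, injectivity of $\mathbb{T}$ on $\mathcal{FGL}$ up to the normalization. For this I would compare graded pieces. Rationally, or over $\ZZ[1/2]$, the indecomposables $Q(\W[1/2])_{n}$ should be shown to have rank one in each degree, matching $Q(L[1/2])_n$. Concretely, I would linearize the FTL axioms at a law of the form $F_0+\epsilon a$, identify the tangent module with a symmetric $2$-cocycle-type complex, and show that it differs from Lazard's symmetric $2$-cocycle complex only by factors of $2$. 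This is the ternary analogue of Lazard's symmetric $2$-cocycle lemma.

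Once both rings are graded, connected, and polynomial over $\ZZ[1/2]$, the map $\W[1/2]\to L[1/2]$ classifying $\mathbb{T}(G^{\univ})$ is surjective on indecomposables, because $\mathbb{G}$ provides a section. It is therefore an isomorphism of polynomial rings with matching generators. The two-out-of-three behaviour of the natural transformations then gives the equivalence of functors $\mathcal{FTL}\simeq\mathcal{FGL}$ on graded $\ZZ[1/2]$-algebras.
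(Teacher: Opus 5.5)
Your construction of $\W$ agrees with the paper's, but the proof has a genuine gap, and it starts with $\mathbb{T}$. You take the ternary coordinate to be $x=-u\chi_G(u)$ and the roots to be $-w\chi_G(w)$ with $w=u_1\pm_G u_2\pm_G u_3$. This is the classical quaternionic picture, i.e.\ pullback along $\mathbb{P}^\infty\to\HPinf$, and \S\ref{subsec:insufficiency-gftl} notes that this pullback vanishes in $\SH(k)[\etainv]$ for $\SL$-oriented theories. The output is not an FTL in the sense of Definition~\ref{def:formal-ternary-law}. First, $x=-u\chi_G(u)$ has degree $-4$, not the required $-2$. Second, at $u_2=u_3=0$ all four roots equal $x$, so $F_t(x,0,0)=(1+xt)^4$. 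The neutral element axiom instead requires $(1+xt)^2(1+\iota(x)t)^2$ with $\iota(x)=-x+O(x^2)$, and the $t$-coefficients $4x$ and $O(x^2)$ cannot agree once $2$ is invertible. Adjusting the axioms would not save it either. Its coefficients are homogeneous of degrees divisible by $4$ (because $|x|=2|u|$), so they generate a subring of $L[1/2]$ that misses $L_2[1/2]\neq 0$, and no isomorphism $\W[1/2]\cong L[1/2]$ can come from it. The right $\mathbb{T}$ keeps the FGL coordinate as the ternary coordinate, as real realization predicts: $\Sp_2(\mathbb{R})\simeq\mathrm{U}(1)$, so $b_1$ realizes to $c_1$ (Remark~\ref{rem:real-realization-of-T}). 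Concretely, with $\iota=\iota_\mu$ and $\Psi=(x-\iota(x))/2$, take the four roots $\mu(\mu(x^{\varepsilon_1},y^{\varepsilon_2}),z^{\varepsilon_3})$ with $\varepsilon_1\varepsilon_2\varepsilon_3=-1$ and $x^{-1}:=\iota(x)$.

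Your $\mathbb{G}$ is not actually constructed either. Defining $G$ as ``the unique odd-compatible series with $\mathbb{T}(G)=F$'' presupposes the bijectivity being proved, so ``$\mathbb{G}\circ\mathbb{T}=\mathrm{id}$ by construction'' is circular. Reading a binary law off $F(x,y,0)$ would require splitting a quartic with roots $\mu(x^{\pm1},y^{\pm1})$, and you give no mechanism for that. The missing idea is the weak neutral element axiom, which is absent from your list of axioms. It gives $F_t(x,x,y)=(1+yt)^2(1+St+Pt^2)$. Over $\ZZ[1/2]$ the quadratic factor splits, because $D=S^2-4P\equiv 16x^2$ is divisible by $x^2$; this follows from the neutral element axiom at $x=0$ and from $F_t(\iota(x),\iota(x),y)=F_t(x,x,y)$. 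With the root $B\equiv -2x-y$, set $f(x,y)=B(\iota(x),\iota(y))$, $[2](x)=f(x,0)$, and $\mu(x,y)=f([1/2](x),y)$. Associativity plus symmetry, via $F_t(x,x,F_t(y,y,z))=F_t(y,y,F_t(x,x,z))$, give $f(u,f(v,w))=f(v,f(u,w))$, and hence the FGL axioms.

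Finally, you defer the hard direction $\mathbb{T}\circ\mathbb{G}=\mathrm{id}$ to an unproved ``ternary symmetric $2$-cocycle lemma'' and to the unproved claim that $\W[1/2]$ is polynomial with rank-one indecomposables. This direction says every FTL comes from an FGL, i.e.\ the ring map $\W[1/2]\to L[1/2]$ is injective. The section coming from $\mathbb{G}\circ\mathbb{T}=\mathrm{id}$ only gives surjectivity. The paper needs no tangent-space computation here:
\begin{enumerate}
	\item Associativity isolates the four factors of $F_t(x,y,z)$ as $B(s,z)$, $B(\iota(s),z)$, $B(r,\iota(z))$, $B(\iota(r),\iota(z))$, with $s=[1/2]\mu(x,y)$ and $r=[1/2]\mu(x,\iota(y))$.
	\item Comparing factors of $F_t(x,x,0)$ shows that $\iota$ is the formal inverse of $\mu$.
	\item The four roots are then exactly those of $\mathbb{T}(\mu)$ (Corollary~\ref{cor:explicit-ftl-splitting-from-G}).
\end{enumerate}
Together with the direct check $\mathbb{G}\circ\mathbb{T}=\mathrm{id}$, Yoneda gives $\W[1/2]\cong L[1/2]$.
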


In other words, after inverting $2$, the ternary theory collapses to the classical theory of formal group laws. This is compatible with the Bachmann--Hoyois real-realization comparison \cite{bachmann2021norm}: if $\mathrm{R}\operatorname{Spec}(k)$ is a point (for example, $k=\ZZ$ or $\mathbb{R}$), then $\SH(k)[\etainv,1/2]\simeq \SH[1/2]$, and under real realization $\MSp[\etainv,1/2]$ is identified with $\MU[1/2]$. We then compare the universal Walter ring with the coefficient ring of $\MSpetainv$. The universal geometric formal ternary law determines classifying maps from $\W$.

\begin{theorem*}[Quillen-type]
	Suppose that $W(k)\cong \ZZ$ (for example, $k=\ZZ$ or $\mathbb{R}$). Then the universal geometric formal ternary law together with the canonical framed involution defines a classifying map
	\[
		\phi:\W\to (\MSpetainv)_*,
	\]
	and the normalized geometric formal ternary law over $\MSLetainv$ defines a classifying map
	\[
		\widetilde{\phi}:\widetilde{\W}\to (\MSLetainv)_*.
	\]
	Both maps are injective, and both become isomorphisms after inverting $2$.
\end{theorem*}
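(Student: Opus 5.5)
The plan is to reduce both claims to the torsion-freeness of the Walter rings together with the two structural results already stated: the Lazard-type isomorphism $\W[1/2]\cong L[1/2]$ and the splitting $(\MSpetainv)_*\cong \Rfr\otimes_{\ZZ}(\MSLetainv)_*$.

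First, the maps exist by universality. The universal geometric formal ternary law on $(\MSpetainv)_*$, built from Borel classes of the triple tensor product over $(\HPinf)^{\times 3}$, together with the canonical framed involution, is a formal ternary law in the axiomatic sense. Checking this amounts to verifying the FTL axioms for the geometric law, which I take as established when $\W$ is constructed. Hence it is classified by a ring map $\phi:\W\to(\MSpetainv)_*$ of $\Rfr$-algebras. Passing to the normalized law, i.e. base-changing along the idempotent $\xi$ whose telescope is $\MSLetainv$, gives $\widetilde\phi:\widetilde\W\to(\MSLetainv)_*$. Because the framed involution on $\MSpetainv$ is the one used in the splitting theorem, $\phi$ is compatible with the two decompositions
\[
\W\cong\Rfr\otimes_{\ZZ}\widetilde\W,\qquad (\MSpetainv)_*\cong\Rfr\otimes_{\ZZ}(\MSLetainv)_*,
\]
so that $\phi=\mathrm{id}_{\Rfr}\otimes\widetilde\phi$. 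It therefore suffices to treat $\widetilde\phi$, provided $\Rfr$ is $\ZZ$-flat. The footnote's $2$-saturation makes $\Rfr$ torsion-free, and I would check that it is free in each degree.

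For the isomorphism after inverting $2$: since $W(k)\cong\ZZ$, Bachmann--Hopkins gives $(\MSLetainv)_*$ as a polynomial $\ZZ$-algebra. The real-realization comparison identifies $(\MSpetainv)_*[1/2]$ with $\MU_*[1/2]\cong L[1/2]$, via a map sending the geometric FTL to $\mathbb{T}$ of the universal formal group law. Then the composite $L[1/2]\cong\W[1/2]\xrightarrow{\phi}(\MSpetainv)_*[1/2]$ is Quillen's isomorphism, after matching the $\Rfr[1/2]$-factor. For this I would compare ranks degreewise: both sides are graded polynomial rings, and the indecomposables of the image of $\widetilde\phi$ are computed through $\mathbb{G}$. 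Alternatively, the earlier theorem $\Lambda[1/2]=(\MSpetainv)_*[1/2]$ gives surjectivity after inverting $2$, since $\Lambda\subseteq\operatorname{im}\phi$. Injectivity after inverting $2$ then follows because both sides are graded with finitely generated free pieces of equal rank, rank equality coming from $\W[1/2]\cong L[1/2]$ and Bachmann--Hopkins.

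Integral injectivity then reduces to the claim that $\W$ (equivalently $\widetilde\W$) is $2$-torsion-free. Given this, $\W\hookrightarrow\W[1/2]\cong(\MSpetainv)_*[1/2]$, and the commutative square with the torsion-free target $(\MSpetainv)_*$ forces $\ker\phi=0$. Torsion-freeness should be exactly what the $2$-saturation in the definition of $\W$ guarantees. If $I$ is the naive ideal of FTL relations in the free ring $F$, then $\W=F/I^{\mathrm{sat}}$ with $I^{\mathrm{sat}}=\{x: 2^nx\in I\}$, and such a quotient has no $2$-torsion. Odd torsion also has to be ruled out, and this is where I expect the main obstacle. The plan there is to show that $F/I$ has no odd torsion by exhibiting, over $\ZZ_{(p)}$ for odd $p$, the explicit splitting via $\mathbb{G}$ as a free presentation. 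Then $\W\otimes\ZZ_{(p)}\cong L\otimes\ZZ_{(p)}$, which is torsion-free, so $\W$ embeds in $\W\otimes\QQ$.
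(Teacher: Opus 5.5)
Your proposal is correct. It handles integral injectivity and the passage between $\phi$ and $\widetilde\phi$ essentially as the paper does. The paper proves the claims for $\phi$ first and deduces them for $\widetilde\phi$ from the square $\phi\circ(\Rfr\otimes\widetilde\W\cong\W)=\chi\circ(\mathrm{id}\otimes\widetilde\phi)$. That is equivalent to your reduction, since $\Rfr\cong\ZZ[b_1,b_3,\ldots]$ is free with $\ZZ$ as a summand.

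Where you differ is the isomorphism after inverting $2$. The paper builds the square $h\circ\phi'=J\circ\theta\circ\mathbb{T}$; both composites classify the FTL with roots $\exp(\pm\log x\pm\log y\pm\log z)$ and framing $(t-\exp(-\log t))/2$. It reads off surjectivity by matching $Q_{2n}(\theta\circ\mathbb{T})$ with $Q_{2n}h$. Injectivity then comes directly from injectivity of $\theta\circ\mathbb{T}$. Your route instead notes $\Lambda\subseteq\operatorname{im}\phi$ and uses the already established $\Lambda[1/2]=(\MSpetainv)_*[1/2]$ for surjectivity. You then conclude with a degreewise rank count, since both rings have finitely generated free pieces with the same Poincar\'e series. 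This is valid and somewhat more economical, because you never have to identify $h\circ\phi'$ with $J\circ\theta\circ\mathbb{T}$. The cost is that it does not produce the comparison diagram that the paper's fuller version of the theorem also asserts. Your real-realization alternative is not needed.

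The ``main obstacle'' you anticipate about odd torsion is not actually an obstacle. The kernel of $\W\to\W[1/2]$ is exactly the $2$-power torsion, and the $2$-saturation removes it. So injectivity of $\phi[1/2]$ already forces injectivity of $\phi$. In any case, odd torsion in $\W$ would inject into $\W[1/2]\cong L[1/2]$, which is torsion-free. Your proposed $p$-local argument via $\mathbb{G}$ would therefore only re-derive the Lazard-type theorem you are already allowed to use.
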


The map $\phi$ provides a first approximation to the desired $\Sp$-analogue of the classical pair $(L,\MU)$. Integrally, however, the classifying map $\phi$ is still not surjective, and the remaining missing $2$-primary information lies on the $\MSLetainv$ side and is encoded by additional secondary power series beyond those captured by a normalized formal ternary law. For reasons of length, the study of these additional secondary power series lies beyond the scope of the present paper.

The paper is organized as follows. Section~\ref{sec:formal-group-laws} recalls the classical Lazard--Quillen theory of formal group laws, and Section~\ref{sec:motivic-homotopy-theory} reviews the pieces of $\eta$-periodic motivic homotopy theory used later. In Section~\ref{sec:geometric-formal-ternary-law} we introduce the geometric formal ternary law, while Sections~\ref{sec:gftl-under-hurewicz} and~\ref{sec:comparing-gftl-and-msp} analyze its Hurewicz image and show that the $W(k)$-subalgebra generated by its coefficients is strictly smaller than $(\MSpetainv)_*$. Section~\ref{sec:framed-involution} defines framed involutions, constructs the canonical framed involution on $\MSpetainv$, and studies the associated Quillen-type idempotent. Section~\ref{sec:formal-ternary-law} axiomatizes formal ternary laws in the $\eta$-periodic setting, constructs the Walter ring and the natural transformations $\mathbb{T}$ and $\mathbb{G}$ relating FTLs and FGLs, and proves the Lazard-type and Quillen-type theorems. Appendix~\ref{app:gftl-hurewicz-computations} records the coefficient calculations used in the Hurewicz comparison.

\subsection*{Table of Notation}\label{subsec:notation-table}

\begingroup \small
\begin{longtable}{@{}lp{7.6cm}r@{}}
	\multicolumn{3}{@{}l}{\bfseries Rings and algebra}                                                                                                                                            \\[2pt]
	$QR,\;Q_n R$              & indecomposable quotient $R_+/R_+^2$, its degree-$n$ piece                                        &                                                                \\
	$[f]_g$                   & coefficient of monomial $g$ in polynomial $f$                                                    &                                                                \\
	$\cont_p(f)$              & $p$-local content of $f$ (gcd of coefficients localized at $p$)                                  &                                                                \\
	$\homgr(R,S)$             & graded ring homomorphisms $R\to S$                                                               &                                                                \\[6pt]
	\multicolumn{3}{@{}l}{\bfseries Formal group and ternary laws}                                                                                                                                \\[2pt]
	$L$                       & Lazard ring                                                                                      &                                                                \\
	$\iota(t), \Psi(t)$       & framed involution                                                                                & \S\ref{subsec:framed-involution-ring}                          \\
	$\Rfr$                    & universal ring with framed involution                                                            & Prop.~\ref{prop:universal-framed-involution-ring}              \\
	$\W,\;\widetilde{\W}$     & $\eta$-periodic Walter ring and its normalized quotient                                          & \S\ref{subsec:walter-ring}                                     \\
	$\mathbb{T},\;\mathbb{G}$ & natural transformations between FGL and FTL                                                      & \S\ref{subsec:from-fgl-to-ftl}, \S\ref{subsec:from-ftl-to-fgl} \\
	$\Lambda,\;\Lambda_l$     & $W(k)$-subalgebra of $(\MSpetainv)_*$ generated by coefficients of GFTL (resp.\ degree-$l$ part) & \S\ref{subsec:indecomposable-formulas}                         \\[6pt]
	\multicolumn{3}{@{}l}{\bfseries Motivic spectra and operations}                                                                                                                               \\[2pt]
	$\MSp, \MSL$              & motivic $\Sp$- and $\SL$-cobordism spectra                                                       & \cite{panin2023on}                                             \\
	$\HP^{n}, \HGr(d,n)$      & quaternionic projective space / Grassmannian                                                     & \cite{panin2021quaternionic}                                   \\
	$b_i(\mathfrak{U})$       & $i$-th Borel class of symplectic bundle $\mathfrak{U}$                                           & Def.~\ref{def:borel-classes}                                   \\
	$\HW,\;\HMW$              & unramified (Milnor-)Witt cohomology spectrum                                                     & \cite{deglise2025homological}                                  \\
	$\KW,\;\kw$               & Witt $K$-theory spectrum and its connective truncation                                           & \cite{bachmann2022eta}                                         \\
	$\beta$                   & Bott element in $\KW_*$ and its connective lift in $\kw_*$                                       & Prop.~\ref{prop:kw-to-hw}                                      \\
	$\KO$                     & Hermitian $K$-theory spectrum                                                                    & \cite{calmes2024a}
\end{longtable}
\endgroup

\subsection*{Convention}
Unless stated otherwise, all motivic spectra are taken over a fixed Dedekind domain $k$. Since we work in the $\eta$-periodic category $\SH(k)[\etainv]$, we use the single grading on motivic homotopy groups (cf.~\cite{ananyevskiy2015the}). For $\E\in \SH(k)[\etainv]$, we write
\[
	\E_*:=\underline{\pi}_*\E(k)=\Gamma(k,\underline{\pi}_*\E)
\]
for the global sections of its homotopy sheaf, and we suppress the base $k$ from the notation. For another Dedekind domain $k'$, we write $\underline{\pi}_*\E(k')$ or $\E_*(k')$.

\subsection*{Acknowledgments}
I am very grateful to my advisor Jean Fasel for introducing me to this topic and for the many valuable discussions. I also thank Alexey Ananyevskiy, Frédéric Déglise and Jens Hornbostel for helpful comments and suggestions.

\section{Formal Group Laws}\label{sec:formal-group-laws}
We recall the parts of the classical theory that will be used later. Standard references are \cite{kochman1996bordism,ravenel1986complex}.
\subsection{Lazard ring}
\begin{definition}
	Let $R$ be a graded (commutative) ring. A \emph{formal group law} over $R$ is a power series
	\[
		\mu(x,y)\in R\llbracket x,y\rrbracket, \quad |x|=|y|=-2
	\]
	satisfying
	\begin{enumerate}
		\item \textbf{(Identity)} $\mu(x,0)=x$ and $\mu(0,y)=y$,
		\item \textbf{(Symmetry)} $\mu(x,y)=\mu(y,x)$, and
		\item \textbf{(Associativity)} $\mu(\mu(x,y),z)=\mu(x,\mu(y,z))$.
	\end{enumerate}
\end{definition}
\begin{proposition}
	If $\mu$ is a formal group law over $R$, then there is a power series $\iota_\mu(t)\in R\llbracket t\rrbracket$ (called the \textit{formal inverse}) such that $\mu(t,\iota_\mu(t))=0$.
\end{proposition}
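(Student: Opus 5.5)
We construct $\iota_\mu(t)=\sum_{n\ge 1} a_n t^n$ with $a_n\in R$ coefficient by coefficient, solving $\mu(t,\iota_\mu(t))=0$ one degree in $t$ at a time. The grading is compatible: $\iota_\mu(t)$ should be homogeneous of degree $-2$, so $a_n$ has degree $2(n-1)$. We give an inductive construction rather than an appeal to a formal implicit function theorem, since the linear coefficient is visibly a unit.

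First, the identity axiom shows that
\[
\mu(x,y)=x+y+\sum_{i,j\ge 1} c_{ij}x^iy^j.
\]
Indeed, $\mu(x,0)=x$ kills all pure $x$-monomials except $x$, and $\mu(0,y)=y$ does the same for $y$. Substituting $y=\iota(t)$ with $\iota(t)\in tR\llbracket t\rrbracket$ is then legitimate in $R\llbracket t\rrbracket$, because each term $x^iy^j$ has $t$-adic order at least $i+j$.

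We set $a_1=-1$. Suppose $a_1,\dots,a_{n-1}$ have been chosen so that $\mu(t,\iota_{<n}(t))\equiv 0 \pmod{t^{n}}$, where $\iota_{<n}=\sum_{k<n}a_kt^k$. Adding $a_nt^n$ changes $\mu(t,\iota(t))$ modulo $t^{n+1}$ only through the linear term $y$. Every mixed term $x^iy^j$ with $i,j\ge1$ contributes $t^i\cdot(\text{order}\ge 1)$, so a change of order $n$ in $y$ alters it only in order at least $n+1$. Hence the coefficient of $t^n$ in $\mu(t,\iota_{\le n}(t))$ equals $a_n+b_n$, where $b_n\in R$ depends only on $a_1,\dots,a_{n-1}$ and the $c_{ij}$. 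We put $a_n=-b_n$. Homogeneity follows by tracking degrees, since $b_n$ is a sum of products $c_{ij}a_{k_1}\cdots a_{k_j}$ of total degree $2(n-1)$. The limit $\iota_\mu(t)=\lim_n \iota_{\le n}(t)$ exists $t$-adically and satisfies $\mu(t,\iota_\mu(t))=0$. Uniqueness also follows, since each $a_n$ is forced.

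The main obstacle is the bookkeeping step: verifying that the $t^n$-coefficient is affine in $a_n$ with coefficient exactly $1$. This is where the identity axiom, which forces the linear part to be $x+y$, enters. Symmetry and associativity are not needed. They would only be used to show additionally that $\mu(\iota_\mu(t),t)=0$ and that $\iota_\mu(\iota_\mu(t))=t$.
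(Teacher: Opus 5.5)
Your proof is correct. The paper gives no proof of its own here: it recalls the statement as standard background, citing Kochman and Ravenel. Your coefficient-by-coefficient induction, which uses only the identity axiom to make the $t^n$-coefficient equal to $a_n + b_n$ (equivalently, the formal implicit function theorem applied to $\mu(t,y)$, whose linear coefficient in $y$ is $1$), is exactly the standard argument behind that citation.
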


\begin{definition}
	Let $\mu$ be a formal group law over $R$. For a nonnegative integer $n$, the $n$-series $[n]_\mu(t)\in R\llbracket t\rrbracket$ is defined recursively by $[n+1]_\mu(t)=\mu(t,[n]_\mu(t))$ with $[0]_\mu(t)=0$. Moreover, define $[-n]_\mu(t):=\iota_{\mu}([n]_\mu(t))$.
\end{definition}
\begin{proposition}
	If $\mu$ is a formal group law over $R$, then for any integers $r_1, r_2 \in \ZZ$, we have
	\begin{enumerate}
		\item $[r_1 + r_2]_{\mu}(t) = \mu([r_1]_{\mu}(t), [r_2]_{\mu}(t))$, and
		\item $[r_1 r_2]_{\mu}(t) = [r_1]_{\mu}([r_2]_{\mu}(t))$.
	\end{enumerate}
\end{proposition}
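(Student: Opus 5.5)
The plan is to derive both identities from the group structure on power series under $\mu$: first put the standard facts on record, then prove (1) by induction, then deduce (2) from (1) by induction on $r_1$ using that $[r_2]_\mu$ is an endomorphism of $\mu$.

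\textbf{Standard facts.} Write $x+_\mu y:=\mu(x,y)$. For power series $f,g,h$ without constant term, associativity, symmetry and identity make this a commutative group operation with neutral element $0$ and inverse $\iota_\mu$; in particular $\mu(t,\iota_\mu(t))=0$. I also record two consequences. Uniqueness of inverses gives $\iota_\mu(\iota_\mu(t))=t$. Applying $\iota_\mu$ to the sum $f+_\mu g$ gives $\iota_\mu(\mu(f,g))=\mu(\iota_\mu f,\iota_\mu g)$, since by commutativity and associativity $\mu\bigl(\mu(f,g),\mu(\iota_\mu f,\iota_\mu g)\bigr)=0$.

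\textbf{Step 1: identity (1).} By definition $[n]_\mu(t)$ is the $n$-fold $\mu$-sum of $t$ for $n\ge 0$, and $[-n]_\mu(t)$ is its $\mu$-inverse. So $n\mapsto[n]_\mu(t)$ is the unique map $\ZZ\to (tR\llbracket t\rrbracket,+_\mu)$ with $1\mapsto t$ that respects sums. Concretely, fix $r_2$ and induct on $r_1\ge 0$.
\begin{itemize}
\item The case $r_1=0$ is the identity axiom.
\item The step $r_1\to r_1+1$ reduces, via associativity, to the claim $[r_2+1]_\mu=\mu(t,[r_2]_\mu)$ for every integer $r_2$.
\end{itemize}
For $r_2\ge 0$ this claim is the definition. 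For $r_2=-m<0$ I must show $[-m+1]_\mu=\mu(t,\iota_\mu[m]_\mu)$. Substituting $[m]_\mu=\mu(t,[m-1]_\mu)$ and using that $\iota_\mu$ distributes over $\mu$, the right side equals $\mu(t,\mu(\iota_\mu t,\iota_\mu[m-1]_\mu))=\iota_\mu[m-1]_\mu=[-(m-1)]_\mu$, as required. Negative $r_1$ then follows by applying $\iota_\mu$ to the case of $(-r_1,-r_2)$, using the distributivity of $\iota_\mu$ over $\mu$ and $\iota_\mu\iota_\mu=\mathrm{id}$.

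\textbf{Step 2: identity (2).} Specialising (1) to the power series $[r_2]_\mu(t)$ gives $\mu([a]_\mu(s),[b]_\mu(s))=[a+b]_\mu(s)$ for every $s$ with zero constant term. Now induct on $r_1\ge 0$:
\[
[r_1+1]_\mu([r_2]_\mu(t))=\mu\bigl([r_2]_\mu(t),[r_1]_\mu([r_2]_\mu(t))\bigr)=\mu\bigl([r_2]_\mu(t),[r_1r_2]_\mu(t)\bigr)=[(r_1+1)r_2]_\mu(t),
\]
where the last equality is (1). For negative $r_1$ I use the definition $[-n]_\mu=\iota_\mu\circ[n]_\mu$ on the outside. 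It remains to check $\iota_\mu([nr_2]_\mu)=[-nr_2]_\mu$ for arbitrary signs, and this follows from (1) with $r_1+r_2=0$ together with uniqueness of inverses.

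\textbf{Main obstacle.} The only delicate point is the bookkeeping with negative indices: $[-n]_\mu$ is defined via $\iota_\mu$, not recursively, so the identity $[r+1]_\mu=\mu(t,[r]_\mu)$ must be checked for negative $r$. I will handle this through the group-theoretic argument above (distributivity of $\iota_\mu$ and uniqueness of inverses), rather than coefficient computations.
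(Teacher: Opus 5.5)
Your proof is correct. The paper gives no proof of this proposition; it recalls it as standard background from Kochman and Ravenel. Your argument is the standard one:
\begin{itemize}
\item view power series without constant term as an abelian group under $\mu$;
\item prove (1) for $r_1\ge 0$ by induction, reducing to $[r+1]_\mu=\mu(t,[r]_\mu)$ for all $r\in\ZZ$;
\item transfer (1) to $r_1<0$ with $\iota_\mu$;
\item obtain (2) by induction on $r_1$, using (1) and $[-a]_\mu=\iota_\mu([a]_\mu)$.
\end{itemize}

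One step is left implicit. When you apply $\iota_\mu$ to the case $(-r_1,-r_2)$, you need $\iota_\mu([-n]_\mu)=[n]_\mu$ for every $n\in\ZZ$, not only for $n\ge 0$. This follows immediately from the definition of $[-n]_\mu$ together with $\iota_\mu\circ\iota_\mu=\mathrm{id}$.
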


\begin{proposition}
	There is a graded ring $L$ (called the Lazard ring), equipped with a formal group law $\mu^{L}$, that is initial among formal group laws: for every formal group law $\mu^{R}$ over a graded ring $R$, there exists a unique graded ring homomorphism $g:L\to R$ such that $g_{*}\mu^{L}=\mu^{R}$.
\end{proposition}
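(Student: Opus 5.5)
The statement is the existence of the Lazard ring. I will construct $L$ by generators and relations.

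Let $P=\ZZ[a_{ij}\mid i,j\ge 0,\ i+j\ge 2]$, graded by $|a_{ij}|=2(i+j-1)$. This grading makes
\[
	\mu^{P}(x,y)=x+y+\sum_{i+j\ge 2}a_{ij}x^iy^j
\]
homogeneous of degree $-2$ when $|x|=|y|=-2$. Let $I\subset P$ be the ideal generated by three families of elements:
\begin{enumerate}
	\item the differences $a_{ij}-a_{ji}$, which encode symmetry;
	\item the $a_{i0}$ and $a_{0j}$ with $i,j\ge 2$, which encode the identity axiom;
	\item the coefficients of every monomial $x^iy^jz^k$ in $\mu^{P}(\mu^{P}(x,y),z)-\mu^{P}(x,\mu^{P}(y,z))$, which encode associativity.
\end{enumerate}
Each coefficient in the third family is a finite polynomial in the $a_{ij}$, since only finitely many terms contribute to a given monomial. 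Homogeneity of $\mu^{P}$ makes each of these coefficients homogeneous, so $I$ is a homogeneous ideal. Set $L:=P/I$ with the induced grading, and let $\mu^{L}$ be the image of $\mu^{P}$. Then $\mu^{L}$ is a formal group law over $L$ by construction.

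For universality, let $\mu^{R}(x,y)=\sum c_{ij}x^iy^j$ be a formal group law over a graded ring $R$. The identity axiom forces $c_{00}=0$, $c_{10}=c_{01}=1$, and $c_{i0}=c_{0j}=0$ for $i,j\ge 2$. Define $\tilde g:P\to R$ by $a_{ij}\mapsto c_{ij}$. Because $\mu^{R}$ lies in degree $-2$, the coefficient $c_{ij}$ has degree $2(i+j-1)$, so $\tilde g$ is graded. The three axioms for $\mu^{R}$ say exactly that $\tilde g$ kills each generator of $I$. Hence $\tilde g$ descends to a graded homomorphism $g:L\to R$ with $g_*\mu^{L}=\mu^{R}$.

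Uniqueness holds because $L$ is generated as a ring by the images of the $a_{ij}$. Any $g$ with $g_*\mu^{L}=\mu^{R}$ must send each such image to $c_{ij}$.

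The main care point is the graded bookkeeping: checking that each associativity coefficient is a homogeneous polynomial, so that $I$ is homogeneous and $L$ inherits a grading. This follows because both composites $\mu^{P}(\mu^{P}(x,y),z)$ and $\mu^{P}(x,\mu^{P}(y,z))$ are homogeneous of degree $-2$, being substitutions of homogeneous degree $-2$ series into one another. The deeper fact that $L$ is polynomial is not needed here.
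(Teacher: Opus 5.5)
Your proposal is correct and follows essentially the same route as the paper's sketch: a polynomial ring on the coefficients of a universal power series, modulo the homogeneous ideal generated by the unit, symmetry and associativity relations. The only difference is cosmetic. You adjoin the variables $a_{i0}, a_{0j}$ and then kill them, whereas the paper builds the unit axiom into the form $x+y+\sum_{i,j\ge 1}a_{i,j}x^iy^j$. You also spell out the homogeneity and uniqueness checks that the paper leaves implicit.
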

\begin{proof}[Sketch of proof]
	Start with the polynomial ring on symbols $a_{i,j}$ of degree $2i+2j-2$, and consider the universal power series
	\[
		x+y+\sum_{i,j\ge 1}a_{i,j}x^iy^j.
	\]
	Let $\mathcal{I}$ be the ideal generated by the relations expressing the unit, commutativity, and associativity axioms of formal group laws. Then
	\[
		L=\ZZ[a_{i,j}\mid i,j\ge 1]/\mathcal{I}
	\]
	carries a universal formal group law by construction.
\end{proof}

\begin{theorem}[Lazard-Fr{\"o}hlich {\cite{lazard1955sur,frohlich1968formal}}]\label{thm:lazard-structure}
	There are $y_n\in L_{2n}$ for $n\ge 1$ such that
	\[
		L \cong \ZZ[y_{1}, y_{2}, \ldots].
	\]
\end{theorem}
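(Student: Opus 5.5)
The plan is the classical Lazard argument. We build a candidate polynomial ring, show it carries an ``additive up to first order'' structure, and compare indecomposables. Let $L\to \ZZ[b_1,b_2,\dots]$ (with $|b_i|=2i$) classify the formal group law $\mu(x,y)=\exp(\log x+\log y)$, where $\log x=\sum_{i\ge0} b_i x^{i+1}$ and $b_0=1$. This map is well defined because the coefficients of $\exp(\log x+\log y)$ are integral polynomials in the $b_i$.

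The first step is to show that $L\to \ZZ[b_1,\dots]$ is injective. Writing a formal group law modulo decomposables, the coefficient $a_{i,j}$ in degree $n=i+j-1$ is a multiple of a single class. More precisely, the indecomposables $Q_{2n}L=L_+/L_+^2$ in degree $2n$ are a quotient of the span of the $a_{i,j}$ with $i+j=n+1$. Under the comparison these are the symmetric 2-cocycles in degree $n+1$.

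The second step is Lazard's \emph{symmetric 2-cocycle lemma}. Every homogeneous polynomial $f(x,y)$ of degree $n+1$ satisfying $f(y,z)-f(x+y,z)+f(x,y+z)-f(x,y)=0$ and $f(x,y)=f(y,x)$ is an integral multiple of
\[
C_{n+1}(x,y)=\frac{1}{d_{n+1}}\bigl((x+y)^{n+1}-x^{n+1}-y^{n+1}\bigr).
\]
Here $d_{n+1}=p$ if $n+1$ is a power of a prime $p$, and $d_{n+1}=1$ otherwise. From this, $Q_{2n}L$ is cyclic, generated by the class of a chosen $y_n$, and one checks that it is torsion-free of rank one. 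The proof of the lemma proceeds by working over $\ZZ/p$ and $\QQ$ separately. Over $\QQ$ every symmetric cocycle is a coboundary of $cx^{n+1}$. Over $\FF_p$ one analyses the divisibility of the binomial coefficients $\binom{n+1}{k}$, splitting into the cases where $n+1$ is or is not a power of $p$.

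The third step handles the rational comparison. The image of $y_n$ in $Q_{2n}\ZZ[b_1,\dots]\cong\ZZ b_n$ is $\pm d_{n+1}b_n$ up to a unit, since the leading term of $\mu$ mod decomposables is $d_{n+1}b_n C_{n+1}$. Lift the $y_n$ to elements of $L$. The resulting map $\ZZ[y_1,y_2,\dots]\to L$ is surjective on indecomposables by Step 2, hence surjective by induction on degree. Composing with $L\to\ZZ[b_i]$ gives a map that is injective, because it is rationally an isomorphism (the $d_{n+1}b_n$ generate $\QQ[b_i]$) between torsion-free rings. Therefore both maps are injective, and $L\cong\ZZ[y_1,y_2,\dots]$.

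A subtle point is injectivity of $L\to\ZZ[b_i]$, that is, whether $L$ itself is torsion-free. The argument above gives it from surjectivity of $\ZZ[y]\to L$ together with injectivity of the composite.

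The main obstacle is the symmetric 2-cocycle lemma in Step 2, specifically the mod $p$ case when $n+1$ is a power of $p$. There I would follow Fr\"ohlich's treatment. First show that a cocycle over $\FF_p$ is determined by its coefficients along $x^{p^k}y^{\cdot}$. Then use Lucas' theorem to reduce to $C_{p^k}$.
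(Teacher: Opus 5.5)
Your proposal is correct and is essentially the paper's argument. You classify $\exp(\log x+\log y)$ over a polynomial ring; taking the coefficients of $\log$ rather than $\exp$ as generators is immaterial. You identify the image on $Q_{2n}$ as $d_{n+1}b_n\ZZ$ via the gcd of binomial coefficients, and use Lazard's symmetric $2$-cocycle lemma to get $Q_{2n}L$ cyclic; the paper cites this lemma as Ravenel's comparison lemma, and it is needed with coefficients in an arbitrary abelian group, e.g.\ $Q_{2n}L$ itself, not just as an ``integral multiple'' statement over $\ZZ$. You then build $\ZZ[y_1,y_2,\ldots]\to L$. Your rational-comparison argument for the injectivity of this map is more careful than the paper's appeal to a ``standard induction on degree'', which by itself only yields surjectivity.
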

\begin{proof}[Proof of Theorem~\ref{thm:lazard-structure}]
	Consider the ring homomorphism
	\[
		\theta: L \to \ZZ[a_1, a_2, \ldots]
	\]
	classifying the formal group law
	\[
		\mu(x,y) = \exp(\log(x) + \log(y))
	\]
	over $\ZZ[a_1, a_2, \ldots]$, where
	\begin{equation}\label{eq:exp-series}
		\exp(t) = t + \sum_{i\ge 1} a_i t^{i+1} \in \ZZ[a_{1}, a_{2}, \ldots] \llbracket t \rrbracket
	\end{equation}
	and $\log(t)$ is the compositional inverse power series of $\exp(t)$, in the form
	\begin{equation}\label{eq:log-series}
		\log(t) = t + \sum_{i\ge 1} m_i t^{i+1}.
	\end{equation}

	For $i,j\ge 1$, the element $\theta(a_{i,j})$ is the coefficient of $x^iy^j$ in $\mu(x,y)$. Since $\log(x)+\log(y)$ has no mixed terms, the mixed coefficient of bi-degree $(i,j)$ comes, modulo decomposables in $\ZZ[a_1,a_2,\ldots]$, only from the summand $a_{i+j-1}(x+y)^{i+j}$ in the exponential. Therefore
	\[
		\theta(a_{i,j}) \equiv \binom{i+j}{i} a_{i+j-1}
		\pmod{(a_1,a_2,\ldots)^2}.
	\]
	By Lemma~\ref{lem:binomial-gcd} below, the gcd of $\{\binom{n+1}{i}\}_{1\le i\le n}$ equals $p$ if $n+1=p^k$ for a prime number $p$, and $1$ otherwise. Therefore
	\[
		T_{2n} := \mathrm{Im}(Q_{2n} \theta) = \begin{cases}
			p a_n \ZZ, & \text{if }n+1=p^{k}, \text{ for a prime } p, \\
			a_n \ZZ,   & \text{ otherwise}
		\end{cases}.
	\]

	By the comparison lemma \cite[Lemma A2.1.12]{ravenel1986complex}, $Q_{2n} \theta:Q_{2n} L \to T_{2n}$ is injective. By the definition of $T_{2n}$, $Q_{2n} \theta$ is surjective and thus it is an isomorphism. Choose $y_{n} \in L_{2n}$ such that $\theta(y_{n})$ projects to a generator of $T_{2n}$. Define a graded ring homomorphism $\gamma: \ZZ[Y_1,Y_2,\ldots] \to L$ by $\gamma(Y_{n}) = y_{n}$ for $n\ge 1$. Since $Q\gamma$ is an isomorphism and both rings are connected graded rings, a standard induction on degree shows that $\gamma$ is an isomorphism.
\end{proof}

\begin{lemma}\label{lem:binomial-gcd}
	For $n \ge 1$,
	\[
		\gcd\left\{\tbinom{n+1}{i} : 1 \le i \le n\right\}
		= \begin{cases}
			p & \text{if } n+1 = p^k \text{ for prime } p \text{ and } k \ge 1, \\
			1 & \text{otherwise.}
		\end{cases}
	\]
\end{lemma}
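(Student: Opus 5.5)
Write $n+1=m$ and $g=\gcd\{\binom{m}{i}:1\le i\le m-1\}$; the plan is to determine $g$ one prime at a time. For each prime $p$ we ask whether $p\mid g$ and, if so, whether $p^2\mid g$.

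First, the case $m=p^k$. Divisibility $p\mid\binom{p^k}{i}$ for $1\le i\le p^k-1$ follows from the Frobenius identity $(1+x)^{p^k}\equiv 1+x^{p^k}\pmod p$ in $\FF_p[x]$. Alternatively, use Kummer's theorem: the $p$-adic valuation of $\binom{p^k}{i}$ equals the number of carries when adding $i$ and $p^k-i$ in base $p$, which is $k-v_p(i)\ge 1$. Taking $i=p^{k-1}$ gives exactly one carry, so $v_p\binom{p^k}{p^{k-1}}=1$ and $p^2\nmid g$. (For $k=1$ one may simply take $i=1$, where $\binom{p}{1}=p$.) Since $\binom{m}{1}=m=p^k$, any prime dividing $g$ must divide $p^k$, hence equals $p$. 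Together these give $g=p$.

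Second, the case where $m$ is not a prime power. Note $m\ge 2$ since $n\ge1$. Since $g\mid\binom{m}{1}=m$, it suffices to show that no prime $p\mid m$ divides $g$. Write $m=p^a r$ with $a\ge1$, $r>1$ and $p\nmid r$; this is possible precisely because $m$ is not a power of $p$. Take $i=p^a$, which satisfies $1\le i<m$. By Lucas's theorem, or by the identity
\[
(1+x)^m\equiv(1+x^{p^a})^r\pmod p,
\]
the coefficient of $x^{p^a}$ is congruent to $r\not\equiv 0\pmod p$. Hence $p\nmid\binom{m}{p^a}$. Every prime divisor of $m$ is therefore excluded, and $g=1$.

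The only step requiring care is the exact valuation $v_p\binom{p^k}{p^{k-1}}=1$, which rules out higher powers of $p$ dividing $g$. I would handle it with Kummer's carry count, or directly via
\[
\binom{p^k}{p^{k-1}}=\prod_{j=0}^{p^{k-1}-1}\frac{p^k-j}{p^{k-1}-j}.
\]
Here $v_p(p^k-j)=v_p(p^{k-1}-j)$ for $1\le j<p^{k-1}$, since both equal $v_p(j)$, while the $j=0$ factor contributes exactly one power of $p$. Everything else is routine.
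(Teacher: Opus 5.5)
Your proof is correct and is essentially the paper's argument: in the prime-power case both use Kummer's carry count with the witness $i=p^{k-1}$ to get valuation exactly one, and $\binom{p^k}{1}=p^k$ to exclude other primes; in the non-prime-power case both use Lucas' theorem (your Frobenius identity is the same computation) at $i=p^a$ to get $\binom{m}{p^a}\equiv r\not\equiv 0 \pmod p$. The only cosmetic difference is that you restrict to primes dividing $m$ via $g\mid\binom{m}{1}$, whereas the paper allows $a=0$ and so treats all primes uniformly.
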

\begin{proof}
	By Kummer's theorem \cite{kummer1852uber}, $\nu_p\binom{n+1}{i}$ equals the number of carries when adding $i$ and $n+1-i$ in base $p$.

	Suppose $n+1 = p^k$.  Since $p^k$ has base-$p$ representation $1\underbrace{0\cdots 0}_{k}$, any sum $i + (p^k - i) = p^k$ with $1 \le i \le p^k - 1$ produces at least one carry, so $p \mid \binom{p^k}{i}$ for all such $i$.  Taking $i = p^{k-1}$, the addition $p^{k-1} + (p-1)p^{k-1}$ produces exactly one carry at position $k-1$, giving $\nu_p\binom{p^k}{p^{k-1}} = 1$.  For any prime $q \ne p$, one has $q \nmid \binom{p^k}{1} = p^k$, so $q \nmid \gcd$.  Hence $\gcd = p$.

	Suppose $n+1$ is not a prime power.  For any prime $p$, write $n+1 = p^a m$ with $p \nmid m$ and $m > 1$.  Then $i := p^a \le n$, and Lucas' theorem \cite{lucas1878the} gives $\binom{n+1}{p^a} \equiv m \pmod{p}$, which is nonzero.  Hence $p \nmid \gcd$, so $\gcd = 1$.
\end{proof}

\subsection{Geometric formal group law and Quillen's theorem}
\begin{definition}
	Let $\E$ be an oriented homotopy commutative ring spectrum, and let $t_{\E}\in \E^2(\mathbb{CP}^{\infty})$ be the chosen orientation. Let $m:\mathbb{CP}^{\infty}\times \mathbb{CP}^{\infty}\to \mathbb{CP}^{\infty}$ be the map classifying the tensor product of the universal line bundles. The associated formal group law over $\E_*$ is the unique power series
	\[
		\mu^{\E}(x,y)\in \E_*\llbracket x,y\rrbracket \cong \E^*(\mathbb{CP}^{\infty}\times \mathbb{CP}^{\infty})
	\]
	such that $m^*(t_{\E})=\mu^{\E}(x,y)$, where $x=\mathrm{pr}_1^*(t_{\E})$ and $y=\mathrm{pr}_2^*(t_{\E})$.

	In particular, for $\E=\MU$ with its canonical complex orientation, we write $\mu^{\MU}$ for the resulting formal group law over $\MU_*$.
\end{definition}
\begin{theorem}[Quillen \cite{quillen1969on}]
	The map $\phi:L \to \MU_{*}$ determined by the formal group law $\mu^{\MU}$ is an isomorphism. Moreover, the following diagram commutes.
	\[
		\xymatrix{
		L \ar[r]^-{\phi} \ar[d]_-{\theta} &  \MU_{*} \ar[d]_-{h}\\
		\ZZ[a_1, a_2, \ldots] \ar[r]^-{J} & H_*\MU
		}
	\]
	Here $h$ is the Hurewicz map and $J: \ZZ[a_1, a_2, \ldots] \to H_*\MU$ is the structural isomorphism.
\end{theorem}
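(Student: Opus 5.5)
The plan is to follow Quillen's original strategy: compute the Hurewicz image in rational (or ordinary) homology, use Theorem~\ref{thm:lazard-structure} to know $L$ is polynomial, and compare indecomposables.

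\textbf{Commutativity of the square.} Since $H\ZZ$ is complex oriented, the orientation on $\MU$ pushed forward along $h$ gives a second orientation $h_*t_{\MU}$ on $H\wedge \MU$. It differs from the standard additive orientation $x^H$ by a power series $x^{\MU}=\sum_{i\ge 0} b_i (x^H)^{i+1}$ with $b_0=1$ and $b_i\in H_{2i}\MU$. Thom's computation gives $H_*\MU\cong \ZZ[b_1,b_2,\ldots]$; we take $J(a_i)=b_i$. The additive formal group law of $H\wedge\MU$ in the coordinate $x^H$ becomes, after the change of coordinate $x^{\MU}=\exp(x^H)$, exactly $\exp(\log x+\log y)$. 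Hence $h\circ\phi$ classifies $J_*\theta^*\mu$, and by the universal property of $L$ the square commutes.

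\textbf{Injectivity.} By Theorem~\ref{thm:lazard-structure} and its proof, $\theta$ is injective: $L$ is torsion free, and rationally $\theta\otimes\QQ$ is an isomorphism, since $Q\theta\otimes\QQ$ is an isomorphism degreewise. Because $J$ is an isomorphism, $h\circ\phi=J\circ\theta$ is injective, and hence $\phi$ is injective.

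\textbf{Surjectivity.} This is the main obstacle, and it is not formal. We know $\MU_*$ is torsion free and polynomial, $\MU_*\cong \ZZ[x_1,x_2,\ldots]$ (Milnor--Novikov via the Adams spectral sequence). Moreover, the Hurewicz image of the indecomposable generator $x_n$ in $QH_{2n}\MU$ is $p\,b_n$ when $n+1=p^k$, and $b_n$ otherwise, modulo decomposables. This is the Milnor hypersurface characteristic number computation, using the $s$-number $s_n$.

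Comparing with the computation of $T_{2n}$ in the proof of Theorem~\ref{thm:lazard-structure}, the images $Q h(Q\MU_*)$ and $Q\theta(QL)=T_{2n}$ coincide as lattices in $Q_{2n}H_*\MU$. Since $\phi$ is injective on $Q$ and $Qh$ is injective on $Q_{2n}\MU_*\cong\ZZ$, it follows that $Q\phi$ is an isomorphism. A connected graded ring map between polynomial rings that is an isomorphism on indecomposables is an isomorphism, by induction on degree as in the proof of Theorem~\ref{thm:lazard-structure}.

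The hard step is the input that $\MU_*$ is polynomial, together with the exact $s$-number divisibility. I would cite these from Milnor--Novikov rather than reprove them; alternatively, Quillen's geometric argument shows directly that $\MU_*$ is generated by the coefficients of $\mu^{\MU}$.
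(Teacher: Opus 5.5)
Your proposal is correct and follows the approach the paper relies on. The paper gives no proof of its own and simply cites Quillen. Your route is the same template the paper later uses for Theorem~\ref{thm:quillen-walter}: commutativity of the square from the universal base-change formula $h(\mu^{\MU})=\exp(\log x+\log y)$, injectivity from $\theta$ and $J$, and surjectivity by matching the lattice $T_{2n}$ with Milnor's $s$-number description of the $Q_{2n}$-image of $h$ (up to sign).
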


\subsection{Universal base-change formula}
The following lemma is the topological prototype of the universal base-change formula that will later be used for $\MSpetainv$.

\begin{lemma}
	Let $\E$ be an oriented homotopy commutative ring spectrum, and set $A:=\E\wedge\MU$. Then
	\[
		A_{\ast} \cong \E_*[a_1,a_2,\ldots].
	\]
	Moreover, for either choice of induced orientation $t=t_{\E}$ or $t=t_{\MU}$, there is an identification
	\[
		A^{\ast}(\mathbb{CP}^{\infty}) \cong A_* \llbracket t \rrbracket \cong \E_*[a_1,a_2,\ldots]\llbracket t \rrbracket.
	\]
	In $A^{\ast}(\mathbb{CP}^{\infty})$, the two coordinates satisfy
	\[
		t_{\MU}=\exp(t_{\E})=t_{\E}+\sum_{i\ge 1}a_i t_{\E}^{i+1}.
	\]
	In particular, when $\E=H\ZZ$, the formal group law over $\MU$ satisfies
	\[
		h(\mu^{\MU}(x,y)) = \exp(\log(x) + \log(y)).
	\]
\end{lemma}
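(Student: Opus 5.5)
We prove the lemma in three steps: compute $A_*$ by a collapsing spectral sequence, compute $A^*(\mathbb{CP}^\infty)$ by a second collapsing spectral sequence, and then read off the relation between the two coordinates from the structure of $\MU\wedge\MU$.

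\textbf{Step 1: the ring $A_*$.} Since $\E$ is oriented, the standard Atiyah--Hirzebruch (or Thom isomorphism) argument gives $\E_*(\mathbb{CP}^\infty)$ as a free $\E_*$-module on classes $\beta_i$ dual to $t_{\E}^i$. The classes $\beta_{i+1}$ pulled back along $\mathbb{CP}^\infty\simeq\Sigma^{-2}\mathbb{CP}^\infty_{1}\to \Sigma^{-2}\MU(1)\to\MU$ define elements $a_i\in \E_{2i}\MU$. The Atiyah--Hirzebruch spectral sequence
\[
H_*(\MU;\E_*)\Rightarrow \E_*\MU
\]
has $E^2=\E_*\otimes H_*\MU$, which is free over $\E_*$ and concentrated in even degrees when $\E_*$ is even. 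In general we argue by the Thom isomorphism: the orientation gives $\E_*\MU(n)\cong \E_*BU(n)$, and $\E_*BU$ is computed from $\E_*\mathbb{CP}^\infty$ by the usual splitting-principle argument. Passing to the colimit yields $A_*\cong \E_*[a_1,a_2,\ldots]$, and multiplicativity follows because the Thom isomorphisms are compatible with the maps $\MU(n)\wedge\MU(m)\to\MU(n+m)$.

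\textbf{Step 2: the ring $A^*(\mathbb{CP}^\infty)$.} $A$ is a homotopy commutative ring spectrum, and it receives ring maps from both $\E$ and $\MU$. Pushing $t_{\E}$ and $t_{\MU}$ forward therefore gives two complex orientations of $A$. For either orientation $t$, the Atiyah--Hirzebruch spectral sequence for $A^*(\mathbb{CP}^\infty)$ degenerates: the powers of $t$ restrict to generators on each $\mathbb{CP}^n$, so there are no differentials and no $\lim^1$ term. This gives $A^*(\mathbb{CP}^\infty)\cong A_*\llbracket t\rrbracket$.

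\textbf{Step 3: the coordinate change.} Write $t_{\MU}=\sum_{i\ge0} c_i t_{\E}^{i+1}$ with $c_i\in A_{2i}$. Restricting to $\mathbb{CP}^1$, where both coordinates restrict to the suspension class, gives $c_0=1$. To identify $c_i$ with $a_i$, pair $t_{\MU}$ against the $\E$-homology classes $\beta_{i+1}$. The coefficient $c_i$ equals the Kronecker pairing $\langle t_{\MU},\beta_{i+1}\rangle$, computed in $A_*$ via the slant product $\E\wedge\MU\wedge\Sigma^\infty\mathbb{CP}^\infty\to\E\wedge\MU$. By the definition of $a_i$, this pairing is exactly the image of $\beta_{i+1}$ under $\mathbb{CP}^\infty\to\Sigma^{-2}\MU$; that is, $c_i=a_i$.

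\textbf{Main obstacle.} The delicate point is the bookkeeping in Step 3: the sign and suspension conventions, and the verification that the slant product matches the definition of $a_i$. I would carry this out by a careful diagram chase, following Adams' Blue Book, Part II, Lemma~6.3 and its sequel.

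\textbf{The case $\E=H\ZZ$.} Here $H\ZZ$ has the additive formal group law, so $\mu^{H\ZZ}(x,y)=x+y$. The formal group law of $A$ in the coordinate $t_{\MU}$ is $h(\mu^{\MU})$, since $h$ is the ring map $\MU\to H\ZZ\wedge\MU$. Writing $t_{\MU}=\exp(t_{H\ZZ})$, the tensor-product map $m$ sends $t_{H\ZZ}$ to $x'+y'$, where $x',y'$ denote the $H\ZZ$-coordinates on the two factors. Hence
\[
m^*t_{\MU}=\exp(x'+y'),\qquad x'=\log(x),\quad y'=\log(y),
\]
where $x,y$ are the $\MU$-coordinates. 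This gives $h(\mu^{\MU}(x,y))=\exp(\log x+\log y)$.
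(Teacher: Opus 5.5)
Your proposal is correct and matches the paper's argument. The paper's proof only cites Adams' Blue Book (Lemma II.4.5 for $A_*\cong\E_*[a_1,a_2,\ldots]$, the Kronecker-pairing Lemma II.6.3 for $t_{\MU}=\exp(t_{\E})$, and Corollary II.6.6 for $\E=H\ZZ$), and your Steps~1--3 and your $H\ZZ$ paragraph sketch exactly those arguments. One notational slip needs fixing: $a_i$ is the push-forward of $\beta_{i+1}$ along $\Sigma^\infty\mathbb{CP}^\infty\simeq\Sigma^\infty\MU(1)\to\Sigma^{2}\MU$, so in Step~3 the target is $\Sigma^{2}\MU$, not $\Sigma^{-2}\MU$.
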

\begin{proof}
	The coefficient ring identification $A_*\cong \E_*[a_1,a_2,\ldots]$ is classical; see \cite[Part II, \S 4, Lemma 4.5(ii)]{adams1974stable}. The description of $A^*(\mathbb{CP}^{\infty})$ in terms of either coordinate $t_{\E}$ or $t_{\MU}$ is the usual complex-orientation coordinate description.

	The comparison of the two coordinates is proved in \cite[Lemma II.6.3]{adams1974stable}, using the Kronecker pairing between homology and cohomology of $\mathbb{CP}^{\infty}$. The specialization to $\E=H\ZZ$ is then \cite[Corollary II.6.6]{adams1974stable}.
\end{proof}
\begin{remark}
	Under the rational Hurewicz isomorphism
	\[
		\MU_* \otimes \QQ \xrightarrow[\cong]{h} H_*(\MU)\otimes \QQ,
	\]
	the power series \eqref{eq:exp-series} and \eqref{eq:log-series} admit unique lifts
	\[
		\exp^{\MU}(t), \log^{\MU}(t) \in \MU_* \otimes \QQ \llbracket t \rrbracket.
	\]
	Since $h(\mu^{\MU}(x,y))=\exp(\log(x)+\log(y))$, it follows that
	\[
		\mu^{\MU}(x,y)=\exp^{\MU}\bigl(\log^{\MU}(x)+\log^{\MU}(y)\bigr)
		\in \MU_* \llbracket x,y \rrbracket \subset \MU_* \otimes \QQ \llbracket x,y \rrbracket.
	\]

	More generally, let $\E$ be an oriented homotopy commutative ring spectrum with torsion-free coefficient ring, and let $g:\MU\to \E$ be the map classifying the chosen orientation. Define
	\[
		\exp^{\E}:=g_*(\exp^{\MU}), \qquad \log^{\E}:=g_*(\log^{\MU})
		\in \E_* \otimes \QQ \llbracket t \rrbracket.
	\]
	Applying $g_*$ to the identity above gives
	\[
		\mu^{\E}(x,y)=\exp^{\E}\bigl(\log^{\E}(x)+\log^{\E}(y)\bigr)
		\in \E_* \llbracket x,y \rrbracket \subset \E_* \otimes \QQ \llbracket x,y \rrbracket.
	\]
\end{remark}

\section{Motivic homotopy theory}\label{sec:motivic-homotopy-theory}
We now pass from classical stable homotopy theory to the motivic setting, more precisely to $\eta$-periodic motivic stable homotopy theory. We begin by recalling the $\Sp$-/$\SL$-orientation formalism developed by Panin and Walter \cite{panin2021quaternionic,panin2023on}, and then collect the structural results of Bachmann and Hopkins \cite{bachmann2021eta,bachmann2022eta} that will be used repeatedly throughout the sequel.

\subsection{Orientations and Borel classes}
\begin{definition-proposition}[{\cite{panin2023on}}]\label{def:sp-orientation}
Let $\E$ be a homotopy commutative ring spectrum in $\SH(k)$. An $\Sp$-orientation on $\E$ is any of the following equivalent data:
\begin{enumerate}
	\item A class $t_\E \in \E^{4,2}(\HPinf)$ whose pullback along $\HP^{1}\to\HPinf$ is the unit $1_{\E}\in \E^{4,2}(\HP^{1}) \cong \E_{0,0}$,
	\item a homomorphism of ring spectra $\MSp \to \E$.
\end{enumerate}
\end{definition-proposition}

\begin{definition}
	Let $\E$ be an $\Sp$-oriented homotopy commutative ring spectrum, let $X$ be a smooth $k$-scheme, and let $\mathfrak U$ be a rank-$2$ symplectic bundle over $X$. Choose a Jouanolou--Thomason affine replacement \cite[Proposition~4.4]{weibel1989homotopy} $p\colon\widetilde X\to X$ and let $f_{\mathfrak U}\colon\widetilde X\to\HPinf_{\widetilde X}$ be a section classifying $p^*\mathfrak U$ as in \cite[Equation~(11.2) and Theorem~11.2]{panin2023on}. Denote by $t_{\E,\widetilde X}\in\E^{4,2}(\HPinf_{\widetilde X})$ the pullback of $t_\E$ along the projection $\HPinf_{\widetilde X}\to\HPinf$, and define the \emph{first Borel class} of $\mathfrak U$ by
	\[
		b_1(\mathfrak U) := (p^*)^{-1}f_{\mathfrak U}^*(t_{\E,\widetilde X})
		\in\E^{4,2}(X),
	\]
	which is independent of the choices of the affine replacement and the classifying section.
\end{definition}

\begin{definition}[{\cite{panin2023on}}]\label{def:sl-orientation}
	Let $\E$ be a homotopy commutative ring spectrum in $\SH(k)$. An $\SL$-orientation on $\E$ is a homomorphism of ring spectra $\MSL\to \E$.
\end{definition}

\begin{remark}\label{rem:sl-induces-sp}
	The canonical map $\MSp\to \MSL$ induces, by composition, an $\Sp$-orientation on every $\SL$-oriented homotopy commutative ring spectrum.
\end{remark}

\begin{theorem}[Quaternionic projective bundle theorem, {\cite{panin2021quaternionic}}]\label{thm:quaternionic-projective-bundle}
	Let $\E$ be an $\Sp$-oriented homotopy commutative ring spectrum in $\SH(k)$, let $X$ be a smooth $k$-scheme, and let $\mathfrak{V}$ be a rank-$2r$ symplectic bundle over $X$. Let $\pi:\HP_X(\mathfrak{V})\to X$ be the associated quaternionic projective bundle. Denote by $\mathfrak U$ the tautological rank-$2$ symplectic subbundle on $\HP_X(\mathfrak{V})$, and write again
	\[
		\zeta:=b_1(\mathfrak U)\in \E^{4,2}(\HP_X(\mathfrak{V})).
	\]
	Then one has an isomorphism of $\E^{*,*}(X)$-modules
	\[
		\E^{*,*}(\HP_X(\mathfrak{V}))\cong \bigoplus_{i=0}^{r-1}\E^{*,*}(X)\cdot \zeta^i,
	\]
	where the coefficients are pulled back along $\pi$. Equivalently, there exist unique classes
	\[
		b_i(\mathfrak{V})\in \E^{4i,2i}(X),\qquad 1\le i\le r,
	\]
	such that
	\[
		\zeta^r-b_1(\mathfrak{V})\zeta^{r-1}+\cdots+(-1)^r b_r(\mathfrak{V})=0
	\]
	in $\E^{*,*}(\HP_X(\mathfrak{V}))$. In particular,
	\[
		\E^{*,*}(\HPinf) \cong \E^{*,*}\llbracket t_{\E} \rrbracket.
	\]
\end{theorem}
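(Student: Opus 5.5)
The plan follows Panin--Walter's strategy for the quaternionic projective bundle theorem: a cellular decomposition of $\HP^n$ together with a Mayer--Vietoris/Zariski-gluing argument.

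\textbf{Step 1: the trivial bundle.} I first treat $X$ arbitrary and $\mathfrak V=\mathcal{H}^{\oplus r}$ the trivial rank-$2r$ symplectic bundle, so that $\HP_X(\mathfrak V)=X\times\HP^{r-1}$. I argue by induction on $r$. There is a closed embedding $\HP^{r-2}\hookrightarrow\HP^{r-1}$ whose open complement is an affine bundle over a point up to $\mathbb{A}^1$-homotopy; more precisely, Panin--Walter's open subscheme $N^{+}$ and its complement give a cofiber sequence
\[
\HP^{r-2}_+\to \HP^{r-1}_+\to \mathrm{Th}(\text{rank-}2(r-1)\text{ symplectic bundle over a point-like base})\simeq T^{2(r-1)} .
\]
Smashing with $X_+$ and applying $\E$, the Thom class of a trivialized symplectic bundle gives an isomorphism $\E^{*,*}(X\times T^{2(r-1)})\cong\E^{*-4(r-1),*-2(r-1)}(X)$. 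I then check that this Thom class maps to $\zeta^{r-1}$ in $\E^{*,*}(X\times\HP^{r-1})$, using multiplicativity of Borel/Thom classes for the tautological bundle. With that identification the long exact sequence splits, since $\zeta^{r-1}$ provides a section, and the induction gives the free basis $1,\zeta,\dots,\zeta^{r-1}$.

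\textbf{Step 2: general $\mathfrak V$.} Any symplectic bundle is Zariski-locally trivial, because symplectic forms locally have standard Darboux form. I show that the map
\[
\bigoplus_{i<r}\E^{*,*}(X)\zeta^i\to\E^{*,*}(\HP_X(\mathfrak V))
\]
is an isomorphism for every open $U\subset X$ by induction on the number of trivializing opens, using Nisnevich/Zariski Mayer--Vietoris sequences on both sides and the five lemma. The classes $\zeta$ restrict compatibly, so the maps commute with the boundary maps.

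\textbf{Step 3: Borel classes and $\HPinf$.} Freeness gives uniqueness and existence of the $b_i(\mathfrak V)$: I expand $\zeta^r$ in the basis and define the $b_i$, with signs, as its coefficients. For $\HPinf=\mathrm{colim}\,\HP^n$, the restriction maps $\E^{*,*}(\HP^{n})\to\E^{*,*}(\HP^{n-1})$ are surjective, namely truncation in $\zeta$. The Milnor sequence then has vanishing $\lim^1$, which yields $\E^{*,*}\llbracket t_\E\rrbracket$. Here $\zeta$ is identified with $t_\E$ by the definition of the first Borel class.

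\textbf{Main obstacle.} I expect the hardest point to be Step 1's identification of the Thom class of the normal bundle with $\zeta^{r-1}$. This requires computing the Borel class of the tautological bundle on $\HP^1$ and the normalization $t_\E|_{\HP^1}=1$, and it uses the $\mathbb{A}^1$-contractibility of the complement strata. I would follow Panin--Walter's explicit geometry with the subschemes $N^\pm$ to settle it.
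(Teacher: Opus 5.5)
Your outline is the Panin--Walter proof: trivial bundles by induction on $r$, excising a closed stratum $\cong\mathbb{A}^{2(r-1)}$ of $\HP^{r-1}$ whose complement is $\mathbb{A}^1$-equivalent to $\HP^{r-2}$; then Zariski Mayer--Vietoris with the five lemma; then the Milnor sequence for $\HPinf$. That is exactly what the paper relies on, since it states the theorem with a citation to \cite{panin2021quaternionic} and gives no proof of its own. Two small corrections: that stratum is closed rather than open, and the identification of the image of its Thom class with $\zeta^{r-1}$ (up to sign) must come from multiplicativity of \emph{Thom} classes, realizing the stratum as the transverse zero locus of a section of $\mathfrak U^{\oplus(r-1)}$, not from a Whitney formula for higher Borel classes, which is only available after the theorem is proved.
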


\begin{definition}\label{def:borel-classes}
	Let $\E$ be an $\Sp$-oriented homotopy commutative ring spectrum, and let $\mathfrak{V}$ be a rank-$2r$ symplectic bundle over a smooth $k$-scheme $X$. The uniquely determined classes
	\[
		b_i(\mathfrak{V})\in \E^{4i,2i}(X),\qquad 1\le i\le r,
	\]
	of Theorem~\ref{thm:quaternionic-projective-bundle} are called the \emph{Borel classes} of $\mathfrak{V}$. For rank-$2$ symplectic bundles, this recovers the first Borel class defined above.
\end{definition}

\subsection{\texorpdfstring{$\eta$}{η}-periodic motivic homotopy}

We henceforth write $\KO$ for the motivic Hermitian $K$-theory spectrum \cite{calmes2024a,kumar2024construction}, $\KW:=\KO[\etainv]$ for the Witt $K$-theory spectrum, and $\kw$ for its connective truncation \cite{bachmann2022eta}. Further, $\HW$ and $\HMW$ denote the spectra representing unramified Witt cohomology and unramified Milnor--Witt cohomology, respectively \cite{deglise2025homological}. Equivalently, $\HW$ is the coconnective truncation of $\kw$ \cite[Corollary~4.11]{bachmann2022eta}. Following \cite[Sections~3.2.5 and 6.3.2]{bachmann2021eta}, let $\beta\in\pi_4\KW$ be the Bott element. The connective-cover map $\kw\to\KW$ induces $\pi_4\kw\cong\pi_4\KW$, and $\beta$ also denotes its canonical connective lift. Finally, we write $\psi^3$ for the third Adams operation \cite{bachmann2021norm,fasel2025the} on $\KW_{(2)}$.

As noted in the introduction, the integral forms and base-change compatibilities used beyond the stated hypotheses of the cited Bachmann--Hopkins results are part of this standing Bachmann--Hopkins assumption.

\begin{proposition}\label{prop:kw-to-hw}
	The spectrum $\kw$ has coefficient ring
	\[
		\kw_*\cong W(k)[\beta],
	\]
	and there is a canonical equivalence
	\[
		\kw/\beta \simeq \HW.
	\]
	Moreover, the spectra $\kw$ and $\HW$ are compatible with base change among Dedekind domains.
\end{proposition}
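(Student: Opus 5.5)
This is essentially a reformulation of the Bachmann--Hopkins structure theory, combined with the standing assumption that their results hold over Dedekind domains. The plan is to obtain the three assertions (coefficients of $\kw$, the cofiber of $\beta$, and base change) from that framework, working first over fields or bases where $2$ is invertible and then invoking the standing assumption for general $k$.

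For the coefficient ring, I would start from the Bachmann--Hopkins computation $\underline{\pi}_*\KW \cong \underline{W}[\beta^{\pm1}]$. This says that $\KW$ is $4$-periodic via the Bott element and that its homotopy sheaves are $\underline{\pi}_{4n}\KW\cong\underline{W}$ and $\underline{\pi}_{i}\KW=0$ for $i\not\equiv 0 \pmod 4$. This is the $\eta$-periodic shadow of Karoubi periodicity for $\KO$: after inverting $\eta$, the non-Witt homotopy groups of $\KO$ die. Taking the connective cover $\kw=\tau_{\ge0}\KW$ in the homotopy $t$-structure on $\SH(k)[\etainv]$ gives $\underline{\pi}_i\kw=\underline{\pi}_i\KW$ for $i\ge0$ and zero otherwise. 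Evaluating global sections over $k$ then yields $\kw_*\cong W(k)[\beta]$, since multiplication by $\beta$ is compatible with truncation and induces isomorphisms $\underline{\pi}_{4n}\kw\to\underline{\pi}_{4n+4}\kw$ for $n\ge0$.

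For the equivalence, I would use the cofiber sequence $\Sigma^4\kw\xrightarrow{\beta}\kw\to\kw/\beta$. Because $\beta$ is injective on homotopy sheaves, with image exactly the sheaves in degrees $\ge4$, the long exact sequence shows that $\kw/\beta$ has a single nonzero homotopy sheaf $\underline{W}$, sitting in degree $0$. So $\kw/\beta\simeq\tau_{\le0}\kw$, and by \cite[Corollary~4.11]{bachmann2022eta} this is $\HW$. An object of the heart is determined by its homotopy sheaf, and the comparison map $\kw\to\tau_{\le 0}\kw$ factors canonically through $\kw/\beta$ because $\beta$ has positive degree, so the equivalence is canonical.

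Base change should follow once one knows that pullback along a morphism $f$ of Dedekind domains commutes with $\eta$-inversion, preserves $\KO$ (true for the constructions of \cite{calmes2024a,kumar2024construction}), and preserves the relevant truncations. Truncations are not preserved in general, which is why this is part of the standing Bachmann--Hopkins assumption. Granting that assumption, $f^*\kw\simeq\kw$ and therefore $f^*\HW\simeq f^*(\kw/\beta)\simeq \kw/\beta\simeq\HW$.

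The main obstacle is the integral statement when $2$ is not invertible in $k$, for instance $k=\ZZ$. Bachmann's results are stated only under $1/2\in k$, so the homotopy-sheaf computation and the $t$-exactness of base change are genuinely assumptions there. To reduce to the proved cases, I would compare with $k[1/2]$ and use a fracture or localization argument, but I expect the proof simply to cite the standing assumption.
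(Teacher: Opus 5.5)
Your proposal follows the same route as the paper, whose proof is just the citation \cite[Lemma~4.4 and Theorem~4.5]{bachmann2022eta}. You unpack those inputs (the homotopy sheaves of $\KW$, $\kw=\tau_{\ge 0}\KW$, the cofiber argument identifying $\kw/\beta$ with $\tau_{\le 0}\kw\simeq\HW$ via \cite[Corollary~4.11]{bachmann2022eta}, and base-change stability) and defer bases where $2$ is not invertible to the standing assumption, exactly as the paper does; note that for $1/2\in k$ base-change stability is part of what Bachmann proves, not an extra assumption.

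One refinement: over $\ZZ$ the vanishing of $\underline{\pi}_i\KW$ for $i\not\equiv 0 \pmod 4$ is not merely unproved but contradicts Remark~\ref{rem:odd-witt-classes}, because of the degree-$1$ class $\xi$. So there your single-homotopy-sheaf argument cannot be run verbatim, and the statement should be read, as the paper indicates, only in the even degrees used later.
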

\begin{proof}
	This follows from \cite[Lemma~4.4 and Theorem~4.5]{bachmann2022eta}.
\end{proof}

\begin{theorem}[Bachmann-Hopkins]\label{thm:kw-fiber-sequence}
	Let $\mathbbm{1}$ denote the motivic sphere spectrum. After $2$-localization there is a fiber sequence
	\[
		\mathbbm{1}_{(2)}[\etainv]\to \kw_{(2)} \xrightarrow{\varphi} \Sigma^{4}\kw_{(2)}.
	\]
	Here $\varphi$ is the connective lift of $\beta^{-1}(\psi^{3}-\mathrm{id})$.
\end{theorem}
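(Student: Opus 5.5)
This is the Bachmann--Hopkins theorem, so the plan is to reduce to the cited results rather than reprove the computation. The argument splits into three parts: constructing $\varphi$, showing its fiber is the unit, and then the integral/Dedekind extension, which rests on the standing Bachmann--Hopkins assumption.

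\textbf{Step 1: construct $\varphi$.} Work $2$-locally. Over $\KW_{(2)}$ the Adams operation $\psi^3$ is a ring map, and $\psi^3-\mathrm{id}$ kills the unit. Since $\psi^3(\beta)=9\beta$ and $\beta$ is invertible in $\KW_*$, the map $\beta^{-1}(\psi^3-\mathrm{id})\colon \KW_{(2)}\to\Sigma^4\KW_{(2)}$ makes sense. To descend it to connective covers $\kw_{(2)}\to\Sigma^4\kw_{(2)}$, I will use that $\kw$ is the very effective (equivalently, homotopy $t$-structure) connective cover. I then check on homotopy sheaves that the composite lands in the cover. In degree $4n$ it sends $\beta^n\mapsto(9^n-1)\beta^{n-1}$, and for $n=0$ it is zero, so no negative classes are hit. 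Hence the lift exists, and it is unique up to the relevant vanishing of maps out of connective objects into coconnective ones.

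\textbf{Step 2: identify the fiber with the unit.} The unit map $\mathbbm{1}_{(2)}[\etainv]\to\kw_{(2)}$ composed with $\varphi$ is nullhomotopic, since $\psi^3$ fixes $1$. This gives a comparison map from $\mathbbm{1}_{(2)}[\etainv]$ to the fiber $F$. I will show it is an equivalence by checking on homotopy sheaves, reducing to fields via the base-change compatibility in Proposition~\ref{prop:kw-to-hw}.

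On fields, $\underline{\pi}_*F$ is computed from the long exact sequence. By Proposition~\ref{prop:kw-to-hw}, $\varphi_*$ is multiplication by $(9^n-1)$ on $W\cdot\beta^n\to W\cdot\beta^{n-1}$. Its kernel is $W_{(2)}$ in degree $0$. Its cokernels are $W_{(2)}/(9^n-1)$ in degree $4n-1$, which match the image-of-$J$-type answer. One then compares with Morel's and Röndigs' computations of $\underline{\pi}_*\mathbbm{1}[\etainv]$, namely $W$ in degree $0$ and the $2$-adic orders $\nu_2(9^n-1)$ in degrees $4n-1$.

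\textbf{Main obstacle.} The hard part is proving that the comparison map is an isomorphism rather than merely an abstract match of groups. For this I would follow Bachmann--Hopkins. Both sides are modules over the unit, so it suffices to check after smashing with $\HW$ or after real realization plus a conservativity argument: $\SH(k)[\etainv]_{(2)}$ is detected by real étale localization together with its Witt-theoretic fiber. Under real realization, the sequence becomes the classical fiber sequence $S_{(2)}\to ko_{(2)}\to\Sigma^4 ko_{(2)}$ given by $\psi^3$. This classical sequence is Mahowald's $ko$-resolution, valid after $2$-completion and, in the $\eta$-inverted form, rationally compatible.

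\textbf{Dedekind case.} The extension from fields to Dedekind domains with $2$ invertible is Bachmann's, and beyond that case it is part of the standing Bachmann--Hopkins assumption. So there I simply invoke it.
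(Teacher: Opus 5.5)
Your proposal ends up where the paper does, by invoking \cite[Theorem~7.8]{bachmann2021eta} for fields and \cite[Theorem~4.12]{bachmann2022eta} together with the standing Bachmann--Hopkins assumption for Dedekind domains. The paper's proof is exactly that citation and nothing more. Your Step~1 and your Dedekind paragraph are compatible with it. The trouble is the mechanism you propose for the decisive step, the ``main obstacle''; it would fail.

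Real realization sends $\eta$ to a self-map of the sphere of degree $\pm 2$, so it inverts $2$. Equivalently, real-\'etale localization in $\SH(k)[\etainv]$ is the same as inverting $2$. On $2$-local $\eta$-periodic spectra it therefore only sees rational information. The realized sequence is something like $H\QQ\to H\QQ[\beta]\to\Sigma^4H\QQ[\beta]$. Everything the theorem actually says is invisible to it: the groups $W(k)_{(2)}/(9^n-1)$ and the $2$-primary torsion of $W(k)$. So ``detection by real-\'etale localization plus a Witt-theoretic fiber'' is just an arithmetic fracture square with all of the difficulty sitting in the fiber.

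Moreover, the classical sequence you want to reduce to does not exist. $S_{(2)}\to ko_{(2)}\to\Sigma^4 ko_{(2)}$ is not a fiber sequence: the fiber of the connective lift of $\psi^3-1$ on $ko_{(2)}$ is the connective image-of-$J$ spectrum, not the sphere. A two-term resolution of this shape holds only $K(1)$-locally. Mahowald's $bo$-resolution is an infinite Adams-type resolution, not this sequence. The motivic theorem is an analogue of the $K(1)$-local statement, and it holds for the $\eta$-periodic sphere itself for genuinely motivic reasons.

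Step~2 also has smaller errors.
\begin{itemize}
\item The kernel of $\varphi_*$ is not just $W_{(2)}$ in degree $0$ whenever $W(k)$ has $2$-torsion, for example $W(\mathbb{C})=\ZZ/2$. Then $9^n-1$ kills part of $W(k)_{(2)}$, and $\underline{\pi}_{4n}$ of the fiber is nonzero for $n\ge 1$.
\item Morel and R{\"o}ndigs only determine $\underline{\pi}_0$, $\underline{\pi}_1$ and $\underline{\pi}_2$. Over a general field the answer in degrees $4n-1$ and $4n$ is itself a consequence of Bachmann--Hopkins, so comparing with it would be circular.
\item ``Checking after $\HW\wedge-$'' is only legitimate given a completeness or convergence statement for both sides. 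That is a nontrivial $2$-adic input, which Bachmann--Hopkins obtain over fields of finite virtual cohomological dimension.
\end{itemize}
At the level of this paper, the correct proof is simply the citation. The realization argument should be dropped rather than offered as the justification of the key step.
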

\begin{proof}
	This follows from \cite[Theorem~4.12]{bachmann2022eta} and \cite[Theorem~7.8]{bachmann2021eta}.
\end{proof}

\begin{corollary}\label{cor:msp-kw-short-exact-sequence}
	Smashing the fiber sequence of Theorem~\ref{thm:kw-fiber-sequence} with $\MSp$ yields a short exact sequence
	\[
		0 \to \pi_* \MSp_{(2)}[\etainv] \to \kw_*\MSp_{(2)} \xrightarrow{\varphi} \kw_{*-4}\MSp_{(2)} \to 0.
	\]
\end{corollary}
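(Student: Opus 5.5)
Smashing the fiber sequence of Theorem~\ref{thm:kw-fiber-sequence} with $\MSp$ preserves cofiber sequences, so we obtain a cofiber sequence
\[
	\MSp_{(2)}[\etainv]\to \kw_{(2)}\wedge\MSp \xrightarrow{\varphi\wedge\mathrm{id}} \Sigma^{4}\kw_{(2)}\wedge\MSp .
\]
Here we use $\mathbbm{1}_{(2)}[\etainv]\wedge\MSp\simeq\MSp_{(2)}[\etainv]$, since localization and $\eta$-inversion are smashing. Taking global sections of homotopy sheaves gives a long exact sequence
\[
	\cdots\to \kw_{*+1}\MSp_{(2)}\xrightarrow{\varphi}\kw_{*-3}\MSp_{(2)}\xrightarrow{\partial}\pi_*\MSp_{(2)}[\etainv]\to\kw_*\MSp_{(2)}\xrightarrow{\varphi}\kw_{*-4}\MSp_{(2)}\to\cdots.
\]
The corollary is then equivalent to the surjectivity of $\varphi$ in every degree: surjectivity kills every connecting map $\partial$, which yields both the injectivity on the left and the exactness in the middle. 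We use the standing Bachmann--Hopkins assumption that global sections of the relevant homotopy sheaves behave exactly, as over a field.

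To compute $\kw_*\MSp$, we use that $\kw$ is $\Sp$-oriented, since it is a truncation of $\KW=\KO[\etainv]$ and $\KO$ is $\Sp$-oriented. The quaternionic projective bundle theorem (Theorem~\ref{thm:quaternionic-projective-bundle}) then gives a Thom isomorphism and a splitting, in the same way as the classical $A_*\cong \E_*[a_1,a_2,\dots]$. We conclude
\[
	\kw_*\MSp\cong \kw_*[b_1,b_2,\dots]=W(k)[\beta][b_1,b_2,\dots],
\]
with $b_i$ in degree $4i$, dual to powers of the Borel class on $\HPinf$. Since the sphere is connective after $2$-localization, we may also take this from \cite{bachmann2022eta}, where $\kw\wedge\MSp$ is shown to be a sum of shifts of $\kw$ indexed by monomials in the $b_i$.

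Next we make $\varphi$ explicit. On $\kw_*=W(k)[\beta]$, the operation $\psi^3$ acts on $\beta^n$ by $9^n$, up to the normalization in \cite{bachmann2021eta}. Hence $\varphi(\beta^n)=\tfrac{9^n-1}{1}\beta^{n-1}$ in connective form, and this is divisible by $8$ but not surjective by itself. Surjectivity on $\kw_*\MSp$ must instead come from the action of $\psi^3$ on the $b_i$, which shifts $b_i$ by lower terms. Concretely, $\psi^3$ acts on $\HPinf$ through the Adams operation on the Borel class, $\psi^3(t)=[3](t)$-type series with leading term $9t$ times a $\beta$-factor. Dually, $\varphi(b_1)=c\cdot 1+\cdots$ for a unit $c$ after suitable normalization.

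I would argue degree by degree, filtering by $\beta$-adic and monomial length, to show that the associated graded map is surjective. This reduces to finding, in each degree, elements whose images generate. The main obstacle is exactly this surjectivity of $\varphi$, that is, the vanishing of $\coker\varphi$. If the direct computation becomes unwieldy, I would instead cite \cite[Theorem~8.7/8.8]{bachmann2021eta}. There Bachmann and Hopkins compute $\pi_*\MSp[\etainv]\cong W(k)[y_1,y_2,\dots]$ with generators in degree $4i$. Given the long exact sequence, one can then compare ranks: in each degree, $\ker\varphi$ in $\kw_*\MSp_{(2)}$ is a free $W(k)_{(2)}$-module of the same rank as $\pi_*\MSp_{(2)}[\etainv]$. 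Injectivity of $\pi_*\to\kw_*$ holds because the Hurewicz-type map to $\kw$-homology is injective on a polynomial algebra detected rationally. Combining these two facts forces $\partial=0$, hence $\varphi$ is surjective, and the short exact sequence follows.
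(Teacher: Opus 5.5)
Your reduction matches the paper's. You smash the fiber sequence with $\MSp$, take the long exact sequence, and observe that the corollary is equivalent to surjectivity of $\varphi$ in every degree. The paper then simply quotes this surjectivity from the proof of \cite[Theorem~8.8]{bachmann2021eta}. The gap is that your proposal never actually establishes it.

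Your direct route is only a plan, and its one concrete input is wrong. The bottom generator (your $b_1$, the paper's $a_1\in\kw_2\MSp$) has $\varphi(a_1)=0$, since its image would lie in $\kw_{-2}\MSp=0$. The first unit value is $\varphi(a_2)=3$ (Proposition~\ref{prop:varphi-on-kw-msp}).

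Your fallback deduces $\partial=0$ from injectivity of $\pi_*\MSp_{(2)}[\etainv]\to\kw_*\MSp_{(2)}$ ``detected rationally''. That step is valid only when $\pi_*\MSp_{(2)}[\etainv]\cong W(k)_{(2)}[y_1,y_2,\ldots]$ is torsion-free, i.e.\ when $W(k)_{(2)}$ is, and the corollary is stated for a general base. For $k=\mathbb{C}$ one has $W(\mathbb{C})\cong\ZZ/2$, both groups vanish after $-\otimes\QQ$, and rational detection says nothing. The claim that $\ker\varphi$ is free of the same rank as $\pi_*\MSp_{(2)}[\etainv]$ is asserted without proof; it presupposes knowing the image of $\varphi$. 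It also does no work, because injectivity in all degrees is by itself equivalent to $\partial=0$.

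Since you are already willing to cite the computation of $\pi_*\MSp[\etainv]$, there is a repair that works for every $W(k)$. By Propositions~\ref{prop:msp-msl-coefficients} and~\ref{prop:e-msp-homology}, both $\pi_*\MSp_{(2)}[\etainv]$ and $\kw_*\MSp_{(2)}\cong W(k)_{(2)}[\beta,a_1,a_2,\ldots]$ are concentrated in even degrees. The connecting map $\partial\colon\kw_{n-3}\MSp_{(2)}\to\pi_n\MSp_{(2)}[\etainv]$ changes parity, so $\partial=0$ identically and $\varphi$ is surjective. This sets aside, as the paper does, the odd Witt classes of Remark~\ref{rem:odd-witt-classes}.

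Alternatively, you could reduce to the universal case $k=\ZZ$ via the base-change compatibilities of the standing assumption; there $W(\ZZ)\cong\ZZ$ and your rational argument is sound. Or you could simply quote the surjectivity, as the paper does.
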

\begin{proof}
	Applying $\pi_*(-\wedge \MSp)$ to Theorem~\ref{thm:kw-fiber-sequence} gives a long exact sequence. By the proof of \cite[Theorem 8.8]{bachmann2021eta}, $\varphi$ is surjective, so this is a short exact sequence.
\end{proof}

Before describing the operation $\varphi$, we record the universal polynomial structure on $\E_*\MSpetainv$.

\begin{proposition}\label{prop:e-msp-homology}
	Let $\E$ be an $\eta$-periodic $\Sp$-oriented homotopy commutative ring spectrum. Then there are classes $a_n\in \E_{2n}\MSpetainv$ for $n\ge 1$ such that
	\[
		\E_*\MSpetainv \cong \E_*[a_1,a_2,\ldots].
	\]
	Moreover, if the chosen $\Sp$-orientation on $\E$ is induced by an $\SL$-orientation, the canonical map $\MSp \to \MSL$ annihilates $a_{i}$ for odd $i$, and
	\[
		\E_*\MSLetainv \cong \E_*[a_2,a_4,\ldots].
	\]
	In particular,
	\[
		\kw_*\MSp \cong W(k)[\beta,a_1,a_2,\ldots],\qquad
		\HW_*\MSp \cong W(k)[a_1,a_2,\ldots].
	\]
\end{proposition}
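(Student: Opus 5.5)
The plan is to copy the classical computation of $\E_*\MU$ (Adams' lemma recalled above), replacing $\mathbb{CP}^\infty$ by $\HPinf$ and the Thom/projective bundle theorem by Theorem~\ref{thm:quaternionic-projective-bundle}.

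\textbf{Step 1: homology of $\HPinf$.} Since $\E$ is $\Sp$-oriented, Theorem~\ref{thm:quaternionic-projective-bundle} gives $\E^{*}(\HP^n)\cong \E^*[t_\E]/(t_\E^{n+1})$ as free $\E^*$-modules. In the $\eta$-periodic single grading, dualizing yields a free $\E_*$-module $\E_*(\HPinf)$ with basis $\beta_0,\beta_1,\ldots$ dual to the powers $t_\E^i$. The same holds for the Thom spectra of tautological bundles.

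\textbf{Step 2: the Thom spectrum filtration.} Write $\MSp$ as the colimit of the Thom spectra $\Sigma^{-4n}\mathrm{Th}(\mathfrak U_n\to \HGr(n,\infty))$. The Thom isomorphism for $\Sp$-oriented theories gives
\[
	\E_*\MSp\cong \operatorname{colim}_n \E_*(\HGr(n,\infty)).
\]
By the splitting principle for symplectic bundles (iterating Theorem~\ref{thm:quaternionic-projective-bundle}), $\E_*(\HGr(n,\infty))$ is the symmetric part of $\E_*((\HPinf)^n)$. Hence the colimit is a polynomial algebra on the images of the $\beta_{i}$, $i\ge1$, which arise from the map $\Sigma^{-4}\mathrm{Th}(\mathfrak U_1)\simeq \Sigma^\infty\HPinf\to\MSp$.

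\textbf{Step 3: grading and inverting $\eta$.} Inverting $\eta$ commutes with smashing, so $\E_*\MSpetainv\cong \E_*\MSp$ once $\E$ is $\eta$-periodic. One must check the degree conventions: after $\eta$-inversion, $\HPinf$ cells live in degrees $4i-2i=2i$. Setting $a_n$ to be the image of $\beta_{n+1}$ then places $a_n$ in degree $2n$ under the paper's single grading (cf.~\cite{ananyevskiy2015the}).

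\textbf{Step 4: the $\SL$ case.} Suppose the orientation comes from $\MSL\to\E$. The key input is Ananyevskiy's computation that, $\eta$-periodically, $\E_*(\mathrm{BSL}_n)$ is generated by Pontryagin/Euler classes. In particular, $\E^*(\mathrm{BGL})$ and $\mathbb P^\infty$-type cells are killed: the $\eta$-inverted cohomology of $\mathbb{P}^{2m}$ is that of a point. Comparing $\MSp\to\MSL$ through $\HP^\infty\to\mathrm{BSL}_2$ shows that the odd generators $a_{2i+1}$ map to classes that can be chosen to be zero, while the even ones map to the polynomial generators of $\E_*\MSLetainv\cong\E_*[a_2,a_4,\ldots]$. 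That last isomorphism again follows from a Thom isomorphism plus the $\eta$-periodic computation of $\E_*\mathrm{BSL}$, which is polynomial in Pontryagin classes of even weight.

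\textbf{Step 5: the special cases.} The final formulas follow from $\kw_*=W(k)[\beta]$ (Proposition~\ref{prop:kw-to-hw}) and $\HW_*=W(k)$. Both $\kw$ and $\HW$ are $\Sp$-oriented (indeed $\SL$-oriented) and $\eta$-periodic.

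\textbf{Main obstacle.} The hardest point is Step 4. One must show that the odd $a_i$ map to zero for a suitable choice of generators, not merely to decomposables. I would first try to choose the $a_n$ via the comparison with Ananyevskiy's $\eta$-periodic description of $\mathrm{BSL}$ and $\mathrm{BSp}$ homology, where odd-degree classes visibly die. If that fails, I would adjust the odd generators by decomposables so that they lie in the kernel, which is possible because that kernel surjects onto the odd indecomposables.
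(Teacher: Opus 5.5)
The paper does not prove this itself. It quotes \cite[Theorem~4.1]{bachmann2021eta} and reads off the $\kw$ and $\HW$ cases from Proposition~\ref{prop:kw-to-hw}, so you are reconstructing the cited argument.

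For the $\MSp$ part (Steps 1--3 and 5) your reconstruction is the right one: Kronecker duality on $\HPinf$, the splitting principle for $\mathrm{BSp}$, the Thom isomorphism, and $\E\wedge\MSpetainv\simeq\E\wedge\MSp$. One slip: the relevant map is $\Sigma^{-4,-2}\Sigma^\infty\mathrm{Th}(\mathfrak U_1)\simeq\Sigma^{-4,-2}\Sigma^\infty\HPinf\to\MSp$, with the bidegree shift on both sides. That shift is what puts the image of $\beta_{n+1}$ in degree $2n$. Under the Thom isomorphism this $a_n$ corresponds to $\beta_n\in\E_*\mathrm{BSp}$, since capping with the Thom class $t$ lowers the index by one.

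The gap is Step 4. You never say why the odd classes die, and your fallback is circular. ``The kernel of $\E_*\MSp\to\E_*\MSL$ surjects onto the odd indecomposables'' restates what is to be shown rather than justifying it. Likewise, $\E_*\MSLetainv\cong\E_*[a_2,a_4,\ldots]$ requires computing the map $\E_*\mathrm{BSp}\to\E_*\mathrm{BSL}$, not just knowing $\E_*\mathrm{BSL}$ abstractly. Changing the odd $a_{2i+1}$ would also break $a_n=$ image of $\beta_{n+1}$, which the paper uses in Proposition~\ref{prop:msp-universal-base-change}. The remarks about $\mathbb P^{2m}$ and $\mathrm{BGL}$ are not the relevant input.

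The missing idea is as follows.
\begin{itemize}
\item After inverting $\eta$ one has $\langle-1\rangle=-1$. For an $\SL$-oriented $\E$, the bundle $(\mathfrak U,-\omega)$ differs from $(\mathfrak U,\omega)$ only by the sign of the determinant trivialization, so its first Borel class is $-t$.
\item Hence the Pontryagin classes, which are Borel classes of the hyperbolic bundle with roots $\pm x_i$, restrict to $\mathrm{BSp}$ as signed elementary symmetric functions of $x_1^2,x_2^2,\ldots$.
\item The stable ring $\E^*\mathrm{BSL}\cong\E^*\llbracket p_1,p_2,\ldots\rrbracket$ has no Euler classes, since these do not survive stabilization. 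So it maps isomorphically onto the closed span of the monomial symmetric functions $m_\lambda$ with all parts even.
\item The $m_\lambda$ are dual to the monomials $\beta^\lambda$ in $\E_*\mathrm{BSp}=\E_*[\beta_1,\beta_2,\ldots]$. Dualizing, $\E_*\mathrm{BSp}\to\E_*\mathrm{BSL}$ is surjective with kernel exactly $(\beta_1,\beta_3,\ldots)$.
\item Transport this along the Thom isomorphisms, which are compatible because the $\Sp$-orientation is restricted from the $\SL$ one. This gives $a_{2i+1}\mapsto 0$ exactly and $\E_*\MSL\cong\E_*[a_2,a_4,\ldots]$, with no need to change generators.
\end{itemize}
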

\begin{proof}
	The first statement is \cite[Theorem~4.1]{bachmann2021eta}. The displayed specializations for $\E=\kw$ and $\E=\HW$ follow from Proposition~\ref{prop:kw-to-hw}.
\end{proof}

\begin{proposition}\label{prop:msp-msl-coefficients}
	There exist generators $y_i\in (\MSpetainv)_{2i}$ such that
	\[
		(\MSpetainv)_* \cong W(k)[y_1,y_2,\ldots].
	\]
	Moreover, the canonical map $\MSp \to \MSL$ annihilates $y_{i}$ for odd $i$, and
	\[
		(\MSLetainv)_* \cong W(k)[y_2,y_4,\ldots].
	\]
	In the category of $\MSpetainv$-modules, the canonical map induces an equivalence
	\[
		\MSpetainv/(y_1,y_3,\ldots)\simeq \MSLetainv.
	\]
\end{proposition}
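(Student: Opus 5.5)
The plan is to read off the polynomial structure from the Bachmann--Hopkins computation, and then deduce the $\MSL$ statements from the behaviour of the canonical map on $\HW$-homology. The statement packages \cite[Theorem~8.7--8.8]{bachmann2021eta}, extended to Dedekind domains as in \cite{bachmann2022eta} under the standing Bachmann--Hopkins assumption, so the proof is mostly a matter of organizing known inputs.

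First, away from $2$. Inverting $2$ and $\eta$, the fiber sequence of Theorem~\ref{thm:kw-fiber-sequence} is replaced by the splitting $\mathbbm{1}[\etainv,1/2]\simeq \kw[1/2]$ modulo $\beta$-periodicity. In this range the Hurewicz map $(\MSpetainv)_*[1/2]\to \HW_*\MSp[1/2]\cong W(k)[1/2][a_1,a_2,\ldots]$ is an isomorphism. Indeed, it is a map of connected graded algebras, and its indecomposables can be compared degree by degree.

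Second, at $2$. I use Corollary~\ref{cor:msp-kw-short-exact-sequence}: $(\MSpetainv)_{(2),*}$ is the kernel of the surjection
\[
\varphi\colon W(k)_{(2)}[\beta,a_1,a_2,\ldots]\to W(k)_{(2)}[\beta,a_1,\ldots]
\]
of degree $-4$. Here $\varphi$ is computed from $\psi^3$ acting on $\kw_*\MSp$: $\psi^3$ scales $\beta$ by $9$ and acts on the $a_i$ through the change of $\Sp$-orientation. The aim is to construct, in each degree $2n$, a kernel element whose image in the indecomposables $Q_{2n}$ (modulo $\beta$, i.e.\ in $\HW_*\MSp$) is a generator of $W(k)a_n$. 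The strategy is to take $a_n$ and correct it by $\beta$-multiples of lower terms so that it lies in $\ker\varphi$. That such a correction exists is exactly the surjectivity argument in \cite[Thm.~8.8]{bachmann2021eta}.

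Having such $y_n$, I argue as in the proof of Theorem~\ref{thm:lazard-structure}. The induced map $W(k)[Y_1,\ldots]\to(\MSpetainv)_*$ is an isomorphism on $Q$ both $2$-locally and after inverting $2$. Hence it is an isomorphism, by induction on degree in connected graded rings. The local and away-from-$2$ generators then need to be glued into integral generators, which is done by the fracture square together with $W(k)$-flatness. This gluing is the main technical obstacle, and it is precisely where the standing assumption is used.

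For $\MSL$, Proposition~\ref{prop:e-msp-homology} shows that $\MSp\to\MSL$ kills $a_{\mathrm{odd}}$ in $\HW$-homology. The generators $y_{2i+1}$ can therefore be chosen to map to elements that are decomposable and then adjusted to map to zero; alternatively, one uses \cite{bachmann2021eta}'s construction with $y_{\mathrm{odd}}$ defined via the image. The same indecomposable argument then identifies $(\MSLetainv)_*$ with $W(k)[y_2,y_4,\ldots]$.

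Finally, for the module equivalence, form $\MSpetainv/(y_1,y_3,\ldots)$ as an iterated cofiber. Because the $y_{\mathrm{odd}}$ map to zero in $\MSLetainv$, the canonical map factors through this quotient. On homotopy, the quotient's coefficients are $W(k)[y_2,y_4,\ldots]$, since the $y_i$ form a regular sequence. The factored map is therefore a $\pi_*$-isomorphism, hence an equivalence.
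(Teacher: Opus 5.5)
There is a genuine gap. Both your away-from-$2$ step and your $2$-local step assume that polynomial generators of $(\MSpetainv)_*$ can be chosen whose $\HW$-Hurewicz images are generators of $W(k)a_n$ modulo decomposables. That assumption is false.

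\textbf{Away from $2$.} The Hurewicz map $(\MSpetainv)_*[1/2]\to\HW_*\MSp[1/2]$ is only a rational isomorphism. For $k=\ZZ$ it is the map $\MU_*[1/2]\to H_*\MU[1/2]$. Its image on $Q_{2n}$ is $p\,a_n$ when $n+1=p^j$ for an odd prime $p$; already for $n=2$ it is $3a_2$, cf.\ Remark~\ref{rem:lambda-image-after-inverting-two}. Also, $\mathbbm{1}[\etainv,1/2]$ is not $\kw[1/2]$ in any sense: over $\mathbb{R}$ it is the classical sphere with $2$ inverted, which has odd torsion.

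\textbf{At $2$.} You cannot in general correct $a_n$ into $\ker\varphi$. The degree-$4$ part of $\kw_*\MSp_{(2)}$ is spanned by $a_2,a_1^2,\beta$, with $\varphi(a_2)=3$, $\varphi(a_1^2)=0$ and $\varphi(\beta)=8$. So the degree-$4$ kernel is spanned by $a_1^2$ and $8a_2-3\beta$, and the indecomposable generator there has Hurewicz image $8a_2$. Likewise, for $n=2^j$ with $j\ge 2$ one only reaches $2a_n$ (Theorem~\ref{thm:msp-eta-inverted-hurewicz-image}). Surjectivity of $\varphi$, which is what \cite[Theorem~8.8]{bachmann2021eta} gives, is a different statement. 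Your correction needs $\varphi(a_n)$ to lie in $\varphi$ of the subring generated by $\beta,a_1,\ldots,a_{n-1}$.

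Consequently, comparing indecomposables with those of $\HW_*\MSp$ cannot show that your candidate elements generate. Your $\MSL$ step (``the same indecomposable argument'') inherits the same flaw. There you also need an independent computation of $(\MSLetainv)_*$, for instance from $\kw_*\MSL\cong W(k)[\beta,a_2,a_4,\ldots]$ and the same fiber sequence. Once that is done, the vanishing of the images of $y_{\mathrm{odd}}$ is automatic, because everything is concentrated in degrees divisible by $4$; no ``adjusting'' is needed.

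\textbf{What is missing} is an intrinsic computation of the indecomposables of the kernel. Here is one way to do it $2$-locally.
\begin{enumerate}
\item Since $\varphi$ is surjective and $\kw_*\MSp_{(2)}$ is torsion-free, $\ker\varphi\otimes\FF_2\cong\ker\bar\varphi$, where $\bar\varphi$ is the reduction mod $2$.
\item The map $\bar\varphi$ is $\beta$-linear and surjective. Hence $\ker\bar\varphi$ is $\FF_2[\beta]$-free, and $\ker\bar\varphi/\beta$ is the kernel of the derivation $a_{2m}\mapsto a_{2m-2}$, $a_{2m+1}\mapsto 0$ on $\FF_2[a_1,a_2,\ldots]$.
\item That kernel is $\FF_2[a_1,a_3,\ldots]\otimes H_*(\mathbf{BSU};\FF_2)$ up to regrading, which is polynomial by \cite{kochman1982polynomial}.
\item Adding $\beta$ in degree $4$ shows $\ker\bar\varphi$ is polynomial with one generator in each degree $2n$. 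The degree-$4$ generator reduces to $\beta$, not to $a_2$.
\item Lifting these generators gives a map $\ZZ_{(2)}[Y]\to\ker\varphi$, which is an isomorphism by degreewise Nakayama.
\end{enumerate}
Away from $2$, use the identification $\MSp[\etainv,1/2]\simeq\MU[1/2]$ (for $k=\ZZ$) together with the Milnor--Novikov computation \cite{milnor1960on}, rather than the Hurewicz map.

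Once each $Q_{2n}$ is known to be free of rank one, the gluing step you flag as the main obstacle is routine. The paper itself does none of this reconstruction; it simply cites \cite[Proposition~5.6]{bachmann2022eta} and \cite[Theorem~8.7 and Corollary~8.9]{bachmann2021eta}.
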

\begin{proof}
	This follows from \cite[Proposition~5.6]{bachmann2022eta} and \cite[Theorem~8.7 and Corollary~8.9]{bachmann2021eta}.
\end{proof}

\begin{proposition}\label{prop:varphi-on-kw-msp}
	In $\kw_*\MSp [1/3]$, one has
	\[
		\varphi(\beta)=8,\qquad \varphi(a_1)=0,
	\]
	and for every $n\ge 2$,
	\[
		\varphi(a_n)\equiv 3(n-1)a_{n-2}\pmod{(\beta)}.
	\]
\end{proposition}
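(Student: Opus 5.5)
We compute $\varphi=\beta^{-1}(\psi^3-\mathrm{id})$ on $\kw_*\MSp[1/3]=W(k)[1/3][\beta,a_1,a_2,\ldots]$ from two inputs: the action of $\psi^3$ on the coefficients $\kw_*$, and the action of $\psi^3$ on the classes $a_n$. The latter is encoded by the $\Sp$-orientations of $\kw\wedge\MSp$.

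\textbf{The value $\varphi(\beta)$.} By construction (Bachmann--Hopkins), $\psi^3$ acts on $\pi_{4}\KW$ by multiplication by $9$, the quaternionic analogue of the complex eigenvalue $3$ on $\pi_2$, squared. Hence
\[
	\varphi(\beta)=\beta^{-1}(9\beta-\beta)=8 .
\]
Since $\psi^3$ is multiplicative and is the identity on $W(k)$ in degree $0$, $\varphi$ is a $\psi^3$-twisted derivation:
\[
	\varphi(xy)=\varphi(x)\,y+\psi^3(x)\,\varphi(y).
\]
It therefore suffices to compute $\psi^3(a_n)$.

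\textbf{Reduction to a comparison of orientations.} As in the topological universal base-change lemma of Section~\ref{sec:formal-group-laws}, $\kw\wedge\MSp$ carries two $\Sp$-orientations. One is $t_{\kw}$, from the Borel class in hermitian $K$-theory; the other is $t_{\MSp}$. They are related by
\[
	t_{\MSp}=t_{\kw}+\sum_{i\ge1}a_i\,t_{\kw}^{i+1}
\]
in $(\kw\wedge\MSp)^*(\HPinf)$, and this relation defines the $a_i$ of Proposition~\ref{prop:e-msp-homology}. Applying $\psi^3$, which fixes $t_{\MSp}$ because it acts only on the $\kw$ factor, gives
\[
	t_{\kw}+\sum_i a_i t_{\kw}^{i+1}=\psi^3(t_{\kw})+\sum_i\psi^3(a_i)\,\psi^3(t_{\kw})^{i+1}.
\]
The problem thus reduces to computing $\psi^3(t_{\kw})\in \kw^{*}(\HPinf)[1/3]$.

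\textbf{Computing $\psi^3(t_{\kw})$.} This is the main obstacle. The Borel class in $\KO$ is $t=\mathfrak U-2$, up to the Bott periodicity identification, and $\psi^3$ sends a rank-$2$ symplectic bundle to the Adams operation computed via the splitting principle (Fasel--Haution). Writing the result as a power series in $t_{\kw}$ with coefficients in $\beta$, one expects modulo $\beta$ (that is, in $\HW$)
\[
	\psi^3(t_{\kw})\equiv 9\,t_{\kw}+(\text{terms of the form }\beta^{j}t^{j+1}),
\]
where the $\beta$-linear term carries the coefficient that eventually produces the $3(n-1)$ factor. I would first do the computation in the honest hermitian $K$-theory of $\HP^n$, then pass to $\KW$ and rescale by $\beta$.

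\textbf{Solving for $\varphi(a_n)$.} Compare coefficients of $t^{n+1}$ modulo $\beta^2$. Write
\[
	\psi^3(a_n)=a_n+\beta\,\varphi(a_n),\qquad \psi^3(t)=9t+c\beta t^2+\cdots.
\]
Working modulo $\beta$ in the final answer, and noting that $\varphi(a_n)$ has degree $2n-4$, the leading $a_n$ terms cancel. What remains says that $\varphi(a_n)$ modulo $\beta$ is determined by $a_{n-2}$ together with the multiplicity $(n-1)$ that arises from expanding $\psi^3(t)^{n-1}$. The constant $3$ arises after dividing out the degree-$4$ eigenvalue factors, which requires inverting $3$. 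Finally:
\begin{itemize}
\item for $n=1$, there is no $a_{-1}$, and the degree forces $\varphi(a_1)=0$ exactly;
\item for $n=2$, the convention $a_0=1$ gives $\varphi(a_2)\equiv 3$.
\end{itemize}

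Checking the precise constant depends on the delicate normalization of $\psi^3(t_{\kw})$ and of $\beta$. That is where I expect the real work to lie.
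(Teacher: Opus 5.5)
\textbf{There is a genuine gap.} Your set-up is sound. It is essentially the computation behind the Bachmann--Hopkins formula that the paper cites: $\psi^3$ fixes $t_{\MSp}$, so applying it to $t_{\MSp}=t_{\kw}+\sum a_i t_{\kw}^{i+1}$ ties $\psi^3(a_n)$ to $\psi^3(t_{\kw})$. Your arguments for $\varphi(\beta)=8$ (as in the paper) and for $\varphi(a_1)=0$ (by degree, since $\kw_{-2}\MSp=0$; the paper instead reads off $\psi^3(a_1)=a_1$) are fine.

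The gap is that the only nontrivial input, $\psi^3(t_{\kw})$, is never determined, and the shape you posit contradicts your own identity.
\begin{itemize}
\item Comparing coefficients of $t_{\kw}$ on both sides forces $[\psi^3(t_{\kw})]_{t}=1$, not $9$. With $9t$ the $a_n$ terms could not cancel, since you would be comparing $a_n$ with $9^{n+1}a_n$.
\item Homogeneity of degree $-2$, with $|t|=-2$ and $|\beta|=4$, allows only terms $\beta^k t^{2k+1}$. So $c\beta t^2$ cannot occur. Such a term would also couple $a_n$ to $a_{n-1}$ with multiplicity $n$, rather than to $a_{n-2}$ with multiplicity $n-1$.
\end{itemize}
The steps in your last paragraph therefore do not follow from what you wrote: the cancellation of $a_n$, the factor $n-1$, and the claim that ``the constant $3$ arises after dividing out eigenvalue factors''. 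In particular, your $n=2$ case via $a_0=1$ just restates the target.

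Once these are corrected, degree considerations give $\psi^3(t_{\kw})=t_{\kw}+c\,\beta\, t_{\kw}^3+O(\beta^2)$ for a single constant $c$. Comparing coefficients of $t^{n+1}$ modulo $\beta^2$, using that $\beta$ is a nonzerodivisor in $\kw_*\MSp[1/3]$, gives $\varphi(a_n)\equiv -c(n-1)a_{n-2}\pmod{\beta}$. So the entire content of the statement is $c=-3$. This value depends on the normalization of $\beta$, and it is exactly what you leave open.

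The paper supplies it by citation. The integral formula for $\psi^3$ on the Borel class \cite[Remark~4.18]{bachmann2021eta}, together with the computation in the proof of \cite[Proposition~4.3]{bachmann2021eta}, gives $\psi^3(a_n)=\sum_{k}\bigl(\tbinom{n+k}{k}-2\tbinom{n+k}{k-1}\bigr)(3\beta)^k a_{n-2k}$. By Lagrange inversion this is equivalent to $\psi^3(t_{\kw})=t_{\kw}-3\beta t_{\kw}^3$, and its $k=1$ term is $3(n-1)\beta a_{n-2}$.

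To finish along your route you need either this citation or an actual computation of the stable $\psi^3$ on the Borel class in $\KO^*(\HP^n)$, in Bachmann--Hopkins' normalization of $\beta$. Note also that inverting $3$ is what makes $\psi^3$ exist; it is not what produces the factor $3$.
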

\begin{proof}
	On $\kw_*\MSp[1/3]$, the operation $\varphi$ is given by $\varphi=(\psi^3-\mathrm{id})/\beta$. By \cite[Theorem~3.1(2)]{bachmann2021eta}, one has $\psi^3(\beta)=9\beta$, hence $\varphi(\beta)=8$. Moreover, the computation in the proof of \cite[Proposition~4.3]{bachmann2021eta}, together with the integral formula for $\psi^3(b)$ from \cite[Remark~4.18]{bachmann2021eta}, gives
	\[
		\psi^3(a_n)=\sum_{k=0}^{\lfloor n/2\rfloor}
		\left(\binom{n+k}{k}-2\binom{n+k}{k-1}\right)(3\beta)^k a_{n-2k},
	\]
	where we use the convention $\binom{n}{-1}=0$. In particular, $\psi^3(a_1)=a_1$, so $\varphi(a_1)=0$. For $n\ge 2$, the terms with $k\ge 2$ are divisible by $\beta^2$, while the $k=1$ term is
	\[
		\left(\binom{n+1}{1}-2\binom{n+1}{0}\right)3\beta a_{n-2}=3(n-1)\beta a_{n-2}.
	\]
	Therefore
	\[
		\psi^3(a_n)\equiv a_n+3(n-1)\beta a_{n-2}\pmod{(\beta^2)},
	\]
	and dividing by $\beta$ yields
	\[
		\varphi(a_n)\equiv 3(n-1)a_{n-2}\pmod{(\beta)}.
	\]
\end{proof}

\begin{remark}\label{rem:odd-witt-classes}
	In contrast with the $2$-invertible case \cite[Theorem~1.5.22]{balmer2005witt}, odd Witt groups over Dedekind domains where $2$ is not invertible need not vanish. For example, over $k=\ZZ$ one has $W^{3}(\ZZ)\cong \ZZ/2$ (see \cite[Proposition~4.3.1]{ranicki1981exact}), giving an additional $2$-torsion class $\xi$ in homological degree $1$ and
	\[
		\pi_*\KW_{\ZZ}\cong \ZZ[\beta^{\pm 1},\xi]/(2\xi,\xi^2),
	\]
	cf.~\cite[Theorem 10.13]{kolderup2025hermitian}. This odd-degree class does not enter the even-degree GFTL coefficient calculations below. For statements about the full homotopy rings over such bases, one should also include this class.
\end{remark}

\section{Geometric formal ternary law}\label{sec:geometric-formal-ternary-law}

We now use the symplectic orientation formalism recalled above. In contrast with the $\mathrm{GL}$-oriented case, an $\Sp$-orientation does not lead to a binary formal group law on the first Borel class. The reason is that the tensor product of two symplectic bundles is naturally orthogonal, whereas the tensor product of three symplectic bundles is symplectic again. The appropriate replacement is therefore a ternary law.

\begin{definition}[\cite{coulette2024formal}]
	Let $R$ be a graded ring. We equip $R\llbracket x_1,\ldots,x_n\rrbracket[t]$ with the total grading extending that of $R$ and determined by
	\[
		|x_i|=-2,\qquad |t|=2.
	\]
	An \emph{$(m,n)$-series} over $R$ is a homogeneous element of degree zero in $R\llbracket x_1,\ldots,x_n\rrbracket[t]$, of the form
	\[
		G_t(x_1,\ldots,x_n)=1+\sum_{l=1}^m G_l(x_1,\ldots,x_n)t^l.
	\]
	This convention is the negative double of the grading used in \cite[\S~2.1.4]{coulette2024formal}, matching the homological grading on coefficient rings used below.

	We say that $G_t$ \emph{splits with roots $r_1,\ldots,r_m$} if the grading $|r_i|=-2$ and
	\[
		G_t(x_1,\ldots,x_n)=\prod_{i=1}^m (1+r_i(x_1,\ldots,x_n)t)
	\]
	in $R\llbracket x_1,\ldots,x_n\rrbracket[t]$.
\end{definition}

\begin{definition}[{\cite[Definitions~2.1.13 and 2.1.15]{coulette2024formal}}]
	\label{def:transport-of-series}
	Let $R$ be a graded ring, let $G_t$ be a composable $(m,n)$-series over $R$, and let $\Phi(x)\in R\llbracket x\rrbracket$ be homogeneous of degree $-2$, with $\Phi(x)=x+O(x^2)$. Write $(\Phi\circ G)_t$ for rootwise composition. If $G_t=\prod_i(1+r_i t)$ in a splitting algebra, then
	\[
		(\Phi\circ G)_t=\prod_i(1+\Phi(r_i)t).
	\]
	The \emph{transport} $\Phi_*G$ is the target of the strict isomorphism $\Phi:G\to\Phi_*G$, determined by
	\[
		(\Phi\circ G)_t(x_1,\ldots,x_n) = (\Phi_*G)_t(\Phi(x_1),\ldots,\Phi(x_n)).
	\]
	Equivalently, if $G_t=\prod_i(1+r_i(x_1,\ldots,x_n)t)$ in a splitting algebra, then
	\[
		(\Phi_*G)_t = \prod_i \left( 1+ \Phi\!\left( r_i(\Phi^{-1}(x_1),\ldots,\Phi^{-1}(x_n)) \right)t \right).
	\]
\end{definition}

\begin{definition}[Geometric formal ternary law]
	Let $S$ be a base scheme and let $\E \in \SH(S)$ be an $\Sp$-oriented homotopy commutative ring spectrum. Let $\mathfrak U_i$ be the tautological rank-$2$ symplectic bundle on the $i$-th factor of $(\HP^\infty_S)^{\times 3}$, and put
	\[
		x=b_1(\mathfrak U_1),\qquad y=b_1(\mathfrak U_2),\qquad z=b_1(\mathfrak U_3).
	\]
	The tensor product $\mathfrak U_1\otimes \mathfrak U_2\otimes \mathfrak U_3$ is a rank-$8$ symplectic bundle and hence is classified by a map $m:(\HP^\infty_S)^{\times 3}\to \HGr(4,\infty)$. Denote by $\mathfrak U$ the tautological rank-$8$ symplectic bundle on $\HGr(4,\infty)$, and define the $(4,3)$-series over $\E^{(2,1)*}(S)$
	\[
		F_t^\E(x,y,z)=1+\sum_{l=1}^4 F_l^\E(x,y,z)t^l
		\in \E^{(2,1)*}(S)\llbracket x,y,z\rrbracket[t]
	\]
	by
	\[
		F_l^\E(x,y,z):=m^*\left(b_l(\mathfrak U)\right),\qquad 1\le l\le 4.
	\]
	This $(4,3)$-series is called the \emph{geometric formal ternary law} (GFTL) of $\E$.
\end{definition}

\begin{remark}
	Since the construction is symmetric in the factors of $(\HP^\infty_S)^{\times 3}$, each $F_l^\E(x,y,z)$ is a symmetric power series in $x,y,z$. We write
	\[
		F_l^\E(x,y,z)=\sum_{a \ge b \ge c\ge 0} a^l_{a,b,c} \sigma(x^a y^b z^c),
	\]
	where $\sigma$ denotes the orbit sum under permutations of $(x,y,z)$.
\end{remark}

\begin{example}[{\cite[(3.2.4c)]{deglise2023the}}] \label{ex:gftl-over-hw}
	If $\E = \HW$, the spectrum representing unramified Witt cohomology, then
	\[
		\begin{aligned}
			F_{1}^{\HW}(x,y,z) & = 0,                              \\
			F_{2}^{\HW}(x,y,z) & = -2\sigma(x^2),                  \\
			F_{3}^{\HW}(x,y,z) & = -8xyz,                          \\
			F_{4}^{\HW}(x,y,z) & = \sigma(x^4) - 2\sigma(x^2 y^2).
		\end{aligned}
	\]
\end{example}

\begin{remark}
	In \cite{deglise2023the}, this formula is proved for fields $k$ of characteristic different from $2$ and $3$. Setting $\beta=0$ in the coefficient formula below recovers the same expression, which corresponds to the quotient $\kw\to\kw/\beta\simeq\HW$.
\end{remark}

\begin{example}[{\cite[Lemma 1.4]{ananyevskiy2017stable}, see also \cite[Theorem 6.6]{fasel2025the}}] \label{ex:gftl-over-kw}
	If $\E = \KW$ is the Witt $K$-theory spectrum, and $\beta \in \KW_{4}(k)$ denotes the Bott element, then
	\[
		\begin{aligned}
			F_{1}^{\KW}(x,y,z) & = \beta xyz,                                        \\
			F_{2}^{\KW}(x,y,z) & = \beta \sigma(x^2y^2) - 2\sigma(x^2),              \\
			F_{3}^{\KW}(x,y,z) & = \beta \sigma(x^3yz)-8xyz,                         \\
			F_{4}^{\KW}(x,y,z) & = \beta x^2y^2z^2 + \sigma(x^4) - 2\sigma(x^2 y^2),
		\end{aligned}
	\]
\end{example}

\section{GFTL under the Hurewicz map} \label{sec:gftl-under-hurewicz}
Write $F_t=F_t^{\MSpetainv}$ and let $h:\MSpetainv \to \HW \wedge \MSp$ be the $\HW$-Hurewicz map. The goal of this section is to determine the image of the coefficient rings of $F_t$ inside $\HW_*\MSp \cong W(k)[a_1,a_2,\ldots]$.
\subsection{Universal base change}
We first start without the $\eta$-periodicity. Let $\E \in \SH(k)$ be an $\Sp$-oriented homotopy commutative ring spectrum and set $A:=\E\wedge\MSp$. Then by \cite{panin2021quaternionic} and \cite{bachmann2021eta},
\[
	A^{*,*}(\HPinf) \cong A^{*,*} \llbracket t \rrbracket \cong \E^{*,*} [ a_1, a_2, \ldots ] \llbracket t \rrbracket,
\]
where the generator $t$ can be chosen to be the one induced by the $\Sp$-orientation of either $\E$ or $\MSp$. We denote these choices by $t_{\E}$ and $t_M$, respectively.

\begin{proposition}\label{prop:msp-universal-base-change}
	In $A^{*,*}(\HPinf)$, the two coordinates $t_{\E}$ and $t_M$ satisfy the universal base-change formula
	\[
		t_{M} = \exp(t_{\E}).
	\]
\end{proposition}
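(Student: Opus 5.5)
The argument will follow the topological prototype (the universal base-change lemma for $\E\wedge\MU$, proved via Adams' Kronecker pairing), adapted to quaternionic projective spaces. First we pin down what $\exp$ means. Since $A=\E\wedge\MSp$ carries two $\Sp$-orientations, $t_\E$ and $t_M$, Theorem~\ref{thm:quaternionic-projective-bundle} gives $A^{*,*}(\HPinf)\cong A^{*,*}\llbracket t_\E\rrbracket$. Hence $t_M$ has a unique expansion
\[
	t_M=t_\E+\sum_{i\ge1}c_i t_\E^{i+1},\qquad c_i\in A_{2i}.
\]
The leading coefficient is $1$ because both classes restrict to the unit on $\HP^1$. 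The content of the proposition is that $c_i$ equals the generator $a_i$ of $A_*\cong\E_*[a_1,a_2,\ldots]$ from Proposition~\ref{prop:e-msp-homology}. So the plan is to recall how the $a_i$ are constructed, following \cite{bachmann2021eta} and the Panin--Walter Thom-class description, and check that they match.

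The first step is to set up the homology side. By the quaternionic projective bundle theorem applied to $\E$, we have $\E_{*,*}(\HPinf)$ free over $\E_{*,*}$ on classes $\beta_0,\beta_1,\ldots$ dual to $1,t_\E,t_\E^2,\ldots$ under the Kronecker pairing. There is a Thom-space identification $\Sigma^{-4,-2}\HPinf\simeq \mathrm{Th}(\mathfrak U)\to \Sigma^{4,2}\MSp$, where the map is the first stage of $\MSp$. Pushing forward along it gives classes $b_i\in\E_{*}\MSp$, and Proposition~\ref{prop:e-msp-homology} (i.e.\ \cite[Theorem~4.1]{bachmann2021eta}) identifies these as the polynomial generators, $a_i=b_{i+1}$ up to the indexing shift. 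We take this as the definition of $a_i$.

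The second step computes $c_i$ via the pairing $A^{*,*}(\HPinf)\otimes \E_{*,*}(\HPinf)\to A_*$. Pairing $t_M$ against $\beta_{i+1}$ gives $c_i$ by the definition of the dual basis. On the other hand, $t_M$ is the pullback of the Thom class of $\MSp$ along the map $\Sigma^{-4,-2}\HPinf\to \Sigma^{4,2}\MSp$ above. So the pairing $\langle t_M,\beta_{i+1}\rangle$ is the image of $\beta_{i+1}$ under the map to $\E\wedge\MSp$ followed by the Thom-class evaluation $\E\wedge\MSp\to\E\wedge\MSp$, and that image is $a_i$ by construction. This is exactly the argument of \cite[Lemma~II.6.3]{adams1974stable}, using that the unit $\mathbbm 1\to\MSp$ and the Thom class are compatible.

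The main obstacle is bookkeeping rather than a new idea. One must check that the motivic bidegree shifts, the identification $\mathrm{Th}(\mathfrak U)\simeq\HPinf$ (from \cite{panin2021quaternionic}), and the sign and ordering conventions of \cite{bachmann2021eta} for the $a_i$ all agree, so that $c_i=a_i$ on the nose and not just up to a unit or a reindexing. If the sources' normalization of $a_i$ differs, we would instead define $a_i:=c_i$ and verify that these still form polynomial generators. That verification follows from the same pairing computation, since the $c_i$ correspond to the dual basis classes $b_{i+1}$.
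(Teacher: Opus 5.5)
Your proposal is correct and follows essentially the paper's argument: expand $t_M$ in powers of $t_{\E}$, pair with the Kronecker-dual classes $\beta_{n+1}$ to isolate $c_n$, and recognize the resulting composite as the generator $a_n$ from its construction in \cite[Theorem~4.1(1)]{bachmann2021eta}. The one slip is in the suspensions: the first-stage map should be $\Sigma^{-4,-2}\Sigma^{\infty}\mathrm{Th}(\mathfrak U)\to\MSp$ (equivalently $\mathrm{Th}(\mathfrak U)\to\Sigma^{4,2}\MSp$), so that $t_M$ lies in $A^{4,2}(\HPinf)$; your composite $\Sigma^{-4,-2}\HPinf\simeq\mathrm{Th}(\mathfrak U)\to\Sigma^{4,2}\MSp$ as written would land in bidegree $(8,4)$.
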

\begin{proof}
	Using the perfect Kronecker pairing \cite[Lemma~4.8]{bachmann2021eta}
	\[
		\langle-,-\rangle:A^{*,*}(\HPinf)\otimes_{A_{*,*}}A_{*,*}(\HPinf)\to A_{*,*},
	\]
	let $\beta_i \in A_{4i,2i}(\HPinf)$ be the class dual to $t_{\E}^i$, i.e.\ $\langle t_{\E}^j,\beta_i\rangle=\delta_{i,j}$.

	Write
	\[
		t_M=\sum_{n\ge 0} c_{n} t_{\E}^{n+1}
	\]
	with $c_n\in A_{4n,2n}$ for $n\ge 0$. Pairing with $\beta_{n+1}$ gives $\langle t_M,\beta_{n+1}\rangle=c_{n}$. By definition of the Kronecker pairing, $\langle t_M,\beta_{n+1}\rangle$ is represented by the composite
	\[
		\Sigma^{4n,2n} \mathbbm{1} \xrightarrow{\beta_{n+1}}
		A \wedge \Sigma^{-4,-2}\Sigma^{\infty}_{+}\HPinf
		\xrightarrow{\mathrm{id}\wedge t_M}
		A \wedge A
		\xrightarrow{\mu}
		A.
	\]
	By the construction of the polynomial generators in \cite[Theorem~4.1(1)]{bachmann2021eta}, this composite is precisely $a_{n}$, with $a_0=1$. Thus $c_n = \langle t_M,\beta_{n+1}\rangle=a_n$ for $n\ge 0$. Therefore
	\[
		t_M=t_{\E}+\sum_{n\ge 1} a_n t_{\E}^{n+1}=\exp(t_{\E}),
	\]
	as claimed.
\end{proof}

\begin{corollary}\label{coro:transport-base-change}
	For the universal rank-$2n$ symplectic bundle $\gamma_n$ over $\HGr(n,\infty)$,
	\[
		b_t^M(\gamma_n)=\bigl(\exp \circ\, b^\E\bigr)_t(\gamma_n)
	\]
	in the sense of Definition~\ref{def:transport-of-series}, where $b_t^\E$ and $b_t^M$ denote the Borel polynomials for the two $\Sp$-orientations on $A$ induced by $\E$ and by $\MSp$, respectively.
\end{corollary}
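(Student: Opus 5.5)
The plan is to reduce to the rank-$2$ case, where Proposition~\ref{prop:msp-universal-base-change} already gives $b_1^M=\exp(b_1^\E)$, and then extend by the splitting principle in the form supplied by the quaternionic projective bundle theorem (Theorem~\ref{thm:quaternionic-projective-bundle}).

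First, for any rank-$2$ symplectic bundle $\mathfrak U$ on a smooth $X$, I will show $b_1^M(\mathfrak U)=\exp\bigl(b_1^\E(\mathfrak U)\bigr)$ in $A^{*,*}(X)$. Both first Borel classes are defined by pulling back $t_M$, respectively $t_\E$, along the same classifying section $f_{\mathfrak U}$ on a Jouanolou replacement. Pullback is a ring map continuous for the relevant filtration; the series converges because $b_1^\E$ is nilpotent on finite-dimensional approximations, or one works in the pro-system defining $A^{*,*}(\HPinf)$. So the identity follows by pulling back the identity $t_M=\exp(t_\E)$ of Proposition~\ref{prop:msp-universal-base-change}.

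Second, I will apply the symplectic splitting principle to $\gamma_n$. Let $\pi\colon Y\to\HGr(n,\infty)$ be the iterated quaternionic flag bundle: take $\HP(\gamma_n)$, pass to the symplectic orthogonal complement of the tautological subbundle, and iterate. On $Y$, $\pi^*\gamma_n$ splits as an orthogonal sum $\mathfrak U_1\perp\cdots\perp\mathfrak U_n$ of rank-$2$ symplectic bundles. By Theorem~\ref{thm:quaternionic-projective-bundle}, applied at each stage and for both orientations, $\pi^*$ is injective on $A^{*,*}$; moreover $Y$ is a free module over the base, so injectivity is independent of which orientation is used. This step has to be done on finite Grassmannians $\HGr(n,N)$, passing to the limit using the Mittag-Leffler property of the surjective restriction maps.

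Third, I will use the Whitney/Cartan formula $b_t(\mathfrak V\perp\mathfrak V')=b_t(\mathfrak V)b_t(\mathfrak V')$ for each orientation, which is standard from \cite{panin2021quaternionic}. It gives
\[
\pi^*b_t^\E(\gamma_n)=\prod_i\bigl(1+b_1^\E(\mathfrak U_i)t\bigr),\qquad
\pi^*b_t^M(\gamma_n)=\prod_i\bigl(1+b_1^M(\mathfrak U_i)t\bigr).
\]
By the first step the second product equals $\prod_i\bigl(1+\exp(r_i)t\bigr)$ with $r_i=b_1^\E(\mathfrak U_i)$. This is exactly $\pi^*(\exp\circ b^\E)_t(\gamma_n)$ in the sense of Definition~\ref{def:transport-of-series}. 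Rootwise composition is computed in a splitting algebra, and $Y$ provides one because the symmetric functions in the $r_i$ are the Borel classes.

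Injectivity of $\pi^*$ then concludes the proof. The main obstacle I expect is this splitting-principle step. It requires checking that the $b_1^\E(\mathfrak U_i)$ genuinely serve as roots in the formal sense, so that the symmetric expression for $(\exp\circ b^\E)_t$ descends to $\HGr(n,\infty)$ with coefficients in $A_*\llbracket b_1^\E,\dots,b_n^\E\rrbracket$. It also requires handling the convergence and limit issues for infinite Grassmannians.
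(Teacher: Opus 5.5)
Your argument is correct. It uses the same splitting-principle strategy as the paper, but with a different splitting space. The paper does not build the iterated quaternionic flag bundle. Instead it pulls back along $\alpha\colon(\HPinf)^{\times n}\to\HGr(n,\infty)$, the map classifying the external sum $\mathfrak U_1\oplus\cdots\oplus\mathfrak U_n$ of the tautological rank-$2$ bundles. There, $A^{*,*}((\HPinf)^{\times n})\cong A^{*,*}\llbracket t^\E_1,\ldots,t^\E_n\rrbracket$ is literally a universal splitting algebra, with $\alpha^*b_i^\E(\gamma_n)=\sigma_i(t^\E_1,\ldots,t^\E_n)$ and likewise for $M$. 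The identity $t^M_j=\exp(t^\E_j)$ is just Proposition~\ref{prop:msp-universal-base-change} pulled back along the $j$-th projection.

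This removes two parts of your argument. Your first step, treating arbitrary rank-$2$ bundles on smooth schemes via classifying sections and nilpotence, is not needed. The obstacle you flag, whether the $b_1^\E(\mathfrak U_i)$ genuinely behave as formal roots, also disappears, because the roots are free power-series variables and rootwise composition is computed on the nose.

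The cost of the paper's route is that split injectivity of $\alpha^*$ rests on the Panin--Walter computation $A^{*,*}(\HGr(n,\infty))\cong A^{*,*}\llbracket b_1,\ldots,b_n\rrbracket$, together with freeness of polynomials over symmetric polynomials. Your flag bundle gets injectivity from Theorem~\ref{thm:quaternionic-projective-bundle} alone. You pay for this with the finite-stage and limit bookkeeping you describe. That bookkeeping does go through: by Mittag-Leffler, $A^{*,*}(\HGr(n,\infty))$ is the inverse limit of the finite stages, and a limit of injections is injective. Both routes use the Cartan sum formula in the same way.
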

\begin{proof}
	The map
	\[
		\alpha^{*}:A^{*,*}(\HGr(n,\infty)) \to A^{*,*}((\HPinf)^{\times n}) \cong A^{*,*} \llbracket t^{\E}_1, t^{\E}_2, \ldots, t^{\E}_n \rrbracket
	\]
	is split injective and satisfies
	\[
		\alpha^*(b^{\E}_i(\gamma_n))=\sigma_i(t^{\E}_1,\ldots,t^{\E}_n),\qquad
		\alpha^*(b^{M}_i(\gamma_n))=\sigma_i(t^{M}_1,\ldots,t^{M}_n).
	\]
	By Proposition~\ref{prop:msp-universal-base-change}, $t^M_j=\exp(t^\E_j)$ for each $j$. Hence
	\[
		\alpha^*b_t^M(\gamma_n)
		=\prod_{j=1}^{n}\left(1+\exp(t_j^\E)t\right)
		=\alpha^*\left(\bigl(\exp \circ\, b^\E\bigr)_t(\gamma_n)\right).
	\]
	Since $\alpha^*$ is split injective, this proves the transport formula.
\end{proof}

Now we return to the $\eta$-periodic world. Let $\mathfrak{U}_1,\mathfrak{U}_2,\mathfrak{U}_3$ be the three tautological rank-$2$ symplectic bundles on $(\HPinf)^{\times 3}$. Their tensor product is classified by a map
\[
	(\HPinf)^{\times 3}\to\HGr(4,\infty).
\]
Let $x^H,y^H,z^H$ be the $\HW$-coordinates and put $x=\exp(x^H)$, $y=\exp(y^H)$, and $z=\exp(z^H)$. Pulling back the $n=4$ case of Corollary~\ref{coro:transport-base-change}, with $\E=\HW$, gives
\[
	h(F_t)(x,y,z)=\bigl(\exp \circ F^{\HW}\bigr)_t(x^H,y^H,z^H),
\]
which is precisely $h(F_t)=\exp_*F_t^{\HW}$. From direct calculation, $F_t^{\HW}$ splits with roots
\[
	\varepsilon_1 x^H+\varepsilon_2 y^H+\varepsilon_3 z^H,
\]
where $\varepsilon_j\in\{\pm 1\}$ and $\varepsilon_1\varepsilon_2\varepsilon_3=-1$. This gives an explicit factorization of the Hurewicz image of the universal GFTL.

\begin{proposition}\label{prop:hurewicz-gftl-roots}
	Let $x,y,z$ denote the generators induced by the universal $\Sp$-orientation on $\MSpetainv$. Then
	\[
		h(F_t)=\prod_{\varepsilon_1\varepsilon_2\varepsilon_3=-1}
		\left(1+\exp \left(\varepsilon_1 \log(x) + \varepsilon_2 \log(y) + \varepsilon_3 \log(z)\right)t\right).
	\]
	Equivalently, the roots of $h(F_t)$ are
	\[
		\exp \left(\varepsilon_1 \log(x) + \varepsilon_2 \log(y) + \varepsilon_3 \log(z)\right).
	\]
\end{proposition}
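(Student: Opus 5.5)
The plan is to combine the transport formula already obtained just before the statement with the explicit splitting of $F_t^{\HW}$. The identity $h(F_t)=\exp_*F_t^{\HW}$ follows from Corollary~\ref{coro:transport-base-change}, applied with $\E=\HW$ and $n=4$, after pulling back along the classifying map $(\HPinf)^{\times 3}\to\HGr(4,\infty)$ of $\mathfrak U_1\otimes\mathfrak U_2\otimes\mathfrak U_3$. Once we know that $F_t^{\HW}$ splits with roots $r_\varepsilon=\varepsilon_1x^H+\varepsilon_2y^H+\varepsilon_3z^H$, where $\varepsilon_1\varepsilon_2\varepsilon_3=-1$, the second description in Definition~\ref{def:transport-of-series} gives
\[
	(\exp_*F^{\HW})_t=\prod_{\varepsilon}\Bigl(1+\exp\bigl(r_\varepsilon(\log x,\log y,\log z)\bigr)t\Bigr).
\]
Here we use that $\exp^{-1}=\log$ and $x=\exp(x^H)$, so that $x^H=\log(x)$, and similarly for $y$ and $z$. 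This is exactly the claimed product. There are four sign patterns, which matches the rank-$4$ Borel polynomial.

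The key steps, in order, are as follows.

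\emph{Step 1: the splitting of $F_t^{\HW}$.} The four roots are $x+y-z$, $x-y+z$, $-x+y+z$ and $-x-y-z$, where we drop the superscript $H$. We check the splitting by expanding the product using Example~\ref{ex:gftl-over-hw}.
\begin{itemize}
	\item[(i)] The sum of the roots is $0$, which matches $F_1^{\HW}=0$.
	\item[(ii)] The second elementary symmetric function equals $\tfrac12\bigl((\sum r)^2-\sum r^2\bigr)=-\tfrac12\cdot 4\sigma(x^2)=-2\sigma(x^2)$.
	\item[(iii)] The third elementary symmetric function, $e_3$, and the fourth, $e_4$, are compared with $-8xyz$ and $\sigma(x^4)-2\sigma(x^2y^2)$ in the same way.
\end{itemize}
To avoid dividing by $2$, I would pair the roots as $(a+b)(a-b)$ with $a=-z$ and $b=x+y$, and as $(c+d)(c-d)$ with $c=z$ and $d=x-y$. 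This writes the polynomial as a product of two quadratics in $t$ and gives integral identities directly. For example, $e_4=(z^2-(x+y)^2)(z^2-(x-y)^2)=\sigma(x^4)-2\sigma(x^2y^2)$.

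\emph{Step 2: splitting in a splitting algebra.} Since $W(k)$ may have $2$-torsion, I would verify the splitting as a polynomial identity over $\ZZ$ and then base change it. The roots then lie in $W(k)\llbracket x^H,y^H,z^H\rrbracket$ itself, so no auxiliary splitting algebra is needed.

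\emph{Step 3: applying the transport.} We apply Definition~\ref{def:transport-of-series} with $\Phi=\exp$, taken over $\HW_*\MSp\cong W(k)[a_1,a_2,\ldots]$. Then we rewrite in the $\MSp$-coordinates $x,y,z$ using $x^H=\log x$.

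The main obstacle is the identification $h(F_t)=\exp_*F_t^{\HW}$ itself. One must check that the $\HW$-Hurewicz image of the universal coordinate is $t_M$, that is, the coordinate in $A=\HW\wedge\MSp$ induced from $\MSp$. One must also check that Borel classes are natural along the ring map $\MSpetainv\to\HW\wedge\MSp$; this holds because $b_l$ is defined by pullback of orientation classes. The only other point needing care is that Proposition~\ref{prop:msp-universal-base-change} holds in the $\eta$-periodic setting, and we take this from the Kronecker pairing of \cite{bachmann2021eta} as stated there.
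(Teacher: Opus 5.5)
Your proposal is correct and follows the paper's proof. Both arguments combine three ingredients: the identity $h(F_t)=\exp_*F_t^{\HW}$, obtained from Corollary~\ref{coro:transport-base-change} with $\E=\HW$ and $n=4$ and pulled back along the classifying map of $\mathfrak U_1\otimes\mathfrak U_2\otimes\mathfrak U_3$; the splitting of $F_t^{\HW}$ with roots $\varepsilon_1x^H+\varepsilon_2y^H+\varepsilon_3z^H$; and the root formula for transport, using $x^H=\log(x)$ and likewise for $y$ and $z$. Your integral factorization into the two quadratics $1-2zt+(z^2-(x+y)^2)t^2$ and $1+2zt+(z^2-(x-y)^2)t^2$ is a valid way to carry out the ``direct calculation'' of that splitting, which the paper leaves implicit.
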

\begin{proof}
	Let $x^{H},y^{H},z^{H}$ denote the generators induced by the $\Sp$-orientation of $\HW$. By Proposition~\ref{prop:msp-universal-base-change}, we have
	\[
		x=\exp(x^{H}),\qquad y=\exp(y^{H}),\qquad z=\exp(z^{H}),
	\]
	so $x^{H}=\log(x)$, $y^{H}=\log(y)$, and $z^{H}=\log(z)$. Since $h(F_t)=\exp_*F_t^{\HW}$ and the roots of $F_t^{\HW}$ are $\varepsilon_1 x^{H}+\varepsilon_2 y^{H}+\varepsilon_3 z^{H}$ with $\varepsilon_1\varepsilon_2\varepsilon_3=-1$, Definition~\ref{def:transport-of-series} gives the stated product formula.
\end{proof}
\begin{remark}
	Before inverting $\eta$, the series $F_{t}^{\HMW}$ does not split in general.
\end{remark}

\subsection{Indecomposable Formulas}\label{subsec:indecomposable-formulas}
For $1\le l\le 4$, let $\Lambda_l \subset (\MSpetainv)_*$ be the $W(k)$-subalgebra generated by the coefficients of $F_l$, and let $\Lambda \subset (\MSpetainv)_*$ be the $W(k)$-subalgebra generated by all coefficients. We also set
\[
	S_n(x,y,z) := \sum_{\varepsilon_1 \varepsilon_2 \varepsilon_3 = -1} \left(\varepsilon_1 x + \varepsilon_2 y + \varepsilon_3 z\right)^{n},
\]
\[
	Q(x,y,z):=x^2+y^2+z^2,\qquad
	\Delta(x,y,z):=\sigma(x^4)-2\sigma(x^2y^2),
\]
\[
	P_m(x,y,z):=4(x^m+y^m+z^m)-S_m(x,y,z),
\]
\[
	C_n(x,y,z):=8\sigma(x^{n+1}yz)+S_{n+3}(x,y,z)-2Q(x,y,z)S_{n+1}(x,y,z),
\]
\[
	L_n(x,y,z):=\sigma\left(x^{n+2}(x^2-y^2-z^2)\right).
\]

\begin{proposition}\label{prop:hurewicz-indecomposable-formulas}
	One has
	\[
		h(F_1)=\sum_{n=1}^{\infty} a_n S_{n+1} \left(\log(x), \log(y), \log(z)\right).
	\]
	Moreover, modulo decomposables in the coefficient ring,
	\[
		\begin{aligned}
			h(F_1) & \equiv \sum_{n=1}^{\infty} a_n S_{n+1} \left(x, y, z\right),                                 \\
			h(F_2) & \equiv -2Q(x,y,z)+\sum_{n=1}^{\infty}a_n P_{n+2}(x,y,z),                                     \\
			h(F_3) & \equiv -8xyz+\sum_{n=1}^{\infty} a_n C_n(x,y,z),                                             \\
			h(F_4) & \equiv \Delta(x,y,z)+\sum_{n=1}^{\infty}a_n\left(\Delta(x,y,z)S_n(x,y,z)-4L_n(x,y,z)\right).
		\end{aligned}
	\]
\end{proposition}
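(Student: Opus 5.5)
The plan is to start from the root factorization in Proposition~\ref{prop:hurewicz-gftl-roots} and expand each elementary symmetric function of the roots to first order in the generators $a_n$. Write $u=\log(x)$, $v=\log(y)$, $w=\log(z)$, and for a sign vector $\varepsilon$ with $\varepsilon_1\varepsilon_2\varepsilon_3=-1$ put $s_\varepsilon=\varepsilon_1u+\varepsilon_2v+\varepsilon_3w$. The roots are $r_\varepsilon=\exp(s_\varepsilon)=s_\varepsilon+\sum_{n\ge1}a_ns_\varepsilon^{n+1}$. Then
\[
	h(F_l)=e_l\bigl(\{r_\varepsilon\}\bigr),
\]
the $l$-th elementary symmetric function in the four roots.

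For $F_1$ no approximation is needed. The sum $\sum_\varepsilon s_\varepsilon$ vanishes, since each variable occurs twice with each sign. Hence
\[
	h(F_1)=\sum_\varepsilon r_\varepsilon=\sum_n a_n S_{n+1}(u,v,w),
\]
which is the exact formula. Modulo decomposables we may replace $u,v,w$ by $x,y,z$, because $\log(t)=t-\sum a_nt^{n+1}+(\text{decomposables})$ and $a_n\cdot a_m\in(a)^2$.

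For $l\ge2$ the first-order variation is
\[
	e_l(\{s_\varepsilon+\delta_\varepsilon\})\equiv e_l(\{s_\varepsilon\})+\sum_\varepsilon\delta_\varepsilon\,e_{l-1}(\{s_{\varepsilon'}\}_{\varepsilon'\ne\varepsilon}),
	\qquad \delta_\varepsilon=\sum_n a_ns_\varepsilon^{n+1}.
\]
We also need $e_l(\{s_\varepsilon\})=F^{\HW}_l(u,v,w)$, which is read off from Example~\ref{ex:gftl-over-hw}. Substituting $u=x-\sum a_nx^{n+1}+\cdots$ into $F^{\HW}_l(u,v,w)$ produces a second linear term $-\sum_n a_n\sum_i x_i^{n+1}\partial_iF^{\HW}_l(x,y,z)$. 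The indecomposable part is therefore
\[
	\sum_na_n\Bigl[\sum_\varepsilon s_\varepsilon^{n+1}\,e_{l-1}(\hat s_\varepsilon)-\sum_ix_i^{n+1}\partial_iF^{\HW}_l\Bigr],
\]
evaluated at $u=x$, $v=y$, $w=z$. The remaining task is to rewrite this bracket as $P_{n+2}$, $C_n$, or $\Delta S_n-4L_n$.

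To evaluate $e_{l-1}(\hat s_\varepsilon)$, we use the recursion $e_{l-1}(\hat s_\varepsilon)=\sum_{j\ge0}(-s_\varepsilon)^j e_{l-1-j}(\text{all})$, which follows from $\prod_{\varepsilon'\ne\varepsilon}(1+s_{\varepsilon'}t)=F^{\HW}_t/(1+s_\varepsilon t)$. With $e_1=0$, $e_2=-2Q$, and $e_3=-8xyz$ this gives:
\[
	\begin{aligned}
		e_1(\hat s_\varepsilon) & =-s_\varepsilon,\\
		e_2(\hat s_\varepsilon) & =-2Q+s_\varepsilon^2,\\
		e_3(\hat s_\varepsilon) & =-8xyz+2Qs_\varepsilon-s_\varepsilon^3.
	\end{aligned}
\]
Each case then becomes a combination of power sums:
\begin{itemize}
	\item For $l=2$ we get $-S_{n+2}$. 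The derivative term is $-\sum x_i^{n+1}(-4x_i)=4\sum x_i^{n+2}$, and together they give $P_{n+2}$.
	\item For $l=3$ we get $-2QS_{n+1}+S_{n+3}$. The derivative term is $+8\sigma(x^{n+1}yz)$, matching $C_n$. The sign of the $Q$-term must be checked against the stated $C_n$; I expect this sign bookkeeping to need care.
	\item For $l=4$ we get $-8xyzS_{n+1}+2QS_{n+2}-S_{n+4}$. The derivative term is $-\sum x_i^{n+1}\partial_i\Delta$, and $\partial_x\Delta=4x(x^2-y^2-z^2)$ yields $-4L_n$.
\end{itemize}

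The main obstacle is the $l=4$ case. There one must prove the identity
\[
	-8xyzS_{n+1}+2QS_{n+2}-S_{n+4}=\Delta\,S_n .
\]
This is a Newton-type identity for power sums of the four roots $s_\varepsilon$. The four roots satisfy $s^4-2Qs^2+8xyz\,s+\Delta=0$ with the appropriate signs, since $e_4=\Delta$. Multiplying by $s_\varepsilon^n$ and summing over $\varepsilon$ gives exactly $S_{n+4}-2QS_{n+2}+8xyzS_{n+1}+\Delta S_n=0$, which is the required identity. One consistency check should be made: the signs in the characteristic polynomial are $(-1)^le_l$, and $e_3=-8xyz$. After that check, everything else is routine expansion.
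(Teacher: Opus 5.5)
Your proposal is correct and is essentially the paper's argument: expand each elementary symmetric function of the roots $\exp(s_\varepsilon)$ to first order in the $a_n$, then substitute $\log(t)\equiv t-\sum_n a_nt^{n+1}$. Your $l=3$ sign already agrees with $C_n$, so no further bookkeeping is needed there; for $l=4$ the paper avoids the Newton identity by factoring $\prod_\varepsilon s_\varepsilon=\Delta$ out of the product, which amounts to noting $s_\varepsilon\,e_3(\hat s_\varepsilon)=e_4=\Delta$ and hence $\sum_\varepsilon s_\varepsilon^{n+1}e_3(\hat s_\varepsilon)=\Delta S_n$ directly.
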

\begin{proof}
	The first identity follows by expanding the product in Proposition~\ref{prop:hurewicz-gftl-roots}. The remaining formulas are obtained by taking the corresponding elementary symmetric polynomials in the four roots and then reducing modulo decomposables. The detailed expansions are recorded in Appendix~\ref{app:gftl-hurewicz-computations}.
\end{proof}

\subsection{Images of the Coefficient Rings}
To determine the image of the coefficient ring $\Lambda$, it suffices to compute the image $p$-primarily for each prime $p$. For an odd prime $p$, Remark~\ref{rem:lambda-image-after-inverting-two} below shows that $\Lambda_1 \subset \Lambda$ already controls the $p$-primary image. For $p=2$, we need to compute the $2$-primary image of each $\Lambda_l$.

\begin{theorem} \label{thm:lambda-one-hurewicz-image}
	For $n > 0$,
	\[
		\mathrm{Im} \left(Q_{2n} \Lambda_1 \xrightarrow{h} Q_{2n} \HW_*\MSp\right) = \begin{cases}
			p 2^{2+\nu_2(n)} a_n W(k), & n+1 = p^{k}, p \text{ odd prime}, \\
			2^{2+\nu_2(n)} a_n W(k),   & \text{otherwise.}
		\end{cases}
	\]
\end{theorem}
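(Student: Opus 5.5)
By Proposition~\ref{prop:hurewicz-indecomposable-formulas}, modulo decomposables $h(F_1)\equiv\sum_{n\ge1}a_nS_{n+1}(x,y,z)$. Since $h$ is a ring map, it sends decomposables of $\Lambda_1$ to decomposables of $\HW_*\MSp\cong W(k)[a_1,a_2,\ldots]$. In degree $2n$ the indecomposables of $\Lambda_1$ are spanned by the coefficients of $F_1$ of the matching degree. Each such coefficient maps to $a_n$ times the corresponding coefficient of $S_{n+1}$. Hence
\[
\mathrm{Im}\bigl(Q_{2n}\Lambda_1\to Q_{2n}\HW_*\MSp\bigr)=\cont(S_{n+1})\,a_nW(k),
\]
where $\cont(S_{n+1})$ is the gcd of the integer coefficients of $S_{n+1}$. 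The statement is therefore reduced to the elementary identity
\[
\cont(S_m)=2^{2+\nu_2(m-1)}\cdot\begin{cases}p,& m=p^k,\ p \text{ odd},\\ 1,&\text{otherwise},\end{cases}\qquad m=n+1\ge 2.
\]

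Next I compute the coefficients of $S_m$ explicitly. Expanding by the multinomial theorem, the coefficient of $x^ay^bz^c$ with $a+b+c=m$ is
\[
\binom{m}{a,b,c}\Bigl((-1)^a+(-1)^b+(-1)^c+(-1)^{m}\Bigr),
\]
since the four admissible sign patterns are $(-,+,+)$, $(+,-,+)$, $(+,+,-)$ and $(-,-,-)$. A parity check gives the following. If $m$ is even, the bracket is $4$ when $a,b,c$ are all even and $0$ otherwise. If $m$ is odd, the bracket is $-4$ when $a,b,c$ are all odd and $0$ otherwise. So $\cont(S_m)=4g_m$, where $g_m$ is the gcd of the multinomials $\binom{m}{a,b,c}$ over triples $(a,b,c)$ all of the same parity as $m$.

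The main obstacle is the gcd computation $g_m=2^{\nu_2(m-1)}\cdot(p\text{ or }1)$, which I will handle prime by prime using Kummer's theorem for multinomials: $\nu_q\binom{m}{a,b,c}$ is the number of carries in the base-$q$ addition $a+b+c$.
\begin{itemize}
\item For $q=2$ and $m$ even, the triple $(m,0,0)$ gives $1$, and $\nu_2(m-1)=0$, so $\nu_2(g_m)=0$.
\item For $q=2$ and $m$ odd, the triple $(m-2,1,1)$ gives $m(m-1)$, of valuation $\nu_2(m-1)$. For the lower bound, note that with $a,b,c$ odd the bits of $a,b,c$ in position $0$ force a carry. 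A carry chain then propagates through the trailing zeros of $m-1$, giving at least $\nu_2(m-1)$ carries. I would make this carry count precise, and it is the step I expect to need the most care.
\item For odd $q$ and $m$ not a power of $q$, I imitate Lemma~\ref{lem:binomial-gcd}. Write $m=q^sr$ with $q\nmid r$ and $r>1$. I seek a parity-admissible triple with no carries, e.g. built from $q^s$, $m-q^s$ and $0$, adjusting digits by multiples of $q^s$ (which are odd) so as to meet the parity constraint. Lucas' theorem then shows this multinomial is a unit mod $q$.
\item For $m=q^k$ with $q$ odd, every admissible triple has at least two nonzero parts, so at least one carry occurs and $q\mid g_m$. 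The triple $(q^k-2q^{k-1},q^{k-1},q^{k-1})$ is all odd, and its single carry chain gives valuation exactly $1$.
\end{itemize}
Combining the prime-by-prime answers gives the claimed content of $S_{n+1}$, and hence the image.
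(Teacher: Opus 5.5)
Your proposal is correct and follows the paper's proof essentially step for step. It uses the same reduction to the content of $S_{n+1}$, the same sign-sum computation (your bracket is the paper's $E(a,b,c)\in\{4,-4,0\}$), and the same prime-by-prime Kummer analysis with the extremal triples $(m,0,0)$, $(m-2,1,1)$ and $((q-2)q^{k-1},q^{k-1},q^{k-1})$. The differences are only in execution: your carry-chain bound replaces the paper's Legendre digit-sum inequality for $\nu_2$, and your sketched odd-$q$ step is completed by the triple $(q^s,q^j,m-q^s-q^j)$, with $j$ any nonzero digit position of $m-q^s$ (all three parts are odd when $m$ is odd, and $(m,0,0)$ already settles even $m$ at every prime), which is the paper's $(p^i,p^j,m-p^i-p^j)$.
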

\begin{proof}
	This reduces to computing the content of the polynomials $S_{n+1}$ for $n\ge 1$; see Appendix~\ref{app:gftl-hurewicz-computations}.
\end{proof}

\begin{remark}\label{rem:lambda-image-after-inverting-two}
	For $k=\ZZ$, there are equivalences
	\[
		\MSpetainv[1/2] \simeq r_{\mathbb{R}}(\MSp)[1/2] \simeq \MU[1/2],
		\quad
		\HW[1/2] \simeq r_{\mathbb{R}}(\HW) \simeq \mathbf{H}\ZZ[1/2],
	\]
	see \cite[Lemma 4.7]{rondigs2019on}. Under these identifications, the following diagram commutes:
	\[
		\xymatrix{
		(\MSpetainv)_*[1/2] \ar[r]^-{\simeq} \ar[d]_-{h} & \MU_* [1/2] \ar[d]_-{h}\\
		\HW_*\MSp[1/2] \ar[r]^-{\simeq} & \mathbf{H}_*\MU[1/2]
		}
	\]
	Hence the two Hurewicz maps agree after inverting $2$. In particular, for $n > 0$,
	\[
		\begin{aligned}
			\mathrm{Im} \left(Q_{2n} (\MSpetainv)_*[1/2] \xrightarrow{h} Q_{2n} \HW_*\MSp[1/2] \right) \\
			\qquad = \begin{cases}
				         p a_n \ZZ, & n+1 = p^{k}, p \text{ odd prime}, \\
				         a_n \ZZ,   & \text{otherwise,}
			         \end{cases}
		\end{aligned}
	\]
	cf.~\cite{milnor1960on}. Comparing with Theorem~\ref{thm:lambda-one-hurewicz-image}, we conclude that
	\[
		\Lambda_1[1/2] = \Lambda[1/2] = (\MSpetainv)_*[1/2].
	\]
	Thus it remains to study only the higher-degree pieces localized at $2$.
\end{remark}

\begin{theorem}\label{thm:lambda-two-hurewicz-image}
	\[
		\mathrm{Im} \left(Q_{2n}\Lambda_2\xrightarrow{h}Q_{2n}\HW_*\MSp\right)_{(2)} = \begin{cases}
			8a_n W(k)_{(2)}, & \text{if }n+2=2^{r}, r\ge 2, \\
			4a_n W(k)_{(2)}, & \text{otherwise}.
		\end{cases}
	\]
\end{theorem}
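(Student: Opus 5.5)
The plan is to reduce the statement to a $2$-adic content computation for the polynomials $P_m$, exactly parallel to the proof of Theorem~\ref{thm:lambda-one-hurewicz-image}.

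\textbf{Reduction to $P_{n+2}$.} Since $\Lambda_2$ is the $W(k)$-subalgebra generated by the coefficients of $F_2$, the module $Q_{2n}\Lambda_2$ is spanned over $W(k)$ by the classes of those coefficients of $F_2$ lying in degree $2n$. With the grading conventions ($|x|=|y|=|z|=-2$, $|t|=2$, so $|F_2|=-4$), these are the coefficients of the monomials $x^ay^bz^c$ with $a+b+c=n+2$. The map $h$ sends decomposables to decomposables. Hence, by Proposition~\ref{prop:hurewicz-indecomposable-formulas}, the image of $Q_{2n}\Lambda_2$ in $Q_{2n}\HW_*\MSp=a_nW(k)$ is
\[
\cont(P_{n+2})\, a_n W(k).
\]
For $n\ge1$, the term $-2Q$ only contributes in degree $0$. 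So after localizing at $2$, it suffices to show that $\nu_2\cont(P_m)=3$ if $m=2^r$ with $r\ge2$, and $\nu_2\cont(P_m)=2$ otherwise, for $m=n+2\ge3$.

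\textbf{Coefficients of $S_m$.} The four sign vectors with $\varepsilon_1\varepsilon_2\varepsilon_3=-1$ are exactly half of all eight sign vectors. Writing the indicator of this set as $\tfrac12(1-\varepsilon_1\varepsilon_2\varepsilon_3)$ and using character orthogonality gives
\[
\sum_{\varepsilon_1\varepsilon_2\varepsilon_3=-1}\varepsilon_1^a\varepsilon_2^b\varepsilon_3^c
=4\bigl([a,b,c\text{ all even}]-[a,b,c\text{ all odd}]\bigr).
\]
Therefore the coefficient of $x^ay^bz^c$ in $S_m$ is $4\binom{m}{a,b,c}\bigl([\text{all even}]-[\text{all odd}]\bigr)$. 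Consequently, in $P_m=4(x^m+y^m+z^m)-S_m$:
\begin{itemize}
\item the pure power $x^m$ has coefficient $4$ if $m$ is odd, and $0$ if $m$ is even;
\item every mixed monomial has coefficient $\mp4\binom{m}{a,b,c}$, nonzero only when all exponents share the parity of $m$.
\end{itemize}

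\textbf{Case $m$ odd.} The coefficient $4$ of $x^m$ occurs, and every coefficient is divisible by $4$. Hence $\nu_2\cont(P_m)=2$.

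\textbf{Case $m=2k$ even.} Here $\cont(P_m)=4\gcd\{\binom{2k}{2a,2b,2c}\}$, the gcd taken over non-pure triples $(a,b,c)$ with $a+b+c=k$. By Lucas' theorem, $\binom{2k}{2a,2b,2c}\equiv\binom{k}{a,b,c}\pmod 2$.
\begin{itemize}
\item If $k$ is not a power of $2$, some non-pure $\binom{k}{a,b,c}$ is odd; for example $\binom{k}{2^s}$ with $2^s$ the lowest binary digit of $k$. This gives valuation $2$.
\item If $k=2^{r-1}$, all non-pure multinomials $\binom{k}{a,b,c}$ are even. Moreover, $\nu_2\binom{2^r}{2^{r-1}}=1$ by Kummer's theorem (one carry, as in Lemma~\ref{lem:binomial-gcd}). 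Since all coefficients are already divisible by $8$, this gives valuation exactly $3$. The condition $m\ge3$ forces $r\ge2$.
\end{itemize}

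\textbf{Main obstacle.} The step needing most care is the parity analysis of the trinomial coefficients in the power-of-two case. One must show that every non-pure coefficient is divisible by $2$ (and so by $8$ overall), while exhibiting one of valuation exactly $1$. This is handled by Lucas/Kummer as indicated. The remaining points are bookkeeping: that coefficients in $W(k)$ do not affect the $2$-adic content (they enter only as the scalar ring of the image), and the explicit expansion of $P_{n+2}$ recorded in Appendix~\ref{app:gftl-hurewicz-computations}.
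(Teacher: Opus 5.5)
Your proposal is correct and follows essentially the same route as the paper. Both arguments reduce to the $2$-adic content of $P_{n+2}$, settle odd $m$ via the coefficient $4$ of $x^m$, and for even $m$ use Kummer/Lucas to show the relevant gcd is $2$ exactly when $m=2^r$ with $r\ge 2$. The only cosmetic difference is that the paper first reduces the trinomial gcd to the binomials $\binom{m}{2k}$ by setting $c=0$, whereas you apply Lucas' theorem directly to the multinomials via $\binom{2k}{2a,2b,2c}\equiv\binom{k}{a,b,c}\pmod 2$.
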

\begin{proof}
	This follows from the content computation of the polynomials $P_m$; see Appendix~\ref{app:gftl-hurewicz-computations}.
\end{proof}

\begin{theorem}\label{thm:lambda-three-hurewicz-image}
	For $n\ge 1$,
	\[
		\mathrm{Im} \left(Q_{2n}\Lambda_3\xrightarrow{h}Q_{2n}\HW_*\MSp\right)_{(2)} = \begin{cases}
			4a_n W(k)_{(2)},  & \text{if $n$ is odd},            \\
			16a_n W(k)_{(2)}, & \text{if $n=2^r$ with $r\ge 2$}, \\
			8a_n W(k)_{(2)},  & \text{otherwise}.
		\end{cases}
	\]
\end{theorem}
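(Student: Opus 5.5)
The plan is to reduce the statement to a $2$-adic content computation for the polynomials $C_n$, in the same way as Theorems~\ref{thm:lambda-one-hurewicz-image} and~\ref{thm:lambda-two-hurewicz-image}.

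\textbf{Step 1: reduction to a content computation.} Since $\Lambda_3$ is generated as a $W(k)$-algebra by the coefficients of $F_3$, the module $Q_{2n}\Lambda_3$ is spanned by the classes of the degree-$2n$ coefficients. The map $h$ sends decomposables to decomposables. By Proposition~\ref{prop:hurewicz-indecomposable-formulas},
\[
h(F_3)\equiv -8xyz+\sum_{n\ge1}a_nC_n(x,y,z)
\]
modulo decomposables. Hence the image in $Q_{2n}\HW_*\MSp$ is $a_n$ times the ideal generated by the integer coefficients of $C_n$. After $2$-localization this image is $2^{\nu}a_nW(k)_{(2)}$, where
\[
\nu=\min_{i+j+k=n+3}\nu_2\bigl([C_n]_{x^iy^jz^k}\bigr).
\]
It therefore suffices to show that $\nu=2$ if $n$ is odd, $\nu=4$ if $n=2^r$ with $r\ge2$, and $\nu=3$ otherwise.

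\textbf{Step 2: an explicit formula for $S_m$.} The four sign vectors with $\varepsilon_1\varepsilon_2\varepsilon_3=-1$ are the negatives of the subgroup $\{\varepsilon:\varepsilon_1\varepsilon_2\varepsilon_3=1\}$. The character sum over that subgroup is $4$ if $i\equiv j\equiv k \pmod 2$ and $0$ otherwise. Therefore
\[
S_m=(-1)^m\,4\sum_{i\equiv j\equiv k\ (2)}\binom{m}{i,j,k}x^iy^jz^k.
\]
For $m$ even all exponents are even, and for $m$ odd all exponents are odd. Substituting this into $C_n=8\sigma(x^{n+1}yz)+S_{n+3}-2QS_{n+1}$ gives every coefficient in the closed form
\[
-4\Bigl[\binom{n+3}{i,j,k}-2\Bigl(\binom{n+1}{i-2,j,k}+\binom{n+1}{i,j-2,k}+\binom{n+1}{i,j,k-2}\Bigr)\Bigr]+8\,\delta,
\]
with a global sign $(-1)^{n+1}$, where $\delta$ is the coefficient of $\sigma(x^{n+1}yz)$ on the monomial.

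\textbf{Step 3: the case $n$ odd.} Here $n+3$ is even, so the $8\sigma(x^{n+1}yz)$ term is supported on monomials with odd exponents in $y,z$. It is therefore disjoint from the even-exponent $S$-terms. Every coefficient is then divisible by $4$. The coefficient of $x^{n+3}$ equals $4-8=-4$, so $\nu=2$.

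\textbf{Step 4: the case $n$ even, lower bound $\nu\ge 3$.} All exponents are odd and the total degree $n+3$ is odd. By Kummer's theorem, adding the two odd parts $i$ and $j$ in base $2$ already produces a carry, so every such multinomial coefficient is even. Hence all coefficients are divisible by $8$, giving $\nu\ge3$.

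\textbf{Step 5: the case $n$ even, the exact value (main obstacle).} We must decide exactly when $16$ divides all coefficients, that is, when the bracket/2 plus $\delta$ is even for every odd triple $(i,j,k)$. A first check is the coefficient of $x^{n+1}yz$, which by the formula above equals
\[
4(n-4)(n+1).
\]
This has $2$-adic valuation $3$ exactly when $n\equiv 6\pmod 8$, so it does not settle all cases.

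In general, one computes $\tfrac12\binom{n+3}{i,j,k}\bmod 2$ via Kummer's theorem: it is odd precisely when the base-$2$ addition of the three parts has exactly one carry. The three terms $\binom{n+1}{\cdots}\bmod 2$ are computed via Lucas' theorem, as the condition that the binary digits of the parts are disjoint.

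If $n=2^r$ with $r\ge2$, then $n+3=2^r+2+1$ and $n+1=2^r+1$ have very sparse binary expansions. I would enumerate the odd triples directly: the parts must be $\{1,1,2^r+1\}$ up to small shifts, or contain the summand $2^r$. For each such triple I would check that the bracket/2 and $\delta$ have the same parity, giving $\nu=4$.

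If $n$ is even but not a power of $2$, write $n=2^s u$ with $u>1$ odd. I would look for a monomial $x^iy^jz^k$ that isolates a single carry in $\binom{n+3}{i,j,k}$ while all three $\binom{n+1}{\cdots}$ vanish modulo $2$. The first attempt is $i=2^s+1$, $j=1$, and $k=n+1-2^s$, permuted as needed. This produces a coefficient of valuation exactly $3$, so $\nu=3$.

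I expect this parity bookkeeping in Step 5, and in particular producing a witness for every non-power-of-$2$ even $n$, to be the main work. The details would be recorded in Appendix~\ref{app:gftl-hurewicz-computations} alongside the analogous computations for $S_{n+1}$ and $P_m$.
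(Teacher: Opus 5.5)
Your Steps 1--4 follow the paper's proof: reduction to $\cont_2(C_n)$, the $x^{n+3}$ coefficient $-4$ for odd $n$, and Kummer to get $8\mid C_n$ for even $n$. Step 5, however, has concrete holes, so the argument is incomplete as written.

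First, $\nu_2\bigl(4(n-4)(n+1)\bigr)=2+\nu_2(n-4)$ equals $3$ for \emph{every} $n\equiv 2\pmod 4$, not only for $n\equiv 6\pmod 8$. This includes $n=2$, where the coefficient is $-24$. The error matters because your fallback witness $(i,j,k)=(2^s+1,1,n+1-2^s)$ fails when $s=1$. In that case $\nu_2\binom{n+3}{3,1,n-1}\ge 2$ and the $\binom{n+1}{\cdot}$ terms are even, so the coefficient is divisible by $16$. For example, for $n=6$ the coefficient of $x^3yz^5$ is $-560$. For $n=10$ the coefficient of $x^3yz^9$ is $-4(2860-2\cdot 1430)=0$. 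As written, therefore, $n=2$ (which lies in neither of your two cases) and all $n\equiv 2\pmod 8$ are not handled. The repair is to use $x^{n+1}yz$ for all $n\equiv 2\pmod 4$, and your witness only for $s\ge 2$. There it does work: $\nu_2\binom{n+3}{i,j,k}=2+s_2(u-1)-s_2(u)=1$, $\delta=0$, and both nonzero $\binom{n+1}{\cdot}$ terms are even.

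Second, for $n=2^r$ your parity check can only give $\nu\ge 4$; you also need one coefficient of valuation exactly $4$. For $r\ge 3$ your own formula $4(n-4)(n+1)$ supplies it. For $n=4$ that coefficient vanishes, so another monomial is needed. The paper uses $xy^3z^3$, whose coefficient is $-4\bigl[140-2(20+20)\bigr]=-240$.

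The lower bound is also simpler than your enumeration suggests; note that no odd part can equal $2^r$. For even $n$, every $\binom{n+1}{\cdot}$ term has three odd parts summing to an odd number, hence is even. This is the paper's remark that $2QS_{n+1}\equiv 0\pmod{16}$. So $16$ divides the coefficient iff $\tfrac12\binom{n+3}{i,j,k}\equiv\delta\pmod 2$. For $n+3=2^r+3$, exactly one carry means $s_2(i)+s_2(j)+s_2(k)=4$, i.e.\ $\{i,j,k\}=\{1,1,2^r+1\}$, and there $\delta=1$.

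With these corrections your argument is complete. It then makes explicit the ``direct check'' that the paper leaves unproved when deciding exactly when $C_{2m}/8\not\equiv 0\pmod 2$.
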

\begin{proof}
	This is the corresponding $2$-local content computation for the polynomials $C_n$; see Appendix~\ref{app:gftl-hurewicz-computations}.
\end{proof}

\begin{theorem}\label{thm:lambda-four-hurewicz-image}
	\[
		\mathrm{Im} \left(Q_{2n}\Lambda_4\xrightarrow{h}Q_{2n}\HW_*\MSp\right)_{(2)} = \begin{cases}
			8a_n W(k)_{(2)}, & \text{if }n+2=2^{r}, r\ge 2, \\
			4a_n W(k)_{(2)}, & \text{otherwise}.
		\end{cases}
	\]
\end{theorem}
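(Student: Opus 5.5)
The plan is to reduce the statement to a $2$-local content computation for the polynomials $\Delta S_n-4L_n$, in the same way as for Theorems~\ref{thm:lambda-two-hurewicz-image} and~\ref{thm:lambda-three-hurewicz-image}.

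\emph{Reduction.} By Proposition~\ref{prop:hurewicz-indecomposable-formulas}, every coefficient of $F_4$ lies in $\Lambda_4$, and $h$ carries decomposables to decomposables. The constant part $\Delta(x,y,z)$ lives in degree $0$. Hence, in positive degree $2n$, the image of $Q_{2n}\Lambda_4$ is
\[
\cont_2\bigl(\Delta S_n-4L_n\bigr)\, a_n W(k)_{(2)}.
\]
So it suffices to show that this $2$-local content is $8$ when $n+2=2^r$ with $r\ge 2$, and $4$ otherwise.

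\emph{Factoring out $4$.} Write $S_n=\sum_{\varepsilon_1\varepsilon_2\varepsilon_3=-1}(\varepsilon_1x+\varepsilon_2y+\varepsilon_3z)^n$ and expand multinomially. The coefficient of $x^iy^jz^k$ is $\binom{n}{i,j,k}\sum\varepsilon_1^i\varepsilon_2^j\varepsilon_3^k$. The character sum over the four sign vectors with $\varepsilon_1\varepsilon_2\varepsilon_3=-1$ equals $\pm4$ when $i,j,k$ all have the same parity, and $0$ otherwise. Therefore $S_n=4T_n$, where:
\begin{itemize}
\item for $n$ even, $T_n=\sum_{i,j,k \text{ even}}\binom{n}{i,j,k}x^iy^jz^k$;
\item for $n$ odd, $T_n=-\sum_{i,j,k\text{ odd}}\binom{n}{i,j,k}x^iy^jz^k$.
\end{itemize}
It follows that $\Delta S_n-4L_n=4(\Delta T_n-L_n)$. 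What remains is to show that $\Delta T_n-L_n$ has $2$-adic content $1$, except when $n+2=2^r$, where it is exactly $2$.

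\emph{Odd $n$.} Every monomial of $\Delta T_n$ has all exponents odd. The polynomial $L_n$ contains $x^{n+4}$ with coefficient $1$, and this monomial has exponents of mixed parity, so it cannot cancel against $\Delta T_n$. Hence the content is $1$, giving $4a_n$. This is consistent with the statement, since $n+2$ is odd and so is never a power of $2$.

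\emph{Even $n$, first test monomials.} The coefficient of $x^{n+4}$ is $1-1=0$, so it gives no information. Look instead at $x^{n+2}y^2$:
\begin{itemize}
\item $x^4\cdot\binom{n}{2}x^{n-2}y^2$ contributes $\binom n2$;
\item $-2x^2y^2\cdot x^n$ contributes $-2$;
\item $L_n$ contributes $-1$, so subtracting it adds $+1$.
\end{itemize}
The total is $\binom n2-1$. This is odd when $n\equiv 2\pmod 4$, which settles that case, and it is consistent because then $n+2\equiv 0\pmod 4$ is a power of $2$ only if $n+2\equiv0\pmod 8$, which cannot happen.

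\emph{Main obstacle: $n\equiv0\pmod4$.} Here I would compute, for general $j$, the coefficient of $x^{n+4-2j}y^{2j}$ (and, if needed, monomials involving $z$). Modulo $2$, the $-2\sigma(x^2y^2)$ part of $\Delta$ drops out, so the coefficient reduces to
\[
\binom{n}{2j}+\binom{n}{2j-4}+[\text{$L_n$ terms}] \pmod 2,
\]
where the $L_n$ terms appear only for $j\in\{0,1,\tfrac{n+2}{2},\tfrac{n+4}{2}\}$. By Lucas' theorem, $\binom{n}{2j}\equiv\binom{n/2}{j}$ and $\binom{n}{2j-4}\equiv\binom{n/2}{j-2}$ modulo $2$. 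Since $\binom{n/2}{j}+\binom{n/2}{j-2}\equiv\binom{n/2+2}{j}$ modulo $2$ (via $(1+u)^{n/2}(1+u^2)\equiv(1+u)^{n/2+2}$), the mod-$2$ coefficients are essentially the row $\binom{(n+4)/2}{j}$, corrected at the boundary by $L_n$.

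By Lemma~\ref{lem:binomial-gcd}-type reasoning, the interior entries of this row are all even iff $(n+4)/2$ is a power of $2$, i.e.\ iff $n+4=2^{s}$. After re-indexing via the boundary corrections, I expect this to match the condition $n+2=2^r$ in the statement; this is where the bookkeeping must be done carefully. In the exceptional case, I would then exhibit one monomial whose coefficient has $2$-adic valuation exactly $1$, for example $j=2^{r-2}$, by computing the relevant binomials modulo $4$ via Kummer's theorem. This shows the content is exactly $2$ and not higher, giving $8a_n$.
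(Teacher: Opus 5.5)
\textbf{There is a genuine gap.} Your reduction to $\cont_2(\Delta S_n-4L_n)$, the factorization $S_n=4T_n$, the odd case, and the value $\binom n2-1$ for the coefficient of $x^{n+2}y^2$ in $\Delta T_n-L_n$ are all correct. The parity conclusion you draw from that coefficient, however, is reversed.

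For $n\equiv 2\pmod 4$ the number $\binom n2=\tfrac n2(n-1)$ is odd, so $\binom n2-1$ is \emph{even}: it equals $0,14,44$ for $n=2,6,10$. It is odd exactly when $n\equiv 0\pmod 4$. Your consistency remark is also false: every $n=2^r-2$ with $r\ge 2$ satisfies $n\equiv 2\pmod 4$, and $n=6$ gives $n+2=8$. As written, your argument would give content $4$ for $n=2,6,14,\dots$, which contradicts the statement. So $x^{n+2}y^2$ disposes of $n\equiv 0\pmod 4$. The class that really needs work is $n\equiv 2\pmod 4$, which contains every exceptional $n$, and your plan mislabels it as settled.

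Your mod-$2$ row computation is a reasonable tool, but its conclusion is misstated. Put $M=(n+4)/2$. The mod-$2$ coefficient of $x^{n+4-2j}y^{2j}$ is $\binom Mj$, plus $1$ when $j\in\{0,1,M-1,M\}$. These all vanish iff $M=2^s+1$, that is, iff $n+2=2^{s+1}$. They do not all vanish for $M=2^s$, because the $j=1$ entry is then $M+1$, which is odd. No ``re-indexing'' enters.

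The more serious problem is the direction of the argument. $z$-free monomials can only exhibit odd coefficients, so they can only prove that the content is $4$. In the exceptional case you must show that \emph{every} coefficient of $\Delta T_n-L_n$ is even, including those involving $z$, and your plan gives no argument for this. The missing idea is the global congruence the paper uses. For $n=2m$, Lucas' theorem gives $T_{2m}\equiv Q^m$. Also $\Delta\equiv Q^2$ and $L_{2m}\equiv Q\,\sigma(x^{2m+2})$. Hence
\[
\Delta T_{2m}-L_{2m}\equiv Q\bigl(Q^{m+1}+\sigma(x^{2m+2})\bigr)\pmod 2 .
\]
Moreover $Q^{m+1}=(x+y+z)^{2m+2}$ equals $\sigma(x^{2m+2})$ in $\FF_2[x,y,z]$ iff $m+1$ is a power of $2$. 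Since $\FF_2[x,y,z]$ is a domain, this settles both directions at once.

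For exactness, $x^{n+2}y^2$ works when $r\ge 3$, because $\nu_2(\binom n2-1)=1$ for $n=2^r-2$. At $n=2$, however, every $z$-free coefficient of $\Delta T_2-L_2$ vanishes identically. So your suggested $j=2^{r-2}=1$ fails there, and you need a monomial such as $x^2y^2z^2$, whose coefficient in $\Delta S_2-4L_2$ is $-24$.
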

\begin{proof}
	This amounts to computing the $2$-local content of the polynomials $\Delta S_n-4L_n$; see Appendix~\ref{app:gftl-hurewicz-computations}.
\end{proof}

Combining the results above, we have
\begin{theorem} \label{thm:lambda-hurewicz-image-localized}
	\[
		\mathrm{Im} \left(Q_{2n} \Lambda \xrightarrow{h} Q_{2n} \HW_*\MSp\right)_{(2)} = \begin{cases}
			8 a_n W(k)_{(2)}, & \text{if }n + 2 = 2^{r}, r\ge 2, \\
			4 a_n W(k)_{(2)}, & \text{otherwise}.
		\end{cases}
	\]
\end{theorem}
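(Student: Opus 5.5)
The plan is to take the $2$-local image of $Q_{2n}\Lambda$ to be the sum of the $2$-local images of the $Q_{2n}\Lambda_l$ for $l=1,\dots,4$, and then read off the minimum $2$-adic valuation from Theorems~\ref{thm:lambda-one-hurewicz-image}, \ref{thm:lambda-two-hurewicz-image}, \ref{thm:lambda-three-hurewicz-image} and \ref{thm:lambda-four-hurewicz-image}.

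First I justify the reduction. $\Lambda$ is the $W(k)$-subalgebra generated by the union of the coefficient sets of $F_1,\dots,F_4$. Since $h$ induces a map of graded algebras, the image of $\Lambda_+$ in $Q\HW_*\MSp$ is the $W(k)$-span of the images of these coefficients: any product of two or more positive-degree generators lands in the decomposables. The image of $Q_{2n}\Lambda_l$ is spanned in the same way by the degree-$2n$ coefficients of $F_l$. Hence
\[
\mathrm{Im}(Q_{2n}\Lambda)=\sum_{l=1}^{4}\mathrm{Im}(Q_{2n}\Lambda_l).
\]
Localization at $2$ is exact, so the same equality holds after $(-)_{(2)}$. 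There is one subtlety: $\Lambda_l$ also contains coefficients of degree $0$, such as the constant $-2$ in $F_2$. These lie in $W(k)$ and are not in the augmentation ideal, so they contribute nothing to $Q_{2n}$ for $n>0$.

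Next, each summand is a submodule $2^{e_l(n)}a_nW(k)_{(2)}$ of the cyclic module $a_nW(k)_{(2)}$. Their sum is therefore $2^{\min_l e_l(n)}a_nW(k)_{(2)}$. The exponents are as follows.
\begin{itemize}
\item $l=1$: localizing Theorem~\ref{thm:lambda-one-hurewicz-image} at $2$ makes the odd factor $p$ a unit, so $e_1(n)=2+\nu_2(n)\ge 2$, with equality exactly when $n$ is odd.
\item $l=2$ and $l=4$: $e(n)=3$ if $n+2=2^r$ with $r\ge2$, and $e(n)=2$ otherwise.
\item $l=3$: $e_3(n)=2$ if $n$ is odd, $4$ if $n=2^r$ with $r\ge2$, and $3$ otherwise.
\end{itemize}

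Now the case analysis. If $n+2\neq 2^r$, then $e_2(n)=2$; since every $e_l(n)\ge2$, the minimum is $2$, giving $4a_nW(k)_{(2)}$.

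If $n+2=2^r$ with $r\ge2$, then $n=2^r-2$ is even and positive.
\begin{itemize}
\item $e_2(n)=e_4(n)=3$.
\item $e_1(n)=2+\nu_2(2^r-2)=2+1=3$, since $2^{r}-2=2(2^{r-1}-1)$ and $2^{r-1}-1$ is odd for $r\ge2$.
\item $n$ is even, and $n=2^{r'}$ with $r'\ge 2$ would force $2^{r'}+2=2^r$, which is impossible because the left side is $2\pmod 4$. So $e_3(n)=3$.
\end{itemize}
The minimum is $3$, giving $8a_nW(k)_{(2)}$. The edge case $r=1$ (that is, $n=0$) is excluded by $n>0$.

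The only step needing care is the reduction in the first paragraph, namely that $Q$ of the generated subalgebra is the sum of the $Q$'s of the pieces. It is standard, but I would state it explicitly using the augmentation-ideal description together with the degree-$0$ remark. Everything after that is bookkeeping of $2$-adic valuations.
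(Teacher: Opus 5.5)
Your proof is correct and is the paper's argument: the paper declares the result immediate from Theorems~\ref{thm:lambda-one-hurewicz-image}--\ref{thm:lambda-four-hurewicz-image}, and you take the same route. Your justification that $\mathrm{Im}(Q_{2n}\Lambda)=\sum_{l}\mathrm{Im}(Q_{2n}\Lambda_l)$ and your $2$-adic bookkeeping (for instance $e_1(n)=e_3(n)=3$ when $n=2^r-2$) simply make that combination step explicit.
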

\begin{proof}
	This is immediate from Theorems~\ref{thm:lambda-one-hurewicz-image},~\ref{thm:lambda-two-hurewicz-image},~\ref{thm:lambda-three-hurewicz-image}, and~\ref{thm:lambda-four-hurewicz-image}.
\end{proof}

\section{Comparing GFTL and \texorpdfstring{$\MSpetainv$}{MSp[η−1]}}\label{sec:comparing-gftl-and-msp}

In this section, we compute the image of the Hurewicz map
\[
	h: (\MSpetainv)_* \to \HW_*\MSp
\]
and then combine it with the results in the previous section to compare the rings $\Lambda$ and $(\MSpetainv)_*$.

\subsection{Image of the Hurewicz map}
Specifically, since the image of $h[1/2]$ is already known, we work with the image of $h$ localized at $2$.

\begin{theorem} \label{thm:msp-eta-inverted-hurewicz-image}
	For $n > 0$,
	\[
		\mathrm{Im} \left(Q_{2n} (\MSpetainv)_* \xrightarrow{h} Q_{2n} \HW_*\MSp\right)_{(2)} = \begin{cases}
			8 a_n W(k)_{(2)}, & \text{if }n=2             \\
			2 a_n W(k)_{(2)}, & \text{if }n=2^{k}, k\ge 2 \\
			a_n W(k)_{(2)},   & \text{otherwise.}
		\end{cases}
	\]
\end{theorem}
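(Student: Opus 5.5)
We compute the $2$-local image of $h$ on indecomposables using the short exact sequence of Corollary~\ref{cor:msp-kw-short-exact-sequence}. That sequence identifies $\pi_*\MSp_{(2)}[\etainv]$ with $\ker(\varphi)\subset \kw_*\MSp_{(2)}\cong W(k)_{(2)}[\beta,a_1,a_2,\ldots]$. The $\HW$-Hurewicz map $h$ factors as the $\kw$-Hurewicz map followed by reduction modulo $\beta$, using $\kw/\beta\simeq\HW$ from Proposition~\ref{prop:kw-to-hw}. Hence
\[
\mathrm{Im}\bigl(Q_{2n}(\MSpetainv)_*\to Q_{2n}\HW_*\MSp\bigr)_{(2)}
\]
is the set of coefficients $c\,a_n$ such that some $\varphi$-cycle in $\kw_{2n}\MSp_{(2)}$ reduces to $c\,a_n$ modulo $\beta$ and decomposables in the $a_i$. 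The problem therefore becomes: for which $c\in W(k)_{(2)}$ does a $\varphi$-cycle of the form $c\,a_n+\beta(\cdots)+(\text{decomposables})$ exist?

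The lower bound comes from constructing cycles; the upper bound comes from an obstruction. Write a general element of degree $2n$ as $x=c\,a_n+\sum_{k\ge1}\beta^k u_k+d$, where $u_k$ is a polynomial in the $a_i$ of degree $2n-4k$ and $d$ is decomposable. We use Proposition~\ref{prop:varphi-on-kw-msp}, which is available $2$-locally since $3$ is a unit there. Together with the explicit formula for $\psi^3(a_n)$, it gives $\varphi(a_n)=3(n-1)a_{n-2}+\beta(\ldots)$ and $\varphi(\beta^k u)=\frac{9^k-1}{\beta}\beta^{k}u+\beta^k\varphi(u)$ in the appropriate sense. Note that $\varphi$ is a twisted derivation, since $\psi^3$ is multiplicative, so $\varphi(xy)=\varphi(x)y+\psi^3(x)\varphi(y)/\ldots$.

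Projecting $\varphi(x)=0$ to the indecomposable $a_{n-2}$-component modulo $\beta$, and then inductively onto $\beta^j a_{n-2-2j}$-components, gives a triangular system:
\[
3(n-1)c+\tfrac{9-1}{1}\,[u_1]_{a_{n-2}}+\cdots=0,
\]
and so on. The key $2$-adic inputs are $\nu_2(9^k-1)=3+\nu_2(k)$ together with the binomial coefficients $\binom{n+k}{k}-2\binom{n+k}{k-1}$ from $\psi^3(a_n)$. When $n$ is odd, $n-1$ is even and the relevant coefficients allow $c=1$. When $n$ is even, one must solve with $c$ divisible by a power of $2$, and the power needed depends on $\nu_2$ of these binomial combinations along the chain $a_n\to a_{n-2}\to\cdots\to a_{n-2j}$ down to $a_0$ or $a_1$. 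In that chain, $a_1$ is a cycle and $\varphi(\beta)=8$.

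I expect the chain to terminate in a way that forces $c\in 2W(k)_{(2)}$ exactly for $n=2^k$ with $k\ge2$, and $c\in 8W(k)_{(2)}$ for $n=2$, where the chain hits $\beta\mapsto 8$ directly. In every other case it should terminate freely, using Kummer-type valuations of the binomials as in Lemma~\ref{lem:binomial-gcd}.

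For sufficiency I would write explicit cycles. For $n=2$, the element $8a_2-3\beta a_1^0\cdot(\ldots)$, essentially $8a_2-3\beta$, works since $\varphi(\beta)=8$ and $\varphi(a_2)=3$. For the other cases, the cycle is built by the same triangular recursion, solving successively for the $u_k$; decomposable corrections are allowed because only $Q_{2n}$ matters.

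The main obstacle is the exact $2$-adic valuation bookkeeping in the triangular system for $n$ even. Two things must be shown: that decomposable terms cannot help cancel the indecomposable obstruction, and that the minimal power of $2$ is exactly $1$ for $n=2^k$, $k\ge2$, and $0$ otherwise. My first approach would be to compute the obstruction as a single $2$-adic number: the coefficient of $\beta^{n/2}$ in $\psi^3(a_n)$ relative to $9^{n/2}-1$. This would use the closed formula for $\psi^3(a_n)$ together with Kummer's theorem on $\binom{3n/2}{n/2}-2\binom{3n/2}{n/2-1}$.
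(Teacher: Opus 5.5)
Your reduction is the right one: identify $\pi_*\MSp_{(2)}[\etainv]$ with $\ker\varphi\subset\kw_*\MSp_{(2)}$, factor $h$ through $\kw$, and ask for which $c$ a cycle $c\,a_n+\cdots$ exists. Your treatment of $n=2$ via the cycle $8a_2-3\beta$ and $\varphi(\beta)=8$ is also correct.

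The gap is the core step for even $n$. The obstruction is \emph{not} visible on the components $\beta^j a_{n-2-2j}$ of $\varphi(x)$, and your expectation that decomposables cannot help cancel it is false. Since $\varphi(a_2)=3$ is a unit and $\varphi(xy)=\varphi(x)y+\psi^3(x)\varphi(y)$, one gets $\varphi(a_2a_{n-2})\equiv 3a_{n-2}$ modulo $\beta$ and decomposables.

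Take $n=8$. The corrections $a_2a_6$, $\beta a_2a_4$, $\beta^2a_4$, $\beta^3a_2$ have unit coefficients $3,27,3^6,3^7$ on the components $a_6,\beta a_4,\beta^2a_2,\beta^3$ respectively. The only entries above the diagonal are $80$ and $728$, both even. Hence your triangular system is solvable with $c=1$; for instance $a_8-7a_2a_6$ already kills the $a_6$-component, because $\varphi(a_8)\equiv 21a_6$.

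Yet the true index is $m_8=2$. Modulo $(2,\beta)$ one has $\bar\varphi(a_2a_6)=a_6+a_2a_4$, $\bar\varphi(a_4^2)=0$, $\bar\varphi(a_2^2a_4)=a_2^3$ and $\bar\varphi(a_2^4)=0$. The odd generators only contribute inside the ideal $(a_1,a_3,\ldots)$. So $\bar\varphi(a_8)=a_6$ is not in the image, and the obstruction lives in the decomposable $a_2a_4$-direction.

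For the same reason, no single number such as the $\beta^{n/2}$-coefficient of $\psi^3(a_n)$ compared with $9^{n/2}-1$ can compute $m_n$. Your sufficiency argument also fails: solving successively for the $u_k$ does not produce cycles, because the decomposable components of $\varphi(x)$ must vanish too.

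What is missing is the paper's global reformulation. Write $y_n=m_na_n-h_n$ with $h_n\in R_{<n}=\ZZ_{(2)}[\beta,a_1,\ldots,a_{n-1}]$. Then $m_n$ is the order of $[\varphi(a_n)]$ in $\coker(\varphi|_{R_{<n}})_{2n-4}$. Surjectivity of $\varphi$ makes this cokernel cyclic, generated by $[\varphi(a_n)]$, so it can be computed after reducing modulo $2$ and modulo $8$ by right exactness.

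Modulo $2$ one has $\varphi(\beta)\equiv 0$, and $\beta$-linearity together with surjectivity shows that $\beta$ annihilates the cokernel. So for $n>2$ the whole $\beta$-tower plays no role, and with it $\nu_2(9^k-1)$ and the coefficients $\binom{n+k}{k}-2\binom{n+k}{k-1}$ for $k\ge 2$. What remains is the derivation $a_{2m}\mapsto a_{2m-2}$, $a_{\mathrm{odd}}\mapsto 0$. The odd generators split off, and the even part is the map $H_*(\mathbf{BU};\FF_2)\to H_{*-2}(\mathbf{BU};\FF_2)$ dual to cup product with $c_1$, whose kernel is $H_*(\mathbf{BSU};\FF_2)$.

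Kochman's theorem on polynomial generators of $H_*(\mathbf{BSU})$ then shows the cokernel is nonzero exactly for $n=2^k$. The analogous computation modulo $8$ pins down $m_n=2$ for $k\ge 2$, which gives the upper bound your proposal does not address. This input about $\mathbf{BSU}$, which detects obstructions in decomposable directions, is what your valuation bookkeeping along $a_n\to a_{n-2}\to\cdots$ cannot replace. Also, the index computations should be done in the universal case $k=\ZZ$ and transported by base change, since $W(k)_{(2)}$ may have torsion.
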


\begin{proof}
	By the base-change theorem of \cite[Theorem~4.5]{bachmann2022eta}, it suffices to determine the integers $m_n$ in the universal case over $k = \ZZ$. For a general base, the same coefficients are then transported by extension of scalars to $W(k)_{(2)}$.
	By Corollary~\ref{cor:msp-kw-short-exact-sequence}, there is a short exact sequence
	\[
		0 \to \pi_* \MSp_{(2)}[\etainv] \to \kw_*\MSp_{(2)} \xrightarrow{\varphi} \kw_{*-4}\MSp_{(2)} \to 0,
	\]
	and by Proposition~\ref{prop:kw-to-hw} the last map induced by $\kw \to \HW$ is the quotient by $\beta$. Thus the Hurewicz map $h$ is the composition
	\[
		\pi_* \MSp_{(2)}[\etainv] \to \kw_*\MSp_{(2)} \to \HW_*\MSp_{(2)}.
	\]

	The homotopy ring $\pi_* \MSp_{(2)}[\etainv]$ identifies with the subring
	\[
		\ZZ_{(2)}[y_1,y_2,y_3,\ldots] \subset \kw_*\MSp_{(2)} \cong \ZZ_{(2)}[\beta,a_1,a_2,a_3,\ldots],
	\]
	with $|y_i|=2i$. Set $R:=\ZZ_{(2)}[\beta,a_1,a_2,a_3,\ldots]$ and $R_{<n}:=\ZZ_{(2)}[\beta,a_1,\ldots,a_{n-1}]\subset R$. Each generator $y_n$ can be written as
	\[
		y_n=m_n a_n-h_n,\qquad m_n=2^{t_n},\qquad h_n\in R_{<n}.
	\]
	The image of $y_n$ in $Q_{2n} \HW_*\MSp_{(2)}$ is $m_n a_n$. Thus it suffices to show
	\[
		m_n= \begin{cases}
			8, & n=2,                      \\
			2, & n=2^k\text{ for }k \ge 2, \\
			1, & \text{otherwise.}
		\end{cases}
	\]

	Since $\varphi(y_n)=0$, we have $m_n\varphi(a_n)=\varphi(h_n)$, and both terms lie in $R_{<n}$. Hence $m_n$ is the index of $\varphi(a_n)$ in the graded cokernel
	\[
		\coker_n^R:=\coker \left(\varphi|_{R_{<n}}\right)_{2n-4}.
	\]

	For $n=1$ we have $\varphi(a_1)=0$ by Proposition~\ref{prop:varphi-on-kw-msp}, hence $m_1=1$. For $n=2$, the degree $4$ part $R_4$ is generated by $a_2$, $a_1^2$, and $\beta$. Since $\varphi(a_1^2)=0$ and $\varphi(\beta)=8$, we get $\coker_2^R\cong \ZZ/8\ZZ$. As $\varphi(a_2)=3$ by Proposition~\ref{prop:varphi-on-kw-msp}, the class $[\varphi(a_2)]$ has index $8$, so $m_2=8$.

	Now assume $n>2$. Since $\varphi$ is surjective, the graded map $R_{2n}\xrightarrow{\varphi}R_{2n-4}$ is surjective as well. Moreover,
	\[
		R_{2n}=(R_{<n})_{2n}\oplus \ZZ_{(2)}\{a_n\}, \qquad R_{2n-4}=(R_{<n})_{2n-4},
	\]
	so $\coker_n^R$ is cyclic generated by $[\varphi(a_n)]$. Equivalently, $\coker_n^R\cong \ZZ/m_n\ZZ$.

	First reduce modulo $2$. Let
	\[
		\bar R:=R/(2)=\FF_2[\beta,a_1,a_2,a_3,\ldots],
	\]
	and let $\bar\varphi$ be the induced map. Degree-wise reduction modulo $2$ is right exact, so
	\[
		\coker_n^{\bar R}:=\coker \left(\bar\varphi|_{\bar R_{<n}}\right)_{2n-4}\cong \coker_n^R\otimes_{\ZZ_{(2)}}\FF_2.
	\]
	Hence $\coker_n^{\bar R}=0$ if and only if $m_n=1$.

	Since $\varphi(\beta) = 8$ by Proposition~\ref{prop:varphi-on-kw-msp}, in $\bar R$ we have $\bar\varphi(\beta)=0$. By \cite[Lemma 8.3]{bachmann2021eta}, $\bar\varphi(\beta y)=\beta \bar\varphi(y)$, i.e.\ $\bar\varphi$ is $\beta$-linear. Since $\bar\varphi:\bar R_{2n-4}\to \bar R_{2n-8}$ is surjective, for every $z\in \bar R_{2n-8}$ choose $y\in \bar R_{2n-4}$ with $\bar\varphi(y)=z$. Then $\beta z=\bar\varphi(\beta y)$ lies in the image. Hence $\beta$ annihilates $\coker_n^{\bar R}$, so quotienting by $\beta$ does not change the cokernel. Thus we may work with $\bar S:=R/(2,\beta)=\FF_2[a_1,a_2,a_3,\ldots]$. The induced map, still denoted $\bar\varphi$, is a derivation and by Proposition~\ref{prop:varphi-on-kw-msp} satisfies
	\[
		\bar\varphi(a_n)= \begin{cases}
			a_{n-2}, & n \text{ even}, \\
			0,       & n \text{ odd},
		\end{cases}
	\]
	with the convention $a_0:=1$.

	Set
	\[
		\bar A:=\FF_2[a_2,a_4,\ldots]\subset \bar S,\qquad \bar I:=(a_1,a_3,\ldots)\subset \bar S.
	\]
	Then $\bar S_{<n}=\bar A_{<n}\oplus \bar I_{<n}$, and both summands are preserved by $\bar\varphi$. For each monomial $u$ in the odd generators of positive degree $2s$, the corresponding summand $\bar A u\subset \bar I$ is preserved by $\bar\varphi$, and in degree $2n-4$ the map is
	\[
		(\bar A_{<n})_{2(n-s)}u \to (\bar A_{<n})_{2(n-s)-4}u,\qquad xu\mapsto \bar\varphi(x)u.
	\]
	If $n-s$ is odd, the target vanishes. If $n-s$ is even, then $n-s<n$ and the extra generator $a_{n-s}\in \bar A_{<n}$ shows that this map is surjective. Therefore $\coker \left(\bar\varphi|_{\bar I_{<n}}\right)_{2n-4}=0$ and hence
	\[
		\coker_n^{\bar R}\cong \coker \left(\bar\varphi|_{\bar A_{<n}}\right)_{2n-4}.
	\]

	For either coefficient ring $\Lambda\in\{\FF_2,\ZZ/8\ZZ\}$, let $A_{\Lambda}:=\Lambda[a_2,a_4,\ldots]$. In the mod-$2$ case we keep the generators $a_{2m}$, and in the mod-$8$ case below we rescale them by units. Thus in both cases, writing $b_m$ for the resulting even generators, we may identify
	\[
		A_{\Lambda}\cong \Lambda[b_1,b_2,\ldots],\qquad d_{\Lambda}(b_m)=b_{m-1},\qquad b_0:=1,
	\]
	where $d_{\Lambda}$ denotes the induced map on the even subring. Up to doubling degrees this is the map $f_{\Lambda}$ in the short exact sequence
	\[
		0 \to H_*(\mathbf{BSU};\Lambda) \to H_*(\mathbf{BU};\Lambda) \xrightarrow{f_{\Lambda}} H_{*-2}(\mathbf{BU};\Lambda) \to 0,
	\]
	where $f_{\Lambda}$ is adjoint to cup product with $c_1$, cf.~\cite[Proposition 1.5]{kozima1987on}.

	Applying the same cokernel-coefficient argument to $d_{\FF_2}$ and using \cite[Theorem 3.3]{kochman1982polynomial}, we obtain
	\[
		\coker_n^{\bar R}= \begin{cases}
			\FF_2, & n=2^k,            \\
			0,     & \text{otherwise.}
		\end{cases}
	\]
	Thus $m_n=1$ whenever $n$ is not a power of $2$. It remains to treat $n=2^k$ with $k\ge 2$.

	For the exact value of $m_n$, reduce modulo $8$. Let
	\[
		S:=R/(8,\beta)=(\ZZ/8\ZZ)[a_1,a_2,a_3,\ldots],
	\]
	and let $\tilde\varphi$ be the induced map. Degree-wise reduction modulo $8$ is right exact, and the same $\beta$-quotient argument as above gives
	\[
		\coker \left(\tilde\varphi|_{S_{<n}}\right)_{2n-4}\cong \coker_n^R\otimes_{\ZZ_{(2)}}\ZZ/8\ZZ.
	\]
	By \cite[Lemma 8.3]{bachmann2021eta}, $\tilde\varphi$ is a derivation. Proposition~\ref{prop:varphi-on-kw-msp} implies that on generators
	\[
		\tilde\varphi(a_1)=0,\qquad \tilde\varphi(a_n)= \begin{cases}
			u_n a_{n-2}, & n \text{ even}, \\
			w_n a_{n-2}, & n \text{ odd},
		\end{cases}\qquad (n\ge 2),
	\]
	for some unit $u_n \in (\ZZ/8\ZZ)^{\times}$ and some $w_n \in 2(\ZZ/8\ZZ)$, with the convention $a_0:=1$.
	Set
	\[
		A:=(\ZZ/8\ZZ)[a_2,a_4,\ldots]\subset S,\qquad I:=(a_1,a_3,\ldots)\subset S.
	\]
	In particular, both $A$ and $I$ are preserved by $\tilde\varphi$, hence
	\[
		\coker \left(\tilde\varphi|_{S_{<n}}\right)_{2n-4}\cong \coker \left(\tilde\varphi|_{A_{<n}}\right)_{2n-4}\oplus \coker \left(\tilde\varphi|_{I_{<n}}\right)_{2n-4}.
	\]
	Since $n=2^k$ is even, $\tilde\varphi(a_n)\in A$, so the order of $[\tilde\varphi(a_n)]$ in the full cokernel is exactly its order in the even summand. Applying the same cokernel-coefficient argument to $d_{\ZZ/8\ZZ}$, and using Kochman's integral polynomial generators for $H_*(\mathbf{BSU})$ again, we obtain $\coker \left(\tilde\varphi|_{A_{<n}}\right)_{2n-4}\cong \FF_2$ for $n=2^k$ with $k\ge 2$. Since the mod-$2$ calculation already showed $m_n>1$, this forces $m_n=2$ for every such $n$. Therefore for $n \ge 3$
	\[
		m_{n}= \begin{cases}
			2, & n=2^k\text{ for }k \ge 2, \\
			1, & \text{otherwise.}
		\end{cases}
	\]
\end{proof}

\subsection{Insufficiency of GFTL}\label{subsec:insufficiency-gftl}
Combining Theorems~\ref{thm:lambda-hurewicz-image-localized} and~\ref{thm:msp-eta-inverted-hurewicz-image}, we obtain a strict inclusion
\[
	\Lambda \subsetneq (\MSpetainv)_*.
\]
Thus additional characteristic coefficients are needed to generate the full $\eta$-periodic symplectic cobordism ring, in the spirit of~\cite{toda1982the}.

However, the analogue of the additional datum used in \emph{loc.\ cit.} does not transfer directly to our setting. There, one uses the symplectification map $q:\mathbb{P}^{\infty} \to \HPinf$ classifying the canonical hyperbolic symplectic bundle $\mathcal{O}(-1)\oplus \mathcal{O}(1)$ on $\mathbb{P}^{\infty}$, and recovers the Borel class via pullback along $q^*$. In $\SH(k)[\etainv]$, this pullback vanishes for every $\SL$-oriented cohomology theory; see the projective bundle formula in~\cite{ananyevskiy2016on}. We therefore need a different enhancement of the GFTL.

\section{Framed involution}\label{sec:framed-involution}
\subsection{Ring with framed involution}\label{subsec:framed-involution-ring}
The relations we impose below come from geometric constructions on $\MSpetainv$. If one simply quotients by the naive defining relations to define the universal object, one introduces extra $2$-torsion that is not forced by the geometry. To avoid this artificial torsion, we systematically replace the naive defining ideals by their $2$-saturated closures. Since this will be used repeatedly in the definitions of framed involutions and formal ternary laws, we record the general notion once and for all.

\begin{definition}\label{def:2-saturated-relations}
	Let $P=\ZZ[c_i\mid i\in I]$ be a graded polynomial ring, and let $\mathcal{R}$ be a family of homogeneous polynomial relations in the variables $\{c_i\}_{i\in I}$. Denote by $\mathcal{I}(\mathcal{R})\subset P$ the homogeneous ideal generated by $\mathcal{R}$. Its \emph{$2$-saturated closure} is
	\[
		\mathcal{I}(\mathcal{R})^{\mathrm{sat}(2)}:=\mathcal{I}(\mathcal{R}):(2^\infty)=\{f\in P\mid \exists n\ge0,\ 2^n f\in \mathcal{I}(\mathcal{R})\}.
	\]
	Given homogeneous elements $\{r_i\}_{i\in I}$ of matching degrees in a graded ring $R$, we say that $\{r_i\}_{i\in I}$ \emph{satisfy the $2$-saturated relations} $\mathcal{R}$ if the induced graded ring map
	\[
		P\to R,\qquad c_i\mapsto r_i,
	\]
	factors through $P/\mathcal{I}(\mathcal{R})^{\mathrm{sat}(2)}$.
	In particular, since $\mathcal{I}(\mathcal{R})\subset \mathcal{I}(\mathcal{R})^{\mathrm{sat}(2)}$, every tuple satisfying the $2$-saturated relations $\mathcal{R}$ also satisfies the original relations $\mathcal{R}$.
\end{definition}

\begin{definition}
	Let $R$ be a graded ring. A \emph{framed involution} $(\iota, \Psi)$ over $R$ is a homogeneous power series of degree $-2$
	\[
		\Psi(t)\in R\llbracket t\rrbracket,\qquad |t|=-2,
	\]
	together with $\iota(t) := t-2\Psi(t)$ such that $\Psi(0)=0$, $\Psi'(0)=1$, and the coefficients of $\Psi(t)$ satisfy the $2$-saturated relations of the involution equation
	\[
		\iota(\iota(t))=t.
	\]
\end{definition}

\begin{remark}
	Every graded ring has a trivial framed involution $\iota(t) = -t, \Psi(t) = t$.
\end{remark}
\begin{remark}
	Let $(\iota, \Psi)$ be a framed involution over a graded ring $R$. Since $\Psi'(0)=1$, we have $\Psi(t)=t+O(t^2)$, so $\Psi$ is invertible under composition. Moreover, applying $\iota(t)=t-2\Psi(t)$ twice in the defining identity $\iota(t)=t-2\Psi(t)$ gives
	\[
		\iota(\iota(t))-t=-2\left(\Psi(\iota(t))+\Psi(t)\right)
	\]
	shows that every coefficient of $\Psi(\iota(t))+\Psi(t)$ belongs to the $2$-saturated closure of the naive involution ideal. Hence, for a framed involution,
	\[
		\Psi(\iota(t))=-\Psi(t),
	\]
	so
	\begin{equation}\label{eq:psi-trivializes-epsilon}
		\Psi\circ \iota\circ \Psi^{-1}(t)=-t,
	\end{equation}
	i.e.\ $\Psi$ is the base change that trivializes $\iota$.
\end{remark}
\begin{definition}
	Define the following functor from the category $\mathbf{Rings}^{\mathrm{gr}}$ of graded rings to the category $\mathbf{Sets}$ of sets:
	\[
		\mathcal{FI}(R):=\{(\iota, \Psi)\mid (\iota, \Psi)\text{ is a framed involution over }R\}.
	\]
\end{definition}

\begin{proposition}\label{prop:universal-framed-involution-ring}
	There exists a graded ring $\Rfr$ equipped with a framed involution $(\iota^{\univ}, \Psi^{\univ})$ such that for any graded ring $R$ with framed involution $(\iota^R, \Psi^R)$, there is a unique graded ring homomorphism $g:\Rfr \to R$ with
	\[
		g_*\Psi^{\univ}=\Psi^R.
	\]
	Equivalently, the functor $\mathcal{FI}$ is represented by $\Rfr$ equipped with universal framed involution $(\iota^{\univ}, \Psi^{\univ})$.
\end{proposition}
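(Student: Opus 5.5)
The construction is the standard "free ring modulo relations" argument, mirroring the sketch for the Lazard ring but with the $2$-saturated ideal in place of the naive one. Let $P:=\ZZ[c_1,c_2,\ldots]$ with $|c_i|=2i$. Form the generic series
\[
\Psi^{\mathrm{gen}}(t)=t+\sum_{i\ge1}c_i t^{i+1}\in P\llbracket t\rrbracket ,
\]
which is homogeneous of degree $-2$ and satisfies $\Psi^{\mathrm{gen}}(0)=0$ and $(\Psi^{\mathrm{gen}})'(0)=1$. Put $\iota^{\mathrm{gen}}(t):=t-2\Psi^{\mathrm{gen}}(t)$ and let $\mathcal{R}$ be the family of coefficients of $\iota^{\mathrm{gen}}(\iota^{\mathrm{gen}}(t))-t$. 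Since $\iota^{\mathrm{gen}}$ has no constant term, the composite is a well-defined power series. Each coefficient is a polynomial in finitely many $c_i$, homogeneous because the whole identity has degree $-2$. We then set
\[
\Rfr:=P/\mathcal{I}(\mathcal{R})^{\mathrm{sat}(2)} ,
\]
and let $\Psi^{\univ}$ be the image of $\Psi^{\mathrm{gen}}$ and $\iota^{\univ}$ the image of $\iota^{\mathrm{gen}}$.

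Before anything else we check that $\Rfr$ is a graded ring. The ideal $\mathcal{I}(\mathcal{R}):(2^\infty)$ is homogeneous. Indeed, if $2^n f\in\mathcal{I}(\mathcal{R})$ with $\mathcal{I}(\mathcal{R})$ homogeneous, then each homogeneous component satisfies $2^n f_d\in\mathcal{I}(\mathcal{R})$, since multiplication by $2^n$ preserves degrees. It is also an ideal: a union of the increasing chain $\mathcal{I}:(2^n)$ of ideals.

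Next we verify that $(\iota^{\univ},\Psi^{\univ})$ is a framed involution over $\Rfr$. The tautological map $P\to\Rfr$, $c_i\mapsto\bar c_i$, is exactly the quotient map, so it factors through $P/\mathcal{I}(\mathcal{R})^{\mathrm{sat}(2)}$. This is precisely the condition of Definition~\ref{def:2-saturated-relations}.

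For universality, take $(\iota^R,\Psi^R)\in\mathcal{FI}(R)$ with $\Psi^R(t)=t+\sum r_i t^{i+1}$, where homogeneity forces $r_i\in R_{2i}$. The relation in the framed-involution definition says exactly that the graded map $P\to R$, $c_i\mapsto r_i$, factors through $\Rfr$. This yields $g$ with $g_*\Psi^{\univ}=\Psi^R$, and then automatically $g_*\iota^{\univ}=\iota^R$. Uniqueness follows because $\Rfr$ is generated by the $\bar c_i$, and any $g$ with $g_*\Psi^{\univ}=\Psi^R$ must send $\bar c_i\mapsto r_i$ by comparing coefficients. Functoriality of $\mathcal{FI}$ is checked the same way: a graded map $f:R\to R'$ sends a framed involution to a framed involution, because the factorization through $\Rfr$ composes with $f$. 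Hence $\Rfr$ represents $\mathcal{FI}$ via $g\mapsto g_*\Psi^{\univ}$, with inverse given by the construction above.

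I do not expect a genuine obstacle. The only point needing care is that the definition of a framed involution was phrased exactly so that \emph{satisfying the $2$-saturated relations} is equivalent to factoring through $P/\mathcal{I}(\mathcal{R})^{\mathrm{sat}(2)}$; there is no need to verify that saturation commutes with base change. One should also note the grading convention: $|t|=-2$ together with degree $-2$ homogeneity forces $|c_i|=2i$, so $P$ is connected graded and the relations are homogeneous.
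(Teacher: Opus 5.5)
Your proposal is correct and uses the same construction as the paper: $\Rfr$ is the polynomial ring on the coefficients of $\Psi$, modulo the $2$-saturation of the ideal generated by the coefficients of $\iota(\iota(t))-t$, and existence and uniqueness of $g$ follow directly from Definition~\ref{def:2-saturated-relations}. Your extra checks, namely that the saturated ideal is homogeneous (so $\Rfr$ is graded) and that $\mathcal{FI}$ is functorial, fill in details the paper leaves implicit.
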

\begin{proof}
	Write
	\[
		\Psi^{\univ}(t)=t+\sum_{m\ge 1} b_m t^{m+1}, \qquad \iota^{\univ}(t) = t-2\Psi^{\univ}(t) = -t - 2\sum_{m\ge 1} b_m t^{m+1},
	\]
	with grading determined by $|b_m|=2m$. Define
	\[
		\Rfr:=\ZZ[b_1, b_2, b_3, \ldots]/\mathcal{J}^{\mathrm{sat}(2)},
	\]
	where $\mathcal{J}$ is the homogeneous ideal generated by the coefficients of
	\[
		\iota^{\univ}(\iota^{\univ}(t))-t,
	\]
	and $\mathcal{J}^{\mathrm{sat}(2)}$ is its $2$-saturated closure in the sense of Definition~\ref{def:2-saturated-relations}.

	Given a ring $R$ with framed involution $(\iota^R, \Psi^R)$, sending the generators $b_m$ to the corresponding coefficients of $\Psi^R$ defines a graded ring map $g:\Rfr \to R$, and by construction this $g$ is unique.
\end{proof}

\begin{proposition}
	The canonical map
	\[
		\ZZ[b_1,b_3,b_5,\ldots]\longrightarrow \Rfr
	\]
	is an isomorphism.
\end{proposition}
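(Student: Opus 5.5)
The plan is to show that, in $\Rfr$, the even coefficients $b_{2m}$ are determined by the odd ones through the saturated relations, and that no relation constrains the odd coefficients. The key identity is the one recorded in the remark above: modulo the $2$-saturated closure, the involution equation is equivalent to $\Psi(\iota(t))=-\Psi(t)$. In fact, since $\iota(\iota(t))-t=-2(\Psi(\iota(t))+\Psi(t))$ and $P=\ZZ[b_1,b_2,\ldots]$ is $2$-torsion free, we get
\[
\mathcal{J}^{\mathrm{sat}(2)}=\mathcal{K}^{\mathrm{sat}(2)},
\]
where $\mathcal{K}$ is the ideal generated by the coefficients of $\Psi(\iota(t))+\Psi(t)$.

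\textbf{Step 1: the relations solve for the even generators.} I will compute the coefficient of $t^{n+1}$ in $\Psi(\iota(t))+\Psi(t)$. Since $\iota(t)=-t-2\sum_m b_m t^{m+1}$, the contribution linear in $b_n$ is
\[
b_n\bigl((-1)^{n+1}+1\bigr)-2b_n .
\]
For $n$ even this equals $-2b_n$; for $n$ odd it equals $0$.

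The remaining terms lie in $\ZZ[b_1,\ldots,b_{n-1}]$. I expect these remaining terms to be divisible by $2$, so that the relation for $n$ even takes the form $2(b_n-f_n)=0$ with $f_n\in\ZZ[b_1,\ldots,b_{n-1}]$. After $2$-saturation this gives $b_n=f_n$.

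A cleaner way to organise this is via conjugation. Over $\ZZ[1/2]$, the framed involutions are exactly $\iota=\Psi^{-1}\circ(-1)\circ\Psi$. Hence $\Psi$ is exactly a power series satisfying $\Psi(\iota(t))=-\Psi(t)$ with $\iota=t-2\Psi$. I would prove by induction on $n$ that, for any choice of the odd $b$'s, there is a unique solution for the even $b$'s over $\ZZ[1/2]$, and then check that this solution is integral.

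\textbf{Step 2: the map is an isomorphism.} Surjectivity of $\ZZ[b_1,b_3,\ldots]\to\Rfr$ then follows by induction on degree from $b_{2m}\equiv f_{2m}$.

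For injectivity, I will construct a framed involution over $\ZZ[b_1,b_3,\ldots]$. Define $b_{2m}:=f_{2m}$ recursively. Then every coefficient of $\Psi(\iota(t))+\Psi(t)$ vanishes: the even-indexed coefficients vanish by construction, and the odd-indexed ones must also be checked to vanish identically. The universal property then gives a retraction $\Rfr\to\ZZ[b_1,b_3,\ldots]$, which proves injectivity.

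To handle the odd coefficients, I plan to pass to $\QQ$. There, for arbitrary odd $b$'s, the series $\Psi$ can be adjusted uniquely so that $\Psi\circ\iota\circ\Psi^{-1}=-t$. The idea is to write $\Psi$ as an odd power series composed with something, and to use that the odd-degree equation is automatically satisfied once the even-degree equations hold.

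\textbf{Main obstacle.} The hardest step is \emph{integrality}: showing that the non-$b_n$ part of the even-degree relation is divisible by $2$, so that $f_n$ has integer coefficients, and that the odd-degree relations are consequences of the even ones. For integrality, I would first try reducing mod $2$. There $\iota\equiv -t\equiv t$, so $\Psi(\iota(t))+\Psi(t)\equiv 2\Psi(t)\equiv 0$, and the whole relation series is divisible by $2$. This handles divisibility, and I expect the odd-degree redundancy to follow from the conjugation structure over $\QQ$ together with torsion-freeness.
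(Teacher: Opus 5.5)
Your surjectivity half is correct and is essentially the paper's argument. The paper observes that in $\iota(\iota(t))-t$ every term other than $4b_{2k}t^{2k+1}$ is at least quadratic in the $2b_i$. You get the same divisibility from $\iota(t)\equiv t\pmod 2$, which gives $\Psi(\iota(t))+\Psi(t)\equiv 0\pmod 2$. Either way, $2$-saturation gives $b_{2k}=f_{2k}\in\ZZ[b_1,\ldots,b_{2k-1}]$ in $\Rfr$.

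\textbf{The gap is injectivity.} Your retraction exists only if, after setting $b_{2m}:=f_{2m}$, the coefficients of $t^{n+1}$ for odd $n$ in $\Psi(\iota(t))+\Psi(t)$ vanish identically in $\ZZ[b_1,b_3,\ldots]$. You leave this as an expectation, and it cannot come out of your induction:
\begin{itemize}
\item By your own Step~1 computation, $b_n$ enters these equations with coefficient $0$. They are therefore constraints on $b_1,\ldots,b_{n-1}$, with no free variable left to satisfy them.
\item ``Adjusting $\Psi$ so that $\Psi\circ\iota\circ\Psi^{-1}=-t$'' is circular, since $\iota=t-2\Psi$ changes with $\Psi$.
\end{itemize}

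What is missing is an existence result: a framed involution over a polynomial ring on odd-degree generators whose odd coefficients are triangularly free. The paper provides it as follows.
\begin{itemize}
\item Take $g(t)=t+\sum_{k\ge1}e_{2k-1}t^{2k}$ over $B=\ZZ[e_1,e_3,\ldots]$.
\item Since $g-\mathrm{id}$ is even, $g(-s)=g(s)-2s$. Hence $\iota^B:=g\circ(-1)\circ g^{-1}=t-2g^{-1}(t)$ is an involution with integral frame $\Psi^B=g^{-1}$.
\item One has $[g^{-1}]_{t^{2k}}\equiv -e_{2k-1}$ modulo $(e_1,\ldots,e_{2k-3})$.
\item Therefore $\ZZ[b_1,b_3,\ldots]\to\Rfr\to B$ is triangular with diagonal $-1$, hence injective.
\end{itemize}
Conceptually this is forced. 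For any framed involution, substituting $t=g(u)$ with $g:=\Psi^{-1}$ turns $\Psi(\iota(t))=-\Psi(t)$ into $g(-u)=g(u)-2u$, i.e.\ $g-\mathrm{id}$ is even. This constrains only the coefficients of $u^{2k+1}$ in $g$ and leaves the odd-index data free.

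This construction also rescues your own route if you prefer it. Given arbitrary odd $b$'s, solve recursively for the $e$'s over $\ZZ[b_1,b_3,\ldots]$, which is possible because the leading coefficient is $-1$. The even-index equations have unique solutions in a torsion-free ring, since $b_{2m}$ appears with coefficient $-2$. So your recursively defined $f_{2m}$ must coincide with the even coefficients of $g^{-1}$, and all relations, including the odd-index ones, hold.
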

\begin{proof}

	First, we show that the even coefficients $b_{2k}$ are determined by the odd ones. Expanding the relation $\iota^{\univ}(\iota^{\univ}(t))=t$ modulo the ideal $I_{m-1} = (2b_1, \dots, 2b_{m-1})$ yields
	\[
		\iota^{\univ}(\iota^{\univ}(t)) \equiv t - ((-1)^{m+1}-1) 2 b_m t^{m+1} + O(t^{m+2}) \pmod{I_{m-1}}.
	\]
	For $m=2k$, the coefficient of $t^{2k+1}$ must vanish, forcing
	\[
		4b_{2k} = P_{2k}(2b_1, \dots, 2b_{2k-1}),
	\]
	for some polynomial $P_{2k}(c_1,\ldots,c_{2k-1}) \in \ZZ[c_1,\ldots,c_{2k-1}]$. By degree considerations for the coefficients $b_i$ appearing in the expansion of $\iota^{\univ}(\iota^{\univ}(t))$, every monomial appearing in the polynomial $P_{2k}(c_1,\ldots,c_{2k-1})$ contains at least two of the variables, that is,
	\[
		P_{2k}(c_1,\ldots,c_{2k-1}) \in (c_1,\ldots,c_{2k-1})^2.
	\]
	Therefore $P_{2k}(2b_1, \dots, 2b_{2k-1})$ is divisible by $4$, and by the $2$-saturation construction,
	\[
		b_{2k} = \frac{1}{4} P_{2k}(2b_1, \dots, 2b_{2k-1}) \in \Rfr,
	\]
	Thus $b_{2k}$ is uniquely determined by the preceding coefficients, so $\Rfr$ is generated entirely by the odd-indexed coefficients $\{b_{2k-1}\}_{k\ge 1}$. This proves the surjectivity of the map.

	To prove injectivity, we define an invertible power series
	\[
		f(t) := t + \sum_{k\ge 1} e_{2k-1} t^{2k}
	\]
	over $B:=\ZZ[e_1,e_3,e_5,\ldots]$. The power series $\iota^{B}(t):= f(-f^{-1}(t))$ is a compositional involution over $B$. Explicitly,
	\[
		\begin{aligned}
			\iota^{B}(t) & = -f^{-1}(t) + \sum_{k\ge 1} e_{2k-1} \left(- f^{-1}(t)\right)^{2k}            \\
			             & = -2f^{-1}(t) + f^{-1}(t) + \sum_{k\ge 1} e_{2k-1} \left(f^{-1}(t)\right)^{2k} \\
			             & = -2f^{-1}(t) + f\left(f^{-1}(t)\right)                                        \\
			             & = t - 2 f^{-1}(t).
		\end{aligned}
	\]
	Therefore,
	\[
		\Psi^{B}(t) := \frac{t - \iota^{B}(t) }{2} = f^{-1}(t)
	\]
	is integral, and $(\iota^{B}, \Psi^{B})$ is a framed involution over $B$. Note that
	\[
		[f^{-1}]_{t^{2k}} \equiv - e_{2k-1} \bmod (e_{1}, e_3, \ldots, e_{2k-3}).
	\]
	Then we have a map $\Rfr \to B$ by the universal property. The composition
	\[
		\ZZ[b_1,b_3,b_5,\ldots] \to \Rfr \to B
	\]
	is upper-triangular with diagonal $-1$ on its generators. Therefore, the map is injective.
\end{proof}

\subsection{Involution and idempotent over symplectic cobordism}
\begin{proposition}\label{prop:msp-framed-involution}
	The ring $(\MSpetainv)_*$ has a framed involution $(\iota^M, \Psi^M)$, where $\iota^{M}(t)$ is the power series associated with the first Borel class of the dual universal symplectic bundle.
\end{proposition}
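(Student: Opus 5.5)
We have to produce $\iota^M(t)\in(\MSpetainv)_*\llbracket t\rrbracket$ with $\iota^M\circ\iota^M=t$, show that $t-\iota^M(t)$ is divisible by $2$, and show that $\Psi^M:=(t-\iota^M)/2$ has leading term $t$ and satisfies the $2$-saturated involution relations. The plan is to define everything geometrically and then check the integrality and saturation conditions with the $\HW$-Hurewicz map and the torsion-freeness of the coefficient ring.

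\emph{Definition of $\iota^M$.} Let $\mathfrak U$ be the tautological rank-$2$ symplectic bundle on $\HPinf$, and let $\mathfrak U^\vee$ be its dual with the induced symplectic form. Then $\mathfrak U^\vee$ is classified by a map $d:\HPinf\to\HPinf$. By Theorem~\ref{thm:quaternionic-projective-bundle} we can write
\[
d^*(t_M)=b_1(\mathfrak U^\vee)=\iota^M(t_M)
\]
for a unique $\iota^M\in(\MSpetainv)_*\llbracket t\rrbracket$ of degree $-2$. Since $(\mathfrak U^\vee)^\vee\cong\mathfrak U$ canonically, we have $d\circ d\simeq\mathrm{id}$. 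This gives $\iota^M(\iota^M(t))=t$, because pullback is functorial and the coordinate is unique.

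\emph{Computing the Hurewicz image.} Apply the universal base change of Proposition~\ref{prop:msp-universal-base-change} with $\E=\HW$. Over $\HW$, the dual of a rank-$2$ symplectic bundle has first Borel class equal to minus the original one. This holds in $\eta$-periodic Witt cohomology: $\mathfrak U^\vee\cong\mathfrak U$ via the form, but the identification reverses the symplectic form, which acts by $\langle-1\rangle=-1$ on $\HW$. This is consistent with the roots $\pm x\pm y\pm z$ in Example~\ref{ex:gftl-over-hw}. Hence $\iota^{\HW}(t)=-t$, and by the transport formula
\[
h(\iota^M)(x)=\exp(-\log x).
\]
The leading term of $\iota^M$ is therefore $-t$, so $\Psi^M$ will satisfy $\Psi^M(0)=0$ and $(\Psi^M)'(0)=1$.

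\emph{Divisibility by $2$: the main obstacle.} We must show that every coefficient of $t-\iota^M(t)$ lies in $2(\MSpetainv)_*$. Two routes are available.

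The first route is to write
\[
\Psi^M(t)=\frac{t-\iota^M(t)}{2},
\]
note that it is well defined in $(\MSpetainv)_*[1/2]\llbracket t\rrbracket$, and prove integrality 2-locally using the short exact sequence of Corollary~\ref{cor:msp-kw-short-exact-sequence}. Divisibility by $2$ in $\kw_*\MSp_{(2)}$ together with $\varphi$-invariance would put the quotient back in $\pi_*\MSp_{(2)}[\etainv]$, since $\kw_*\MSp$ is torsion-free. So the check reduces to showing that, in $\kw_*\MSp$,
\[
x+\iota(x)=x+\exp^{\kw}(\iota^{\kw}(\log^{\kw}x))
\]
is divisible by $2$. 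Over $\KW$ the dual formula is $\iota^{\KW}(t)=-t$ (again by Example~\ref{ex:gftl-over-kw}, which is odd in each variable). So we need $\exp(y)+\exp(-y)$ with $y=\log x$, that is $2\sum_{n \text{ odd}}a_n y^{n+1}$. But $y=\log x$ has non-integral coefficients, and this is where the difficulty lies.

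I would therefore use the second route, a geometric trivialization, which matches the name ``framed'' and which I expect the author uses. The symplectic form identifies $\mathfrak U^\vee$ with $\mathfrak U$ twisted by $-1$. In $\eta$-periodic $\SL$-oriented theories, the difference $t-\iota(t)$ should be expressible as $2\Psi(t)$, where $\Psi(t)$ is the Borel class computed through the framing. The strategy is:
\begin{enumerate}
\item write $\mathfrak U^\vee\cong\mathfrak U$ with the form multiplied by $-1$;
\item use that $\langle-1\rangle$ acts on an $\eta$-inverted ring as $-1$, so $1-\langle-1\rangle=2$;
\item deduce $t-\iota^M(t)=(1-\langle-1\rangle)\cdot(\text{class})$.
\end{enumerate}

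If that fails, the fallback is the torsion-freeness argument. Proposition~\ref{prop:msp-msl-coefficients} shows that $(\MSpetainv)_*$ is torsion-free, at least away from the odd Witt classes, which live in odd degrees. We have $\Psi^M\in(\MSpetainv)_*[1/2]\llbracket t\rrbracket$, and I would show it is $2$-integral by checking it on the generators $y_i$ via the computation of $h$ in Theorem~\ref{thm:msp-eta-inverted-hurewicz-image}.

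\emph{The saturation condition.} Once $\Psi^M$ is integral, the saturated relations are automatic. The torsion-freeness of $(\MSpetainv)_*$ in even degrees means that the map from $\ZZ[b_1,b_2,\ldots]$ kills every element $f$ with $2^nf\in\mathcal J$. This is because $\mathcal J$ maps to $0$, as $\iota^M\circ\iota^M=t$, and so $2^n g(f)=0$ forces $g(f)=0$.
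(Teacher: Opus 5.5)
\textbf{There is a genuine gap: 2-divisibility is not established.} It lies in the route you commit to. Step (3) of your second route, $t-\iota^M(t)=(1-\langle-1\rangle)\cdot(\text{class})$, has no mechanism behind it. Knowing that $\langle-1\rangle$ acts by $-1$ on $\eta$-periodic coefficients says nothing about how the first Borel class in $\MSpetainv$ changes when the symplectic form is negated. The obvious way to make it precise is $b_1(\mathfrak U,-\omega)=\langle-1\rangle\, b_1(\mathfrak U,\omega)$, and this is false here. It would force $\iota^M(t)=-t$, whereas your own Hurewicz computation gives
\[
h(\iota^M)(t)=\exp(-\log t)=-t+2\sum_{m\ge1}a_{2m-1}\log^{2m}(t)\neq -t .
\]
The fallback does not help either. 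Theorem~\ref{thm:msp-eta-inverted-hurewicz-image} only records images on indecomposables, so it cannot decide whether the coefficients of $(t-\iota^M(t))/2$, which are mostly decomposable, are integral. As written, $2$-divisibility, which is the real content of the proposition, is left unproved.

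\textbf{The missing idea is that $\log$ is integral.} This removes the obstruction to the first route you abandoned. The $\log$ in Proposition~\ref{prop:msp-universal-base-change} is not a rational logarithm. It is the compositional inverse of $\exp(t)=t+\sum_{n\ge1}a_nt^{n+1}$, a series with leading coefficient $1$ over $\kw_*\MSp\cong W(k)[\beta,a_1,a_2,\ldots]$. Its coefficients can be solved for recursively and lie in that same ring. (The non-integral series is $\log^M$ over $(\MSpetainv)_*\otimes\QQ$, which plays no role here.)

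With this, the displayed formula shows $h(t-\iota^M(t))\in 2\,\kw_*\MSp\llbracket t\rrbracket$. Your argument with Corollary~\ref{cor:msp-kw-short-exact-sequence} then finishes $2$-locally: if $h(c)=2d$, then $2\varphi(d)=\varphi(h(c))=0$, so $\varphi(d)=0$ by torsion-freeness in even degrees, and $d$ lies in the image of $h$. Since $(\MSpetainv)_*$ is degreewise free over $\ZZ$ for $k=\ZZ$, $2$-local $2$-divisibility implies integral $2$-divisibility.

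\textbf{Comparison with the paper.} This is the paper's argument in a slightly different package. The paper base-changes to a field $k'$ with $W(k')\cong\FF_2$ (e.g.\ $\mathbb{C}$), where $h$ embeds $(\MSpetainv)_*(k')$ in $\kw_*\MSp(k')$. There the same formula reads $h(\iota^M)(t)=-t=t$, so $t-\iota^M(t)$ vanishes mod $2$. Your remaining steps are correct and more explicit than the paper: $\iota^M\circ\iota^M=t$ from $(\mathfrak U^\vee)^\vee\cong\mathfrak U$, the normalization $\Psi^M=t+O(t^2)$, and the saturated relations from $2$-torsion-freeness.
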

\begin{proof}
	We need only to construct such a framing for $k=\ZZ$. The other cases are obtained by restricting the sheaf $\underline{\pi}_* \MSpetainv$. Indeed, set
	\[
		\Psi^{M}(t) := \frac{t - \iota^{M}(t)}{2}.
	\]

	To prove the $2$-divisibility of $t-\iota^M(t)$, it suffices to show that it vanishes after restricting to a field $k'$ such that $W(k') \cong \FF_2$ and the restriction map $W(\ZZ) \to W(k')$ identifies with reduction mod $2$ (for example, $k'=\mathbb{C}$). Then $\underline{\pi}_* \MSpetainv(k')$ is the kernel of the operation $\varphi: \kw_*\MSp(k') \to \kw_{*-4}\MSp(k')$, with inclusion $h:\underline{\pi}_* \MSpetainv(k') \hookrightarrow \kw_*\MSp(k')$ given by the Hurewicz map. By the universal base-change formula (cf.~Proposition~\ref{prop:msp-universal-base-change}),
	\[
		h(\iota^M)(t) = \exp( - \log(t)) =  - t + \sum_{n\ge 1} 2 a_{2n-1} \log^{2n}\left(t\right) = -t,
	\]
	as $\kw$ has involution $-t$. Since $W(k') \cong \FF_2$, this implies that $t-\iota^M(t)$ vanishes. Returning to the case $k=\ZZ$, we conclude that $\Psi^M(t) \in \underline{\pi}_* \MSpetainv(\ZZ)\llbracket t \rrbracket$ is well defined.
\end{proof}

\begin{theorem}[Quillen-type idempotent]\label{thm:quillen-idempotent}
	There exists a homotopy commutative ring endomorphism
	\[
		\xi:\MSpetainv\to\MSpetainv
	\]
	which is idempotent. Moreover, its telescope is homotopy equivalent to $\MSLetainv$.
\end{theorem}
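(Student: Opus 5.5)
We follow Quillen's construction of the $p$-typical idempotent on $\MU_{(p)}$. The framed involution of Proposition~\ref{prop:msp-framed-involution} provides the change of coordinates. By Definition-Proposition~\ref{def:sp-orientation}, ring maps $\MSpetainv\to\MSpetainv$ correspond to $\Sp$-orientations of $\MSpetainv$, that is, to classes $t'\in(\MSpetainv)^{4,2}(\HPinf)$ restricting to $1$ on $\HP^1$. By Theorem~\ref{thm:quaternionic-projective-bundle} these are exactly the power series $t'=t+O(t^2)$ in the universal coordinate $t=t_M$.

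\textbf{Step 1: the candidate endomorphism.} Define the new coordinate $t':=\Psi^M(t)$. Since $\Psi^M(t)=t+O(t^2)$, it is an $\Sp$-orientation and determines a ring map $\xi:\MSpetainv\to\MSpetainv$. By \eqref{eq:psi-trivializes-epsilon}, the dual bundle acts on the new coordinate by $t'\mapsto -t'$. So $\xi$ sends the universal framed involution to the trivial one: $\xi_*\Psi^M(t)=t$ and $\xi_*\iota^M(t)=-t$.

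\textbf{Step 2: idempotence.} The composite $\xi\circ\xi$ corresponds to the orientation $(\xi_*\Psi^M)(\Psi^M(t))$, which is the analogue of Quillen's formula $f_*(g)\circ g$. Since $\xi_*\Psi^M(t)=t$ by Step 1, this equals $\Psi^M(t)$, so $\xi\circ\xi=\xi$ as orientations. By the classification of orientations (Definition-Proposition~\ref{def:sp-orientation}), the two ring maps agree up to homotopy. This step requires the naturality statement that $\xi_*$ on coefficient series is computed by pushforward along $\xi_*:(\MSpetainv)_*\to(\MSpetainv)_*$. That follows because the Borel class of the dual bundle is natural in the orientation.

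\textbf{Step 3: identifying the telescope.} Let $T$ be the telescope (colimit) of $\MSpetainv\xrightarrow{\xi}\MSpetainv\xrightarrow{\xi}\cdots$. It is a homotopy commutative ring spectrum and a retract of $\MSpetainv$, with $\pi_*T=\mathrm{Im}(\xi_*)$.

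For the map $T\to\MSLetainv$: the orientation of $\MSLetainv$ is induced by an $\SL$-orientation. In $\SL$-oriented $\eta$-periodic theories the dual symplectic bundle has $b_1(\mathfrak U^\vee)=-b_1(\mathfrak U)$, because $\mathfrak U^\vee\cong\mathfrak U$ as a bundle and the symplectic form changes sign, which in $\eta$-inverted $\SL$-theory multiplies Borel classes by $\langle -1\rangle=-1$. Hence the image of $\Psi^M$ in $\MSLetainv$ is $t$, and the canonical map $c:\MSpetainv\to\MSLetainv$ satisfies $c\circ\xi=c$. Thus $c$ factors through $T$.

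For the reverse map: $\xi$ carries a trivial involution, so $\xi$ is an orientation that should factor through $\MSL$. The plan is to show that $\xi_*(y_{2i+1})$ lies in the ideal generated by odd classes and vanishes after adjustment. More directly, compare Hurewicz images. By Proposition~\ref{prop:e-msp-homology}, $c$ annihilates the odd $a_i$. By the computation in the proof of Proposition~\ref{prop:msp-framed-involution}, $h(\xi)$ is the substitution $t\mapsto\Psi^M(t)$, which kills the odd part of $\exp$. Then use the equivalence $\MSpetainv/(y_1,y_3,\dots)\simeq\MSLetainv$ of Proposition~\ref{prop:msp-msl-coefficients} to show that $\pi_*T\to(\MSLetainv)_*=W(k)[y_2,y_4,\dots]$ is an isomorphism. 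Injectivity follows because $\mathrm{Im}(\xi_*)$ is torsion-free and detected by $\HW$-Hurewicz together with the $[1/2]$ real-realization comparison of Remark~\ref{rem:lambda-image-after-inverting-two}, where this is Quillen's classical statement for $\MU[1/2]\to \mathbf{MSU}[1/2]$. Surjectivity follows because $c$ is surjective on homotopy and factors through $T$. A map of ring spectra inducing an isomorphism on $\underline{\pi}_*$ sheaves is then an equivalence.

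\textbf{Main obstacle.} The delicate point is $2$-primary: proving $\ker(\xi_*)=(y_1,y_3,\dots)$ integrally, rather than only after inverting $2$, since $\Psi^M$ itself involves division by $2$. The first approach is the reduction used in Proposition~\ref{prop:msp-framed-involution}: work over $k=\ZZ$, check rationally and modulo $2$ (over $k'$ with $W(k')=\FF_2$) via the Hurewicz embedding into $\kw_*\MSp$, and conclude using torsion-freeness of $W(\ZZ)[y_i]$.
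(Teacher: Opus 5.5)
Your proposal is correct. It follows the paper's overall architecture: the orientation $s=\Psi^M(t)$, idempotence from $\xi_*\Psi^M(t)=t$ via universality of $\MSp$, and identification of the telescope through the canonical map $c$ (the paper's $q$) using $c\xi\simeq c$.

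Two sub-steps are argued differently.

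\textbf{The identity $c\xi\simeq c$.} You invoke the fact that $\langle -1\rangle=-1$ in an $\eta$-periodic $\SL$-oriented theory, which forces the involution to be $-t$. The paper instead uses degrees: $\Rfr\cong\ZZ[b_1,b_3,\ldots]$ is generated in degrees $4m-2$, where $(\MSLetainv)_*$ vanishes, so the map classifying $c_*\Psi^M$ is trivial. The paper's route stays inside its own algebra: it needs the structure of $\Rfr$ but nothing about $\SL$-Thom classes. Yours needs that external geometric fact (true, e.g.\ by Ananyevskiy) but not the computation of $\Rfr$.

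\textbf{Injectivity of $c_*$ on $\operatorname{im}(\xi_*)$.} You use that the integral $\HW$-Hurewicz map is injective and that $h(\operatorname{im}\xi_*)\subset W(k)[a_2,a_4,\ldots]$, on which $c_*$ is injective. The paper first shows $\xi_*(y_{2m-1})=0$ and then uses $q_*\xi_*=q_*$ on $W(k)[y_2,y_4,\ldots]$. Both work. The real-realization detour is unnecessary, and the classical target would be $\mathbf{MSO}[1/2]$, not $\mathbf{MSU}[1/2]$, since $\MSL$ realizes to $\mathbf{MSO}$.

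\textbf{The ``main obstacle'' is not an obstacle.} The element $\xi_*(y_{2m-1})$ lies in $(\MSpetainv)_{4m-2}$, which is torsion-free over $\ZZ$, and it vanishes rationally by your Hurewicz computation; hence it is zero. The inclusion $\ker\xi_*\subseteq\ker c_*=(y_1,y_3,\ldots)$ follows from $c\xi\simeq c$. So no mod-$2$ check over a field $k'$ is needed.

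\textbf{One point to make explicit.} Already in Step~1, passing from $\xi_*\iota^M(t)=-t$ to $\xi_*\Psi^M(t)=t$ divides by $2$. You should therefore work over $k=\ZZ$, where the coefficients are torsion-free, from the start. Other bases then follow by base change, as in the paper.
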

\begin{proof}
	We shall prove this for $k=\ZZ$. The maps and homotopies over the other bases are obtained by base change. Using Definition~\ref{def:sp-orientation}, we define a new orientation $s \in \MSpetainv^*(\HPinf)$ by
	\[
		s = \Psi^{M}(t).
	\]
	This corresponds to a homotopy commutative ring endomorphism $\xi: \MSpetainv \to \MSpetainv$. The associated involution $\iota'(s)$ of this orientation is $-s$ by \eqref{eq:psi-trivializes-epsilon}. Applying $\xi_*$ to $2s=t-\iota^M(t)$ therefore gives $2\xi_*(s)=s-\iota'(s)=2s$. By Proposition~\ref{prop:msp-msl-coefficients} and the quaternionic projective bundle theorem, the even-degree coefficient groups occurring here are torsion-free over $\ZZ$, so $\xi_*(s)=s$. Hence $\xi^2$ and $\xi$ classify the same $\Sp$-orientation, and the universality of $\MSp$ gives $\xi^2\simeq\xi$.

	By \cite[Paragraph~5.19, Definition~6.1 and Corollary~6.2]{deglise2021on}, the unit of $\SH(k)[\etainv]_{\QQ}$ identifies with $\HW_{\QQ}$. Smashing with $\MSp$ and passing to coefficients gives the rational Hurewicz isomorphism
	\[
		h: (\MSpetainv)_*\otimes \QQ \xrightarrow{\cong} \HW_*\MSp\otimes \QQ,
	\]
	and the target has generators $\{a_i\}_{i\ge 1}$. Let $\log^{M}(t)$ and $\exp^{M}(t) \in (\MSpetainv)_*\otimes \QQ \llbracket t \rrbracket$ be the preimages of $\log(t)$ and $\exp(t)$, respectively. Then we have
	\[
		\iota^M(t) = \exp^M(-\log^M(t)).
	\]
	The exponential power series $\exp'(s)$ associated to the new orientation is
	\[
		\exp'(s) = s + \sum_{n\ge 1} a_{2n} s^{2n+1} \in (\MSpetainv)_*\otimes \QQ \llbracket s \rrbracket.
	\]
	This implies that $\xi_*\otimes \QQ: (\MSpetainv)_*\otimes \QQ \to (\MSpetainv)_*\otimes \QQ$ sends
	\[
		a_i \mapsto \begin{cases}
			a_i, & i \text{ even,} \\
			0,   & i \text{ odd.}
		\end{cases}
	\]

	Choose polynomial generators
	\[
		(\MSpetainv)_* \cong \ZZ[y_1,y_2,\ldots]
	\]
	as in Proposition~\ref{prop:msp-msl-coefficients}, and let
	\[
		q:\MSpetainv\to\MSLetainv
	\]
	be the canonical map. By the description $\Rfr\cong\ZZ[b_1,b_3,\ldots]$ above, $\Rfr$ is generated in degrees $4m-2$, whereas $(\MSLetainv)_{4m-2}=0$. Thus $q$ sends $\Psi^M$ to the trivial frame, so $q_*(s)=q_*(t)$. The maps $q\xi$ and $q$ therefore classify the same $\Sp$-orientation, and hence $q\xi\simeq q$. The following diagram commutes
	\[
		\xymatrix{\MSpetainv \ar[r]^-{q} \ar[d]_-{h} & \MSLetainv \ar[d]_-{h} \\
		\HW \wedge \MSp \ar[r]^-{q} & \HW \wedge \MSL }
	\]
	After tensoring with $\QQ$ and using the Hurewicz coordinates, Proposition~\ref{prop:e-msp-homology} identifies $q_*: \HW_* \MSp\otimes \QQ \to \HW_* \MSL \otimes \QQ$ with the homomorphism annihilating $a_{i}$ for $i$ odd. Since $\ker(q_*\otimes\QQ)$ is generated by the odd $y_i$, the rational Hurewicz isomorphism gives
	\[
		(y_1,y_3,\ldots)\otimes\QQ=(a_1,a_3,\ldots) = \ker(\xi_*\otimes\QQ)
		\subset (\MSpetainv)_*\otimes\QQ .
	\]
	Since $(\MSpetainv)_{4m-2}$ is torsion-free over $\ZZ$, it follows that $\xi_*(y_{2m-1})=0$ for all $m\ge 1$.
	Since $\xi$ is idempotent, there are splitting maps
	\[
		\MSpetainv\xrightarrow{p}\operatorname{Tel}(\xi)\xrightarrow{i}\MSpetainv,
		\qquad pi\simeq\mathrm{id}_{\operatorname{Tel}(\xi)},\quad ip\simeq\xi.
	\]
	Set $\omega:=qi:\operatorname{Tel}(\xi)\to\MSLetainv$. Since homotopy sheaves commute with filtered homotopy colimits, $\underline{\pi}_*\operatorname{Tel}(\xi)\cong\operatorname{im}(\xi_*)$ and $\omega_*$ is the restriction of $q_*$ to this image. By
	Proposition~\ref{prop:msp-msl-coefficients}, $\omega$ is an equivalence. Defining
	\[
		\bar{\xi}:=i \omega^{-1}:\MSLetainv\longrightarrow\MSpetainv
	\]
	gives $q\bar{\xi}\simeq\mathrm{id}_{\MSLetainv}, \omega p\simeq q\xi\simeq q$, and $\bar{\xi}q\simeq ip\simeq\xi$. Thus $\operatorname{Tel}(\xi)\simeq\MSLetainv$.
\end{proof}
\begin{remark}
	This is inspired by the splitting of $\MU[1/2]$ described in \cite{baker2014msp}, and readers can also find there some discussion of the relation between involutions and such splittings. However, such a splitting in the $\mathrm{GL}$-oriented case does not occur integrally.
\end{remark}

\begin{lemma}\label{lem:canonical-msp-splitting}
	Let
	\[
		\kappa:\Rfr\longrightarrow (\MSpetainv)_*
	\]
	be the map classifying the framed involution $\Psi^M$. Then multiplication induces an isomorphism
	\[
		\chi:\Rfr\otimes_{\ZZ}(\MSLetainv)_*\xrightarrow{\cong}(\MSpetainv)_*,
		\qquad
		r\otimes x\longmapsto \kappa(r)\cdot \bar{\xi}_*(x).
	\]
\end{lemma}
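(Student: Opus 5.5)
We work first over $k=\ZZ$, where $W(k)\cong\ZZ$, and then pass to other bases by the base change used in Theorem~\ref{thm:quillen-idempotent}. The map $\chi$ is a graded ring homomorphism, since $\kappa$ and $\bar{\xi}_*$ are ring maps and everything is commutative. By Proposition~\ref{prop:msp-msl-coefficients} and the description $\Rfr\cong\ZZ[b_1,b_3,\ldots]$, both sides are connected graded polynomial rings over $\ZZ$ of the same finite rank in each degree. The source has one generator in each degree $4m-2$ (the $b_{2m-1}$) and one in each degree $4m$ (the $y_{2m}$); the target has one generator in each degree $2i$. It therefore suffices to show that $Q\chi$ is surjective, i.e.\ an isomorphism on indecomposables in each degree. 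The standard induction on degree used in the proof of Theorem~\ref{thm:lazard-structure} then shows that $\chi$ is surjective, and a surjection between free $\ZZ$-modules of equal finite rank is bijective.

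The degrees $4m$ are handled by the $\MSL$ factor. Since $q\bar{\xi}\simeq\mathrm{id}$ and $q_*$ kills the odd $y_i$, we get $\bar{\xi}_*(y_{2m})\equiv y_{2m}$ modulo $\ker q_*=(y_1,y_3,\ldots)$. Modulo decomposables this means $\bar{\xi}_*(y_{2m})\equiv y_{2m}$ in $Q_{4m}$, because $(\ker q_*)_{4m}$ is decomposable: every odd $y_i$ lives in degree $\equiv 2\pmod 4$. The $\Rfr$-part contributes nothing in degree $4m$ on indecomposables, as $\Rfr$ has no generators there.

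The degrees $4m-2$ are handled by the $\Rfr$ factor, and this is the main obstacle. We must show that $\kappa(b_{2m-1})\equiv\pm y_{2m-1}$ modulo decomposables. My plan is to compute in the Hurewicz image. Since $\Psi^M=(t-\iota^M(t))/2$ and $h(\iota^M)(t)=\exp(-\log t)$, we have, modulo decomposables in $\HW_*\MSp$,
\[
	\exp(-\log t)\equiv -t-2\sum_{n\ge1}a_{2n-1}t^{2n}.
\]
Indeed, the linear term in $a_j$ of $\exp(-\log t)$ is $a_j((-t)^{j+1}-t^{j+1})$. Hence $h(\kappa(b_{2m-1}))\equiv a_{2m-1}$ in $Q_{4m-2}\HW_*\MSp$. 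By Theorem~\ref{thm:msp-eta-inverted-hurewicz-image} at the prime $2$ (with $2m-1$ odd, so the index is $1$) and Remark~\ref{rem:lambda-image-after-inverting-two} at odd primes, the image of $Q_{4m-2}(\MSpetainv)_*$ is generated by $m\,a_{2m-1}$. Here $m=p$ if $2m=p^k$ for an odd prime $p$, which is impossible since $2m$ is even, and $m=1$ otherwise, as $2m$ is a prime power only for $p=2$, already accounted for. So the image is all of $\ZZ a_{2m-1}$. Because $Q h$ is injective on $Q_{4m-2}$ (by the comparison-lemma argument, or by rational injectivity together with torsion-freeness of $Q_{4m-2}\cong\ZZ$), $\kappa(b_{2m-1})$ is a generator of $Q_{4m-2}$. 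The delicate point to check is that the odd-degree $\HW$-Hurewicz image is exactly $\ZZ a_{2m-1}$, with no extra factor coming from $p$-primary considerations. This is where the Hurewicz-image computations are used.

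Combining the two cases, $Q\chi$ is surjective in every degree. This yields the isomorphism over $\ZZ$, and hence over general $k$ after tensoring with $W(k)$. That last step is implicit in the statement as written over $\ZZ$, and is consistent with Theorem~\ref{thm:quillen-idempotent}.
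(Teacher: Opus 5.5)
Your argument is correct and is essentially the paper's proof. You reduce to $k=\ZZ$ and to indecomposables, use $q_*\bar{\xi}_*=\mathrm{id}$ in degrees $4m$, and use the Hurewicz image of $\Psi^M$ in degrees $4m-2$. In the degree-$4m$ step, your explicit remark that $(y_1,y_3,\ldots)$ is decomposable in degree $4m$ is what makes it work.

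There are two problems with the degree $4m-2$ step.

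\textbf{Sign error.} Your justification of the congruence for $\exp(-\log t)$ is wrong. Modulo decomposables $\log(t)\equiv t-\sum_i a_i t^{i+1}$. So the part of $\exp(-\log t)$ linear in $a_j$ is $a_j\bigl(t^{j+1}+(-t)^{j+1}\bigr)$, not $a_j\bigl((-t)^{j+1}-t^{j+1}\bigr)$. The expression you wrote vanishes for odd $j$ and keeps only the even-indexed $a_j$. Taken literally, it contradicts your displayed congruence and would give $h\kappa(b_{2m-1})\equiv 0$, which kills the term you need. The correct statement is $\exp(-\log t)\equiv -t+2\sum_{n\ge1}a_{2n-1}t^{2n}$. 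This matches the exact formula $h(\iota^M)(t)=-t+2\sum_{n\ge 1}a_{2n-1}\log^{2n}(t)$ in the paper, and gives $h\kappa(b_{2m-1})\equiv -a_{2m-1}$. The sign itself does not matter.

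\textbf{Unnecessary detour.} The ``delicate point'' you flag is not actually needed. Write $Q_{4m-2}\kappa(b_{2m-1})=\lambda y_{2m-1}$ and $Q_{4m-2}h(y_{2m-1})=\mu a_{2m-1}$. Since $Q_{4m-2}\HW_*\MSp=\ZZ a_{2m-1}$, the congruence gives $\lambda\mu=-1$, so $\lambda=\pm1$. This uses nothing from Theorem~\ref{thm:msp-eta-inverted-hurewicz-image} or Remark~\ref{rem:lambda-image-after-inverting-two}, and does not need injectivity of $Qh$. This is exactly how the paper argues. It also shows that the framed involution alone reproves the odd-degree case of the Hurewicz-image computation, rather than depending on it.

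Your route through those results is valid and not circular, since they are proved independently of this lemma. It is just heavier than necessary.
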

\begin{proof}
	As in Proposition~\ref{prop:msp-framed-involution}, it suffices to prove the claim for $k=\ZZ$. The other cases are obtained by restricting the sheaves $\underline{\pi}_*\MSpetainv$ and $\underline{\pi}_*\MSLetainv$. Thus $W(k)=\ZZ$.

	By the description $\Rfr\cong \ZZ[b_1,b_3,b_5,\ldots]$ above and Proposition~\ref{prop:msp-msl-coefficients}, both sides are polynomial rings over $\ZZ$ with one indecomposable generator in each positive degree. Hence it suffices to prove that $\chi$ induces an isomorphism on indecomposables.

	In degree $4m$, the map $Q_{4m}\chi$ is just $Q_{4m}\bar{\xi}_*$, because $\Rfr$ has no positive even indecomposables. Since $q_*\circ \bar{\xi}_*=\mathrm{id}_{(\MSLetainv)_*}$, the maps $Q_{4m}q_*$ and $Q_{4m}\bar{\xi}_*$ are inverse isomorphisms.

	In degree $4m-2$, the map $Q_{4m-2}\chi$ is just $Q_{4m-2}\kappa$, because $(\MSLetainv)_*$ has no odd indecomposables. Write
	\[
		\Psi^M(t)=t+\sum_{n\ge 1} c_n t^{n+1}.
	\]
	By the proof of Proposition~\ref{prop:msp-framed-involution},
	\[
		h(\Psi^M)(t) = \frac{t-h(\iota^M)(t)}{2} = t-\sum_{n\ge 1}a_{2n-1}\log^{2n}(t).
	\]
	Since $\log(t)=t+O(t^2)$, it follows that
	\[
		h(c_{2m-1})\equiv -a_{2m-1}\pmod{(a_1,\ldots,a_{2m-2})}.
	\]
	Therefore $Q_{4m-2}(h\circ \kappa)$ sends the generator of $Q_{4m-2}\Rfr\cong \ZZ$ to $-a_{2m-1}$. $y_{2m-1}$ is a generator of the free rank-one $\ZZ$-module $Q_{4m-2}(\MSpetainv)_*$. Writing
	\[
		Q_{4m-2}\kappa(b_{2m-1})=\lambda y_{2m-1},\qquad
		Q_{4m-2}h(y_{2m-1})=\mu a_{2m-1}
	\]
	for some $\lambda,\mu\in \ZZ$, the above congruence gives $\lambda\mu=-1$. Hence $\lambda=\pm 1$, so $Q_{4m-2}\kappa$ is an isomorphism.

	Thus $Q\chi$ is an isomorphism, and therefore $\chi$ is an isomorphism.
\end{proof}

\section{Formal Ternary Laws}\label{sec:formal-ternary-law}
\subsection{Axioms of Formal Ternary Laws}\label{subsec:axiom-formal-ternary-law}
In this subsection, we give an axiomatic definition of formal ternary laws in the $\eta$-periodic setting. Compared with \cite{coulette2024formal}, the scalar $\epsilon$ is identified with $1$ here.
\begin{definition}\label{def:formal-ternary-law}
	A \emph{formal ternary law} (FTL) over a graded ring $R$ equipped with a framed involution $(\iota, \Psi)$ is a $(4,3)$-series with coefficients in $R$:
	\[
		F_t(x,y,z)= 1 + \sum_{l=1}^{4} F_l(x,y,z)t^l,
	\]
	whose coefficients satisfy the $2$-saturated relations of the following axioms in the sense of Definition~\ref{def:2-saturated-relations}:
	\begin{enumerate}
		\item \textbf{(Neutral element)} The $(4,1)$-series satisfies
		      \[
			      F_t(x,0,0)=(1+xt)^2 (1+\iota(x)t)^2.
		      \]
		\item \textbf{(Symmetry)} $F_t$ is $S_3$-invariant:
		      \[
			      F_t(x_1,x_2,x_3)=F_t(x_{\sigma(1)},x_{\sigma(2)},x_{\sigma(3)})\qquad (\sigma\in S_3).
		      \]
		\item \textbf{(Associativity)} For formal variables $(x,y,z,u,v)$ one has an equality of $(16,5)$-series
		      \[
			      F_t\left(F_t(x,y,z),u,v\right)=F_t\left(x,F_t(y,z,u),v\right),
		      \]
		      where the substitution is as in \cite[Definition~2.1.13]{coulette2024formal}.
		\item \textbf{(Framed $\iota$-linearity)} Writing $\widehat{F}_t:=\Psi_*F_t$ for the transport of $F_t$ along $\Psi$, one has
		      \[
			      \widehat{F}_t(-x,y,z)=\widehat{F}_{-t}(x,y,z).
		      \]
		\item \textbf{(Weak neutral element)} The $(4,2)$-series $F_t(x,x,y)$ has a multiple root $y$, that is,
		      \[
			      F_t(x,x,y) = (1+yt)^2(1+ S(x,y)t + P(x,y)t^2),
		      \]
		      for some $S(x,y), P(x,y) \in R\llbracket x,y \rrbracket$.
	\end{enumerate}
\end{definition}
\begin{definition}
	An FTL is \emph{normalized} if its framed involution is trivial, i.e.\ $\iota(t)=-t$ and $\Psi(t)=t$. Framed $\iota$-linearity is then $F_t(-x,y,z)=F_{-t}(x,y,z)$. For every FTL, the coordinate transport $\widehat F_t:=\Psi_*F_t$ is a normalized FTL.
\end{definition}

\begin{remark}\label{rem:epsilon-linearity-consequence}
	By framed $\iota$-linearity and symmetry, $\widehat{F}_t(-x,-y,z)=\widehat{F}_t(x,y,z)$. Transporting this identity back along $\Psi^{-1}$ and using $\iota=\Psi^{-1}(-\Psi)$ gives
	\[
		F_t(\iota(x),\iota(y),z)=F_t(x,y,z).
	\]
\end{remark}
\begin{definition}
	Define the following functors from the category $\mathbf{Rings}^{\mathrm{gr}}$ of graded rings to the category $\mathbf{Sets}$ of sets:
	\[
		\mathcal{FGL}(R):=\{\mu \mid \mu \text{ is an FGL over }R\}.
	\]
	\[
		\mathcal{FTL}(R):=
		\left\{
		(\iota, \Psi, F_t)\,\middle|\,
		\begin{array}{l}
			F_t \text{ is an FTL over }R, \\
			\text{with framed involution }(\iota, \Psi)
		\end{array}
		\right\}.
	\]
	\[
		\mathcal{FTL}_{\mathrm{norm}}(R):=\{(\iota, \Psi, F_t) \in \mathcal{FTL}(R) \mid \Psi(t) = t \}.
	\]
\end{definition}

\begin{proposition}
	The GFTL $F_t$ over any $\eta$-periodic $\Sp$-oriented homotopy commutative ring spectrum $\E$ is an FTL over the ring $\E_*$ with framed involution $(\iota^{\E}, \Psi^{\E})$ induced by the framed involution $(\iota^M, \Psi^M)$ over $\MSpetainv$ (cf.~Proposition~\ref{prop:msp-framed-involution}).
\end{proposition}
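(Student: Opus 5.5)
We reduce to the universal case $\E=\MSpetainv$ over $k=\ZZ$. An $\Sp$-orientation is a ring map $g:\MSp\to\E$. Since $\E$ is $\eta$-periodic, $g$ factors through $\MSpetainv$, and $g_*$ carries the GFTL, the Borel classes and the framed involution of $\MSpetainv$ to those of $\E$. Each axiom is either an identity among power series or a $2$-saturated relation, and both kinds are preserved by ring homomorphisms. Once we know that the coefficients of $F^{M}_t$ satisfy the $2$-saturated ideal, applying $g_*$ gives the claim for $\E$; for a general base this is combined with the restriction of the sheaf $\underline{\pi}_*\MSpetainv$. The framed involution on $\E_*$ is by definition $g_*(\iota^M,\Psi^M)$, and $g_*\iota^M$ is again the series of the dual bundle.

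Over $\ZZ$ the ring $(\MSpetainv)_*\cong\ZZ[y_1,y_2,\ldots]$ is torsion-free, and so is its image in $(\MSpetainv)_*\otimes\QQ$. Hence it suffices to check that the naive relations hold in $(\MSpetainv)_*$. Indeed, if $2^nf\in\mathcal I$, then $2^n f(\text{coeffs})=0$, and torsion-freeness forces $f(\text{coeffs})=0$. By the rational Hurewicz isomorphism (used in the proof of Theorem~\ref{thm:quillen-idempotent}), it is then enough to verify every axiom after applying $h\otimes\QQ$. By Proposition~\ref{prop:hurewicz-gftl-roots}, $h(F_t)$ splits with roots $\exp(\varepsilon_1\log x+\varepsilon_2\log y+\varepsilon_3\log z)$ with $\varepsilon_1\varepsilon_2\varepsilon_3=-1$. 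Equivalently, $h(F_t)=\exp_*F^{\HW}_t$, where $F^{\HW}_t$ has roots $\pm x\pm y\pm z$ (odd sign product). Also $h(\iota^M)=\exp(-\log t)$.

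We then check the axioms on the additive model $F^{\HW}$ and transport along $\exp$. Transport is compatible with each axiom provided the involution is transported as well. Specifically, rootwise composition commutes with the substitution of \cite[Def.~2.1.13]{coulette2024formal}, and transport by $\exp$ turns $-t$ into $\exp(-\log t)=h(\iota^M)$.

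\begin{enumerate}
\item \textbf{Neutral element.} The roots at $(x,0,0)$ are $\pm x$, each with multiplicity $2$. This gives $(1+xt)^2(1+\iota(x)t)^2$ after transport.
\item \textbf{Symmetry.} The root set is $S_3$-stable.
\item \textbf{Associativity.} Both sides have the sixteen roots $\pm x\pm y\pm z\pm u\pm v$ with a fixed sign constraint: nesting two odd-sign-product constraints gives product $+1$ on the outer variables. The two multisets coincide.
\item \textbf{Weak neutral element.} At $(x,x,y)$ the roots are $-y$ (twice, from $x-x-y$ and $-x+x-y$), $2x+y$ (really $x+x-y$ up to sign conventions), and the remaining one. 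The root pair $\pm(\cdot)$ containing $y$ up to $\iota$ appears with multiplicity two. One must check carefully that the doubled root is literally $y$ after transport; this depends on the sign convention of $F_t$ versus roots $-r$. I will verify it directly on $F^{\HW}_2,F^{\HW}_4$ as a polynomial identity.
\end{enumerate}

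\textbf{Main obstacle: framed $\iota$-linearity.} This is where the framing enters. We have $\widehat F=\Psi_*F$, and rationally $\Psi^M$ is a coordinate, with $h(\Psi^M)=\exp'^{-1}\circ\exp$-type change, in which $\exp'$ is the odd exponential $s+\sum a_{2n}s^{2n+1}$ from the proof of Theorem~\ref{thm:quillen-idempotent}. The key fact is that $\Psi\circ\iota\circ\Psi^{-1}=-t$ by \eqref{eq:psi-trivializes-epsilon}.

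First, I will show that $\widehat F=\Psi_*F$ has roots $\exp'(\varepsilon_1\log' x+\cdots)$ for an odd series $\exp'$. This holds because $\Psi$ rewrites $\exp\circ(\text{sign change})\circ\log$ as conjugation of $-1$, so $\widehat F=(\exp')_*F^{\HW}$ with $\exp'$ odd. Second, since $\exp'$ is odd, replacing $x$ by $-x$ negates every root, which is exactly $t\mapsto -t$. Verifying that the transported exponential is indeed odd, using $h(\Psi^M)$ from Lemma~\ref{lem:canonical-msp-splitting}, is the step requiring the most care.
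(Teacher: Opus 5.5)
Your proposal follows the paper's proof. You reduce to $F^M_t$ over $k=\ZZ$ and pass to the Hurewicz image; you use torsion-freeness plus the rational Hurewicz isomorphism where the paper uses injectivity of $h$ directly. Your remark that torsion-freeness makes the naive and $2$-saturated relations equivalent is a point the paper leaves implicit. You then read off each axiom from the roots $\exp(\varepsilon_1\log x+\varepsilon_2\log y+\varepsilon_3\log z)$. Two corrections are needed, neither affecting the approach.

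\emph{Weak neutral element.} Your tentative computation uses the triples $(1,-1,-1)$ and $(-1,1,-1)$. These have sign product $+1$, so they are not in the index set. Taken literally, your computation would make $\iota(y)$ the double root, which is not the axiom. The admissible triples at $(x,x,y)$ are $(1,-1,1)$ and $(-1,1,1)$, both giving the additive root $y$, and $(1,1,-1)$, $(-1,-1,-1)$, giving $\pm 2x-y$. After transport the roots are $y,y,\exp(\pm2\log x-\log y)$. So the double root is literally $y$, as the paper states.

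\emph{Framed $\iota$-linearity.} This is not the hard step. By definition $h(\Psi^M)(t)=\tfrac12\bigl(t-\exp(-\log t)\bigr)$. Hence $\vartheta(u):=h(\Psi^M)(\exp u)=\tfrac12\bigl(\exp u-\exp(-u)\bigr)=u+\sum_{m\ge1}a_{2m}u^{2m+1}$. This is visibly odd, it is exactly your $\exp'$, and $\widehat F_t=\vartheta_*F^{\HW}_t$. This is the paper's argument. Your alternative derivation of oddness from $\Psi\circ\iota\circ\Psi^{-1}=-t$ and $\iota=\exp\circ(-1)\circ\log$ is also valid. Note that $h(\Psi^M)=\exp'\circ\log$, not $\exp'^{-1}\circ\exp$ as in your parenthetical.
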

\begin{proof}
	We only need to prove this for the universal GFTL $F_t^M$ of $\MSpetainv$ over $k=\ZZ$.
	Since the Hurewicz map $h:\MSpetainv_* \to \HW_*\MSp$ is injective, it suffices to check that $F_t^h:=h(F^M_t)$ is an FTL over $\HW_*\MSp$ with framed involution $(h(\iota^{M}), h(\Psi^{M}))$. By Proposition~\ref{prop:hurewicz-gftl-roots}, the roots of $F_t^h$ are
	\[
		r_{\varepsilon}(x,y,z):=\exp \left(\varepsilon_1 \log(x) + \varepsilon_2 \log(y) + \varepsilon_3 \log(z)\right),
		\qquad \varepsilon_1\varepsilon_2\varepsilon_3=-1.
	\]
	Symmetry is immediate from the symmetry of this root set. For the neutral element, the roots at $(x,0,0)$ are $x,x,h(\iota^M)(x),h(\iota^M)(x)$, hence
	\[
		F_t^h(x,0,0)=(1+xt)^2(1+h(\iota^M)(x)t)^2.
	\]
	For the weak neutral element, the roots at $(x,x,y)$ are $y,y,\exp(\pm2\log(x)-\log(y))$, so $y$ is a multiple root of $F_t^h(x,x,y)$. For framed $\iota$-linearity, we have $\widehat{F}_t:= h(\Psi^M)_*F_t^h=\vartheta_*F_t^{\HW}$ with $\vartheta(u):=h(\Psi^M)(\exp(u))=(\exp(u)-\exp(-u))/2$. The series $\vartheta$ is odd. Replacing $x$ by $-x$ changes the index set $\{\varepsilon_1\varepsilon_2\varepsilon_3=-1\}$ to its negative, hence $\widehat{F}_t(-x,y,z)=\widehat{F}_{-t}(x,y,z)$. For associativity, both $F_t^h(F_t^h(x,y,z),u,v)$ and $F_t^h(x,F_t^h(y,z,u),v)$ have roots
	\[
		\exp(\sigma_1\log(x)+\sigma_2\log(y)+\sigma_3\log(z)+\sigma_4\log(u)+\sigma_5\log(v)), \quad \sigma_i\in\{\pm 1\},\prod_{i} \sigma_i =1
	\]
	so they agree.
\end{proof}

\begin{example}
	The frame $\Psi^M$ transports the universal GFTL $F^M_t$ over $\MSpetainv$ to the normalized FTL
	\[
		\widehat{F}^M_t:=\Psi^M_*F^M_t.
	\]
	Equivalently, $\widehat{F}^M_t$ is the GFTL associated to the $\Sp$-orientation $s=\Psi^M(t)$ induced by the endomorphism $\xi$ of Theorem~\ref{thm:quillen-idempotent}. Since this orientation factors through $\MSLetainv$ and has trivial associated framed involution, we may also view $\widehat{F}^M_t$ as the induced normalized GFTL over $\MSLetainv$, by abuse of notation.
\end{example}
\begin{proposition}\label{prop:ftl-nonpositive-coefficients}
	The non-positive degree coefficients of $F_t(x,y,z)$ are completely determined by the axioms.
\end{proposition}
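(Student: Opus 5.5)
The plan is to read off the degrees of the coefficients and then show, by a filtration argument, that the axioms leave no freedom in degrees $\le 0$.

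\textbf{Setup and reduction.} Since $|t|=2$, $|x|=|y|=|z|=-2$ and $F_t$ is homogeneous of degree $0$, the coefficient $a^l_{a,b,c}$ of $\sigma(x^ay^bz^c)$ in $F_l$ has degree $2(a+b+c-l)$. So the statement concerns the coefficients with $a+b+c\le l$, and I will prove two things:
\begin{enumerate}
\item every coefficient with $a+b+c<l$ vanishes;
\item the coefficients with $a+b+c=l$ are the integers of Example~\ref{ex:gftl-over-hw}, so that the degree-$0$ part of $F_t$ is $F^{\HW}_t$.
\end{enumerate}
Since the axioms are imposed universally, it suffices to work in the ring freely generated by $\Rfr$ and the symbols $a^l_{a,b,c}$, modulo the $2$-saturated ideal of the axioms. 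There I will show that each such coefficient lies in that ideal, respectively is congruent to the stated integer modulo it.

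The basic tool is the weight filtration on $R\llbracket x,y,z\rrbracket[t]$. Let $x,y,z$ have weight $1$, let $t$ have weight $-1$, and let coefficients have weight $0$. Then a term of weight $w$ carries a coefficient of degree $-2w$. Every axiom is an identity of homogeneous series. Hence the identity may be truncated to its part of weight $\ge 0$, that is, to coefficients of degree $\le 0$. The terms of the framed involution do not interfere: $\Psi(t)=t+\sum b_mt^{m+1}$ with $|b_m|>0$. Consequently $\iota(x)=-x+(\text{terms of negative weight})$, and these terms only ever multiply other coefficients into strictly lower weight.

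\textbf{Step 1: top weight.} I will argue by descending induction on the weight, beginning with the largest weight $w=l-(a+b+c)>0$ that can occur. Using symmetry, I will specialise the neutral element axiom to $F_t(x,0,0)=(1+xt)^2(1+\iota(x)t)^2$. In positive weight the right side contributes nothing, because $(1+xt)^2(1-xt)^2$ is pure weight $0$ and the correction terms have negative weight. This kills all the coefficients $a^l_{a,0,0}$ of positive weight.

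\textbf{Step 2: mixed monomials (the main obstacle).} I expect the mixed monomials, those with $b,c>0$, to be the hardest part. My first attempt will use associativity with $u=v=0$:
\[
F_t\bigl(F_t(x,y,z),0,0\bigr)=F_t\bigl(x,F_t(y,z,0),0\bigr).
\]
I will compute both sides using the neutral element. On the left, each root $r$ of $F_t(x,y,z)$ contributes $(1+rt)^2(1+\iota(r)t)^2$. On the right, the same applies to the roots of $F_t(y,z,0)$ inside the outer ternary substitution. Comparing the top-weight parts, and using the induction hypothesis on all higher weights, I aim to get a linear system on the top-weight coefficients whose only solution is $0$.

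If this system turns out to be degenerate, the fallback is the weak neutral element. I will expand $F_t(x,x,y)=(1+yt)^2(\cdots)$ and restrict to the top-weight part, which gives additional linear constraints. One point needs care here. The solution must be $0$ in the saturated sense, so I only need it to vanish after multiplying by a power of $2$. Because the ideal is $2$-saturated, that is enough. This is exactly where the $2$-saturation in Definition~\ref{def:2-saturated-relations} is used.

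\textbf{Step 3: weight zero.} Once all coefficients of positive weight vanish, the weight-$0$ part $F^0_t$ has coefficients in $R_0$. Its terms are homogeneous of the form $F^0_l\,t^l$ with $F^0_l$ homogeneous of degree $l$ in $x,y,z$, and it satisfies all the axioms with $\iota$ replaced by $-x$ and $\Psi$ by the identity. In particular $F^0_t(x,0,0)=(1-x^2t^2)^2$, which fixes the $\sigma(x^l)$ coefficients. Symmetry together with normalized $\iota$-linearity $F^0_t(-x,y,z)=F^0_{-t}(x,y,z)$ forces parity constraints. These leave only the monomials $\sigma(x^2)$, $xyz$, $\sigma(x^4)$ and $\sigma(x^2y^2)$ in the relevant degrees. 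Their coefficients are then pinned down by two constraints. The first is the multiple root $y$ in $F^0_t(x,x,y)$. The second is the $(x,y,0)$ part of the associativity identity from Step 2, which compares the quadratic form in $t$ on both sides. I expect these to give exactly $-2$, $-8$, $1$ and $-2$, recovering $F^{\HW}_t$ as in Example~\ref{ex:gftl-over-hw}, again up to $2$-saturation.
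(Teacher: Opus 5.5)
Your Steps 1 and 3 are essentially the paper's argument. Step 1 uses the neutral element to kill or fix the $\sigma(x^l)$ coefficients. Step 3 uses the parity from normalized $\iota$-linearity and the weak neutral element to get $-8$ and $-2$. Associativity is not needed in Step 3: the $t^1,t^2$ coefficients give $S^{\le 0}=-2y$ and $P^{\le 0}=-4x^2+y^2$, and then the $t^3,t^4$ coefficients give $a^3_{1,1,1}=-8$ and $a^4_{2,2,0}=-2$.

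The gap is Step 2. You flag it as the main obstacle but do not carry it out, and its primary route cannot work as described. Associativity is nonlinear in the coefficients, so its weight-$w$ part is not a linear system in the weight-$w$ coefficients. It also contains products of several coefficients of smaller positive weight; the left side alone already contains $F_t(x,y,z)^2$. Your descending induction has not yet controlled those coefficients. More fundamentally, a relation such as $c^2\in\mathcal I$ never puts $c$ into $\mathcal I^{\mathrm{sat}(2)}$, so the vanishing has to come from the linear axioms. (A minor point: with $x$ of weight $1$ and $t$ of weight $-1$, the coefficient degree is $+2w$; you actually mean weight $l-(a+b+c)$.)

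The missing idea is that the parity constraint you only use in weight $0$ holds in every degree. After normalizing, $\widehat F_l(-x,y,z)=(-1)^l\widehat F_l(x,y,z)$ is a linear relation on all coefficients. By symmetry and $2$-saturation, every surviving monomial $x^ay^bz^c$ in $\widehat F_l$ then has $a\equiv b\equiv c\equiv l\pmod 2$. For $a+b+c<l\le 4$ this leaves only the constants and $\sigma(x^2)t^4$. Both are of the form $a^l_{a,0,0}$ and are killed by your Step 1. Transporting back gives the claim for $F_t$, because the nonlinear coefficients of $\Psi^{\pm1}$ have positive degree.

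Alternatively, your fallback does work once executed. After Step 1, $F_1$ and $F_2$ have no negative-degree terms, hence neither do $S$ and $P$. So the negative-degree parts of $F_3(x,x,y)=2yP+y^2S$ and $F_4(x,x,y)=y^2P$ must vanish. This gives $a^3_{1,1,0}(x^2+2xy)=0$, $a^4_{1,1,0}(x^2+2xy)=0$, and
\[
2a^4_{2,1,0}x^3+\bigl(2a^4_{2,1,0}+a^4_{1,1,1}\bigr)x^2y+2a^4_{2,1,0}xy^2=0 .
\]
Hence $a^3_{1,1,0}=a^4_{1,1,0}=a^4_{2,1,0}=a^4_{1,1,1}=0$, using $2$-saturation for $a^4_{2,1,0}$. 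Up to symmetry these are exactly the mixed negative-degree coefficients, so Step 2 reduces to this linear computation.
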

\begin{proof}
	First replace $F_t$ by the normalized law $\Psi_*F_t$, and write it again as $F_t$. Its $\iota$-linearity gives
	\[
		F_l^{\le 0}(-x,y,z)=(-1)^l F_l^{\le 0}(x,y,z).
	\]
	Combined with symmetry and $2$-saturation, each monomial $x^ay^bz^c$ in $F_l^{\le 0}$ must have all exponents congruent to $l$ modulo~$2$. In particular $F_1^{\le 0}=0$, and
	\[
		\begin{aligned}
			            & F_2^{\le 0}=a^2_{2,0,0}\sigma(x^2),\qquad
			F_3^{\le 0}=a^3_{1,1,1}xyz,                                                             \\
			F_4^{\le 0} & =a^4_{2,0,0}\sigma(x^2)+a^4_{4,0,0}\sigma(x^4)+a^4_{2,2,0}\sigma(x^2y^2).
		\end{aligned}
	\]

	From the neutral element axiom
	\[
		F_t(x,0,0)=(1+xt)^2(1+\iota(x)t)^2
		=1-2x^2t^2+x^4t^4+\text{(terms of higher $x$-degree)},
	\]
	we obtain
	\[
		a^2_{2,0,0}=-2,\qquad
		a^4_{2,0,0}=0,\qquad
		a^4_{4,0,0}=1.
	\]
	Next, from the weak neutral element axiom
	\[
		F_t(x,x,y)=(1+yt)^2(1+S(x,y)t+P(x,y)t^2),
	\]
	the $t$- and $t^2$-coefficients give
	\[
		F_1(x,x,y)=2y+S(x,y),\qquad
		F_2(x,x,y)=y^2+2yS(x,y)+P(x,y).
	\]
	Since $F_1^{\le 0}=0$ and $F_2^{\le 0}(x,y,z)=-2\sigma(x^2)$, we obtain
	\[
		S^{\le 0}(x,y)=-2y,\qquad
		P^{\le 0}(x,y)=-4x^2+y^2.
	\]
	The $t^3$-coefficient is
	\[
		F_3(x,x,y)=2yP(x,y)+y^2S(x,y).
	\]
	Comparing the coefficient of $x^2y$, only the term $2yP(x,y)$ contributes, so
	\[
		a^3_{1,1,1}=[F_3(x,x,y)]_{x^2y}=2[P(x,y)]_{x^2}=2(-4)=-8.
	\]
	Hence
	\[
		F_2^{\le 0}(x,y,z)=-2\sigma(x^2),\qquad
		F_3^{\le 0}(x,y,z)=-8xyz.
	\]

	It remains to determine $a^4_{2,2,0}$. From the weak neutral element axiom, the $t^4$-coefficient satisfies $F_4(x,x,y)=y^2P(x,y)$. Using the formula for $P^{\le 0}$ above and comparing the degree-$4$ part of $F_4^{\le 0}(x,x,y)=y^2P^{\le 0}(x,y)$:
	\[
		(2+a^4_{2,2,0})x^4+2a^4_{2,2,0}x^2y^2+y^4=-4x^2y^2+y^4,
	\]
	giving $a^4_{2,2,0}=-2$. Therefore
	\[
		F_4^{\le 0}(x,y,z)=\sigma(x^4)-2\sigma(x^2y^2).
	\]
	This also shows that the normalized law has no negative-degree coefficients. Since the nonlinear coefficients of $\Psi^{\pm1}$ have positive degree, transporting back preserves the displayed non-positive part.
\end{proof}

\begin{remark}
	Compared with \cite[\S A.5]{coulette2024formal}, the strengthened weak neutral element axiom forces $a^3_{1,1,1}=-8$, eliminating the ambiguity $a^3_{1,1,1}=\pm 8$.
\end{remark}

\begin{definition}\label{def:trivial-ftl}
	Let $R$ be a graded ring. The \emph{trivial FTL} over $R$ is the FTL over $R$ with the trivial framed involution, given by
	\[
		F_t^{\mathrm{triv}}(x,y,z)=1-2\sigma(x^2)t^2-8xyz t^3+\left(\sigma(x^4)-2\sigma(x^2y^2)\right)t^4.
	\]
	By Proposition~\ref{prop:ftl-nonpositive-coefficients}, this is the unique FTL whose positive-degree coefficients vanish. For example, the GFTL in Example~\ref{ex:gftl-over-hw} is trivial.
\end{definition}

\subsection{Walter ring}\label{subsec:walter-ring}
We now introduce the $\eta$-periodic Walter ring via the following universal characterization.

\begin{proposition}
	There exists a graded $\Rfr$-algebra $\W$, called the \emph{$\eta$-periodic Walter ring}, together with the framed involution $(\iota^{\univ}, \Psi^{\univ})$ induced by the structure map $\Rfr\to\W$, and an FTL $F_t^{\univ}$ over $\W$, with the following universal property: for every graded ring $R$ endowed with a framed involution $(\iota^R, \Psi^R)$ and an FTL $F_t^R$ over it, there is a unique graded ring homomorphism $g:\W\to R$ such that
	\[
		g_*\Psi^{\univ}=\Psi^R,\qquad g_*F_t^{\univ}=F_t^R.
	\]
	Equivalently, the functor $\mathcal{FTL}$ is represented by $\W$ in $\mathbf{Rings}^{\mathrm{gr}}$.
\end{proposition}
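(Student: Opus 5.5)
We construct $\W$ by generators and relations, in the same way as $L$ and as $\Rfr$ in Proposition~\ref{prop:universal-framed-involution-ring}. Take the polynomial ring
\[
	P:=\ZZ[b_1,b_2,\ldots][\,c^l_{a,b,c}\mid 1\le l\le 4,\ a,b,c\ge 0,\ a+b+c\ge 0\,],
\]
where $|b_m|=2m$ and $c^l_{a,b,c}$ has degree $2(a+b+c)-2l$. Only finitely many generators lie in each degree, since the grading conditions on $(4,3)$-series make $a+b+c\ge l-\ldots$ finitely bounded in each degree. On $P$ we have the universal series $\Psi(t)=t+\sum b_mt^{m+1}$, $\iota(t)=t-2\Psi(t)$, and $F_t=1+\sum_l\sum c^l_{a,b,c}x^ay^bz^ct^l$, which is homogeneous of degree zero. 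Let $\mathcal R$ be the union of two families of relations. The first is the coefficients of $\iota(\iota(t))-t$. The second is the coefficients of the identities in axioms (1)--(5) of Definition~\ref{def:formal-ternary-law}. Set
\[
	\W:=P/\mathcal I(\mathcal R)^{\mathrm{sat}(2)}.
\]

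The first step is to check that every axiom really is expressed by a family of homogeneous polynomial relations in the generators, so that Definition~\ref{def:2-saturated-relations} applies. Axioms (1), (2) and (5) are direct. For (5), ``has a multiple root $y$'' should be encoded as divisibility of $F_t(x,x,y)$ by $(1+yt)^2$ in $P\llbracket x,y\rrbracket[t]$. Since $(1+yt)^2$ is monic in $t$ up to a unit leading behaviour, division with remainder is possible, and divisibility is equivalent to the vanishing of the coefficients of the remainder, which are polynomial in the generators. Axioms (3) and (4) involve substitution and transport. Here one uses \cite[Definition~2.1.13]{coulette2024formal} and Definition~\ref{def:transport-of-series}: the coefficients of $F_t(F_t(x,y,z),u,v)$, and of $\Psi_*F_t$, are universal integral polynomials in the coefficients of the input series. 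This holds by the fundamental theorem on symmetric functions applied in a splitting algebra, together with inversion of $\Psi$, which is valid because $\Psi'(0)=1$. Hence both sides of each axiom have coefficients in $P$, and their differences give the relations. This integrality and polynomiality check is the main technical point. I would handle it by expressing everything through elementary symmetric functions of the roots, which avoids division.

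Next, endow $\W$ with the images of $\Psi,\iota,F_t$. The framed involution relations lie in $\mathcal I(\mathcal R)^{\mathrm{sat}(2)}$, so they hold in $\W$. The map $\Rfr\to\W$ exists because the involution ideal $\mathcal J$ satisfies $\mathcal J\subset\mathcal I(\mathcal R)$. Saturation is monotone, so $\mathcal J^{\mathrm{sat}(2)}\subset\mathcal I(\mathcal R)^{\mathrm{sat}(2)}$, and this makes $\W$ an $\Rfr$-algebra. The tuple $(\iota,\Psi,F_t)$ over $\W$ is then an FTL by construction, since both the framed involution axioms and the FTL axioms are imposed in the saturated sense.

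Finally, the universal property. Let $R$ be given with a framed involution $(\iota^R,\Psi^R)$ and an FTL $F^R_t$. Define $P\to R$ by sending each $b_m$ and each $c^l_{a,b,c}$ to the corresponding coefficient of $\Psi^R$ or $F^R_t$; this map is forced, which gives uniqueness. For existence we must show the map kills $\mathcal I(\mathcal R)^{\mathrm{sat}(2)}$. The hypothesis is only that the involution coefficients factor through $\mathcal J^{\mathrm{sat}(2)}$, and that the FTL coefficients factor through the saturation of their own ideal. These are separate conditions, so we must show they combine. The FTL relations are stated in terms of the coefficients of $F$ and $\Psi$ together, so the FTL axiom ideal already contains the involution relations, and Definition~\ref{def:formal-ternary-law} should be read as requiring the whole tuple of coefficients to factor through the saturation of the combined ideal $\mathcal I(\mathcal R)$. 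If this reading is adopted, factorization is immediate. If the definitions are instead read separately, we need the inclusion $\mathcal I_1^{\mathrm{sat}}+\mathcal I_2^{\mathrm{sat}}\subset(\mathcal I_1+\mathcal I_2)^{\mathrm{sat}}$, which is true. The converse inclusion is not automatic, so I would fix the combined reading as the definition. The graded map $\W\to R$ is then the induced quotient map, and it respects $\Psi$ and $F_t$. This establishes representability of $\mathcal{FTL}$.
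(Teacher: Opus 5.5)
Your overall route is the paper's: present $\W$ as a polynomial ring on the coefficients of $\Psi$ and $F_t$, quotient by the $2$-saturation of the ideal generated by all axiom relations, and define $g$ by sending generators to the corresponding coefficients. One step, however, fails as written.

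Your generators include $c^l_{0,0,0}$ (of degree $-2l$). Over $P$ the series $F_l(x,y,z)$ therefore have non-zero constant terms, and the roots of $F_t$ in a splitting algebra are not topologically nilpotent. As a result, the substitution in axiom (3) and the transport $\Psi_*F_t$ in axiom (4) do not converge. For instance, the $(x,y,z)$-constant part of $\Psi(r_i)$ would be $\bar r_i+\sum_{m\ge 1}b_m\bar r_i^{m+1}$, where $\bar r_i$ are the roots of $1+\sum_l c^l_{0,0,0}t^l$. This is an infinite sum of distinct non-zero terms, all of degree $-2$. So these ``relations'' are not elements of $P$. Your claim that the coefficients of $F_t(F_t(x,y,z),u,v)$ and of $\Psi_*F_t$ are universal integral polynomials holds only for composable series, i.e.\ when $F_t(0,0,0)=1$; this is why Definition~\ref{def:transport-of-series} assumes composability.

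The repair is easy: drop the generators $c^l_{0,0,0}$, which the neutral element axiom forces to vanish anyway. Once $F_l\in(x,y,z)$, the roots are $(x,y,z)$-adically nilpotent. Every coefficient in (3) and (4) is then a finite integral polynomial in the $b_m$ and $c^l_{a,b,c}$, as you intended.

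A smaller inaccuracy concerns axiom (5): $(1+yt)^2$ is not monic in $t$, since its leading coefficient is $y^2$. You can either divide the reversed polynomial in $s=t^{-1}$, which is monic, or follow the proof of Lemma~\ref{lem:ftl-xxy-splitting}. There you solve for the two auxiliary series from the $t$- and $t^2$-coefficients of $F_t(x,x,y)$ and then impose the $t^3$- and $t^4$-identities.

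With this repaired, the remaining differences from the paper are harmless.
\begin{itemize}
\item The paper builds $\W$ over $\Rfr$ and saturates inside $\Rfr[a^l_{i,j,k}]$. Pulled back to $\ZZ[b_m][a^l_{i,j,k}]$ (in the paper's generators), that saturated ideal is $(\mathcal{J}+\widetilde{\mathcal{I}})^{\mathrm{sat}(2)}$, where $\widetilde{\mathcal{I}}$ is the lifted axiom ideal. So the paper implicitly uses exactly your combined reading. Your remark that this reading must be fixed to get existence of $g$ makes explicit what the paper leaves to ``by construction''.
\item In your discussion the two inclusions have swapped roles. The one you call true only shows that the universal tuple is an FTL. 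Existence of $g$ under the separate reading would need the converse. Your conclusion is nevertheless the right one.
\item The paper fixes the degree-zero coefficients and omits the negative-degree ones by appeal to Proposition~\ref{prop:ftl-nonpositive-coefficients}. You instead keep the remaining non-positive-degree coefficients as free generators. This makes the universal property independent of that proposition, at the price of a polynomial ring with generators in negative degrees. The two presentations give isomorphic rings once the proposition is known.
\end{itemize}
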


\begin{proof}
	Write
	\[
		F_t^{\univ}(x,y,z)=1 + \sum_{l=1}^{4}\left(\sum_{i,j,k\ge 0} a^l_{i,j,k} x^i y^j z^k\right)t^l,
	\]
	with grading determined by $|a^l_{i,j,k}|=2(i+j+k-l)$ (in particular, $a^l_{i,j,k}$ occurs only when $i+j+k-l\ge 0$ and is fixed when $i+j+k-l = 0$, cf.~Proposition~\ref{prop:ftl-nonpositive-coefficients}). Let
	\[
		P:=\Rfr\left[ a^l_{i,j,k} \;\middle|\; i+j+k-l\ge 1,\ i,j,k\ge 0 \right].
	\]
	Define
	\[
		\W:=P/\mathcal{I}^{\mathrm{sat}(2)},
	\]
	where $\mathcal{I}$ is the homogeneous ideal generated by the relations expressing that $F_t^{\univ}$ is an FTL with framed involution induced from $\Rfr$, and $\mathcal{I}^{\mathrm{sat}(2)}$ is its $2$-saturated closure in the sense of Definition~\ref{def:2-saturated-relations}.
	Given $(\iota^R, \Psi^R, F_t^R)\in \mathcal{FTL}(R)$, sending the generators $b_m$ and $a^l_{i,j,k}$ to the corresponding coefficients of $\Psi^R$ and $F_t^R$ defines a graded ring map $g:\W\to R$, and by construction this $g$ is unique.
\end{proof}

\begin{remark}\label{rem:walter-ring-splitting}
	In particular, $\widetilde{\W}:=\W \otimes_{\Rfr} \ZZ$, where $\Rfr \to \ZZ$ is the map classifying the trivial involution, represents the functor $\mathcal{FTL}_{\mathrm{norm}}$ of normalized FTLs. Moreover, any FTL is obtained by transporting a normalized FTL along a framed involution. Hence the representing ring splits as
	\[
		\W \cong \Rfr \otimes_{\ZZ} \widetilde{\W}.
	\]
\end{remark}

\subsection{From FGL to FTL}\label{subsec:from-fgl-to-ftl}

Suppose that $R$ is a graded $\ZZ[1/2]$-algebra and $\mu$ is a formal group law over $R$. Then the formal inverse $\iota(x) \in R \llbracket x \rrbracket$ of $\mu$ is an involution over $R$, hence a framed involution with
\[
	\Psi(x):=\frac{x-\iota(x)}{2}.
\]
Formally defining $x^{-1}:=\iota(x)$, we can then define a formal ternary law over the framed involution $(\iota, \Psi)$ as follows:
\[
	F_t(x,y,z):=\prod_{\substack{\varepsilon_1,\varepsilon_2,\varepsilon_3\in\{\pm 1\}\\ \varepsilon_1\varepsilon_2\varepsilon_3=-1}} \left( 1 +  \mu(\mu(x^{\varepsilon_1},y^{\varepsilon_2}),z^{\varepsilon_3}) t\right).
\]
It satisfies the symmetry and associativity axioms by the symmetry and associativity of $\mu$. Since $\mu(x,x^{-1}) = 0$ and $\mu(x,0) = x$, it also satisfies the neutral element and weak neutral element axioms. For framed $\iota$-linearity, write $\nu:=\Psi_*\mu$ and let $\widehat{F}_t:=\Psi_*F_t$, so that $\widehat{F}_t$ is given by the same product formula with $\mu$ replaced by $\nu$. Then $\nu$ has formal inverse $-x$. Replacing $x$ by $-x$ in the defining product for $\widehat{F}_t$ changes the index condition from $\varepsilon_1\varepsilon_2\varepsilon_3=-1$ to $\varepsilon_1\varepsilon_2\varepsilon_3=1$, while changing all three signs identifies the latter index set with the negatives of the former. Hence $\widehat{F}_t(-x,y,z)=\widehat{F}_{-t}(x,y,z)$.

This construction is compatible with base change in the category $\mathbf{Alg}^{\mathrm{gr}}_{\ZZ[1/2]}$ of graded $\ZZ[1/2]$-algebras, so it defines a natural transformation
\[
	\mathbb{T}: \mathcal{FGL}(-) \to \mathcal{FTL}(-).
\]
over $\mathbf{Alg}^{\mathrm{gr}}_{\ZZ[1/2]}$.

\begin{remark}\label{rem:real-realization-of-T}
	The transformation $\mathbb{T}$ also has a real-realization interpretation, suggested by Alexey Ananyevskiy. When $\mathrm{R}\operatorname{Spec}(k)$ is a point (for example, $k=\ZZ$ or $\mathbb{R}$), the real realization identifies $\MSp[\etainv,1/2]$ with $\MU[1/2]$. Since $\Sp_2(\mathbb{R})\cong \mathrm{SL}_2(\mathbb{R})$ deformation retracts onto $\mathrm{SO}(2)\cong \mathrm{U}(1)$, the realization of a rank-$2$ symplectic bundle may be regarded homotopically as a complex line bundle. Let $L_i$ correspond to $\mathfrak{U}_i$, with complex structure $J_i$. The realization of the triple tensor product is
	\[
		V:=(L_1)_{\mathbb{R}}\otimes_{\mathbb{R}}(L_2)_{\mathbb{R}}\otimes_{\mathbb{R}}(L_3)_{\mathbb{R}},
	\]
	with complex structure $J_1\otimes J_2\otimes J_3$. As a complex rank-$4$ bundle, it decomposes as
	\[
		(V,J_1\otimes J_2\otimes J_3)\cong
		\bigoplus_{\substack{\varepsilon_i\in\{\pm1\}\\ \varepsilon_1\varepsilon_2\varepsilon_3=-1}}
		L_1^{\varepsilon_1}\otimes_{\mathbb{C}} L_2^{\varepsilon_2}\otimes_{\mathbb{C}} L_3^{\varepsilon_3},
		\qquad L_i^{-1}:=L_i^\vee.
	\]
	For a complex orientation with formal group law $\mu$ and $x_i=c_1(L_i)$, its four Chern roots are $\mu(\mu(x_1^{\varepsilon_1},x_2^{\varepsilon_2}),x_3^{\varepsilon_3})$, indexed by $\varepsilon_1\varepsilon_2\varepsilon_3=-1$. These are exactly the roots defining $\mathbb{T}(\mu)$.
\end{remark}

\begin{example}[Multiplicative formal group law]\label{ex:multiplicative-fgl-kw}
	Let $R=\ZZ[1/2,\alpha]$, with $|\alpha|=2$, and over it we have the multiplicative formal group law
	\[
		\mu_m(x,y)=x+y+\alpha xy.
	\]
	Its formal inverse and frame are
	\[
		\iota(x)=-\frac{x}{1+\alpha x},\qquad
		\Psi(x)=\frac{x-\iota(x)}{2}=\frac{x(2+\alpha x)}{2(1+\alpha x)}.
	\]
	Transporting $\mu_m$ along $\Psi$ gives
	\[
		\Psi_*\mu_m(x,y)
		=x\sqrt{1+\alpha^2y^2}+y\sqrt{1+\alpha^2x^2},
	\]
	where the square roots are expanded as formal power series. This is the specialization $\delta=-\alpha^2/2$, $\varepsilon=0$ of the elliptic-sine formal group law in \cite[Example~20]{buchstaber2010elliptic}; see also \cite{koyama2013formal}. Note that the construction $\mathbb{T}$ commutes with transport, and then we have $\Psi_*\mathbb{T}(\mu_m)=\mathbb{T}(\Psi_*\mu_m)$.
	Write $\widehat{F}_t:=\mathbb{T}(\Psi_*\mu_m)$, a direct expansion gives
	\[
		\begin{aligned}
			\widehat{F}_{1}(x,y,z) & = -4\alpha^2 xyz,                                        \\
			\widehat{F}_{2}(x,y,z) & = -4\alpha^2 \sigma(x^2y^2) - 2\sigma(x^2),              \\
			\widehat{F}_{3}(x,y,z) & = -4\alpha^2 \sigma(x^3yz)-8xyz,                         \\
			\widehat{F}_{4}(x,y,z) & = -4\alpha^2 x^2y^2z^2 + \sigma(x^4) - 2\sigma(x^2 y^2).
		\end{aligned}
	\]
	Here the transported ternary law agrees with the GFTL of $\KW$ in Example~\ref{ex:gftl-over-kw}, after artificially identifying this $-4\alpha^2$ with the Bott element $\beta$.
\end{example}

\subsection{From FTL to FGL}\label{subsec:from-ftl-to-fgl}
We now define a natural transformation
\[
	\mathbb{G}: \mathcal{FTL}(-) \to \mathcal{FGL}(-),
\]
over $\mathbf{Alg}^{\mathrm{gr}}_{\ZZ[1/2]}$. Note that the invertibility of $2$ is vital here.

\begin{remark}[Square root] In the formal power series ring $S:= R \llbracket x_1, x_2, \ldots, x_{n} \rrbracket$ over a $\ZZ[1/2]$-algebra $R$, suppose that $u \in S_{+}:= \ker(S \to R)$. Then we can define the following formal square root:
	\[
		\sqrt{1+u}=(1+u)^{1/2}=\sum_{n\ge 0}\binom{1/2}{n}u^n,
		\qquad
		\binom{1/2}{n}=\frac{\left(\frac12\right)\left(\frac12-1\right)\cdots\left(\frac12-n+1\right)}{n!}.
	\]
	Moreover, if $w = (1+u)v^{2}$ for $v \in S$, there are two choices of formal square root:
	\[
		\sqrt{w} := \pm v\sqrt{1+u}.
	\]
\end{remark}

\begin{lemma}\label{lem:ftl-xxy-splitting}
	Given an FTL $F_t$ over a graded $\ZZ[1/2]$-algebra with framed involution $(\iota, \Psi)$, the $(4,2)$-series $F_t(x,x,y)$ splits:
	\[
		F_t(x,x,y)=(1+yt)^2(1+B(x,y)t)(1+B'(x,y)t),
	\]
	with
	\[
		B(x,y) \equiv - 2x - y \bmod (x,y)^2, \quad B'(x,y) \equiv 2x - y \bmod (x,y)^2.
	\]
\end{lemma}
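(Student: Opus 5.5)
The weak neutral element axiom already gives
\[
F_t(x,x,y)=(1+yt)^2\bigl(1+S(x,y)t+P(x,y)t^2\bigr),
\]
so the claim reduces to splitting the quadratic factor $1+St+Pt^2$ over $R\llbracket x,y\rrbracket$. Its roots $B,B'$ must satisfy $B+B'=S$ and $BB'=P$. The plan is to solve the quadratic explicitly using the formal square root from the preceding remark. This is possible because $2$ is invertible.

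First, I determine the low-order terms of $S$ and $P$. By Proposition~\ref{prop:ftl-nonpositive-coefficients}, the non-positive degree parts are
\[
S^{\le 0}(x,y)=-2y,\qquad P^{\le 0}(x,y)=-4x^2+y^2.
\]
Every other coefficient has positive degree, and each monomial $x^ay^b$ of degree $-2$ (resp.\ $-4$) contributes to $S$ (resp.\ $P$) with $a+b\ge 2$ (resp.\ $a+b\ge 3$). Hence
\[
S\equiv -2y \bmod (x,y)^2,\qquad P\equiv -4x^2+y^2 \bmod (x,y)^3.
\]
The discriminant is $D:=S^2-4P$. Its quadratic part is $4y^2+16x^2-4y^2=16x^2$, so $D\equiv 16x^2 \bmod (x,y)^3$.

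The main obstacle is extracting $\sqrt D$. For that, $D$ must have the form $(1+u)v^2$ with $u\in S_+$, and the cubic error terms need not be divisible by $x^2$ a priori. I would get this divisibility from the other axioms.

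The first attempt is to use Remark~\ref{rem:epsilon-linearity-consequence}, which gives $F_t(\iota(x),\iota(x),y)=F_t(x,x,y)$. Together with the fact that $F_t(x,0,\cdot)$ is governed by the neutral element, this should show that at $x=0$ the quadratic factor becomes $(1+yt)^2$. Indeed, $F_t(0,0,y)=(1+yt)^2(1+\iota(y)t)^2$, so the quadratic factor is $(1+\iota(y)t)^2$ and $D(0,y)=0$.

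Vanishing at $x=0$ only gives divisibility of $D$ by $x$. To get $x^2$, I would pass to the normalized law $\widehat F_t=\Psi_*F_t$, for which framed $\iota$-linearity says $\widehat F_t(-x,-x,y)=\widehat F_t(x,x,y)$. Combined with the neutral element, this should show that the quadratic factor of $\widehat F_t(x,x,y)$ is even in $x$. The symmetric functions $\widehat S,\widehat P$ are then even in $x$, and $\widehat D$ is an even power series vanishing at $x=0$, hence divisible by $x^2$.

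Writing $\widehat D=16x^2(1+u)$ with $u\in(x,y)$, the roots are
\[
\widehat B,\widehat B'=\tfrac12\bigl(\widehat S\mp 4x\sqrt{1+u}\bigr),
\]
which have linear terms $-y\mp 2x$. Finally, I transport back along $\Psi^{-1}$, applying $\Psi^{-1}$ rootwise as in Definition~\ref{def:transport-of-series}. Since $\Psi^{\pm1}(t)=t+O(t^2)$, this preserves both the splitting and the linear terms, which gives $B\equiv -2x-y$ and $B'\equiv 2x-y \bmod (x,y)^2$.

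If the evenness argument for $\widehat D$ fails, the fallback is to verify divisibility of the cubic part of $D$ by $x^2$ directly, using the associativity relations among the degree-$2$ coefficients.
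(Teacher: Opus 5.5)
Your proposal is correct and is essentially the paper's argument. The paper also reduces to showing $x^2\mid D$. It gets $D(0,y)=0$ from the neutral element via $S(0,y)=2\iota(y)$ and $P(0,y)=\iota(y)^2$. It kills the $x$-linear coefficient using the invariance $D(x,y)=D(\iota(x),y)$ from Remark~\ref{rem:epsilon-linearity-consequence} together with the invertibility of $2$. It then sets $B,B'=\tfrac12(S\mp\sqrt{D})$ with $\sqrt{D}\equiv 4x$.

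The only difference is that you first pass to the normalized law $\widehat F_t=\Psi_*F_t$, where that invariance becomes evenness in $x$, and then transport the roots back along $\Psi^{-1}$; the paper works directly with $\iota$ and needs no transport. Your aside that the quadratic factor at $x=0$ is $(1+yt)^2$ should read $(1+\iota(y)t)^2$, as you yourself state in the next sentence.
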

\begin{proof}
	Define
	\[
		S(x,y) := F_1(x,x,y)-2y, \qquad P(x,y) := F_2(x,x,y)-2yS(x,y)-y^2.
	\]
	These are the series appearing in the weak neutral element axiom, and one has
	\[
		D(x,y):=S^2(x,y)-4P(x,y)\equiv 16x^2 \bmod (x,y)^3.
	\]
	In fact $x^2\mid D(x,y)$. This is proved in Lemma~\ref{lem:d-divisible-by-x-squared}.

	Choose $\sqrt{D(x,y)}\in R\llbracket x,y\rrbracket$ with
	\[
		\sqrt{D(x,y)}\equiv 4x \pmod{(x,y)^2},
	\]
	and define
	\[
		B(x,y):=\frac{S(x,y)-\sqrt{D(x,y)}}{2},\qquad
		B'(x,y):=\frac{S(x,y)+\sqrt{D(x,y)}}{2}.
	\]
	These are exactly the two roots of the power series $1+ S(x,y)t + P(x,y)t^2$. Equivalently, they give the two nontrivial roots of $F_t(x,x,y)$ in addition to the double root $y$.
\end{proof}

\begin{lemma}\label{lem:d-divisible-by-x-squared}
	$D(x,y)$ is divisible by $x^2$.
\end{lemma}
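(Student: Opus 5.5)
Since $D=S^2-4P$ is the discriminant of the quadratic factor $1+S(x,y)t+P(x,y)t^2$ of $F_t(x,x,y)$, the plan is to show that this quadratic acquires a double root when $x=0$, and then to upgrade vanishing of $D$ to vanishing to second order in $x$ by a symmetry in $x$.

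\textbf{Step 1 (vanishing at $x=0$).} By symmetry and the neutral element axiom,
\[
	F_t(0,0,y)=F_t(y,0,0)=(1+yt)^2(1+\iota(y)t)^2.
\]
Comparing with the weak neutral element factorization $F_t(x,x,y)=(1+yt)^2(1+S t+P t^2)$ at $x=0$, and cancelling the unit $(1+yt)^2$ in $R\llbracket y\rrbracket[t]$, gives
\[
	1+S(0,y)t+P(0,y)t^2=(1+\iota(y)t)^2.
\]
Hence $S(0,y)=2\iota(y)$ and $P(0,y)=\iota(y)^2$, so $D(0,y)=0$. Thus $x\mid D(x,y)$.

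\textbf{Step 2 (evenness in $x$).} I would use Remark~\ref{rem:epsilon-linearity-consequence}, namely $F_t(\iota(x),\iota(x),y)=F_t(x,x,y)$. Since the factor $(1+yt)^2$ is unchanged and $(1+yt)^2$ is a nonzerodivisor, cancelling it gives
\[
	S(\iota(x),y)=S(x,y),\qquad P(\iota(x),y)=P(x,y),
\]
and therefore $D(\iota(x),y)=D(x,y)$. Now change coordinates by $u=\Psi(x)$. Then $\iota=\Psi^{-1}\circ(-1)\circ\Psi$ by \eqref{eq:psi-trivializes-epsilon}, so $\widetilde D(u,y):=D(\Psi^{-1}(u),y)$ is even in $u$. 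By Step 1, $\widetilde D(0,y)=0$.

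Write $\widetilde D(u,y)=\sum_{k} d_k(y)u^k$. Evenness gives $2d_k=0$ for odd $k$, and since $2$ is invertible in $R$, $d_k=0$ for odd $k$. Combined with $d_0=0$, this yields $u^2\mid \widetilde D$. Because $\Psi^{-1}(u)=u\cdot(\text{unit})$ and $\Psi(x)=x\cdot(\text{unit})$, divisibility by $u^2$ is equivalent to divisibility by $x^2$. Hence $x^2\mid D(x,y)$.

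\textbf{Step 3 (leading term).} For the congruence $D\equiv 16x^2$ used in Lemma~\ref{lem:ftl-xxy-splitting}, read off low-order terms from Proposition~\ref{prop:ftl-nonpositive-coefficients}. There $S\equiv -2y$ and $P\equiv -4x^2+y^2$ modulo higher order, so $S^2-4P\equiv 16x^2$.

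\textbf{Main obstacle.} The delicate point is justifying the cancellation of $(1+yt)^2$ and the identity $F_t(\iota(x),\iota(x),y)=F_t(x,x,y)$ as honest identities of coefficients. The axioms are only imposed as $2$-saturated relations, but over a $\ZZ[1/2]$-algebra saturation is harmless. Cancellation in $R\llbracket x,y\rrbracket[t]$ is legitimate because $(1+yt)^2$ is monic in the appropriate sense: it has constant term $1$ in $t$. I would therefore perform the division by comparing coefficients of $t^l$ inductively.
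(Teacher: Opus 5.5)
Your proof is correct and is essentially the paper's argument: you get $D(0,y)=0$ from symmetry and the neutral element axiom, then use $D(\iota(x),y)=D(x,y)$ (from Remark~\ref{rem:epsilon-linearity-consequence}) together with the invertibility of $2$ to kill the $x$-linear coefficient. The only difference is that you conjugate by $\Psi$ to show $D(\Psi^{-1}(u),y)$ is even in $u$, whereas the paper just compares coefficients of $x$ using $\iota(x)=-x+O(x^2)$. That detour is harmless but unnecessary, and so is the cancellation of $(1+yt)^2$, which is a nonzerodivisor rather than a unit in $R\llbracket y\rrbracket[t]$, since the paper defines $S$ and $P$ directly from $F_1$ and $F_2$.
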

\begin{proof}
	Write
	\[
		D(x,y)=\sum_{n\ge 0} d_n(y)x^n,\qquad d_n(y)\in R\llbracket y\rrbracket.
	\]
	It suffices to prove $d_0(y)=d_1(y)=0$.

	By symmetry and the neutral element axiom,
	\[
		F_t(0,0,y)=F_t(y,0,0)=(1+yt)^2(1+\iota(y)t)^2.
	\]
	Therefore $F_1(0,0,y)=2y+2\iota(y)$ and
	$F_2(0,0,y)=y^2+4y\iota(y)+\iota(y)^2$. By the definitions of $S$ and $P$ this gives
	\[
		S(0,y)=2\iota(y),\qquad P(0,y)=\iota(y)^2.
	\]
	Hence $D(0,y)=S(0,y)^2-4P(0,y)=0$, i.e.\ $d_0(y)=0$.

	By Remark~\ref{rem:epsilon-linearity-consequence}, $F_t(x,x,y)=F_t(\iota(x),\iota(x),y)$ and thus
	$D(x,y)=D(\iota(x),y)$. Using $\iota(x)=-x+(\text{terms in }x^2)$, we get
	\[
		D(\iota(x),y)=d_1(y)\iota(x)+\cdots=-d_1(y)x+(\text{terms in }x^2).
	\]
	Comparing coefficients of $x$ in $D(x,y)=D(\iota(x),y)$ gives $d_1(y)=-d_1(y)$,
	so $2d_1(y)=0$. As $2$ is invertible in $R$, we conclude $d_1(y)=0$.
\end{proof}

\begin{definition}
	For a given FTL $F_t$ over a graded $\ZZ[1/2]$-algebra with framed involution $(\iota, \Psi)$, let $B(x,y)$ be the root produced in Lemma~\ref{lem:ftl-xxy-splitting}.
	Set
	\[
		f(x,y):=B(\iota(x),\iota(y)).
	\]
	Moreover, the series
	\[
		[2](x):=f(x,0)\equiv 2x \bmod (x^2)
	\]
	has a compositional inverse $[1/2](x)$.

	We then construct the following binary series over $R$ associated to $(\iota, \Psi, F_t)$:
	\begin{equation}\label{eq:ftl-to-fgl}
		\mu(x,y):=f([1/2](x),y)
	\end{equation}
	By Proposition~\ref{prop:ftl-to-fgl-is-fgl} below, it is indeed an FGL, and therefore we obtain the desired map
	\[
		\mathbb{G}: \mathcal{FTL}(R) \to \mathcal{FGL}(R).
	\]
	This map is compatible with base change, and therefore $\mathbb{G}: \mathcal{FTL}(-) \to \mathcal{FGL}(-)$ is a natural transformation over $\mathbf{Alg}^{\mathrm{gr}}_{\ZZ[1/2]}$.
\end{definition}

\begin{proposition}\label{prop:ftl-to-fgl-is-fgl}
	For any FTL $(\iota, \Psi, F_t)$ over a graded $\ZZ[1/2]$-algebra $R$, the series
	$\mu(x,y)\in R\llbracket x,y\rrbracket$ defined in~\eqref{eq:ftl-to-fgl} is a formal group law over $R$.
\end{proposition}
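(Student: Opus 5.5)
We must show that $\mu(x,y)=f([1/2](x),y)$ satisfies identity, symmetry and associativity. The idea is that $B(x,y)$ should be the ``root'' $\mu(\mu(\iota(x),\iota(x)),y)$-type expression, i.e.\ the root $x^{-1}x^{-1}y$ of $F_t(x,x,y)$. Then $f(x,y)=B(\iota(x),\iota(y))$ plays the role of ``$x\cdot x\cdot y^{-1}$'' and $\mu(x,y)$ that of ``$x\cdot y^{-1}$'' up to the correct involution. Rather than verify the axioms directly from the FTL axioms, we will reduce to a splitting statement.

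\textbf{Step 1: a splitting algebra for $F_t$.} Work over $R' :=R\llbracket x,y,z\rrbracket$ and use Lemma~\ref{lem:ftl-xxy-splitting} as the base case. We aim to show that $F_t(x,y,z)$ splits with roots
\[
r_{\varepsilon}=\mu(\mu(x^{\varepsilon_1},y^{\varepsilon_2}),z^{\varepsilon_3}),\qquad \varepsilon_1\varepsilon_2\varepsilon_3=-1 .
\]
Here $x^{-1}:=\iota(x)$ and $\mu$ is the candidate binary law. Inverting $2$ lets us extract the root $B$ uniquely from the normalization $B\equiv -2x-y$.

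The plan is to define a binary operation on roots using associativity. Set $g(x,y):=B(x,y)$, the root of $F_t(x,x,y)$ other than $y$ that is close to $-2x-y$. Specialising the associativity axiom
\[
F_t(F_t(x,x,y),u,v)=F_t(x,F_t(x,y,u),v)
\]
at suitable values gives functional equations for $g$. Concretely, put $u=v$ and use weak neutrality again. We then use the neutral axiom $F_t(x,0,0)$, symmetry, and Remark~\ref{rem:epsilon-linearity-consequence} to obtain:
\begin{itemize}
\item[(a)] $f(x,0)=[2](x)$, and $f(0,y)=y$, since $B(0,y)=\iota(y)$ by the computation $S(0,y)=2\iota(y)$, $P(0,y)=\iota(y)^2$ in Lemma~\ref{lem:d-divisible-by-x-squared};
\item[(b)] an ``associativity'' relation for $f$, namely $f(x,f(x',y))=f(\,\cdot\,,y)$, expressing that $f(x,-)$ is translation by $[2]$-twice $x$.
\end{itemize}

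\textbf{Step 2: derive the axioms for $\mu$.} With $\mu(x,y)=f([1/2]x,y)$, property (a) gives $\mu(0,y)=y$ and $\mu(x,0)=x$. Associativity of $\mu$ follows from (b) after reparametrising by $[1/2]$. Symmetry then follows from associativity together with the identity axioms, the existence of inverses, and the $S_3$-symmetry of $F_t$. It should come from comparing the roots of $F_t(x,x,y)$ with those of $F_t(x,y,y)$, which forces $\mu(x,y)=\mu(y,x)$.

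\textbf{Alternative if the functional equations are hard.} A cleaner route is to pass to the universal case $\W[1/2]$ and rationalise. Over $R\otimes\QQ$, transport along $\Psi$ to a normalized FTL, and construct a logarithm $\ell$ for $F$. Pick $\ell$ so that $\ell_*F$ has roots $\pm x\pm y\pm z$ with product of signs $-1$, i.e.\ the trivial FTL. This should be possible by an inductive degree argument, using that the trivial FTL has no infinitesimal deformations modulo coordinate changes rationally; this amounts to a Lazard-type cohomology vanishing. Then $B$ corresponds to $\ell^{-1}(-2\ell(x)-\ell(y))$. It follows that
\[
\mu(x,y)=\ell^{-1}(\ell(x)+\ell(y)),
\]
which is manifestly an FGL rationally. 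The coefficients of $\mu$ are integral over $R$ by construction. So if $\W[1/2]$ is $\ZZ$-torsion-free, the identities descend; otherwise we need the direct argument.

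\textbf{Main obstacle.} The hardest step is extracting usable functional equations for the single root $B$ from the associativity of $(4,3)$-series, where substitution is only defined through splitting algebras. Equivalently, the difficulty lies in the torsion-freeness and cohomology-vanishing needed for the rational logarithm route. I would attempt the direct route first: compute $F_t(F_t(x,x,y),w,w)$ in a splitting algebra containing $B$, and identify roots by their linear terms. This works because distinct linear terms separate roots once $2$ is invertible.
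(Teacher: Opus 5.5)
Your unit laws are fine: $B(0,y)=\iota(y)$ gives $f(0,y)=y$, and $f(x,0)=[2](x)$ holds by definition. Comparing split factors by their linear terms is also the right tool. However, the two steps that carry the content are not supplied.

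\textbf{Relation (b).} It has an unspecified first argument. A relation $f(x,f(x',y))=f(\phi(x,x'),y)$ is equivalent to associativity of $\mu$ (set $y=0$ to identify $\phi$ and reparametrise by $[1/2]$), so it cannot be taken as an input. Your specialization $u=v$ does not produce it either. The right side still contains $F_t(x,y,u)$ with three distinct arguments, and its roots are not available at this stage.

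\textbf{Commutativity.} Your route here fails. $F_t(x,x,y)$ and $F_t(x,y,y)=F_t(y,y,x)$ are different series (double root $y$ versus double root $x$), so comparing their roots yields no relation. Nor can commutativity come from associativity, units and inverses. A one-dimensional formal group law over a $\ZZ[1/2]$-algebra need not be commutative: take $x+y+\varepsilon xy^p$ over $\FF_p[\varepsilon]/(\varepsilon^2)$, $p$ odd, $|\varepsilon|=2p$. Commutativity must be extracted from the ternary axioms.

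\textbf{Step 1.} It presupposes $\mu$ and is never used.

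\textbf{The missing identity.} What you need is the left-commutativity $f(u,f(v,w))=f(v,f(u,w))$, which gives both remaining axioms. Taking $(u,v,w)=([1/2](x),[1/2](y),0)$ gives $\mu(x,y)=\mu(y,x)$. Taking $w=z$ gives $\mu(x,\mu(y,z))=\mu(y,\mu(x,z))$, hence associativity once commutativity is known. It is obtained in three moves.
(i) Remark~\ref{rem:epsilon-linearity-consequence} gives $F_t(x,x,y)=F_t(\iota(x),\iota(x),y)$, and matching linear terms gives $B'(x,y)=B(\iota(x),y)$.
(ii) Associativity plus symmetry give $F_t(x,x,F_t(y,y,z))=F_t(y,y,F_t(x,x,z))$, where all sixteen roots are built from $B$ and $B'$. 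The unique factor with linear term $-2x+2y+z$ yields $B(x,B(y,z))=B'(y,B'(x,z))=B(\iota(y),B(\iota(x),z))$.
(iii) Setting $x=0$ there gives $\iota(B(y,z))=f(y,z)$, equivalently $\iota(f(v,w))=B(v,w)$. This turns (ii) into left-commutativity of $f$.

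\textbf{The rational alternative.} It does not close the gap. A logarithm trivializing an arbitrary FTL over a $\QQ$-algebra is asserted rather than constructed, and its existence is essentially the statement being proved. Descending from $\W[1/2]\otimes\QQ$ also needs $\W[1/2]$ to be torsion-free. The construction only saturates away $2$-torsion, and torsion-freeness is known only a posteriori from $\W[1/2]\cong L[1/2]$, that is, from this very proposition.
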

\begin{proof}
	By definition, $\mu(x,0)=f([1/2](x),0)=[2]([1/2](x))=x$.
	By symmetry of $F_t$ and the neutral element axiom, $F_t(0,0,y)=(1+yt)^2(1+\iota(y)t)^2$. Thus $B(0,y)=\iota(y)$, and
	\[
		\mu(0,y)=f(0,y)=B(0,\iota(y))=\iota(\iota(y))=y.
	\]

	By Remark~\ref{rem:epsilon-linearity-consequence}, one has $F_t(x,x,y)=F_t(\iota(x),\iota(x),y)$, and comparing
	the factors in the splitting by their linear terms gives
	\begin{equation}\label{eq:swap-root-b}
		B'(x,y)=B(\iota(x),y)
	\end{equation}

	Next, apply associativity and symmetry to get an equality of split $(16,3)$-series
	\[
		F_t(x,x,F_t(y,y,z))=F_t(y,y,F_t(x,x,z)).
	\]
	Comparing the uniquely determined factors by their linear terms yields
	\[
		B(x,B(y,z))=B'(y,B'(x,z)).
	\]
	Using \eqref{eq:swap-root-b}, this can be rewritten as
	\begin{equation}\label{eq:b-associativity}
		B(x,B(y,z))=B(\iota(y),B(\iota(x),z)).
	\end{equation}

	Setting $x=0$ in \eqref{eq:b-associativity} and using $B(0,w)=\iota(w)$ and $\iota(0)=0$ gives
	\[
		\iota(B(y,z))=B(\iota(y),\iota(z))=f(y,z).
	\]
	In particular, applying this with $(y,z)=(\iota(v),\iota(w))$ yields
	\[
		\iota(f(v,w))=\iota(B(\iota(v),\iota(w)))=B(v,w).
	\]
	Therefore, from~\eqref{eq:b-associativity} we deduce
	\begin{equation}\label{eq:f-associativity}
		\begin{aligned}
			f(u,f(v,w)) & =B(\iota(u),\iota(f(v,w)))=B(\iota(u),B(v,w)) \\
			            & =B(\iota(v),B(u,w))=f(v,f(u,w)).
		\end{aligned}
	\end{equation}

	Taking $(u,v,w)=([1/2](x),[1/2](y),0)$ in~\eqref{eq:f-associativity} gives $\mu(x,y)=\mu(y,x)$.
	Taking $(u,v,w)=([1/2](x),[1/2](y),z)$ gives
	\[
		\mu(x,\mu(y,z))=\mu(y,\mu(x,z)).
	\]
	Using commutativity, for any $x,y,z$,
	\[
		\mu(\mu(x,y),z)=\mu(z,\mu(x,y))=\mu(x,\mu(z,y))=\mu(x,\mu(y,z)),
	\]
	so $\mu$ is associative. Hence $\mu$ is a formal group law.
\end{proof}

\begin{corollary}\label{cor:explicit-ftl-splitting-from-G}
	Let $(\iota,\Psi,F_t)$ be an FTL over a graded $\ZZ[1/2]$-algebra $R$, let $\mu=\mathbb{G}(\iota,\Psi,F_t)$, and set
	\[
		p:=\mu(x,y),\qquad q:=\mu(x,\iota(y)),\qquad s:=[1/2](p),\qquad r:=[1/2](q).
	\]
	Then $F_t(x,y,z)$ splits with roots:
	\[
		B(s,z), \quad B(\iota(s),z), \quad B(r,\iota(z)), \quad B(\iota(r),\iota(z)).
	\]
	Moreover, $\iota(x)$ is the formal inverse of $\mu$, and the series $[2](x)=f(x,0)$ is the $2$-series of $\mu$, that is $[2](x)=[2]_{\mu}(x):=\mu(x,x)$.
\end{corollary}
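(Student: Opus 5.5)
The plan is to first settle the two "moreover" statements, since the splitting relies on them, and then derive the four roots from associativity together with the weak neutral element splitting of Lemma~\ref{lem:ftl-xxy-splitting}.

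\textbf{Formal inverse and $2$-series.} For the formal inverse we compute $\mu(x,\iota(x))$. Apply \eqref{eq:f-associativity}-type identities with $w=0$: we have $f(u,0)=[2](u)$ and $f(0,w)=B(0,\iota(w))=\iota(\iota(w))=w$. The relation $\iota(B(y,z))=f(y,z)$ and $B(0,w)=\iota(w)$ also give $f(y,\iota(y))$ in terms of $B(y,y)$. To show $f(y,\iota(y))=0$, specialise the weak neutral element splitting: $F_t(x,x,y)$ at $y=x$ equals $F_t(x,x,x)$, and by symmetry this has root $x$ with multiplicity compatible with $B$. A cleaner route is to use the already-proved group law. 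Since $\mu$ is an FGL with $\mu(0,y)=y$, and $\iota$ satisfies $\iota(f(v,w))=f(\iota v,\iota w)$ (from $\iota(B(y,z))=f(y,z)$ combined with $B=\iota f(\iota\cdot,\iota\cdot)$), $\iota$ is an automorphism of $f$. Then $\mu(x,\iota(x))$ is a series $g(x)$ with $g=\iota g$ and linear term $0$, and $2$-invertibility together with $\iota(t)=-t+O(t^2)$ forces $g=0$.

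For the $2$-series, we want $\mu(x,x)=f([1/2]x,x)=[2](x)$. This amounts to $f(u,[2]u)=[2]([2]u)$, i.e.\ $f(u,f(u,0))=f(f(u,0),0)$. The left side equals $f(u,f(u,0))$, which by \eqref{eq:f-associativity} is symmetric in the roles, so it suffices to identify $f$ with $\mu([2]\cdot,\cdot)$. Concretely, $f(u,w)=\mu([2]u,w)$ holds by definition. With $[2]=f(\cdot,0)$, a direct substitution and induction show that $f(u,0)=\mu([2]u,0)$ is consistent and that $\mu(u,u)=f([1/2]u,u)$. Associativity then gives $f(v,f(v,0))=f(f(v,0),0)$ after checking $f(v,w)=\mu(v,\mu(v,w))$. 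I expect this identification $f(v,w)=\mu(v,\mu(v,w))$ to be the key lemma. It should follow from \eqref{eq:f-associativity} with $u=[1/2](\cdot)$ and the value at $w=0$, but the bookkeeping is the main obstacle.

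\textbf{Splitting.} Write $F_t(x,y,z)$ via associativity as $F_t(x,y,F_t(\cdot))$. More practically, use the identity
\[
F_t(x,x,F_t(y,y,z))=F_t\bigl(F_t(x,x,y),y,z\bigr)
\]
(associativity plus symmetry), and substitute the known roots of $F_t(x,x,y)$. This expresses $F_t(a,y,z)$ for $a$ a root such as $B(x,y)=\iota f(\iota x,\iota y)$. With $a$ ranging over the image of $x\mapsto$ these roots, the map is invertible, with linear term $\mp2x$ and $2$ invertible. This lets us solve for $F_t(x,y,z)$ for general $x$. Its roots are then $B$ applied to $(\text{half of }\mu\text{-sum},z)$, and tracking through $f=\mu([2]\cdot,\cdot)$ gives $s=[1/2]\mu(x,y)$ and $r=[1/2]\mu(x,\iota y)$. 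The pairing with $\iota$ comes from Remark~\ref{rem:epsilon-linearity-consequence}. Finally, we verify that the four claimed roots have the correct linear terms $\pm x\pm y\pm z$ with product sign $-1$. Uniqueness of the splitting over $\ZZ[1/2]$ (via the distinct linear terms) then confirms the identification.
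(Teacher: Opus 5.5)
Your argument for the formal inverse is correct, and it differs from the paper's. From $\iota f(v,w)=f(\iota v,\iota w)$ you get $\iota\circ[1/2]=[1/2]\circ\iota$. (The companion relation is $B(v,w)=f(\iota v,\iota w)=\iota f(v,w)$, not $B=\iota f(\iota\cdot,\iota\cdot)$.) Hence $g:=\mu(x,\iota(x))$ satisfies $\iota(g)=\mu(\iota(x),x)=g$ by commutativity, and $\iota(g)=-g+O(g^2)$ with $2$ invertible forces $g=0$. The paper instead reads off $q+\iota(q)=0$ from the four-root splitting at $(x,x,0)$; your route does not need the splitting for this claim.

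The gap is the $2$-series. Your ``key lemma'' $f(v,w)=\mu(v,\mu(v,w))$ is a restatement of the goal. By definition $f(v,w)=\mu([2](v),w)$, and $\mu(v,\mu(v,w))=\mu(\mu(v,v),w)$, so at $w=0$ the lemma says $[2](v)=\mu(v,v)$. Nor can \eqref{eq:f-associativity} with $w=0$ supply it. Work over $\ZZ[1/2][c]$ with $|c|=4$, let $\mu$ be additive, and set $f(u,w):=\mu(\phi(u),w)$ with $\phi(u)=2u+cu^3$. Then all of the following hold: \eqref{eq:f-associativity}, $f(0,w)=w$, $\iota f(v,w)=f(\iota v,\iota w)$, $f(\phi^{-1}(x),y)=\mu(x,y)$ and $\mu(x,\iota(x))=0$. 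Yet $f(u,0)=\phi(u)\neq\mu(u,u)$.

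So the diagonal data $F_t(x,x,y)$ together with \eqref{eq:f-associativity} cannot determine $[2]$; you need off-diagonal input. In the paper that input is the four-root splitting specialised at $y=x$, $z=0$. There the roots become $\iota(p),p,\iota(q),q$ with $p=\mu(x,x)$. Meanwhile Lemma~\ref{lem:ftl-xxy-splitting} and \eqref{eq:swap-root-b} give $F_t(x,x,0)=(1+\iota([2](x))t)(1+[2](x)t)$. Matching the unique roots with linear term $2x$ gives $\mu(x,x)=[2](x)$.

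Hence your order of proof has to be reversed. The paper proves the splitting first, using only $[2](s)=p$ and $[2](r)=q$, which hold by the definition of $s,r$. You make the splitting depend on the $2$-series identity, which is circular given the above. Concretely, on your route a root comes out as $B(s,B(y,z))$, and identifying it with $B(r,\iota(z))$ requires $\mu(\iota(y),[2](y))=y$.

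The splitting step itself is also not carried out. Substitute the roots of $F_t(x,x,y)$ into $F_t(F_t(x,x,y),y,z)=F_t(x,x,F_t(y,y,z))$ and cancel $F_t(y,y,z)^2$. This determines only the \emph{product} $F_t(B(x,y),y,z)\,F_t(B(\iota(x),y),y,z)$, as eight linear factors in which the linear terms $\pm 2x-z$ each occur twice. To conclude that $F_t(B(x,y),y,z)$ alone splits, and with which four roots, you need an extra factorisation argument; over a general $\ZZ[1/2]$-algebra, a factor of a split polynomial need not split. So ``this expresses $F_t(a,y,z)$'' is not justified as stated.

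Once you do have some four-root splitting expressed through $B$, specialising it at $(x,x,0)$ as above yields $[2]=[2]_\mu$. Only then may you replace $[2]$ by $[2]_\mu$ in your tracking.
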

\begin{proof}
	Lemma~\ref{lem:ftl-xxy-splitting} and equation~\eqref{eq:swap-root-b} give
	\[
		F_t(a,a,b)=(1+bt)^2(1+B(a,b)t)(1+B(\iota(a),b)t).
	\]
	Applying associativity to this split form and using $[2](s)=p$, $[2](r)=q$ isolates the four factors of $F_t(x,y,z)$. Their linear terms are
	\[
		-x-y-z,\qquad x+y-z,\qquad -x+y+z,\qquad x-y+z,
	\]
	which are the linear terms of $B(s,z)$, $B(\iota(s),z)$, $B(r,\iota(z))$, and $B(\iota(r),\iota(z))$, respectively. Since the split factors are distinguished by their linear terms, this gives the displayed product.

	For the remaining assertions, first note the two identities from the proof of Proposition~\ref{prop:ftl-to-fgl-is-fgl}:
	\[
		\iota(f(v,w))=f(\iota(v),\iota(w)),\qquad B(v,w)=f(\iota(v),\iota(w)).
	\]
	In particular,
	\[
		[2](\iota(v))=\iota([2](v)),\qquad [1/2](\iota(v))=\iota([1/2](v)),
	\]
	and
	\[
		B(a,0)=\iota([2](a)),\qquad B(\iota(a),0)=[2](a).
	\]

	Set $y=x$ and $z=0$ in the displayed product, so that $p=\mu(x,x)$ and $q=\mu(x,\iota(x))$. If $h:=[1/2](x)$, then
	\[
		\iota(q)=\iota(f(h,\iota(x)))=f(\iota(h),x)=\mu(\iota(x),x)=q,
	\]
	where the last equality uses commutativity of $\mu$. The displayed product gives
	\[
		F_t(x,x,0)=(1+\iota(p)t)(1+pt)(1+\iota(q)t)(1+qt).
	\]
	The same split form with $(a,b)=(x,0)$ gives
	\[
		F_t(x,x,0)=(1+\iota([2](x))t)(1+[2](x)t).
	\]
	Comparing the factors with linear terms $-2x$ and $2x$ gives $p=[2](x)$. After cancellation,
	\[
		(1+\iota(q)t)(1+qt)=1.
	\]
	Since $q=\iota(q)$, the coefficient of $t$ gives $2q=0$, hence $q=0$. Therefore $\mu(x,\iota(x))=0$, so $\iota(x)$ is the formal inverse for $\mu$, and $[2](x)=\mu(x,x)$ is its $2$-series.
\end{proof}

\begin{proposition}\label{prop:G-inverse-of-T}
	Suppose that $(\iota, \Psi, F_t)=\mathbb{T}(\mu_0(x,y))$. Then $\mathbb{G}(\iota, \Psi, F_t)=\mu_0$.
\end{proposition}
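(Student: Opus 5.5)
We compute each ingredient of $\mathbb{G}$ explicitly for $F_t=\mathbb{T}(\mu_0)$. In this FTL the framed involution is $\iota=\iota_{\mu_0}$, the formal inverse of $\mu_0$, with $\Psi=(x-\iota(x))/2$. The roots of $F_t$ are the four series $\mu_0(\mu_0(x^{\varepsilon_1},y^{\varepsilon_2}),z^{\varepsilon_3})$ with $\varepsilon_1\varepsilon_2\varepsilon_3=-1$, where $x^{-1}=\iota(x)$. We will therefore identify $B$, then $f$, then $[2]$, and finally $\mu$, in terms of $\mu_0$.

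\textbf{The roots on the diagonal.} Specializing to $(x,x,y)$ and writing $+_0$ for $\mu_0$, the four roots are
\[
x+_0x+_0\iota(y),\qquad x+_0\iota(x)+_0 y=y,\qquad \iota(x)+_0x+_0y=y,\qquad \iota(x)+_0\iota(x)+_0\iota(y).
\]
The double root $y$ is as predicted by the weak neutral element axiom. The two remaining roots are $[2]_{\mu_0}(x)+_0\iota(y)$ and $\iota([2]_{\mu_0}(x)+_0 y)$. Their linear terms are $2x-y$ and $-2x-y$ respectively.

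\textbf{Identifying $B$.} In Lemma~\ref{lem:ftl-xxy-splitting}, $B$ and $B'$ are the two roots of $1+S t+Pt^2$. They are pinned down by the sign choice $\sqrt{D}\equiv 4x$, so $B$ is the root with linear term $-2x-y$. This identification needs a short justification: $1+St+Pt^2$ has exactly the two roots above, since the factorization of $F_t(x,x,y)$ into linear factors over $R\llbracket x,y\rrbracket$ is unique once the factors are distinguished by distinct linear terms. This is the same principle the paper already uses in Proposition~\ref{prop:ftl-to-fgl-is-fgl}. It follows that
\[
B(x,y)=\iota\bigl([2]_{\mu_0}(x)+_0 y\bigr).
\]

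\textbf{Computing $\mu$.} Next,
\[
f(x,y)=B(\iota(x),\iota(y))=\iota\bigl([2]_{\mu_0}(\iota(x))+_0\iota(y)\bigr).
\]
Since $\iota$ is the inverse of the formal group $\mu_0$, it is a homomorphism of $\mu_0$ and satisfies $\iota\circ\iota=\mathrm{id}$. Hence
\[
f(x,y)=[2]_{\mu_0}(x)+_0 y.
\]
Setting $y=0$ gives $[2](x)=[2]_{\mu_0}(x)$, so $[1/2]$ is the compositional inverse of $[2]_{\mu_0}$. This inverse exists because $2$ is invertible in $R$. Finally,
\[
\mu(x,y)=f([1/2](x),y)=[2]_{\mu_0}([1/2](x))+_0 y=\mu_0(x,y).
\]

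\textbf{Main obstacle.} The main obstacle is the identification of $B$ with the specific $\mu_0$-expression, that is, matching the sign choice of $\sqrt{D}$ to the correct root. I would handle it by reducing modulo $(x,y)^2$: the two candidate roots have distinct linear terms $\mp2x-y$. Because $2$ is a unit, a power series root of $1+St+Pt^2$ is determined by its linear term. To see this, write any root as $(S\pm\sqrt{D})/2$; the two square roots of $D$ differ by sign and are distinguished by their leading term $\pm 4x$. The remaining steps are formal manipulations with the group law $\mu_0$.
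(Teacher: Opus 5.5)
Your proposal is correct and is essentially the paper's proof. You identify the two non-trivial roots of $F_t(x,x,y)$ as $[2]_{\mu_0}(x)+_0\iota(y)$ and $\iota([2]_{\mu_0}(x))+_0\iota(y)$, single out $B$ as the latter via the normalization $\sqrt{D}\equiv 4x$, and then unwind $f$, $[2]$, $[1/2]$ and $\mu$ exactly as the paper does.

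The only cosmetic difference is in identifying $B$. The paper reads $S=r_1+r_2$ and $P=r_1r_2$ directly off the definitions of $S$ and $P$, and writes $D=(r_1-r_2)^2$ explicitly. That makes your appeal to uniqueness of factorizations distinguished by linear terms unnecessary.
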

\begin{proof}
	Let $+_0$ denote the addition for $\mu_0$, $\bar{a}:=\iota_{\mu_0}(a)$, and $u:=[2]_{\mu_{0}}(x)$. By the definition of $\mathbb{T}$, the roots of the ternary law $F_t(x,x,y)$ are $\{y,y,u+_0\bar{y},\bar{u}+_0\bar{y}\}$. We compute the reconstruction series derived from the coefficients $F_1,F_2$:
	\[
		\begin{aligned}
			S(x,y)   & :=F_1(x,x,y)-2y=(u+_0\bar{y})+(\bar{u}+_0\bar{y}),                       \\
			P(x,y)   & :=F_2(x,x,y)-2yS(x,y)-y^2=(u+_0\bar{y})(\bar{u}+_0\bar{y}),              \\
			D(x,y)   & :=S^2-4P=((u+_0\bar{y})-(\bar{u}+_0\bar{y}))^2,                          \\
			B(x,y)   & :=\tfrac12(S-\sqrt{D})=\bar{u}+_0\bar{y}=[-2]_{\mu_{0}}(x)+_0\bar{y},    \\
			f(x,y)   & :=B(\bar{x},\bar{y})=[-2]_{\mu_{0}}(\bar{x})+_0 y=[2]_{\mu_{0}}(x)+_0 y, \\
			[2](x)   & :=f(x,0)=[2]_{\mu_{0}}(x) \Longrightarrow [1/2](x)=[1/2]_{\mu_{0}}(x),   \\
			\mu(x,y) & :=f([1/2](x),y)=[2]_{\mu_{0}}([1/2]_{\mu_{0}}(x))+_0 y=\mu_0(x,y).
		\end{aligned}
	\]
\end{proof}

\begin{proposition}\label{prop:T-inverse-of-G}
	Suppose that $(\iota,\Psi,F_t)$ is an FTL over a graded $\ZZ[1/2]$-algebra $R$, and set $\mu:=\mathbb{G}(\iota,\Psi,F_t)$. Then $\mathbb{T}(\mu)=(\iota,\Psi,F_t)$.
\end{proposition}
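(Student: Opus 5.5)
We must show three things: the framed involution of $\mathbb{T}(\mu)$ equals $(\iota,\Psi)$, and the ternary law of $\mathbb{T}(\mu)$ equals $F_t$. The involution part is immediate. By Corollary~\ref{cor:explicit-ftl-splitting-from-G}, $\iota$ is the formal inverse of $\mu$. Since $2$ is invertible, $\Psi=(t-\iota(t))/2$ is forced by $\iota$. So $\mathbb{T}(\mu)$ carries exactly the framed involution $(\iota,\Psi)$.

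For the ternary part, the plan is to compare roots, since both series split. Corollary~\ref{cor:explicit-ftl-splitting-from-G} gives $F_t(x,y,z)$ with roots
\[
B(s,z),\quad B(\iota(s),z),\quad B(r,\iota(z)),\quad B(\iota(r),\iota(z)),
\]
where $s=[1/2](\mu(x,y))$ and $r=[1/2](\mu(x,\iota(y)))$. The proof of Proposition~\ref{prop:ftl-to-fgl-is-fgl} gives $B(v,w)=f(\iota(v),\iota(w))$. It also gives $\iota\circ[1/2]=[1/2]\circ\iota$ and, from Corollary~\ref{cor:explicit-ftl-splitting-from-G}, $f(a,w)=\mu([2](a),w)$ with $[2]=[2]_\mu$. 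Writing $\bar a=\iota(a)$ and using that $\iota$ is the $\mu$-inverse, each root becomes a $\mu$-sum:
\[
B(s,z)=\mu([2](\bar s),\bar z)=\mu(\mu(\bar x,\bar y),\bar z),\qquad
B(\iota(s),z)=\mu(\mu(x,y),\bar z),
\]
\[
B(r,\iota(z))=\mu(\mu(\bar x,y),z),\qquad
B(\iota(r),\iota(z))=\mu(\mu(x,\bar y),z).
\]
The sign patterns here are $(-,-,-)$, $(+,+,-)$, $(-,+,+)$ and $(+,-,+)$. These are exactly the triples with $\varepsilon_1\varepsilon_2\varepsilon_3=-1$, so the roots are precisely those defining $\mathbb{T}(\mu)$. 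Hence $F_t=\mathbb{T}(\mu)_t$ as products.

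The main obstacle is bookkeeping in the identities relating $f$, $B$, $[2]$ and $\iota$. In particular, one must confirm that $f(a,w)=\mu([2](a),w)$ holds identically, not only at $w=0$. This follows from the definition $\mu(x,y)=f([1/2](x),y)$ by substituting $x=[2](a)$, which is valid because $[2]$ is compositionally invertible. One must also confirm that $[2]$ defined as $f(\cdot,0)$ coincides with $[2]_\mu$; this was shown in Corollary~\ref{cor:explicit-ftl-splitting-from-G}.

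Finally, I would check the linear terms of each identified root against the list in that corollary. This rules out a mismatched pairing caused by a sign slip, since the split factors are determined by their linear terms.
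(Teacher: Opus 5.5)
Your proposal is correct and follows essentially the same route as the paper: you take the explicit splitting from Corollary~\ref{cor:explicit-ftl-splitting-from-G}, rewrite each root via $B(a,b)=f(\iota(a),\iota(b))=\mu([2](\iota(a)),\iota(b))$, and identify the four resulting $\mu$-sums with the roots indexed by $\varepsilon_1\varepsilon_2\varepsilon_3=-1$. Your remark that $\Psi=(t-\iota(t))/2$ is forced once $\iota$ is the formal inverse of $\mu$ also settles the framed-involution component, which the paper leaves implicit.
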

\begin{proof}
	By Corollary~\ref{cor:explicit-ftl-splitting-from-G}, the formal inverse of $\mu$ is $\iota$, and $[2](a) = [2]_{\mu}(a)$. Hence $\iota(\mu(a,b))=\mu(\iota(a),\iota(b))$ and $[2](\iota(a))=\iota([2](a))$.

	With $p,q,s,r$ as in Corollary~\ref{cor:explicit-ftl-splitting-from-G}, the explicit splitting gives
	\[
		F_t(x,y,z)=(1+B(s,z)t)(1+B(\iota(s),z)t)(1+B(r,\iota(z))t)(1+B(\iota(r),\iota(z))t).
	\]
	Since $B(a,b)=f(\iota(a),\iota(b))=\mu([2](\iota(a)),\iota(b))$, these four roots are
	\[
		\begin{aligned}
			B(s,z)               & =\mu(\iota(p),\iota(z))=\mu(\mu(\iota(x),\iota(y)),\iota(z)), \\
			B(\iota(s),z)        & =\mu(p,\iota(z))=\mu(\mu(x,y),\iota(z)),                      \\
			B(r,\iota(z))        & =\mu(\iota(q),z)=\mu(\mu(\iota(x),y),z),                      \\
			B(\iota(r),\iota(z)) & =\mu(q,z)=\mu(\mu(x,\iota(y)),z).
		\end{aligned}
	\]
	They are exactly the roots defining $\mathbb{T}(\mu)$. Thus $\mathbb{T}(\mu)=F_t$.
\end{proof}

\subsection{Lazard- and Quillen-type theorems}
Propositions~\ref{prop:G-inverse-of-T} and~\ref{prop:T-inverse-of-G} show that the maps $\mathbb{T}$ and $\mathbb{G}$ are inverse bijections between $\mathcal{FGL}(R)$ and $\mathcal{FTL}(R)$ for any graded $\ZZ[1/2]$-algebra $R$. Hence, by the representability of the restrictions of $\mathcal{FGL}$ and $\mathcal{FTL}$ to $\mathbf{Alg}^{\mathrm{gr}}_{\ZZ[1/2]}$, we obtain the following Lazard-type theorem.

\begin{theorem}[Lazard-type]\label{thm:lazard-walter}
	The functor $\mathbb{T}$ induces an isomorphism
	\[
		\W[1/2] \xrightarrow{\cong} L[1/2]
	\]
	as graded rings.
\end{theorem}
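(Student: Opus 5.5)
The proof is a Yoneda argument on the category $\mathbf{Alg}^{\mathrm{gr}}_{\ZZ[1/2]}$ of graded $\ZZ[1/2]$-algebras.

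\emph{Representability after inverting $2$.} For a graded $\ZZ[1/2]$-algebra $R$, graded ring maps $\W\to R$ are the same as graded ring maps $\W[1/2]\to R$. So the universal property of $\W$ shows that $\W[1/2]$ represents the restriction of $\mathcal{FTL}$ to $\mathbf{Alg}^{\mathrm{gr}}_{\ZZ[1/2]}$. In the same way, $L[1/2]$ represents the restriction of $\mathcal{FGL}$.

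One point needs checking here. Over a $\ZZ[1/2]$-algebra, the $2$-saturated relations defining $\Rfr$ and $\W$ reduce to the naive relations, since $\mathcal{I}^{\mathrm{sat}(2)}[1/2]=\mathcal{I}[1/2]$. Hence the defining data of an FTL or a framed involution over $R$ are just the naive axioms. This is consistent with how $\mathbb{T}$ and $\mathbb{G}$ were verified, namely via the naive identities.

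\emph{The natural isomorphism.} Naturality of $\mathbb{T}$ and $\mathbb{G}$ in $R$ was recorded when they were constructed: both are built from coefficientwise operations compatible with base change. These operations are formal inverse, halving, formal square root with the normalization $\sqrt{D}\equiv 4x$, and compositional inversion of $[2]$. Propositions~\ref{prop:G-inverse-of-T} and~\ref{prop:T-inverse-of-G} give $\mathbb{G}\circ\mathbb{T}=\mathrm{id}$ and $\mathbb{T}\circ\mathbb{G}=\mathrm{id}$ objectwise, so $\mathbb{T}$ is a natural isomorphism of set-valued functors on $\mathbf{Alg}^{\mathrm{gr}}_{\ZZ[1/2]}$ with inverse $\mathbb{G}$.

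The Yoneda lemma then produces a unique graded ring map $\W[1/2]\to L[1/2]$ inducing $\mathbb{T}$ on points. Concretely, this is the map classifying $\mathbb{T}(\mu^{L})$ together with its framed involution over $L[1/2]$. Likewise, $\mathbb{G}$ yields a map $L[1/2]\to\W[1/2]$ classifying $\mathbb{G}(F^{\univ}_t)$. The two composites induce the identity natural transformations, so they are identities, and the map is an isomorphism of graded rings.

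\emph{Main obstacle.} The delicate step is making sure $\mathbb{G}$ is genuinely well defined and natural, independent of choices. Two things have to be pinned down: the sign of the square root in Lemma~\ref{lem:ftl-xxy-splitting} is fixed canonically by the linear-term normalization, and the splitting is unique, because factors are distinguished by their linear terms. Both rely on $2$ being invertible. I also need to confirm that gradings are preserved, so that the Yoneda maps are graded. Once these are in place, the statement is a formal consequence.
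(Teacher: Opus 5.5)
Your proposal is correct and follows the paper's own argument. Propositions~\ref{prop:G-inverse-of-T} and~\ref{prop:T-inverse-of-G} make $\mathbb{T}$ a natural bijection $\mathcal{FGL}(R)\cong\mathcal{FTL}(R)$ on graded $\ZZ[1/2]$-algebras, and Yoneda applied to the representing objects $L[1/2]$ and $\W[1/2]$ of the restricted functors gives the isomorphism. Your explicit remark that $\mathcal{I}^{\mathrm{sat}(2)}[1/2]=\mathcal{I}[1/2]$, so that the naive axioms checked for $\mathbb{T}(\mu)$ are enough, fills in a point the paper leaves implicit.
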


As a consequence, we have the following Quillen-type theorem.
\begin{theorem}[Quillen-type]\label{thm:quillen-walter}
	Suppose that $W(k)\cong \ZZ$ (e.g.\ $k=\ZZ$ or $\mathbb{R}$). Then the universal GFTL $F^{M}_t$ and the framed involution $(\iota^{M}, \Psi^{M})$ determine a classifying map
	\[
		\phi:\W\to (\MSpetainv)_*.
	\]
	This map is injective. After inverting $2$, the induced map
	\[
		\phi':=\phi[1/2]: \W[1/2] \to (\MSpetainv)_*[1/2]
	\]
	is an isomorphism. Moreover, the following diagram commutes:
	\[
		\xymatrix{
		\W[1/2] \ar[r]^-{\phi'} \ar[d]_-{\theta \circ \mathbb{T}} &  (\MSpetainv)_*[1/2] \ar[d]_-{h}\\
		\ZZ[1/2][a_1, a_2, \ldots] \ar[r]^-{J} & \HW_*\MSp[1/2]
		}
	\]
	Here $h$ is the Hurewicz map and $J: \ZZ[1/2][a_1, a_2, \ldots] \to \HW_*\MSp[1/2]$ is the structural algebra isomorphism.
\end{theorem}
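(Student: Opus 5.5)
The plan is to build $\phi$ from the universal property of $\W$, then prove the isomorphism after inverting $2$ by comparing both sides with $L[1/2]$ through the Hurewicz map, and finally deduce integral injectivity from torsion-freeness.

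\textbf{Construction of $\phi$ and commutativity of the diagram.} By the proposition that every GFTL is an FTL, the pair consisting of $(\iota^M,\Psi^M)$ from Proposition~\ref{prop:msp-framed-involution} and the universal GFTL $F^M_t$ lies in $\mathcal{FTL}((\MSpetainv)_*)$. The universal property of $\W$ therefore gives a unique $\phi$ with $\phi_*F_t^{\univ}=F_t^M$ and $\phi_*\Psi^{\univ}=\Psi^M$.

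Invert $2$ and apply $h$. Proposition~\ref{prop:hurewicz-gftl-roots} says that $h(F^M_t)$ has roots $\exp(\varepsilon_1\log x+\varepsilon_2\log y+\varepsilon_3\log z)$. These are exactly the roots of $\mathbb{T}(\mu)$ for $\mu(x,y)=\exp(\log x+\log y)$, the FGL classified by $\theta$. Under this $\mu$ the formal inverse is $\exp(-\log x)=h(\iota^M)$, and the frame agrees with $h(\Psi^M)$ because both equal $(x-\iota(x))/2$. Hence $h\circ\phi'$ and $J\circ\theta\circ\mathbb{T}$ classify the same FTL over $\HW_*\MSp[1/2]$. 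By uniqueness in the universal property of $\W[1/2]$, the square commutes. Here $\mathbb{T}$ is read as the isomorphism $\W[1/2]\to L[1/2]$ of Theorem~\ref{thm:lazard-walter}.

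\textbf{Isomorphism after inverting $2$.} Since $\theta\circ\mathbb{T}$ and $h$ are rationally injective maps between torsion-free rings, the square shows that $\phi'$ is injective. For surjectivity, I will compare indecomposables in each degree $2n$ inside $Q_{2n}\HW_*\MSp[1/2]\cong \ZZ[1/2]a_n$.
\begin{itemize}
\item By the proof of Theorem~\ref{thm:lazard-structure}, the image of $Q_{2n}(\theta\circ\mathbb{T})$ is $pa_n$ if $n+1=p^k$ with $p$ odd, and $a_n$ otherwise.
\item By Remark~\ref{rem:lambda-image-after-inverting-two}, the image of $Q_{2n}h$ on $(\MSpetainv)_*[1/2]$ is exactly the same lattice.
\end{itemize}
The two rank-one lattices coincide and $h$ is injective on $Q_{2n}$, so $Q\phi'$ is surjective. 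Both rings are connected graded, and $(\MSpetainv)_*[1/2]$ is polynomial by Proposition~\ref{prop:msp-msl-coefficients}. The standard degree induction then shows $\phi'$ is surjective, hence an isomorphism.

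\textbf{Integral injectivity (the main obstacle).} Injectivity of $\phi$ itself follows from the isomorphism above, provided $\W$ has no $2$-torsion. I expect this to be the delicate point, since it is exactly what the $2$-saturation in the definition is meant to guarantee. First, $\W\cong\Rfr\otimes_\ZZ\widetilde\W$ by Remark~\ref{rem:walter-ring-splitting}, and $\Rfr$ is polynomial, so it suffices to treat $\W$ as defined. Second, $\W=P/\mathcal{I}^{\mathrm{sat}(2)}$ with $P$ polynomial over the torsion-free ring $\Rfr$. If $2f\in\mathcal{I}^{\mathrm{sat}(2)}$, then $2^{n+1}f\in\mathcal{I}$ for some $n$, so $f\in\mathcal{I}^{\mathrm{sat}(2)}$; hence $\W$ has no $2$-torsion. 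Third, odd torsion in $\W$ would survive in $\W[1/2]\cong L[1/2]$, which is torsion-free, so there is none. Thus $\W$ is torsion-free and $\W\to\W[1/2]$ is injective. Finally, $\W\to\W[1/2]\xrightarrow{\phi'}(\MSpetainv)_*[1/2]$ is injective and agrees with $\W\xrightarrow{\phi}(\MSpetainv)_*\to(\MSpetainv)_*[1/2]$, so $\phi$ is injective.
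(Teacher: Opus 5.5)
Your proposal is correct and follows the paper's proof essentially step for step. You obtain $\phi$ from the universal property, and the square commutes because $h\circ\phi'$ and $J\circ\theta\circ\mathbb{T}$ classify the same FTL; you usefully make explicit that the frames $h(\Psi^M)$ and $(x-\iota_\mu(x))/2$ agree. Injectivity of $\phi'$ then comes from injectivity of $\theta\circ\mathbb{T}$, surjectivity from matching the rank-one lattices in $Q_{2n}$, and integral injectivity from the $2$-torsion-freeness of $\W$ that the $2$-saturation forces.

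Two small adjustments. Like the paper, you should first reduce to $k=\ZZ$, since Remark~\ref{rem:lambda-image-after-inverting-two} is stated there. Your odd-torsion and $\Rfr\otimes_{\ZZ}\widetilde{\W}$ detours are unnecessary, because injectivity of $\W\to\W[1/2]$ only requires the absence of $2$-power torsion.
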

\begin{proof}
	It suffices to prove the claim for $k=\ZZ$. Since the two compositions $h\circ \phi'$ and $J \circ \theta \circ \mathbb{T}$ classify the same FTL, the universal property implies that the diagram commutes. As $\mathbb{T}$ is an isomorphism and, by the proof of Theorem~\ref{thm:lazard-structure} and Remark~\ref{rem:lambda-image-after-inverting-two}, the maps $Q_{2n}(\theta \circ \mathbb{T})$ and $Q_{2n}h$ are isomorphisms onto rank-one images that correspond under the isomorphism $Q_{2n}J$, it follows from the commutative diagram that $Q_{2n}\phi'$ is an isomorphism and hence that $\phi'$ is surjective. Since $\theta \circ \mathbb{T}$ is injective and $J$ is an isomorphism, $\phi'$ is also injective. Thus $\phi'$ is an isomorphism. Finally, $\W$ is $2$-torsion-free by the $2$-saturation construction, so the injectivity of $\phi'$ implies the injectivity of $\phi$.
\end{proof}

\begin{corollary}\label{cor:normalized-quillen-walter}
	Suppose that $W(k)\cong \ZZ$. Then the normalized GFTL $\widehat{F}^M_t$ over $\MSLetainv$ determines a classifying map
	\[
		\widetilde{\phi}:\widetilde{\W}\to (\MSLetainv)_*.
	\]
	This map is injective. After inverting $2$, it induces an isomorphism
	\[
		\widetilde{\phi}':=\widetilde{\phi}[1/2]:\widetilde{\W}[1/2]\to (\MSLetainv)_*[1/2].
	\]
	Moreover, under the splittings of Lemma~\ref{lem:canonical-msp-splitting} and Remark~\ref{rem:walter-ring-splitting}, one has a commutative diagram
	\[
		\xymatrix{
		\Rfr\otimes_{\ZZ}\widetilde{\W} \ar[r]^-{\cong} \ar[d]_-{\mathrm{id}\otimes \widetilde{\phi}} & \W \ar[d]^-{\phi}\\
		\Rfr\otimes_{\ZZ}(\MSLetainv)_* \ar[r]^-{\chi} & (\MSpetainv)_*.
		}
	\]
\end{corollary}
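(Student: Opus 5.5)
Because $\widehat{F}^M_t=\Psi^M_*F^M_t$ is a normalized FTL over $(\MSpetainv)_*$, and the proof of Theorem~\ref{thm:quillen-idempotent} shows its coefficients lie in the image of $\bar{\xi}_*$, I will define $\widetilde{\phi}$ as the classifying map of $q_*\widehat{F}^M_t$ over $(\MSLetainv)_*$. Equivalently, $\widetilde{\phi}$ is the map of $\mathcal{FTL}_{\mathrm{norm}}$ represented by $\widetilde{\W}$ (Remark~\ref{rem:walter-ring-splitting}). As in Theorem~\ref{thm:quillen-walter}, I will work over $k=\ZZ$, with the general case $W(k)\cong\ZZ$ obtained by restricting homotopy sheaves.

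I will first establish the commutative square, and then deduce everything else from it and from Theorem~\ref{thm:quillen-walter}. Put $\widetilde{\phi}_M:=\bar{\xi}_*\circ\widetilde{\phi}:\widetilde{\W}\to(\MSpetainv)_*$. It classifies the normalized FTL $\bar{\xi}_*q_*\widehat{F}^M_t=\xi_*\widehat{F}^M_t$, which is the GFTL of the orientation $\xi_*s$. From $\xi_*(s)=s$ (proof of Theorem~\ref{thm:quillen-idempotent}) this equals the GFTL of $s$, i.e.\ $\widehat{F}^M_t$ itself.

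The upper isomorphism $\Rfr\otimes\widetilde{\W}\to\W$ sends $(\Psi,\widehat F)$ to $(\Psi,\Psi^{-1}_*\widehat F)$. Hence the composite $\phi\circ(\text{iso})$ classifies the pair $(\Psi^M,\widehat{F}^M_t)$, because $\phi$ classifies $(\Psi^M,F^M_t)$ and $F^M_t=(\Psi^M)^{-1}_*\widehat{F}^M_t$. The composite $\chi\circ(\mathrm{id}\otimes\widetilde{\phi})$ is $r\otimes w\mapsto\kappa(r)\,\widetilde{\phi}_M(w)$, and it classifies the same pair. By the universal property of $\W\cong\Rfr\otimes\widetilde{\W}$ (a ring map out of a tensor product is determined by its two factors), the square commutes. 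The point I expect to need care is the identification $\xi_*\widehat{F}^M_t=\widehat{F}^M_t$. It uses that GFTLs are natural under ring maps and that $\xi$ pulls back the orientation $t$ to $s$. These facts are implicit in the Example preceding Proposition~\ref{prop:ftl-nonpositive-coefficients}, and I will make them explicit.

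Given the square, the horizontal maps are isomorphisms (Remark~\ref{rem:walter-ring-splitting} and Lemma~\ref{lem:canonical-msp-splitting}). Therefore $\mathrm{id}_{\Rfr}\otimes\widetilde{\phi}$ is identified with $\phi$, which is injective and becomes an isomorphism after inverting $2$ by Theorem~\ref{thm:quillen-walter}. The ring $\Rfr\cong\ZZ[b_1,b_3,\ldots]$ is free over $\ZZ$, and the map $\Rfr\to\ZZ$ classifying the trivial framed involution splits the unit. Base-changing $\mathrm{id}\otimes\widetilde{\phi}$ along $\Rfr\to\ZZ$ recovers $\widetilde{\phi}$. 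To get injectivity of $\widetilde{\phi}$, I will instead restrict to $1\otimes\widetilde{\W}$: the map $\widetilde{\W}\to\Rfr\otimes\widetilde{\W}$, $w\mapsto 1\otimes w$, is injective because $\Rfr$ is free with $1$ part of a basis. Hence $\widetilde{\phi}$ is the restriction of an injective map along an injective map, and is injective. After inverting $2$, the map $\mathrm{id}\otimes\widetilde{\phi}'$ is an isomorphism. Applying $-\otimes_{\Rfr[1/2]}\ZZ[1/2]$ then shows that $\widetilde{\phi}'$ is an isomorphism.
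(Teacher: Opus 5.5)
Your proposal is correct and follows the paper's argument: the square commutes because both composites classify the same pair $(\Psi^M,\widehat{F}^M_t)$, i.e.\ the FTL $F^M_t$, and the localized square then shows that $\widetilde{\phi}'$ is an isomorphism; you also make explicit the step $\bar{\xi}_*q_*\widehat{F}^M_t=\xi_*\widehat{F}^M_t=\widehat{F}^M_t$, which the paper leaves implicit. The only difference is in the injectivity step: the paper combines $2$-torsion-freeness of $\widetilde{\W}$ with injectivity of $\widetilde{\phi}'$, whereas you restrict the injective map $\mathrm{id}\otimes\widetilde{\phi}$ (identified with $\phi$) to $1\otimes\widetilde{\W}$, using that $\Rfr$ is free over $\ZZ$; both arguments are valid.
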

\begin{proof}
	Under the identifications in Remark~\ref{rem:walter-ring-splitting} and Lemma~\ref{lem:canonical-msp-splitting}, the composite $\chi\circ (\mathrm{id}\otimes \widetilde{\phi})$ classifies the same FTL over $(\MSpetainv)_*$ as $\phi$, namely the universal GFTL $F_t^M$. Hence the square commutes by the universal property of $\W$. After inverting $2$, the localized square shows that $\widetilde{\phi}'$ is an isomorphism, because the top horizontal map, the bottom horizontal map, and $\phi'$ are isomorphisms. Finally, $\widetilde{\W}$ is $2$-torsion-free, so the injectivity of $\widetilde{\phi}'$ implies the injectivity of $\widetilde{\phi}$.
\end{proof}

\appendix

\section{Computations}\label{app:gftl-hurewicz-computations}
Throughout this appendix, congruences $\equiv$ are taken modulo decomposables in the coefficient ring.
\subsection{Proof of Proposition~\ref{prop:hurewicz-indecomposable-formulas}}

\begin{proof}[Proof of Proposition~\ref{prop:hurewicz-indecomposable-formulas}]
	The formula for $h(F_1)$ is from Proposition~\ref{prop:hurewicz-gftl-roots}. For the remaining coefficients, write $x^H,y^H,z^H$ for the generators induced by the $\Sp$-orientation of $\HW$, so that
	\[
		x^H=\log(x),\qquad y^H=\log(y),\qquad z^H=\log(z).
	\]
	Let
	\[
		t_1=x^H+y^H-z^H,\; t_2=x^H-y^H+z^H,\; t_3=-x^H+y^H+z^H,\; t_4=-x^H-y^H-z^H.
	\]
	These are the four roots of $F_t^{\HW}$.

	We compute the three higher coefficients in turn.

	For $F_2$, Proposition~\ref{prop:hurewicz-gftl-roots} gives
	\[
		h(F_2)=\sum_{i<j}\exp(t_i)\exp(t_j).
	\]
	Modulo decomposables,
	\[
		\exp(u)\exp(v)\equiv uv+\sum_{n=1}^{\infty}a_n\left(u^{n+1}v+uv^{n+1}\right).
	\]
	Since
	\[
		\begin{aligned}
			\sum_{i<j}t_it_j
			 & =-2\left((x^H)^2+(y^H)^2+(z^H)^2\right), \\
			\sum_{i<j}\left(t_i^{n+1}t_j+t_it_j^{n+1}\right)
			 & =-S_{n+2}(x^H,y^H,z^H),
		\end{aligned}
	\]
	we obtain
	\[
		\begin{aligned}
			h(F_2)
			 & \equiv -2\left((x^H)^2+(y^H)^2+(z^H)^2\right)-\sum_{n=1}^{\infty}a_n S_{n+2}(x^H,y^H,z^H)               \\
			 & = -2\left(\log^2(x)+\log^2(y)+\log^2(z)\right)-\sum_{n=1}^{\infty}a_n S_{n+2}(\log(x),\log(y),\log(z)).
		\end{aligned}
	\]
	Moreover,
	\[
		\log^2(u)\equiv \left(u-\sum_{n=1}^{\infty}a_nu^{n+1}\right)^2
		\equiv u^2-2\sum_{n=1}^{\infty}a_nu^{n+2}.
	\]
	Putting these together yields
	\[
		h(F_2)\equiv -2Q(x,y,z)
		+\sum_{n=1}^{\infty}a_n P_{n+2}(x,y,z).
	\]

	For $F_3$, we similarly start from
	\[
		h(F_3)=\sum_{i<j<k}\exp(t_i)\exp(t_j)\exp(t_k).
	\]
	Working modulo decomposables, we obtain
	\[
		\begin{aligned}
			h(F_3)
			 & =\sum_{i<j<k}\left(t_i+\sum_{n\ge 1}a_nt_i^{n+1}\right)
			\left(t_j+\sum_{n\ge 1}a_nt_j^{n+1}\right)
			\left(t_k+\sum_{n\ge 1}a_nt_k^{n+1}\right)                 \\
			 & \equiv \sum_{i<j<k}t_it_jt_k
			+\sum_{n\ge 1}a_n\sum_{i<j<k}\left(t_i^{n+1}t_jt_k+t_it_j^{n+1}t_k+t_it_jt_k^{n+1}\right).
		\end{aligned}
	\]
	A standard symmetric-polynomial identity gives
	\[
		\sum_{i<j<k}\left(t_i^{n+1}t_jt_k+t_it_j^{n+1}t_k+t_it_jt_k^{n+1}\right)
		=
		\sigma_2(t_1,\ldots,t_4)\sum_{r=1}^4 t_r^{n+1}+\sum_{r=1}^4 t_r^{n+3}.
	\]
	Therefore
	\[
		\begin{aligned}
			 & h(F_3)                                                                                                \\
			 & \equiv -8x^{H}y^{H}z^{H}
			+\sum_{n\ge 1}a_n\left(S_{n+3}(x^{H},y^{H},z^{H})-2Q(x^{H},y^{H},z^{H})S_{n+1}(x^{H},y^{H},z^{H})\right) \\
			 & \equiv -8\log(x)\log(y)\log(z)
			+\sum_{n\ge 1}a_n\left(S_{n+3}(x,y,z)-2Q(x,y,z)S_{n+1}(x,y,z)\right).
		\end{aligned}
	\]
	Finally,
	\[
		-8\log(x)\log(y)\log(z)
		\equiv -8xyz+8\sum_{n\ge 1}a_n\left(x^{n+1}yz+xy^{n+1}z+xyz^{n+1}\right),
	\]
	so
	\[
		h(F_3)\equiv -8xyz+\sum_{n=1}^{\infty} a_n C_n(x,y,z).
	\]

	Finally, for $F_4$ we start from
	\[
		h(F_4)
		=\prod_{\varepsilon_1\varepsilon_2\varepsilon_3=-1}
		\exp \left(\varepsilon_1\log(x)+\varepsilon_2\log(y)+\varepsilon_3\log(z)\right).
	\]
	Modulo decomposables,
	\[
		\begin{aligned}
			h(F_4)
			 & =\prod_{\varepsilon_1\varepsilon_2\varepsilon_3=-1}
			\left(
			(\varepsilon_1x^H+\varepsilon_2y^H+\varepsilon_3z^H)
			+\sum_{n=1}^{\infty}a_n(\varepsilon_1x^H+\varepsilon_2y^H+\varepsilon_3z^H)^{n+1}
			\right)                                                                                  \\
			 & \equiv \Delta(x^H,y^H,z^H)\left(1+\sum_{n=1}^{\infty}a_nS_n(x^H,y^H,z^H)\right)       \\
			 & \equiv \Delta(\log(x),\log(y),\log(z))\left(1+\sum_{n=1}^{\infty}a_nS_n(x,y,z)\right) \\
			 & \equiv \Delta(\log(x),\log(y),\log(z))+\Delta(x,y,z)\sum_{n=1}^{\infty}a_nS_n(x,y,z).
		\end{aligned}
	\]
	Hence
	\[
		h(F_4)\equiv \Delta(x,y,z)+\sum_{n=1}^{\infty}a_n\left(\Delta(x,y,z)S_n(x,y,z)-4L_n(x,y,z)\right).
	\]
\end{proof}

\subsection{Proof of Theorem~\ref{thm:lambda-one-hurewicz-image}}

\begin{proof}[Proof of Theorem~\ref{thm:lambda-one-hurewicz-image}]
	Modulo decomposables, $m_i \equiv -a_i$, so the coefficient of $a_n$ in $h(F_1)$ is the polynomial $S_{n+1}(x,y,z)$. It therefore suffices to compute, for $m\ge 2$, the content
	\[
		g_m:= \gcd(\mathcal{C}_m),
	\]
	where $\mathcal{C}_m$ denotes the set of non-zero coefficients of $S_m(x,y,z)$. We claim that
	\[
		g_m = \begin{cases}
			p 2^{2+\nu_2(m-1)} , & m = p^{k}, \text{ for odd prime }p, \\
			2^{2+\nu_2(m-1)},    & \text{otherwise.}
		\end{cases}
	\]

	The argument has three steps. We first rewrite $S_m$ as
	\[
		\begin{aligned}
			S_m(x,y,z) & = \sum_{\varepsilon_1 \varepsilon_2 \varepsilon_3 = -1} \sum_{a+b+c=m} \binom{m}{a,b,c} \varepsilon_1^a \varepsilon_2^b \varepsilon_3^c x^{a} y^{b} z^{c}               \\
			           & = \sum_{a+b+c=m} \left(\sum_{\varepsilon_1 \varepsilon_2 \varepsilon_3 = -1} \varepsilon_1^a \varepsilon_2^b \varepsilon_3^c\right) \binom{m}{a,b,c} x^{a} y^{b} z^{c}.
		\end{aligned}
	\]
	Define
	\[
		E(a,b,c) := \sum_{\varepsilon_1 \varepsilon_2 \varepsilon_3 = -1} \varepsilon_1^a \varepsilon_2^b \varepsilon_3^c.
	\]
	A direct check gives
	\[
		E(a,b,c)=
		\begin{cases}
			4,  & \text{if } a,b,c \text{ are all even}, \\
			-4, & \text{if } a,b,c \text{ are all odd},  \\
			0,  & \text{otherwise}.
		\end{cases}
	\]
	Equivalently,
	\[
		\mathcal{C}_m = \left\{ E(a,b,c)\binom{m}{a,b,c} \,\middle|\, a+b+c=m,\ E(a,b,c)\neq 0\right\}.
	\]

	Step 1: the even case. If $m>1$ is even, then $E(a,b,c)\neq 0$ forces $a,b,c$ all to be even, so every non-zero coefficient of $S_m$ is divisible by $4$. Since
	\[
		\binom{m}{m,0,0}=1,
	\]
	we obtain $g_m=4$.

	Step 2: the $2$-adic valuation in the odd case. Assume that $m>1$ is odd. Using Legendre's formula \cite[p.10]{legendre1830theorie}, $\nu_2(k!) = k-s_2(k)$, where $s_2(k)$ is the sum of the base-$2$ digits of $k$, we obtain for $a+b+c=m$:
	\[
		\begin{aligned}
			\nu_2 \left(\frac{m!}{a!b!c!}\right) & = \nu_2(m!) - \nu_2(a!) - \nu_2(b!) - \nu_2(c!) \\
			                                     & = s_2(a) + s_2(b) + s_2(c) - s_2(m).
		\end{aligned}
	\]
	When $E(a,b,c)\neq 0$, the integers $a,b,c$ are all odd, so we may write
	\[
		m = 2r+1,\qquad a = 2 \alpha +1,\qquad b = 2\beta +1,\qquad c = 2\gamma +1,
	\]
	with $\alpha + \beta + \gamma = r-1$. Since $s_2(2k+1) = s_2(k) + 1$, this gives
	\[
		\nu_2 \left(\frac{m!}{a!b!c!}\right) = 2 + s_2(\alpha) + s_2(\beta) + s_2(\gamma) - s_2(r).
	\]
	Using
	\[
		s_2(\alpha) + s_2(\beta) + s_2(\gamma) \ge s_2(r-1)
	\]
	and $s_2(r-1) = s_2(r) + \nu_2(r) - 1$, we obtain
	\[
		\nu_2 \left(\frac{m!}{a!b!c!}\right) \ge \nu_2(m-1).
	\]
	This bound is sharp for $(a,b,c)=(m-2,1,1)$. Since $\nu_2(E(a,b,c))=2$ whenever $E(a,b,c)\neq 0$, we conclude that
	\[
		\nu_2(g_m)=2+\nu_2(m-1).
	\]

	Step 3: odd primes. Let $p$ be an odd prime. Because $p\nmid 4$, whenever $E(a,b,c)\neq 0$,
	\[
		\nu_p\left(E(a,b,c)\binom{m}{a,b,c}\right)=\nu_p\binom{m}{a,b,c}.
	\]
	By Kummer's theorem \cite{kummer1852uber}, $\nu_p\binom{m}{a,b,c}$ equals the number of carries in the base-$p$ addition $a+b+c=m$.

	If $m=p^k$, then $m$ is odd and $a,b,c$ are all odd, hence non-zero. Writing $m$ in base $p$ as $100\ldots0_p$, an addition without carries would force all base-$p$ digits of $a,b,c$ below position $k$ to vanish, which is impossible. Hence $\nu_p\binom{m}{a,b,c}\ge 1$. Equality holds for
	\[
		(a,b,c) = (p^{k-1},p^{k-1}, (p-2)p^{k-1}),
	\]
	so $\nu_p(g_m)=1$.

	If $m$ is not a power of $p$, write $m = \sum_{i\ge 0} m_i p^{i}$ with $0\le m_i \le p-1$. Since $m$ and $p$ are odd,
	\[
		1 \equiv m \equiv \sum_{i\ge 0} m_i \pmod 2.
	\]
	Set $s:= \sum_{i\ge 0} m_i$, so $s$ is odd. As $m$ is not a power of $p$, we have $s\ge 3$. If some digit $m_i\ge 2$, choose $i=j$ at such an index. Otherwise all non-zero digits are equal to $1$, and since $s\ge 3$, choose distinct $i\neq j$ with $m_i=m_j=1$. Then
	\[
		(a,b,c)=(p^i,p^j,m-p^i-p^j)
	\]
	consists of odd integers and the base-$p$ addition $a+b+c=m$ has no carry. Hence $\nu_p(g_m)=0$.

	This proves the stated formula for $g_m$ and therefore completes the proof of Theorem~\ref{thm:lambda-one-hurewicz-image}.
\end{proof}

\subsection{Proof of Theorem~\ref{thm:lambda-two-hurewicz-image}}

\begin{proof}[Proof of Theorem~\ref{thm:lambda-two-hurewicz-image}]
	It suffices to compute the $2$-local content of $P_m$ for $m\ge 3$, namely
	\[
		\cont_2(P_m)=
		\begin{cases}
			8, & \text{if }m=2^{r}, r\ge 2, \\
			4, & \text{otherwise}.
		\end{cases}
	\]

	We distinguish the parity of $m$.

	If $m$ is odd, then $S_m$ has no contribution to the $x^m$-term, so the coefficient of $x^m$ in $P_m$ is $4$. Hence $\cont_2(P_m)=4$.

	Assume now that $m$ is even. Expanding $S_m$ gives
	\[
		P_m(x,y,z)=-4  \sum_{\substack{a+b+c=m\\ a,b,c < m \text{ even}}}  \binom{m}{a,b,c} x^{a}y^{b}z^{c},
	\]
	and therefore
	\[
		\cont_2(P_m)
		=
		4\gcd\left\{\binom{m}{a,b,c} \middle| \substack{a+b+c=m,\\ a,b,c<m \text{ even}}\right\}.
	\]
	Using $\binom{m}{a,b,c}=\binom{m}{a}\binom{m-a}{b}$ and taking $c=0$ reduces this to
	\[
		\cont_2(P_m)=4\gcd\left\{\binom{m}{2k}\right\}_{1\le k\le \frac{m}{2}-1}.
	\]
	By Kummer's theorem, this gcd is $2$ if and only if $m$ is a power of $2$ (with $m\ge 4$), and is $1$ otherwise.
	This gives the stated formula for $\cont_2(P_m)$, and hence proves the theorem.
\end{proof}

\subsection{Proof of Theorem~\ref{thm:lambda-three-hurewicz-image}}

\begin{proof}[Proof of Theorem~\ref{thm:lambda-three-hurewicz-image}]
	It suffices to compute $\cont_2(C_n)$.
	We begin with a divisibility observation. For every $m$, the polynomial $S_m(x,y,z)$ has all coefficients divisible by $4$: only monomials with all exponents of the same parity occur, and the corresponding sign-sum is $\pm 4$.
	Hence $8\sigma(x^{n+1}yz)$ and $2Q S_{n+1}$ are divisible by $8$, while $S_{n+3}$ is divisible by $4$.
	Therefore $4\mid C_n$ for all $n$, and moreover $8\mid C_n$ when $n$ is even.

	If $n$ is odd, then $n+3$ is even and the monomial $x^{n+3}$ occurs in $S_{n+3}$ with coefficient $4$, while the other two summands in $C_n$ are divisible by $8$.
	It follows that $\cont_2(C_n)=4$.

	Assume now that $n=2m$ is even. Then $2m+1$ and $2m+3$ are odd. By Kummer's theorem, every multinomial coefficient appearing in $S_{2m+1}$ and $S_{2m+3}$ is even. Hence both polynomials are divisible by $8$.
	Consequently $\cont_2(C_{2m})\ge 8$.

	To decide whether $\cont_2(C_{2m})\ge 16$, reduce $C_{2m}/8$ modulo $2$. Since $S_{2m+1}$ is divisible by $8$, the term $2Q S_{2m+1}$ is divisible by $16$, so modulo $2$ we have
	\[
		\frac{C_{2m}}{8}\equiv \sigma(x^{2m+1}yz)+\frac{S_{2m+3}(x,y,z)}{8}\pmod{2}.
	\]
	Therefore
	\[
		\frac{C_{2m}}{8}\equiv 0\pmod{2}
		\quad\Longleftrightarrow\quad
		\frac{S_{2m+3}(x,y,z)}{8}\equiv \sigma(x^{2m+1}yz)\pmod{2}.
	\]
	A direct check shows that this congruence holds if and only if $m$ is a power of $2$ with $m\ge 2$. Therefore, if $m$ is not a power of $2$ (or $m=1$), then $C_{2m}/8\not\equiv 0\pmod{2}$, so $\cont_2(C_{2m})=8$.

	It remains to treat the case $n=2^r$ with $r\ge 2$. We already know that $16\mid C_n$. We shall show that $32\nmid C_n$, hence $\cont_2(C_n)=16$.
	For $n\ge 8$ (i.e.\ $r\ge 3$), consider the coefficient of the monomial $x^{n+1}yz$ in $C_n$:
	\[
		[8\sigma(x^{n+1}yz)]_{x^{n+1}yz}=8,
	\]
	\[
		[S_{n+3}]_{x^{n+1}yz}=-4\binom{n+3}{n+1,1,1}=-4(n+3)(n+2),
	\]
	and only the $x^2$-term of $Q$ contributes to $[Q S_{n+1}]_{x^{n+1}yz}$, giving
	\[
		[-2Q S_{n+1}]_{x^{n+1}yz}
		=-2 [S_{n+1}]_{x^{n-1}yz}
		=-2\left(-4\binom{n+1}{n-1,1,1}\right)
		=8n(n+1).
	\]
	Summing, we obtain
	\[
		[C_n]_{x^{n+1}yz}
		=8-4(n+3)(n+2)+8n(n+1)
		=4(n-4)(n+1).
	\]
	If $n=2^r\ge 8$, then $n+1$ is odd and $n-4=4(2^{r-2}-1)$ has $\nu_2(n-4)=2$, so
	\[
		\nu_2 \left([C_n]_{x^{n+1}yz}\right)
		=\nu_2 \left(4(n-4)(n+1)\right)=4,
	\]
	and hence $\cont_2(C_n)=16$.

	In the remaining case $n=4$, we instead use the coefficient of $xy^3z^3$:
	\[
		\begin{aligned}
			\relax [C_4]_{xy^3z^3}
			 & =[S_7]_{xy^3z^3}-2[Q S_5]_{xy^3z^3}                           \\
			 & =-4\binom{7}{1,3,3}-2\left([S_5]_{xyz^3}+[S_5]_{xy^3z}\right) \\
			 & =-560+320                                                     \\
			 & =-240,
		\end{aligned}
	\]
	which has $\nu_2(240)=4$. Thus $\cont_2(C_4)=16$, and the theorem follows.
\end{proof}

\subsection{Proof of Theorem~\ref{thm:lambda-four-hurewicz-image}}

\begin{proof}[Proof of Theorem~\ref{thm:lambda-four-hurewicz-image}]
	It suffices to show
	\[
		\cont_2(\Delta S_n-4L_n)=
		\begin{cases}
			8, & \text{if }n+2=2^{r}, r\ge 2, \\
			4, & \text{otherwise}.
		\end{cases}
	\]
	We again separate the odd and even cases.

	If $n$ is odd, then $\Delta S_n$ has no contribution to the $x^{n+4}$-term, so the coefficient of $x^{n+4}$ in $\Delta S_n-4L_n$ is $-4$.
	Thus $\cont_2(\Delta S_n-4L_n)=4$.

	Assume now that $n=2m$ is even. Modulo $2$, Kummer's theorem implies
	\[
		\begin{aligned}
			S_{2m}(x,y,z)/4
			 & \equiv \sum_{a+b+c=m}\binom{2m}{2a,2b,2c}x^{2a}y^{2b}z^{2c} \\
			 & \equiv \sum_{a+b+c=m}\binom{m}{a,b,c}x^{2a}y^{2b}z^{2c}     \\
			 & \equiv \sigma(x^2)^m \pmod{2}.
		\end{aligned}
	\]
	Also $\Delta(x,y,z)\equiv \sigma(x^2)^2\pmod{2}$ and
	\[
		L_{2m}(x,y,z)\equiv \sigma(x^{2m+2})\sigma(x^2)\pmod{2}.
	\]
	Hence
	\[
		\Delta S_{2m}/4-L_{2m} \equiv \sigma(x^2)\left(\sigma(x^2)^{m+1}+\sigma(x^{2m+2})\right)\pmod{2}.
	\]
	Now $\sigma(x^2)^{m+1}=\sigma(x^{2m+2})$ in $\FF_2[x,y,z]$ if and only if $m+1$ is a power of $2$. This again follows from Kummer's theorem applied to the relevant binomial coefficients. Therefore, if $m+1$ is not a power of $2$ (equivalently $n+2$ is not a power of $2$), then $\Delta S_{2m}/4-L_{2m}\not\equiv 0\pmod{2}$ and thus $\cont_2(\Delta S_{2m}-4L_{2m})=4$.

	Finally, when $m+1=2^{r-1}$ with $r\ge 3$ (i.e.\ $n+2=2^r$), the congruence above shows $8\mid (\Delta S_n-4L_n)$. In particular, the coefficient of $x^{n+2}y^2$ in $\Delta S_n-4L_n$ equals $4\left(\binom{n}{2}-1\right)$, whose $2$-adic valuation is $3$. Hence $\cont_2(\Delta S_n-4L_n)=8$.

	When $n=2$, the coefficient of $x^2y^2z^2$ in $\Delta S_n-4L_n$ equals $-24$, so the same conclusion holds. This completes the proof.
\end{proof}

\bibliographystyle{halpha}
\bibliography{bib}
\end{document}